\numberwithin{equation}{section}
\newcommand{\ssp}{\hspace{1pt}}
\newtheorem{proposition}{Proposition}[section]
\newtheorem{lemma}[proposition]{Lemma}
\newtheorem{corollary}[proposition]{Corollary}
\newtheorem{theorem}[proposition]{Theorem}
\newtheorem{conjecture}[proposition]{Conjecture}
\crefname{conjecture}{conjecture}{conjectures}
\Crefname{conjecture}{Conjecture}{Conjectures}
\crefname{lstlisting}{listing}{listings}
\Crefname{lstlisting}{Listing}{Listings}
\theoremstyle{definition}
\newtheorem{definition}[proposition]{Definition}
\newtheorem{remark}[proposition]{Remark}
\begin{document}
\title{Random Lozenge Waterfall:\\Dimensional Collapse of Gibbs Measures}

\author{Alisa Knizel and Leonid Petrov}

\date{}

\maketitle

\begin{abstract}
We investigate the asymptotic behavior of the $q$-Racah
probability measure on lozenge tilings of a hexagon whose
side lengths scale linearly with a parameter $L\to\infty$,
while the parameters $q\in(0,1)$ and $\kappa\in
\mathbf{i}\mathbb{R}$ remain fixed.
This regime differs fundamentally
from the traditional case $q\sim e^{-c/L}\to1$, in which
random tilings are locally governed by two-dimensional
translation-invariant ergodic Gibbs measures.
In
the fixed-$q$ regime we uncover a new macroscopic phase, the
\emph{waterfall} (previously only observed experimentally),
where the two-dimensional Gibbs structure collapses into a
one-dimensional random stepped interface that we call a
\emph{barcode}.

We prove a law of large numbers and
exponential concentration, showing that the random tilings
converge to a deterministic waterfall profile.
We further conjecture an explicit
correlation kernel of the one-dimensional barcode process
arising in the limit.
Remarkably, the limit is invariant under shifts by
$2\mathbb{Z}$ but not by $\mathbb{Z}$, exhibiting an
emergent period-two structure absent from the original
weights.
Our conjectures are supported by extensive numerical
evidence and perfect sampling simulations.
The kernel is built from a family of
functions orthogonal in both spaces
$\ell^{2}(\mathbb{Z})$
and $\ell^{2}(\mathbb{Z}+\frac12)$,
that may be of independent interest.

Our proofs adapt the spectral projection method of
Borodin--Gorin--Rains (2009) to the regime with fixed~$q$.
The resulting asymptotic analysis is substantially more
involved, and leads to non-self-adjoint operators. We
overcome these challenges in the exponential concentration
result by a separate argument based on sharp bounds for the
ratios of probabilities under the $q$-Racah orthogonal polynomial ensemble.
\end{abstract}

\section{Introduction}
\label{sec:intro}

\subsection{Overview}
\label{sub:overview_intro}

Random dimer coverings of graphs --- most notably, random
tilings of planar regions ---
form an exactly solvable
setup
for understanding phase transitions and emergent large-scale
behavior in two-dimensional statistical mechanics.
A prototypical example is uniformly random lozenge tilings of
the hexagon (see \Cref{fig:lozenge_hexagon_intro}).
When the side lengths of the hexagonal region grow
proportionally to a large parameter~$L$, several universal
regimes emerge:
\textbf{(i)}~a deterministic limit shape with Gaussian Free Field fluctuations
\cite{CohnKenyonPropp2000},
\cite{Kenyon2004Height},
\cite{Petrov2012GFF},
\cite{bufetov2016fluctuations},
\cite{berggren2024perfect_hexagon};
\textbf{(ii)}~edge statistics governed by Random-Matrix and Pearcey-type kernels
\cite{OkounkovReshetikhin2006RandomMatr},
\cite{GorinPanova2012_full},
\cite{Okounkov2005},
\cite{aggarwal2022gaussian},
\cite{huang2024pearcey};
\textbf{(iii)}~Kardar-Parisi-Zhang class behavior near arctic boundaries
\cite{BKMM2003},
\cite{Petrov2012},
\cite{huang2024concentration},
\cite{aggarwalhuang2025airy};
and \textbf{(iv)}~bulk lattice limits described by translation-invariant ergodic Gibbs measures
\cite{okounkov2003correlation},
\cite{Sheffield2008},
\cite{KOS2006},
\cite{Gorin2007Hexagon},
\cite{aggarwal2019universality}.
For a broader survey, see \cite{gorin2021lectures}.

As the extensive list of references demonstrates, a diverse toolkit
has emerged for studying
uniformly random lozenge tilings of the hexagon, including
the variational principle, orthogonal polynomial methods
(Riemann--Hilbert or spectral projection approaches),
determinantal formulas and their asymptotics, 
loop (Nekrasov) equations,
and discrete
complex analysis.
Many of these methods survive once the uniform measure is
tilted by weights, provided the weights preserve the
underlying orthogonal polynomial structure.
We study the most general family of weights of this kind,
namely the \emph{$q$-Racah weights} introduced in
\cite{borodin-gr2009q}; they correspond to a terminal node
of the $q$-Askey scheme \cite{Koekoek1996}. (Other
deformations, for example the doubly-periodic weights
that necessitate matrix-valued orthogonal polynomials
\cite{charlier2020doubly}, lie outside our scope.)

\begin{figure}[htpb]
\centering
\includegraphics[height=0.27\textwidth]{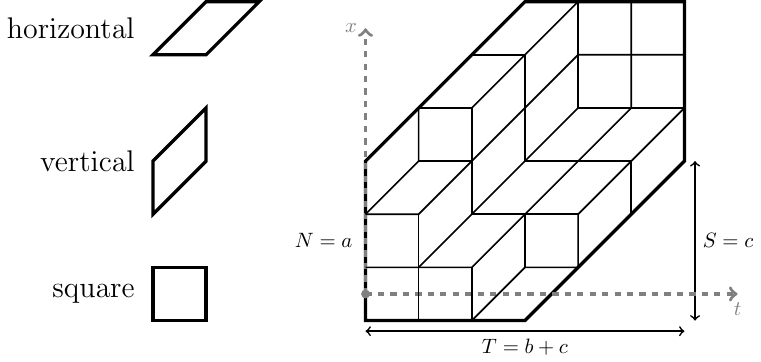}
\qquad
\includegraphics[height=0.28\textwidth]{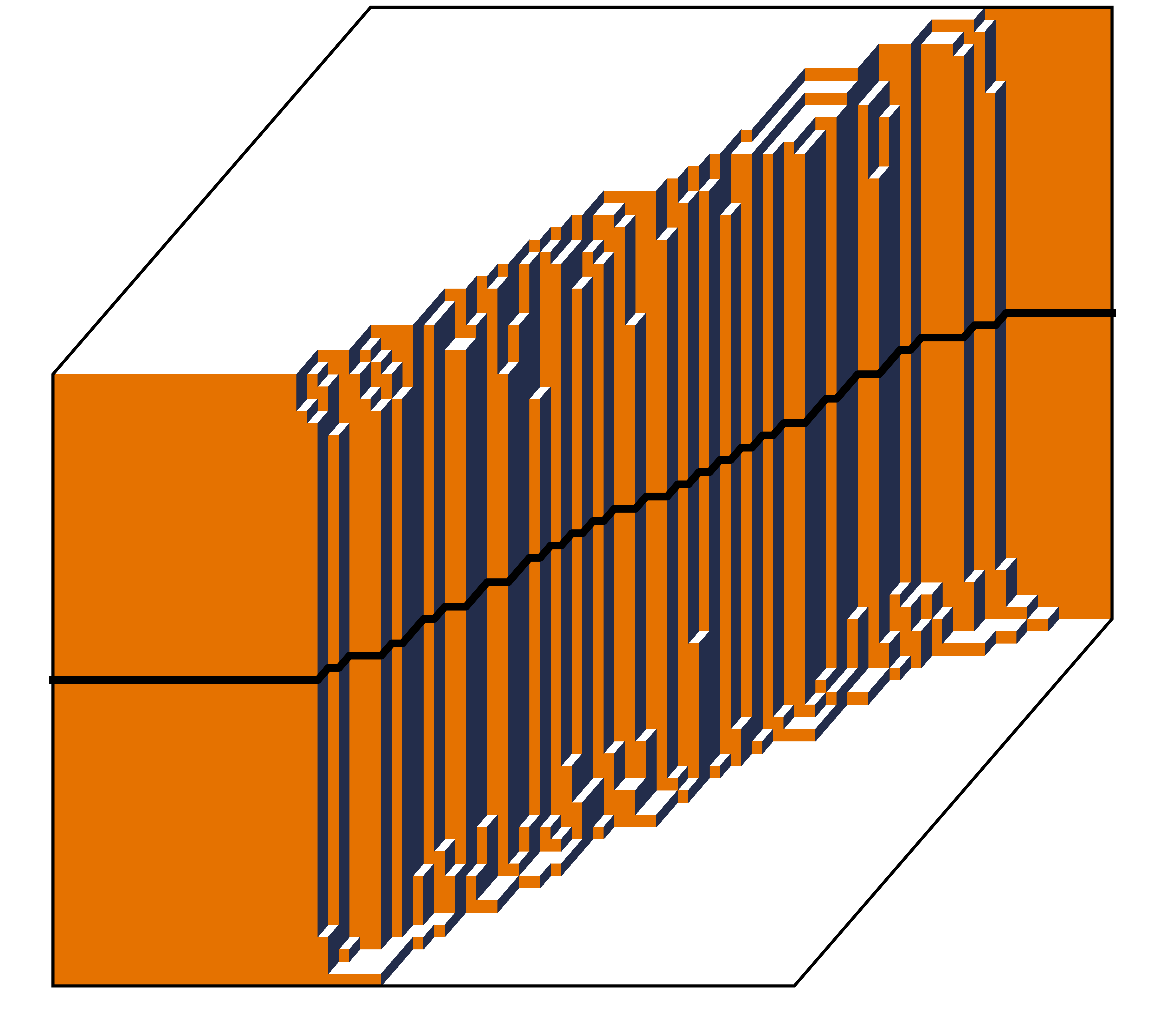}
\caption{Left: the three types of lozenges. Center: an example of a lozenge tiling of a hexagon whose side lengths are all equal to~$3$.
Observe that the plane can be affine transformed so that all lozenges become congruent;
however, it is convenient throughout the paper to use the equivalent representation shown here.
The sides of the hexagon are denoted by $a,b,c,a,b,c$, but throughout the paper we use the parameters
$N=a$, $T=b+c$, and $S=c$. Right: a perfect sample of the $q$-Racah random tiling with
$N=50$, $T=100$, $S=30$, $q=0.7$, and $\kappa=3\mathbf{i}$ obtained by our \texttt{Python} port
of the original algorithm from \cite{borodin-gr2009q}
(provided as
an ancillary file to the arXiv version of the paper). 
The cross-section of the 3D surface across the 
middle represents the barcode process.}
\label{fig:lozenge_hexagon_intro}
\end{figure}

The $q$-Racah probability measure on lozenge tilings of a
hexagon is defined by assigning the probability
\begin{equation}
	\label{eq:q_racah_weight_intro}
	\frac{1}{Z}
	\prod_{\substack{\textnormal{horizontal}\\\textnormal{lozenges $u$}}} \mathsf{w}_{q,\kappa}
	\bigl(\operatorname{height}(u)\bigr),
	\qquad
	\mathsf{w}_{q,\kappa}(j)\coloneqq \kappa q^{j-(S+1)/2}-\frac{1}{\kappa q^{j-(S+1)/2}},
\end{equation}
to each tiling, where $Z$ is the normalizing constant, $S=c$ is one of the sides of the hexagon,
and the height of a horizontal lozenge is defined
as the number of $1\times 1\times 1$ cubes under this lozenge
in the representation of a tiling as a 3D stepped surface.
The parameters are\footnote{That is, we consider the
so-called \emph{imaginary $q$-Racah case}. There are two
other cases, \emph{real} and \emph{trigonometric},
introduced in \cite[Section~2.2]{borodin-gr2009q}.
Throughout the paper, we focus only on the imaginary case since it
leads to the waterfall behavior.}
$\kappa\in \mathbf{i}\mathbb{R}$
and $q>0$. Note that all of the factors $\mathsf{w}_{q,\kappa}(j)$ are purely imaginary with positive imaginary part, 
and after the normalization, \eqref{eq:q_racah_weight_intro} defines an honest probability measure.

Because the total number of horizontal lozenges is fixed at
$S(T{-}S)$, the involution
$(q,\kappa)\mapsto(q^{-1},\kappa^{-1})$ leaves it invariant.
We therefore fix the convention $0<q<1$ throughout.
The limits $\kappa\to0$ or $\kappa\to \mathbf{i}\infty$
reduce the $q$-Racah measure to the volume-tilted measures
$q^{\mathsf{vol}}$ and $q^{-\mathsf{vol}}$, respectively, where
$\mathrm{vol}$ is the three-dimensional volume under the stepped surface.
The further limit $q\to1$ gives back the uniform measure.

\medskip

Until now, asymptotic investigations of the $q$-Racah model
in the regime where the side lengths of the hexagon grow
linearly with a parameter \(L\to\infty\) have focused
exclusively on the \emph{traditional $q\to1$ window}, i.e.\ the scaling \(q=e^{-c/L}\to1\).  Using the
spectral projection framework developed in
\cite{borodin2007asymptotics}, \cite{Olshansk2008-difference} (see
also \cite[Chapter~3.3]{TaoRMbook}), Borodin--Gorin--Rains
\cite{borodin-gr2009q} identified the local (bulk) lattice
limits in this window with the same translation-invariant
ergodic Gibbs measures present in the uniform case.
Subsequent works by Dimitrov--Knizel
\cite{dimitrov2019log} and Gorin--Huang
\cite{gorin2022dynamical} established Gaussian Free Field
fluctuations of the height function via two complementary
loop-equation approaches, while Duits--Duse--Liu
\cite{Duits2024lozenge} confirmed the same universal
behavior through an analysis of the recurrence coefficients
of the underlying orthogonal polynomials.

By contrast, in the \emph{fixed-$q$ regime}
almost nothing is rigorously
understood. Aside from the striking \emph{waterfall} phase
observed in simulations in \cite{borodin-gr2009q}
(see \Cref{fig:lozenge_hexagon_intro}, right, for an illustration),
the asymptotic behavior for fixed $q$ and $\kappa$ has so far
escaped a mathematical description.
(For the simpler \(q^{\pm\mathrm{vol}}\) measures, the
limiting objects were connected in
\cite{OkounkovKenyon2007Limit} to tropical curves, but those
results do not extend to the $q$-Racah setting.)
Describing the fixed-$q$
regime and the waterfall phase is the
central aim of the present work.

\medskip
Our \textbf{main results} are as follows.
\begin{enumerate}[$\bullet$]
  \item We prove that, in the fixed-$q$ regime, the $q$-Racah random tiling converges
		(with exponential bounds)
		to a deterministic \emph{waterfall profile}.
		In the waterfall region $\mathcal{W}$
		(a part of this profile in which locally one sees
		two types of
		lozenges),
		the two-dimensional lattice behavior
		\emph{collapses} into a one-dimensional random stepped interface
		(which we call a \emph{barcode}; this is the central broken line
		in the tiling in \Cref{fig:lozenge_hexagon_intro}, right).

  \item
		Extending the
		spectral projection framework to the fixed-$q$ regime,
		we obtain (coefficient-wise) limits of the
		relevant difference operators at every macroscopic
		location.
		Away from the center of the waterfall region, i.e.\ when
		$x\not\sim \tfrac12(S+t)$, the limiting operators are no
		longer self-adjoint. A systematic study of these
		non-self-adjoint operators is left for future work.

	\item
		By matching nonrigorous computations with numerical and
		perfect sampling data, we propose a determinantal
		correlation kernel for the one-dimensional
		\emph{barcode process}.
		The conjectural kernel exhibits a surprising
		$2\mathbb Z$-periodicity, breaking the original lattice
		homogeneity.
\end{enumerate}

\begin{remark}
	Most tiling models feature degenerate regions where the
	two-dimensional Gibbs structure disappears.  The familiar
	examples are the \emph{frozen facets} outside an arctic
	boundary, in which the 3D stepped surface is parallel to one of the
	coordinate planes, and exhibits no fluctuations.
	Certain boundary conditions or periodic weights can also
	give rise to \emph{semi-frozen} zones
	\cite{Mkrtchyan2014Periodic}, \cite{Mkrtchyan2019}, 
	\cite{BerggrenBorodin2023},
	in which locally
	only two types of lozenges appear, producing a barcode-type
	configuration that is, however, fully deterministic and
	periodic.

	The waterfall phase we uncover is fundamentally different.
	Here, the bulk two-dimensional Gibbs structure \emph{collapses} onto a single
	random stepped interface (the barcode), while the complementary regions remain fully frozen.
	To our knowledge, no prior tiling model with nonrandom weights exhibits a
	macroscopic facet that is simultaneously
	random and one-dimensional.

	We remark that an effectively one-dimensional random phase,
	in which two types of lozenges are arranged as steps of a
	random walk, is expected to occur in domino tilings of the
	Aztec diamond with the \emph{random} edge weights introduced
	in \cite{BufetovPetrovZografos2025}. The
	randomness of the weights is essential for this effect, whereas in our
	model the barcode phase appears under a completely
	deterministic choice of weights.
\end{remark}

\begin{remark}
The waterfall phase in our model represents a strong form of
dimensional reduction:
outside the waterfall region, all
fluctuations are frozen, and only the one-dimensional
barcode interface survives in the waterfall region.
This
can be contrasted with other known instances of dimensional
reduction in statistical mechanics, such as in the scaling
limit of the two-dimensional critical Ising model
\cite{WuMcCoyTracyBarouch1976}. In the Ising case, while the
correlation functions are famously described by
lower-dimensional mathematical structures, the underlying
two-dimensional field continues to exhibit non-trivial
fluctuations in both spatial dimensions.
\end{remark}

In the remainder of the Introduction, we provide a detailed
discussion of our methods and their limitations.

\subsection{Spectral projection approach and its limitations}
\label{sub:spectral_projection_intro}

Let $X=\{(t_i,x_i)\}\subset\mathbb Z^2$ be the random point configuration marking the
positions of the vertical and square lozenges in a tiling of the hexagon
(where $t$ and $x$ are the lattice coordinates defined as in
\Cref{fig:lozenge_hexagon_intro}, center).
For pairwise distinct $(s_1,x_1),\dots,(s_n,x_n)$, define the $n$-point correlation function
\[
\rho_m\bigl((s_1,x_1),\dots,(s_m,x_m)\bigr)
  \coloneqq\mathbb{P}\bigl\{(s_i,x_i)\in X, \ i=1,\ldots,m \bigr\},
\]
where $\mathbb{P}$ is the $q$-Racah probability measure.
Because this measure is
equivalent to a dimer model on the bipartite hexagonal
graph, the resulting correlation functions are
determinantal; see the lecture notes
\cite{Kenyon2007Lecture} for a general exposition.
That is, there exists a
kernel $K(s,x;t,y)$ such that
\[
\rho_m\bigl((s_1,x_1),\dots,(s_m,x_m)\bigr)
	=\det\bigl[K(s_i,x_i;s_j,x_j)\bigr]_{i,j=1}^m,
	\qquad m\ge1.
\]
As shown in \cite{borodin-gr2009q},
the kernel is
given in terms of the $q$-Racah orthogonal polynomials:
\begin{equation}
	\label{eq:qRacah_two-dimensional_kernel_intro}
	{K}(s,x;t,y)=
	\begin{cases}
		\displaystyle
		\sum_{n=0}^{N-1}
		(C_n^{t} C_n^{t+1}\ldots C_n^{s-1})^{-1}\ssp
		f_n^s(x)\ssp f_n^t(y),&\qquad  s\ge t;\\[12pt]
		\displaystyle
		-\sum_{n\ge N}
		(C_n^s C_n^{s+1}\ldots C_n^{t-1})\ssp
		f_n^s(x)\ssp f_n^t(y)
		,&\qquad  s<t,
	\end{cases}
\end{equation}
where the constants $C_n^{t}$ are explicit, and the
functions $f_n^{t}(x)$ form an orthonormal basis of the
space $\ell^{2}$ on the $t$-th vertical slice.
The
functions $f_n^{t}(x)$ are obtained from the relevant
$q$-Racah orthogonal polynomials (with parameters depending
on $q$, $\kappa$, $t$, and the side lengths of the
hexagon) by multiplying by the square root of the $q$-Racah weight function, and
then normalising. Complete formulas are given in
\Cref{sec:q_racah_tilings,sec:2d_correlations}.

On each fixed vertical slice $t$, the kernel
$K_t(x,y)\coloneqq K(t,x;t,y)=\sum_{n=0}^{N-1} f_n^t(x)f_n^t(y)$
can be interpreted as the
\emph{orthogonal spectral projection}
onto the positive part of the spectrum of a certain difference operator
with explicit product-form coefficients,
see
\eqref{eq:diff_operator_scaling_shifting} and
\eqref{cor:qRacah_hexagon_kernel_spectral_description}.
We take the limits of these coefficients for fixed $q$ and $\kappa$
as $T=\lfloor L\ssp \mathsf{T} \rfloor$, $S=\lfloor L\ssp \mathsf{S} \rfloor$, and $N=\lfloor L\ssp \mathsf{N} \rfloor$
and around a macroscopic location $(\mathsf{t},\mathsf{x})$,
where $t=\lfloor  L\ssp \mathsf{t} \rfloor$ and $x=\lfloor L\ssp \mathsf{x} \rfloor+\Delta x$.
The waterfall phase arises only when $\mathsf{N}<\mathsf{T}$, 
and we assume this condition throughout the rest of the Introduction.
Depending on the scaled location $(\mathsf{t},\mathsf{x})$ on a slice $\mathsf{t}$ intersecting the waterfall
region $\mathcal{W}$, we obtain various
limiting symmetric Jacobi (tridiagonal) operators
which act in the local shift $\Delta x$:
\begin{equation}
\label{eq:limit_of_difference_operator_intro_general_form}
	\bigl(T g\bigr)(\Delta x)=d(\Delta x+1)\ssp g(\Delta x+1)
	+
	a(\Delta x)\ssp g(\Delta x)+
	d(\Delta x)\ssp g(\Delta x-1),
\end{equation}
Their coefficients are given in the following proposition.

\begin{proposition}[\Cref{lemma:D_scaled_limit_of_diagonal_coefficient,lemma:D_scaled_limit_of_off_diagonal_coefficient}]
	\label{prop:limit_of_difference_operator_intro}
	\begin{enumerate}[$\bullet$]
		\item
			If $(\mathsf{t},\mathsf{x})$ is above or below $\mathcal{W}$, we have $d(\Delta x)=0$ and $a(\Delta x)=-1$, that is, the
			limiting difference operator is the negative identity $(-Id)$;
		\item If $(\mathsf{t},\mathsf{x})$ lies on the upper boundary of $\mathcal{W}$,
			then
			\begin{equation*}
				d(\Delta x)=-\kappa^{2} q^{2\Delta x+1/2},
				\qquad
				a(\Delta x)=-1-\kappa^{2}(1+q)q^{2\Delta x+1}.
			\end{equation*}
			On the lower boundary of $\mathcal{W}$, 
			one has to 
			replace the powers of $(q,\kappa)$ with those of
			$(q^{-1},\kappa^{-1})$, while the factor $1+q$ is kept
			the same.
		\item 
			Inside $\mathcal{W}$ but away
			from its center line $\lfloor
			L\ssp\mathsf{x}\rfloor=\tfrac12(S+t)$, the
			coefficients in the upper half are
			\begin{equation*}
				d(\Delta x)=-\kappa^{2} q^{2\Delta x+1/2},
				\qquad 
				a(\Delta x)=-\kappa^{2}(1+q)q^{2\Delta x+1}.
			\end{equation*}
			The formulas for the lower part are obtained from these by the same symmetry as in the preceding item.
		\item
			On the center line, we get
			\begin{equation}
				\label{eq:limit_of_difference_operator_center_line_intro}
				d(\Delta x)=\dfrac{-\kappa^2(1 - \kappa^2 q^{2\Delta x})^{-1} q^{2\Delta x+1/2}}{
				\sqrt{(1 - \kappa^2 q^{2\Delta x-1}) (1 - \kappa^2 q^{2\Delta x+1})}}
				,\qquad
				a(\Delta x)=
				\dfrac{-\kappa^2 (1+q) \ssp q^{2\Delta x+1}}{(1-\kappa^2 q^{2 \Delta x})(1-\kappa^2 q^{2\Delta x+2})}.
			\end{equation}
	\end{enumerate}
\end{proposition}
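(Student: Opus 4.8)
The plan is to deduce \Cref{prop:limit_of_difference_operator_intro} from the two coefficient asymptotics (\Cref{lemma:D_scaled_limit_of_diagonal_coefficient,lemma:D_scaled_limit_of_off_diagonal_coefficient}), which I would prove by a direct, regime-by-regime expansion of the rescaled difference operator. Write $B_t(x)$ and $D_t(x)$ for the two (product-form) coefficients of the $q$-difference operator $\mathcal D_t$ of \eqref{eq:diff_operator_scaling_shifting} whose positive spectral projection is $K_t$: in the $q$-Racah parameter dictionary of \Cref{sec:q_racah_tilings}, each of $B_t,D_t$ is a \emph{ratio of finitely many factors of the form} $1-c\,q^{\ell}$, with $c$ a monomial in the $q$-Racah parameters and $\ell$ affine in $(x,t,N,S,T)$. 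It is convenient to first treat the non-symmetric operator $g\mapsto B_t(x)\,g(x{+}1)-\bigl(B_t(x){+}D_t(x)\bigr)g(x)+D_t(x)\,g(x{-}1)$ and to symmetrize only at the very end by the standard diagonal gauge, which turns the off-diagonal pair $(B_t(x),D_t(x{+}1))$ into the single geometric mean $\sqrt{B_t(x)\,D_t(x{+}1)}$; symmetrizing after passing to the limit avoids carrying a branch of the square root through the asymptotics and produces the forms displayed in the statement (the residual diagonal gauge, hence the overall sign of the off-diagonal entry, being immaterial for the spectral projection). The object whose coefficient-wise limit we take is then the rescaled-and-shifted operator of \eqref{eq:diff_operator_scaling_shifting}, of the form $\alpha_L\,(\mathcal D_t-\beta_L\,\mathrm{Id})$ acting in the local variable $\Delta x$ after the substitution $t=\lfloor L\mathsf t\rfloor$, $x=\lfloor L\mathsf x\rfloor+\Delta x$, $N=\lfloor L\mathsf N\rfloor$, $S=\lfloor L\mathsf S\rfloor$, $T=\lfloor L\mathsf T\rfloor$.

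The mechanism of the limit is elementary once the coefficients are in product form. Since $q\in(0,1)$ is fixed, for a factor $1-c\,q^{\ell_L}$ with $\ell_L=\ell_L(\Delta x)$ affine in $\Delta x$ one has $1-c\,q^{\ell_L}\to 1$ if $\ell_L\to+\infty$, $1-c\,q^{\ell_L}\sim-c\,q^{\ell_L}$ (and diverges) if $\ell_L\to-\infty$, and $1-c\,q^{\ell_L}$ converges to a finite $\Delta x$-dependent value exactly when $\ell_L$ stays bounded. Classifying the exponents occurring in $B_t$ and $D_t$ by these three cases, as a function of the macroscopic location $(\mathsf t,\mathsf x)$, partitions the picture: when every exponent is one-sided, $B_t$ and $D_t$ are asymptotically trivial and, with $(\alpha_L,\beta_L)$ calibrated so that the spectral threshold of $K_t$ is sent to $0$, the operator degenerates to $(-\mathrm{Id})$ — this is the frozen case $(\mathsf t,\mathsf x)$ above or below $\mathcal W$. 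The remaining cases — the two boundary curves of $\mathcal W$, the interior of $\mathcal W$, and the center line $x\sim\tfrac12(S+t)$ — are precisely those in which one or two exponents become bounded, and one extracts the leading term.

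For the off-diagonal coefficient one records which binomials of $B_t(x)$ and of $D_t(x{+}1)$ survive the limit; near $\mathcal W$ this leaves a bounded product, and the geometric mean yields $-\kappa^2q^{2\Delta x+1/2}$ on the upper boundary of $\mathcal W$ and in its upper half, and on the center line the full expression with denominator $\sqrt{(1-\kappa^2q^{2\Delta x-1})(1-\kappa^2q^{2\Delta x+1})}$. For the diagonal coefficient $a(\Delta x)$, written as $-\alpha_L\bigl(B_t(x)+D_t(x)\bigr)-\alpha_L\beta_L$, the difference between the boundary and interior formulas is exactly whether the constant shift $-\alpha_L\beta_L$ survives: on $\partial\mathcal W$ it does, contributing the extra $-1$, while inside $\mathcal W$ the term $-\alpha_L(B_t+D_t)$ dominates it. Passing from the upper to the lower boundary (and from the upper to the lower half) is the involution $(q,\kappa)\mapsto(q^{-1},\kappa^{-1})$ already noted for the $q$-Racah weight, i.e.\ reversal of the height coordinate; that the factor $1+q$ is left unchanged is a short separate check on the surviving binomials.

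The main obstacle is the center-line case, and with it the emergent period-two structure. On $x\sim\tfrac12(S+t)$ the factor $1-\gamma\delta\,q^{2x+O(1)}$ (in the $q$-Racah notation) has $\gamma\delta\,q^{2x}\to\kappa^2$, since $2x\sim S+t$, so this binomial stays bounded and must be retained in \emph{both} $B_t$ and $D_t$; this is what produces the denominators $(1-\kappa^2q^{2\Delta x})(1-\kappa^2q^{2\Delta x+2})$ and $(1-\kappa^2q^{2\Delta x\pm1})$ of \eqref{eq:limit_of_difference_operator_center_line_intro}. Pinning down the exact constants here — in particular the precise placement of the half-integer shift, which is the origin of the two lattices $\ell^2(\mathbb Z)$ and $\ell^2(\mathbb Z+\tfrac12)$ and of the $2\mathbb Z$- but not $\mathbb Z$-invariance of the limit — requires carrying the floor functions and the parity of $S+t$ through the computation and verifying that the $L$-dependence of the $q$-Racah parameters conspires to give exactly $\kappa^2$ with no stray power of $q$. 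The other delicate point, consistency of the square root across neighboring sites $\Delta x$, is precisely what the "symmetrize last" device dispatches; with it, the remaining work in each of the five regimes is the routine (if lengthy) bookkeeping of a fixed finite collection of binomial factors.
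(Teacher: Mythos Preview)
Your approach is essentially the same as the paper's: both proofs are a direct, case-by-case extraction of the leading asymptotics of the explicit product-form coefficients $B(\tilde x),D(\tilde x)$ and the weight ratios, with the seven macroscopic regimes corresponding to which binomial factors $1-c\,q^{\ell_L}$ remain bounded versus go to $1$ or diverge. The only organizational difference is that the paper symmetrizes \emph{before} the limit (it works directly with $\sqrt{w^{qR}(\tilde x-1)/w^{qR}(\tilde x)}\,B(\tilde x-1)$, squares to avoid branch issues, and expands into a long polynomial in eq.~(4.5)--(4.7)), whereas you propose to track $B_t$ and $D_t$ separately and take the geometric mean at the end; both strategies are valid and lead to identical bookkeeping. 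Your identification of the center-line case as the delicate one (the factors $1-\kappa^2 q^{2\Delta x+\bullet}$ staying finite, plus the floor/parity conventions fixing the half-integer shift) matches the paper's Remark~4.2 precisely.
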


If the scaled point $(\mathsf t,\mathsf x)$ is either
outside the waterfall region $\mathcal{W}$ or sits on the
center line, the
pre-limit difference operator
converges to a \emph{bounded, self-adjoint} limit
--- either $(-Id)$, or the operator given by 
\eqref{eq:limit_of_difference_operator_intro_general_form}--\eqref{eq:limit_of_difference_operator_center_line_intro}.
Then, by the spectral projection method based on the strong
resolvent convergence (developed in 
\cite{borodin2007asymptotics}, \cite{Olshansk2008-difference}; we recall the main statement 
in \Cref{thm:funkan}),
the correlation kernel $K_t$ converges to 
zero outside $\mathcal{W}$
and to the identity operator on the center line.
This result essentially parallels the 
spectral approach that proved successful in the traditional regime
$q=e^{-c/L}\to1$ in \cite{borodin-gr2009q}, where it
yielded the limiting kernel at \emph{all} macroscopic
points inside the hexagon.

However, for fixed $q$, on the boundary of $\mathcal{W}$ and inside of $\mathcal{W}$ but not on the center line, the coefficients of 
\eqref{eq:limit_of_difference_operator_intro_general_form} grow rapidly at positive or negative infinity, so the
limiting difference operator is 
no longer self-adjoint and in fact has von Neumann deficiency indices $(1,1)$.
The non-uniqueness of self-adjoint extensions of 
prevents us from singling out a canonical spectral projection.
The most intriguing case is the boundary of $\mathcal{W}$, where 
one can see a nontrivial random configuration with all three types of lozenges --- 
thus, the spectral projection there should yield a new correlation kernel that is not zero nor the identity operator.
We leave the full functional-analytic treatment of these non-self-adjoint operators for future work.

Although the spectral projection approach breaks down inside the waterfall region away from the center line,
we bridge this gap by performing direct
estimates for the $q$-Racah orthogonal polynomial ensemble
(see \Cref{sub:exponential_concentration_intro} below).
These estimates strengthen the kernel convergence along the
center line to an exponential concentration statement, which
is the main result of the present paper.

\begin{theorem}[Combination of \Cref{prop:limit_of_kernel}, 
\Cref{thm:no_holes_in_waterfall_region}, and
\Cref{cor:kernel_goes_to_identity}]
	\label{thm:waterfall_LLN_intro}
	Assume that the side lengths of the hexagon, $a,b,c$,
	scale proportionally with a parameter $L\to+\infty$, while
	the $q$-Racah parameters $\kappa$ and $q$ remain fixed.
	Then the fixed-slice correlation kernel
	converges either to zero or the identity operator,
	depending on the scaled location $(\mathsf{t},\mathsf{x})$:\ssp\footnote{Here
	and throughout the paper, $\mathbf{1}_A$ stands for the indicator of an event or condition $A$.}
	\begin{equation*}
		\lim_{L\to+\infty}
		K_{\lfloor L\ssp \mathsf{t} \rfloor}
		(
		\lfloor L\ssp \mathsf{x}\rfloor+\Delta x,
		\lfloor L\ssp \mathsf{x}\rfloor+\Delta y)
		=
		\begin{cases}
			\mathbf{1}_{\Delta x=\Delta y}, &\textnormal{if $(\mathsf{t},\mathsf{x})\in \mathcal{W}$};
			\\
			0,&\textnormal{otherwise,}
		\end{cases}
		\quad \textnormal{for all fixed
		$\Delta x,\Delta y\in \mathbb{Z}$}.
	\end{equation*}
	Moreover, the probability of finding a
	horizontal lozenge inside $\mathcal{W}$ or a square or vertical
	lozenge outside $\mathcal{W}$ decays exponentially fast in $L$.
\end{theorem}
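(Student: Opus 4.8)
The plan is to split the claim into (a) pointwise convergence of the fixed-slice kernel $K_t$ --- to $0$ outside $\mathcal W$ and to the identity inside $\mathcal W$ --- and (b) the exponential concentration estimate; part (a) I would prove by the spectral projection method wherever the limiting difference operator is self-adjoint, and part (b), which in turn forces (a) throughout $\mathcal W$, by a hands-on analysis of the $q$-Racah orthogonal polynomial ensemble on a single vertical slice.

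\emph{Spectral projection where it applies.} Recall that $K_t(x,y)=\sum_{n=0}^{N-1}f_n^t(x)f_n^t(y)$ is the orthogonal spectral projection of the explicit tridiagonal operator $\mathcal D_t$ of \eqref{eq:diff_operator_scaling_shifting} onto the part of its spectrum above the normalization cutoff, which we place at $0$. By \Cref{prop:limit_of_difference_operator_intro}, centering at a macroscopic point $(\mathsf t,\mathsf x)$ and passing to the limit coefficient-wise in $\Delta x$ produces $-Id$ when $(\mathsf t,\mathsf x)\notin\mathcal W$ and the operator \eqref{eq:limit_of_difference_operator_center_line_intro} when $(\mathsf t,\mathsf x)$ lies on the center line of $\mathcal W$; in the latter case the coefficients decay as $\Delta x\to\pm\infty$ because $\kappa^2<0$ keeps every factor $1-\kappa^2q^{m}$ bounded away from $0$, so the limit is a bounded self-adjoint Jacobi operator. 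Coefficient-wise convergence of uniformly bounded Jacobi operators gives strong resolvent convergence, and then \Cref{thm:funkan} yields strong convergence of the spectral projections onto $(0,\infty)$ provided $0$ is not an atom of the limiting spectral measure. For $-Id$ the positive subspace is trivial, so $K_t\to 0$; for the center-line limit one checks positivity and the absence of an atom at $0$ --- most transparently by exhibiting it as a conjugate, via the positive diagonal multiplier built from $(1-\kappa^2q^{m})^{\pm1/2}$, of a manifestly non-negative operator --- so its positive part is everything and $K_t\to Id$. This establishes \Cref{prop:limit_of_kernel} and the center-line case of \Cref{cor:kernel_goes_to_identity}. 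Off the center line inside $\mathcal W$, and on $\partial\mathcal W$, the limiting coefficients blow up at one end, the operator has von Neumann deficiency indices $(1,1)$, and this route stops.

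\emph{Direct estimates on the $q$-Racah ensemble.} On slice $t$ the particle positions form a $q$-Racah orthogonal polynomial ensemble, with probability proportional to $\prod_{i<j}\bigl(\mu(x_i)-\mu(x_j)\bigr)^2\prod_i W_t(x_i)$ for the positive $q$-Racah weight $W_t$ and the ($q$-quadratic) node function $\mu$; the limit shape identifies $\mathcal W$ on that slice with an interval of consecutive sites whose cardinality equals the particle number up to $O(1)$. The engine is a sharp, configuration-uniform two-sided bound on the ratio $\mathbb P(x')/\mathbb P(x)$ when one particle is moved by a single lattice step: because $q$ is fixed, the weight ratios $W_t(x_i\pm1)/W_t(x_i)$ are explicit products of factors $1-\alpha q^{x_i}$, bounded away from $1$ outside an $O(1)$ window, so a particle lying outside the interval feels a strictly positive drift toward it that the Vandermonde (Coulomb) term cannot cancel. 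Telescoping along a path of unit moves, and taking a union bound over the $\mathrm{poly}(L)$ possible defect locations and intermediate states, then shows that the probability of a configuration in which some particle sits $k$ steps from the filled interval is at most $e^{-ck}$; at $k\asymp L$ this is exponentially small. Equivalently, with probability $1-e^{-cL}$ no particle lies outside $\mathcal W$ and no site of $\mathcal W$ is vacant --- that is, horizontal lozenges inside $\mathcal W$ and square or vertical lozenges outside $\mathcal W$ are exponentially unlikely --- which is \Cref{thm:no_holes_in_waterfall_region} and the ``Moreover'' clause.

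\emph{Assembling, and the main obstacle.} The diagonal entry is the one-point function, $K_t(z,z)=\mathbb P\{(t,z)\in X\}$, so by the previous step $K_t(z,z)$ is within $e^{-cL}$ of $1$ when $(\mathsf t,\mathsf x)\in\mathcal W$ and within $e^{-cL}$ of $0$ otherwise. Since $K_t$ is a real symmetric orthogonal projection, $\sum_w K_t(z,w)^2=K_t(z,z)$, hence $\sum_{w\ne z}K_t(z,w)^2=K_t(z,z)\bigl(1-K_t(z,z)\bigr)\le e^{-cL}$ in both regimes; therefore $K_t(z+\Delta x,z+\Delta y)\to\mathbf 1_{\Delta x=\Delta y}$ on $\mathcal W$ and $\to 0$ off $\mathcal W$ for every fixed $\Delta x,\Delta y$, which is the displayed limit, and it upgrades \Cref{cor:kernel_goes_to_identity} to all of $\mathcal W$ rather than just the center line. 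The crux of the whole argument is the ratio bound in the second step: making it sharp and uniform over configurations, and turning the local drift into a genuinely exponential global bound that survives the union bound. This step is unavoidable precisely because it covers the regions --- off-center inside $\mathcal W$, and $\partial\mathcal W$ --- where the limiting operators are non-self-adjoint and the spectral machinery has nothing to offer; the remaining technical point, verifying positivity and the absence of a spectral atom at $0$ for the center-line limit operator, is softer by comparison.
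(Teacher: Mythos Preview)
Your high-level plan matches the paper's: (i) spectral projection gives $K_t\to 0$ outside $\mathcal W$ and $K_t\to Id$ on the center line, (ii) a direct OPE estimate gives exponential concentration, and (iii) the projection identity $\sum_w K_t(z,w)^2=K_t(z,z)$ upgrades the diagonal convergence to the full statement on $\mathcal W$. Your assembling step is exactly the paper's Corollary~5.9 argument.

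Two substantive differences in execution are worth flagging. First, for the center-line operator the paper does not argue positivity by conjugation; it \emph{explicitly diagonalizes} it (Proposition~4.3), exhibiting an orthogonal basis of eigenfunctions built from Stieltjes--Wigert polynomials with eigenvalues $q^{n+1}$, $n\ge 0$. This simultaneously shows the spectrum is positive and that $0$ is not an eigenvalue, and it also lets the paper shift the projection window to $(-(1-q)/2,\infty)$ so that the endpoint sits strictly between pre-limit eigenvalues. Your conjugation sketch is plausible but would need to separately rule out a kernel at $0$.

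Second, and more importantly, the paper's concentration argument is \emph{not} a single-step telescoping drift. It compares $\mathbb P[\vec z\cup\{x\}]$ to $\mathbb P[\vec z\cup\{y\}]$ for an arbitrary jump $x\to y$, extracts the leading $q$-power as $q^{\mathscr E(x,y)}$ with $\mathscr E$ split into a one-body term and a two-body interaction $\sum_i\mathscr G(x,y,z_i)$, and then proves a key minimization lemma: over all $(N-1)$-particle configurations $\vec z$, the interaction sum is minimized when the $z_i$ are \emph{densely packed} around the center of the slice. For that extremal configuration the exponent becomes a discrete integral of an explicit piecewise-linear function, which is shown to be $\gtrsim L$ (or $\gtrsim L^2$ when $x$ is macroscopically outside $\mathcal W$). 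Your unit-step telescoping would have to move a particle through sites possibly already occupied by other $z_i$, and the per-step Vandermonde ratio is not uniformly controlled without something equivalent to this minimization; the paper's direct-swap-plus-minimization avoids that obstruction entirely. This minimization over $\vec z$ is the concrete mechanism behind what you call ``making the ratio bound sharp and uniform over configurations,'' and it is the step you would need to supply.
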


\subsection{Inter-slice transitions and the conjectural barcode kernel}
\label{sub:barcode_kernel_intro}

To access inter-slice correlations and the barcode process, one 
has to look at the full two-dimensional kernel
$K(s,x;t,y)$. Thanks to the structure of the $q$-Racah orthogonal polynomials,
the coefficients $C_n^t$ in 
\eqref{eq:qRacah_two-dimensional_kernel_intro} 
can be incorporated into the action of two-diagonal 
inter-slice operators~$\mathfrak{U}_t$:
\begin{equation}
	\label{eq:inter_slice_prelimit_operator_intro}
	\frac{\sqrt{\lambda(t)}}{\sqrt{\lambda(t+1)}}\ssp C_n^t \ssp f_n^{t+1}(x)=\bigl(\mathfrak{U}_t f_n^{t}\bigr)(x),
	\qquad 
	\bigl(\mathfrak{U}_t \ssp g\bigr)(x)\coloneqq
	u_1^t(x-1)\ssp g(x-1)+u_0^t(x)\ssp g(x),
\end{equation}
where $\lambda(\cdot)$
is a nonvanishing explicit gauge factor 
\eqref{eq:lambda_t_definition}
that does not change the determinantal process,
and the 
coefficients $u_0^t(x)$ and $u_1^t(x)$ 
have explicit product-form expressions,
see
\Cref{sub:2d_correlations_transitions_new}
for details.

We represent the $q$-Racah correlation kernel
\eqref{eq:qRacah_two-dimensional_kernel_intro}
as 
\begin{equation}
	\label{eq:K_sx_ty_intro_as_operators}
		\frac{\sqrt{\lambda(s)}}{\sqrt{\lambda(t)}}\ssp
		K(s,x;t,y)=
		\begin{cases}
			\displaystyle
			\left(
				\mathfrak{U}_t^{-1}
				\mathfrak{U}_{t+1}^{-1}
				\ldots
				\mathfrak{U}_{s-1}^{-1}
			\right)_y
			K_s(x,y)
			,&\qquad  s> t;\\[4pt]
			K_t(x,y),&\qquad s=t;\\[2pt]
			\displaystyle
			\left(
				\mathfrak{U}_{t-1}
				\ldots
				\mathfrak{U}_{s+1}
					\ssp
				\mathfrak{U}_{s}
			\right)_y
			\left( -\mathbf{1}_{x=y}+K_s(x,y) \right)
			,&\qquad  s<t.
		\end{cases}
\end{equation}
where the operators $\mathfrak{U}_{\bullet}$ act in the variable $y$.

Fixed-$q$ limits
of the coefficients $u_0^t(x)$ and $u_1^t(x)$
of the inter-slice operators $\mathfrak{U}_t$ \eqref{eq:inter_slice_prelimit_operator_intro}
are immediately accessible.
Taking the limit in the operators $\mathfrak{U}_t$ 
(together with rescaling and shifting to the required macroscopic location), we arrive at
the operator 
\begin{equation*}
	\bigl(\mathfrak{U}^{\mathrm{barcode}}_t f\bigr)(x)
	\coloneqq
	\mathfrak{a}(x-\tfrac{t}{2})\ssp f(x-1)+\mathfrak{a}(x-\tfrac{t}{2}+\tfrac12)\ssp f(x),
\qquad \mathfrak{a}(x)\coloneqq
\sqrt{
\frac{-\kappa^2 q^{2x-1}}
{ (1-\kappa ^2 q^{2x})(1-\kappa ^2 q^{2x-1})}},
\end{equation*}
where 
$x$ belongs to $\mathbb{Z}$ or $\mathbb{Z}+\frac{1}{2}$.
Due to the rapid decay of the coefficients
$\mathfrak{a}(x)$
at $x\to\pm\infty$, the inverse of the operator $\mathfrak{U}^{\mathrm{barcode}}_t$ is
not well-defined. Therefore, directly taking the fixed-$q$ limit of the
kernel \eqref{eq:K_sx_ty_intro_as_operators}
is not possible.

However, the bounded 
operator $\mathfrak{U}^{\mathrm{barcode}}_t$ itself admits a family of functions
$\mathcal{F}_n^t(x)$, $n\in \frac{1}{2}\mathbb{Z}_{\ge0}$,
orthogonal in both spaces $\ell^2(\mathbb{Z})$ and $\ell^2(\mathbb{Z}+\frac{1}{2})$,
which satisfy an identity similar to \eqref{eq:inter_slice_prelimit_operator_intro}:
\begin{equation*}
	-q^n\ssp \mathcal{F}_n^{t+1}(x) = \bigl(\mathfrak{U}^{\mathrm{barcode}}_t \mathcal{F}_n^t\bigr)(x).
\end{equation*}
We develop properties of these functions in \Cref{appendix}.
In particular, they may
be expressed through the continuous $q^{-1}$-Hermite polynomials 
\cite[Chapter~3.26]{Koekoek1996}, \cite{IsmailMasson1994}.

Motivated by this structure, we first 
conjectured the following
series representation for the density function
of the barcode process:
\begin{equation}
	\label{eq:rho_barcode_intro_density}
	\rho^{\mathrm{barcode}}(t)=
	\mathfrak{a}(1-\tfrac t2)
	\sum_{n\in \frac12\mathbb{Z}_{\ge0}}
	\frac{-q^{-n}
	\mathcal{F}_n^t(0)\mathcal{F}_n^{t-1}(0)
	}{\|\mathcal{F}_n\|^2_{\ell^2(\mathbb{Z})}}.
\end{equation}
However, this series \textbf{does not converge}. Numerical
experiments indicate that its divergence is rather
\emph{mild}: the tail behaves as $\,\ldots
+c-c+c-c+\ldots\,$. Consequently, the series in
\eqref{eq:rho_barcode_intro_density} can be regarded as
having ``\textbf{two distinct sums},'' depending on how one
pairs adjacent terms. Further numerical calculations and
simulation studies confirm that the barcode process is
\textbf{two-periodic}, with the even- and odd-site densities arising
from these two alternative summations of
\eqref{eq:rho_barcode_intro_density}.
This periodicity is unexpected but can be explained a posteriori,
see \Cref{rmk:barcode_periodicity_intro} below.

Turning to the full correlation kernel, one would get even more divergent series which need
to be regularized.
Based on further numerics,
we
conjecture the existence of a universal
limiting \emph{barcode kernel} governing the local statistics 
inside the waterfall region.
This conjectural kernel is given for $s\ge t$ by 
\begin{equation}
	\label{eq:K_barcode_intro}
	\begin{split}
		\mathcal{K}^{\mathrm{barcode}}(s,t)\coloneqq
		\lim_{M\to\infty}
		q^{(M+1)(s-t)}&\times
		(-1)^{t-s}
		\sqrt{\frac{-\kappa^2q^{1-s}}{(1-\kappa^2 q^{2-t})(1-\kappa^2 q^{1-s})}}
		\\&\hspace{60pt}\times
		\sum_{n=0}^{M+1/2}
		\frac{\bigl(-q^{n}\bigr)^{t-1-s}\mathcal{F}_n^{s}(0)\mathcal{F}_n^{t-1}(0)}
		{\|\mathcal{F}_n\|^2_{\ell^2(\mathbb{Z})}},
	\end{split}
\end{equation}
where $M\in \mathbb{Z}$, and the sum is over $n\in \frac12\mathbb{Z}_{\ge0}$.
For $s<t$, the kernel is extended by symmetry. 
\begin{conjecture}[Combination of \Cref{conj:barcode_limit_via_F,conj:conjectural_limit_of_full_barcode_kernel,conj:barcode_density_limit}]
	\label{conj:barcode_kernel_intro}
	Around any macroscopic point $(\mathsf{t},\mathsf{x})$ in the waterfall region $\mathcal{W}$, the
	random configuration of square lozenges
	on a horizontal slice
	converges to a
	determinantal \textbf{barcode process} on $\mathbb{Z}$ with the symmetric correlation kernel
	$\mathcal{K}^{\mathrm{barcode}}(s,t)$.\footnote{In particular, 
	we conjecture that the limit \eqref{eq:K_barcode_intro} exists for all integers $s\ge t$.}
	The barcode process depends only on the parameters $q$ and $\kappa$,
	but not on the geometry of the hexagon or the macroscopic location $(\mathsf{t},\mathsf{x})$
	inside $\mathcal{W}$, as long as the waterfall phase is present, i.e., $\mathsf{N}<\mathsf{T}$.
	
	The limiting kernel is $2\times 2$ block Toeplitz, that is,
	\begin{equation*}
		\begin{pmatrix}
			\mathcal{K}^{\mathrm{barcode}}(s,t)& \mathcal{K}^{\mathrm{barcode}}(s,t+1)\\
			\mathcal{K}^{\mathrm{barcode}}(s+1,t) & \mathcal{K}^{\mathrm{barcode}}(s+1,t+1)
		\end{pmatrix}
		=
			\begin{pmatrix}
				\mathcal{K}^{\mathrm{barcode}}_{00}(s-t) & \mathcal{K}^{\mathrm{barcode}}_{01}(s-t)\\
				\mathcal{K}^{\mathrm{barcode}}_{10}(s-t) & \mathcal{K}^{\mathrm{barcode}}_{11}(s-t)
			\end{pmatrix},
			\qquad s,t\in 2\mathbb{Z},
	\end{equation*}
	and satisfies
	$\mathcal{K}^{\mathrm{barcode}}(0,0)=1-\mathcal{K}^{\mathrm{barcode}}(1,1)$ 
	(the plot of this function
	in $q$ and $\kappa$ is given in \Cref{fig:density_rho_even_3d}).
	Consequently, the barcode process has global density $1/2$
	for every choice of $q$ and $\kappa$, and it is invariant
	under even but not odd shifts.
\end{conjecture}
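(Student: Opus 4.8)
The plan is to deduce \Cref{conj:barcode_kernel_intro} from convergence of the \emph{full} two-dimensional $q$-Racah kernel \eqref{eq:qRacah_two-dimensional_kernel_intro}, restricted to the center height of the waterfall region $\mathcal{W}$ and read along consecutive vertical slices (which play the role of the lattice $\mathbb{Z}$ carrying the barcode process), and then to invoke the standard correspondence between convergence of correlation kernels and weak convergence of determinantal point processes (as in the general theory recalled in \cite{Kenyon2007Lecture}). The technical heart, which I will call Step~1, is a refined, \emph{co-degree-resolved} asymptotic analysis of the gauged $q$-Racah orthonormal functions $f_n^{t}$ near a macroscopic point $(\mathsf t,\mathsf x)\in\mathcal{W}$, going well beyond the diagonal estimates behind \Cref{thm:waterfall_LLN_intro}. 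Writing $n=N-1-m$ for the co-degree and keeping track of the parity of the slice --- this is what forces the effective index $m$ to live in $\tfrac12\mathbb{Z}_{\ge0}$ --- I would prove that, after rescaling and recentering at $(\mathsf t,\mathsf x)$, the functions $f_{N-1-m}^{t}$ converge to the functions $\mathcal{F}_m^{t}$ of \Cref{appendix}, with exponential decay in both $|\Delta x|$ and $m$, uniformly in $L$. The natural route: first verify that the three-term recurrence satisfied by the $q$-Racah polynomials converges, coefficient-wise in the fixed-$q$ scaling, to the recurrence characterizing $\mathcal{F}_m^{t}$ (equivalently, the continuous $q^{-1}$-Hermite recurrence of \cite[Chapter~3.26]{Koekoek1996}, \cite{IsmailMasson1994}); then use the sharp a priori bounds on ratios of $q$-Racah ensemble probabilities already developed for \Cref{thm:waterfall_LLN_intro} to control the recurrence solutions and upgrade coefficient-wise convergence to genuine convergence of the normalized eigenfunctions.

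With Step~1 in hand, the constants $C_n^{t}$ of \eqref{eq:qRacah_two-dimensional_kernel_intro} are dealt with via \eqref{eq:inter_slice_prelimit_operator_intro}: the inter-slice operators $\mathfrak{U}_t$ converge coefficient-wise to $\mathfrak{U}^{\mathrm{barcode}}_t$, and $C_n^{t}\to -q^{m}$. Since the inverses $\mathfrak{U}_t^{-1}$ do not survive the fixed-$q$ limit, one cannot pass to the limit in \eqref{eq:K_sx_ty_intro_as_operators} directly; instead I would work with the explicit bilinear sum \eqref{eq:qRacah_two-dimensional_kernel_intro}. This is Step~2, the analytic obstacle. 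For each finite $L$ the sum $\sum_{n=0}^{N-1}\bigl(C_n^{t}\cdots C_n^{s-1}\bigr)^{-1}f_n^{s}(x)\ssp f_n^{t}(y)$ is a finite sum, but its term-by-term limit is only conditionally convergent --- of the oscillating form $c-c+c-\cdots$ already observed for \eqref{eq:rho_barcode_intro_density}. The key lemma to establish is that the pre-limit sum nonetheless has a limit as $L\to\infty$, and that this limit equals the regularized right-hand side of \eqref{eq:K_barcode_intro}: concretely, that the co-degree truncation at $m=N-1$ may be replaced by truncation at a level $M=M(L)\to\infty$ chosen slowly enough, the compensating power $q^{(M+1)(s-t)}$ arising because $\bigl(C_n^{t}\cdots C_n^{s-1}\bigr)^{-1}\to(-q^{m})^{-(s-t)}$ grows like $q^{-m(s-t)}$ at indices near $N-1$ while $f_{N-1-m}^{s}$ at the center is exponentially small in $L$ there, the two effects balancing; the uniform exponential bounds from Step~1 are what make the truncation error controllable. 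Establishing even the existence of the limit in \eqref{eq:K_barcode_intro} is part of this step.

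Once kernel convergence is secured along and between slices, the remaining assertions are comparatively soft. Determinantality of the limit, and the fact that it depends only on $q$ and $\kappa$, are immediate because the limiting data --- the operators $\mathfrak{U}^{\mathrm{barcode}}_t$ and the constants $-q^{m}$ --- carry no trace of the hexagon geometry or of $(\mathsf t,\mathsf x)$, so the same kernel is obtained at every macroscopic point of $\mathcal{W}$ provided $\mathsf N<\mathsf T$. The invariance under $2\mathbb{Z}$ but not $\mathbb{Z}$, hence the $2\times2$ block Toeplitz form, follows from the equivariance of the family $\{\mathcal{F}_m^{t}\}$ under slice shifts: the identity $\mathfrak a(x-\tfrac{t+2}{2})=\mathfrak a\bigl((x-1)-\tfrac t2\bigr)$ shows that $\mathfrak{U}^{\mathrm{barcode}}_{t+2}$ is conjugate to $\mathfrak{U}^{\mathrm{barcode}}_t$ by unit translation, so a shift of the slice by $2$ acts on $\mathcal{F}_m^{t}$ as a unit spatial shift, whereas under $t\mapsto t+1$ the two coefficients $\mathfrak a(x-\tfrac t2)$ and $\mathfrak a(x-\tfrac t2+\tfrac12)$ are only interchanged --- precisely the origin of the genuine period two, matched against the parity-dependent prefactors in \eqref{eq:K_barcode_intro} (the a posteriori explanation is in \Cref{rmk:barcode_periodicity_intro}). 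Finally, $\mathcal{K}^{\mathrm{barcode}}(0,0)=1-\mathcal{K}^{\mathrm{barcode}}(1,1)$, and with it global density $1/2$, I would derive from the resolution of the identity for $\{\mathcal{F}_m^{t}\}_{m\in\frac12\mathbb{Z}_{\ge0}}$ in $\ell^2(\mathbb{Z})$ established in \Cref{appendix}: the signs $(-1)^{t-s}$ in \eqref{eq:K_barcode_intro} split that resolution of the identity, evaluated at the center, into the even-site and odd-site density contributions, whose sum is therefore $1$.

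The crux I expect to be Step~1 together with the resummation lemma of Step~2: this is exactly the regime in which, as discussed above, the pre-limit difference operators cease to be self-adjoint away from the center line, so the clean spectral-projection argument used in \cite{borodin-gr2009q} for $q=e^{-c/L}\to1$ is unavailable, and one is forced to control the conditionally convergent bilinear expansions by hand through sharp, uniform estimates on ratios of $q$-Racah ensemble probabilities.
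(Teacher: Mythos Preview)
The statement you are addressing is a \emph{conjecture} in the paper, not a theorem: the authors explicitly write that ``a rigorous derivation of $\mathcal K^{\mathrm{barcode}}$ and the proof of \Cref{conj:barcode_kernel_intro} will likely require new tools'' and ``leave these analytic developments to future work.'' The paper's support consists of (i) a nonrigorous heuristic derivation in \Cref{sub:s7_density_limit,sub:s7_general_limit}, in which the limit is passed separately through the fixed-slice kernel and the inter-slice operators --- the authors flag that ``we did not rigorously define $(\mathfrak{U}^{\mathrm{barcode}}_t)^{-1}$'' so the key step ``does not formally make sense''; (ii) high-precision numerics comparing the pre-limit kernel with the regularized series \eqref{eq:K_barcode_intro}; and (iii) density and correlation statistics harvested from perfect samples. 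There is no proof to compare your proposal against.

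Your outline does identify the right skeleton and the right obstacle. Step~1 --- convergence of $f_{N-1-m}^{t}$ to $\mathcal{F}_m^{t}$ via recurrence matching plus the probability-ratio bounds of \Cref{sec:asymptotic_clustering_of_nonintersecting_paths} --- is a reasonable program and is implicitly the target of the paper's heuristics, though the paper never attempts it and the uniform-in-$m$ control you need is not obviously furnished by those bounds. Step~2, however, is precisely the gap the paper cannot close. You propose to tame the bilinear sum by truncating at a slowly growing $M(L)$ and balancing the growth $(-q^{m})^{-(s-t)}$ against smallness of $f_{N-1-m}^{s}$ at the center; but the paper notes that for $s\ge t+1$ the series diverges ``worse'' than the alternating $c-c+c-\cdots$ pattern, with genuine exponential growth in $m$, and the regularizing factor $q^{(M+1)(s-t)}$ in \eqref{eq:K_barcode_intro} was \emph{discovered numerically}, not derived. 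Your appeal to ``uniform exponential bounds from Step~1'' does not supply a mechanism for why the finite pre-limit sum and the regularized limit should coincide --- this is the resummation miracle the paper observes only experimentally. Absent a concrete device (a Christoffel--Darboux-type telescoping in the co-degree, an integral representation surviving the limit, or an honest analysis of the non-self-adjoint limiting operators of \Cref{prop:limit_of_difference_operator_intro}), your Step~2 is a restatement of the conjecture rather than a route to its proof.
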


\begin{remark}
	\label{rmk:barcode_periodicity_intro}
	The period-two behaviour of the barcode process was not
	anticipated a priori, since the $q$-Racah weights
	$\mathsf{w}_{q,\kappa}$ \eqref{eq:q_racah_weight_intro}
	assigned to horizontal lozenges do not possess this
	periodicity.
	However, this property may be explained as follows.
	In every vertical slice the number of horizontal lozenges is
	determined by the geometry of the hexagon, and moving from
	slice $t$ to slice $t+1$ changes this number by exactly one.
	This extra lozenge is created either above or below the
	waterfall region, which introduces an imbalance.
	The imbalance is rectified after two slices, i.e., after an
	even translation, which accounts for the invariance of the
	barcode process under even shifts but not under all integer
	shifts.
\end{remark}

\Cref{sub:s7_numerics,sub:s7_probabilistic_simulations}
corroborate \Cref{conj:barcode_kernel_intro} in two ways. First, 
we find a very strong numerical agreement
between the conjectural barcode kernel $\mathcal{K}^{\mathrm{barcode}}$ 
and the 
pre-limit correlation kernel $K(s,x;t,y)$
for a large hexagon, regardless of its geometry and the macroscopic location $(\mathsf{t},\mathsf{x})$ inside the waterfall region $\mathcal{W}$
around which the limit is taken.
We also observe that the sequence \eqref{eq:K_barcode_intro}
converges with the geometric rate $q^M$, and, moreover, the (suitably rescaled) pre-limit 
kernels $K(s,x;t,y)$ also converge to a limit with the geometric rate $q^{L}$.
Second, we ported the perfect sampling algorithm from
\cite{borodin-gr2009q} to \texttt{Python} to automatically collect large samples of the barcode process.
We compare the sample even- and odd-site densities, as well as two-point correlations, with the predictions 
based on $\mathcal{K}^{\mathrm{barcode}}$, and find a very good agreement.

A rigorous derivation of $\mathcal K^{\mathrm{barcode}}$ and the proof of \Cref{conj:barcode_kernel_intro}
will likely require new tools for 
dealing with the inverse of the operator
$\mathfrak{U}^{\mathrm{barcode}}_t$.
We leave these analytic developments to future work.

\subsection{Exponential concentration}
\label{sub:exponential_concentration_intro}

Complementing the spectral projection approach,
in \Cref{sec:asymptotic_clustering_of_nonintersecting_paths}
we also develop a
concentration result on a given vertical slice
by working directly with the
$q$-Racah orthogonal polynomial ensemble:
\begin{equation}
	\label{eq:qRacah_OPE_intro}
	\frac{1}{Z}\prod_{1\le i<j\le N}\bigl(
		\mu(x_i)-\mu(x_j)
	\bigr)^2
	\prod_{i=1}^N
	w(x_i),
	\qquad 
	\mu(x)\coloneqq q^{-x}+\kappa^2 q^{x+1-S-t},
\end{equation}
where the ``particles'' $x_1,\ldots,x_N $ are the positions of the
non-horizontal lozenges,
and $w(x)$ is the $q$-Racah weight function (under which the 
$q$-Racah polynomials are orthogonal).

The key idea
is to compare the probabilities of two $N$-particle
configurations that differ only by moving a single 
particle from
$(t,x)$ to $(t,y)$.
The ratio of these probabilities 
exposes a dominant
factor $q^{\mathscr E(x,y)}$, where the exponent
decomposes into a one-body term $\mathscr W(u)$
(integrated between $x$ and $y$) and 
an interaction term $\mathscr G(x,y,z_i)$, where $z_i$ are the positions of the other particles
that are not moved.
We 
show that the interaction is minimized when the other
$N-1$ particles form a densely packed block around the center line.

After this minimisation, the exponent $\mathscr E(x,y)$
becomes a discrete integral of a piecewise linear
function which 
has order $1/L$
inside the waterfall region. This allows us to
show that moving a single particle closer to the center line
is advantageous of order $q^{-cL}$ close to the boundary of the
waterfall region, and of order $q^{-c'L^2}$ when move is across the waterfall boundary.
These estimates translate into the exponential concentration of 
square and vertical lozenges inside the waterfall region, and
of the horizontal lozenges outside the waterfall region.
Together with the
spectral projection results, this establishes
\Cref{thm:waterfall_LLN_intro} for the entire hexagon.

\subsection{Outline of the paper}
\label{sub:outline}

In 
\Cref{sec:q_racah_tilings} 
we collect the necessary definitions related to the $q$-Racah random lozenge tilings,
their connection with $q$-Racah orthogonal polynomials, and the
spectral projection interpretation of the fixed-slice correlation kernel.
We mostly follow the notation of \cite{borodin-gr2009q}.
In
\Cref{sec:scaling_regime} we introduce the fixed-$q$ scaling regime,
and discuss how it differs from the traditional scaling $q\to1$.
We also outline the heuristics for the waterfall phenomenon
which are made rigorous in the subsequent \Cref{sec:behavior_on_slice_via_spectral_projections,sec:asymptotic_clustering_of_nonintersecting_paths}.
Namely, in \Cref{sec:behavior_on_slice_via_spectral_projections} we employ the spectral projection method to
get the limit of the correlation kernel at the center line and outside the waterfall region,
and in \Cref{sec:asymptotic_clustering_of_nonintersecting_paths} we
complement the analysis by direct estimates in the $q$-Racah orthogonal polynomial ensemble,
leading to the exponential concentration result.
These sections complete the proof of our main result, \Cref{thm:waterfall_LLN_intro}.

In \Cref{sec:2d_correlations} we review the two-dimensional
correlation structure of $q$-Racah random lozenge tilings
and set the stage for a subsequent nonrigorous analysis of
their asymptotic behavior that culminates in the conjectural
barcode kernel.  The presentation in this section is
completely rigorous; in particular, we derive asymptotic
formulas for the inter-slice transition kernel that yield
the limiting operator $\mathfrak{U}^{\mathrm{barcode}}_{t}$.
\Cref{sec:2d_correlations_asymptotics_numerics_conjectures} then combines a heuristic derivation of the barcode kernel with numerical and probabilistic evidence.  We first carry out an informal asymptotic analysis explaining the eventual form of the kernel, and afterwards corroborate the predictions by means of exact-formula numerics and perfect sampling data.

\Cref{appendix} collects several properties of the functions $\mathcal{F}_n^t(x)$ 
appearing in the barcode kernel, which may be of independent interest.
\Cref{sec:Mathematica_code_appendix} contains the \texttt{Mathematica} code used for the numerical analysis of the 
exact formulas performed in 
\Cref{sub:s7_numerics}.

\subsection{Acknowledgements}
\label{sub:acknowledgements}

We are grateful to Vadim Gorin for valuable remarks and for sharing with us the original \texttt{Fortran} source code implementing the shuffling algorithm for $q$-Racah random lozenge tilings described in \cite[Section~6]{borodin-gr2009q}.
AK was partially supported by the NSF grant DMS-2348756.
LP was partially supported by the NSF grant DMS-2153869 and the Simons Collaboration Grant for Mathematicians 709055.
Part of this research was performed in Spring 2024 while
LP was
visiting the
program
``Geometry, Statistical Mechanics, and Integrability''
at the Institute for Pure and Applied Mathematics
(IPAM), which is supported by the NSF grant DMS-1925919.

\section{Preliminaries on $q$-Racah random tilings}
\label{sec:q_racah_tilings}

In this section, we recall the definition and properties of
the (imaginary) $q$-Racah probability measure on the set of lozenge tilings of a hexagon.
This measure was
introduced and studied in \cite{borodin-gr2009q}, and
we mostly follow the notation of that paper.

\subsection{Random lozenge tilings and nonintersecting paths}
\label{sub:tilings_and_paths}

Consider a hexagon drawn on the triangular lattice with
integer sides $a,b,c,a,b,c$. We are interested in tilings of this
hexagon by three types of lozenges. We will always work with
an affine transformed lattice, in which one of the lozenges
looks like a unit square. The hexagon with $a=b=c=3$ and an
example of its lozenge tiling are given in
\Cref{fig:tiling_and_path_picture}, left.

\begin{figure}[htpb]
	\centering
	\includegraphics[width=\textwidth]{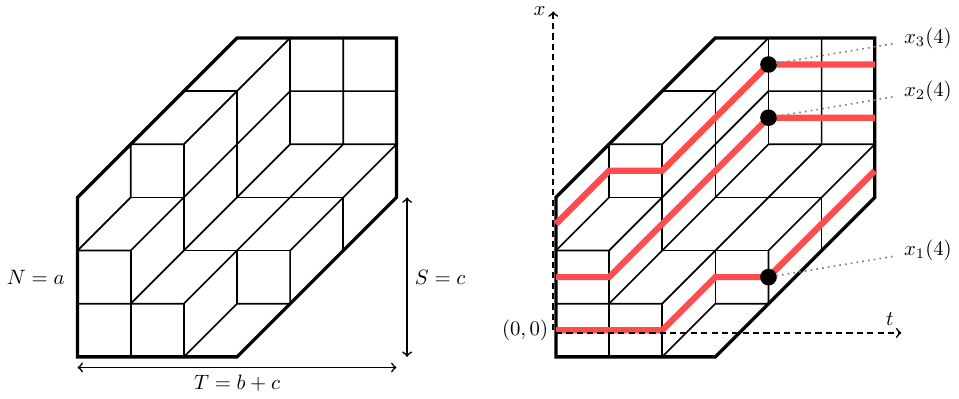}
	\caption{Left: An example of a lozenge tiling of a hexagon with sides $a=b=c=3$.
	Right: The corresponding ensemble of $N=a$ nonintersecting paths.}
	\label{fig:tiling_and_path_picture}
\end{figure}

We set $T\coloneqq b+c$, $S\coloneqq c$, and $N\coloneqq a$, and use the parameters $(T,S,N)$
throughout the paper.
The lozenge
tilings of our hexagon are in bijective correspondence with
ensembles of $N$ nonintersecting lattice paths that satisfy
the following conditions:
\begin{enumerate}[$\bullet$]
	\item We view the horizontal axis as the ``time'' $t\in \{0,1,\ldots,T\}$.
	The paths start at locations $(0,0),(0,1),\ldots,(0,N-1)$ at time $t=0$.
	\item The paths end at locations $(T,S),(T,S+1),\ldots,(T,S+N-1)$ at time $t=T$.
	\item The paths make two types of steps:
	\begin{enumerate}[$\circ$]
		\item horizontal, $(t,x)\to (t+1,x)$,
		\item and diagonal, $(t,x)\to (t+1,x+1)$.
	\end{enumerate}
	\item The paths do not intersect each other.
\end{enumerate}
See \Cref{fig:tiling_and_path_picture}, right, for an example the path ensemble corresponding to a given lozenge tiling.

We denote by
\begin{equation}
	\label{eq:nonintersecting_path_ensemble_X_t_defn}
	X(t)\coloneqq ( x_1(t)<x_2(t)<\ldots <x_N(t) )
\end{equation}
the locations of the paths at time $t$. For example, in
\Cref{fig:tiling_and_path_picture}, right, we have
$X(4)=(1,4,5)$. These locations must satisfy
\begin{equation}
	\label{eq:hexagon_constraints}
	\max(0, t+S-T)\le x_i(t)\le \min(t+N-1, S+N-1)
	,\qquad i=1,\ldots,N,\quad t=0,\ldots,T.
\end{equation}

\subsection{The $q$-Racah random lozenge tilings}

\begin{definition}[$q$-Racah measure on nonintersecting path ensembles]
	\label{def:qRacah_tilings}
	We put a probability measure on the set of all
	nonintersecting path ensembles. The measure depends on the
	geometry of the hexagon (parametrized by $(T,S,N)$), and
	on two parameters $q>0$ and $\kappa\in
	\mathbf{i}\mathbb{R}$.
	The
	probability weight of a nonintersecting path ensemble is
	proportional to
	\begin{equation}
		\label{eq:q_kappa_weight_of_ensemble}
		\prod_{\textnormal{holes in the path ensemble}} \mathsf{w}_{q,\kappa}\bigl( x-\tfrac t2+1\bigr),
		\qquad
		\mathsf{w}_{q,\kappa}(j) \coloneqq \kappa q^{j-(S+1)/2}-\frac{1}{\kappa q^{j-(S+1)/2}}.
	\end{equation}
	Here the holes are points $(t,x)$ such that $x\notin X(t)$,
	but $x$ satisfies the inequalities
	\eqref{eq:hexagon_constraints}. In other words,
	holes are lozenges of the type
	\begin{tikzpicture}[baseline = (current bounding box.south),scale=.25]
		\draw[thick] (0,0)--++(1,1)--++(1,0)--++(-1,-1)--cycle;
	\end{tikzpicture}.
	For example, in \Cref{fig:tiling_and_path_picture}, right, the holes are $(1,2),(2,1),(2,4),(3,0),(3,2),(3,5),(4,2),(4,3),(5,3)$.
	The probability weights are normalized so that the total probability of all path ensembles is $1$.
	We call the resulting probability measure the \emph{$q$-Racah measure} on nonintersecting path ensembles
	(equivalently, on lozenge tilings of the hexagon).
\end{definition}

\begin{remark}
	\label{rmk:q_assumptions_same_generality}
	The number of holes in the path ensemble is a conserved quantity: it is equal to $S(T-S)$, which depends on the geometry of the hexagon but not on the path ensemble. This implies that
	the $q$-Racah measure is invariant under the transformation
	$(q,\kappa)\mapsto (1/q,1/\kappa)$. Throughout the
	paper, we will assume that $q\in (0,1)$, as this
	does not restrict the generality.
\end{remark}

\begin{proposition}
	\label{prop:q_kappa_weight_of_ensemble_x_as_a_product_over_xi}
	In terms of the coordinates $x_i(t)$, the probability weight of a path ensemble is equal to
	\begin{equation}
		\label{eq:q_kappa_weight_of_ensemble_x}
		\prod_{t=0}^T
		\mathrm{const}(t,T,S,N)
		\prod_{i=1}^N
		\frac{q^{x_i(t)}}{1-\kappa^2 q^{2x_i(t)-S-t+1}}.
	\end{equation}
\end{proposition}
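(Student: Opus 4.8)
The plan is to prove the identity one time-slice at a time. Fix $t\in\{0,1,\ldots,T\}$ and write $A_t\coloneqq\max(0,t+S-T)$ and $B_t\coloneqq\min(t+N-1,S+N-1)$ for the two endpoints of the range of positions allowed by \eqref{eq:hexagon_constraints}. By \Cref{def:qRacah_tilings} the holes on slice $t$ are precisely the elements of $\{A_t,A_t+1,\ldots,B_t\}$ that are not among $x_1(t),\ldots,x_N(t)$; since the particles themselves lie in $\{A_t,\ldots,B_t\}$, the set of holes on slice $t$ is the complement of the particle set inside this interval, and there are exactly $B_t-A_t+1-N\ge 0$ of them. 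Hence the slice-$t$ contribution to \eqref{eq:q_kappa_weight_of_ensemble} rearranges as
\[
	\prod_{\text{holes }(t,x)}\mathsf{w}_{q,\kappa}\!\bigl(x-\tfrac t2+1\bigr)
	=\frac{\displaystyle\prod_{x=A_t}^{B_t}\mathsf{w}_{q,\kappa}\!\bigl(x-\tfrac t2+1\bigr)}{\displaystyle\prod_{i=1}^{N}\mathsf{w}_{q,\kappa}\!\bigl(x_i(t)-\tfrac t2+1\bigr)}.
\]
This is legitimate because $\mathsf{w}_{q,\kappa}$ never vanishes: with $\kappa\in\mathbf{i}\mathbb R$ one has $\kappa^2<0$, so for $q\in(0,1)$ the value $\mathsf{w}_{q,\kappa}(j)=\kappa q^{j-(S+1)/2}-\kappa^{-1}q^{-(j-(S+1)/2)}$ is a nonzero purely imaginary number for every $j$. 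Note that the numerator depends only on $(t,T,S,N)$, since $A_t,B_t$ do.

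Next I would invert a single weight factor explicitly. Setting $j=x-\tfrac t2+1$ gives $j-\tfrac{S+1}{2}=x+\tfrac{1-S-t}{2}$, so clearing the denominator,
\[
	\mathsf{w}_{q,\kappa}\!\bigl(x-\tfrac t2+1\bigr)
	=\kappa\,q^{x+(1-S-t)/2}-\kappa^{-1}q^{-x-(1-S-t)/2}
	=\frac{\kappa^{2}q^{2x+1-S-t}-1}{\kappa\,q^{x+(1-S-t)/2}},
\]
and therefore
\[
	\frac{1}{\mathsf{w}_{q,\kappa}\!\bigl(x-\tfrac t2+1\bigr)}
	=-\kappa\,q^{(1-S-t)/2}\cdot\frac{q^{x}}{1-\kappa^{2}q^{2x+1-S-t}}.
\]
The prefactor $-\kappa\,q^{(1-S-t)/2}$ is independent of the configuration. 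Collecting the $N$ such prefactors (one per particle) and combining with the configuration-independent numerator from the first display, the slice-$t$ hole weight becomes
\[
	\Bigl(\prod_{x=A_t}^{B_t}\mathsf{w}_{q,\kappa}\!\bigl(x-\tfrac t2+1\bigr)\Bigr)\bigl(-\kappa\,q^{(1-S-t)/2}\bigr)^{N}\;\prod_{i=1}^{N}\frac{q^{x_i(t)}}{1-\kappa^{2}q^{2x_i(t)+1-S-t}}.
\]
Defining $\mathrm{const}(t,T,S,N)$ to be the product of the first two factors --- which manifestly depends only on $t,T,S,N$ and not on the path ensemble --- and then multiplying over $t=0,1,\ldots,T$ recovers \eqref{eq:q_kappa_weight_of_ensemble_x}, because $2x_i(t)+1-S-t=2x_i(t)-S-t+1$ and the product over all slices of the left-hand sides above is exactly the full hole weight \eqref{eq:q_kappa_weight_of_ensemble}.

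There is no real obstacle here: the argument is a reindexing of ``product over holes $=$ (product over all admissible positions)$/$(product over particles)'' followed by the elementary identity for $1/\mathsf{w}_{q,\kappa}$. The only points deserving care are confirming that $\mathsf{w}_{q,\kappa}$ does not vanish on the relevant arguments (so that the division step is valid), and handling the shift $j=x-\tfrac t2+1$ of \Cref{def:qRacah_tilings} together with the half-integer exponent $(1-S-t)/2$ carefully, so that the exponent in the denominator emerges as $2x_i(t)-S-t+1$ exactly as stated.
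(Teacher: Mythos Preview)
Your proof is correct and follows essentially the same approach as the paper: rewrite the product over holes as a configuration-independent full product divided by the product over particles, then simplify $1/\mathsf{w}_{q,\kappa}(x-\tfrac t2+1)$ into $\mathrm{const}\cdot q^{x}/(1-\kappa^{2}q^{2x-S-t+1})$. You are somewhat more explicit than the paper (working slice-by-slice and noting the nonvanishing of $\mathsf{w}_{q,\kappa}$), but the argument is identical in substance.
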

\begin{proof}
	This is essentially \cite[Proposition~3.3]{borodin-gr2009q}. For the reader's convenience, we provide a brief proof here.
	To get \eqref{eq:q_kappa_weight_of_ensemble_x} from
	\eqref{eq:q_kappa_weight_of_ensemble}, replace the product
	over the holes by the reciprocal of the product over the
	points in the path ensemble (this replacement contributes a constant
	factor independent of the path ensemble). Then, for
	$j=x-\tfrac t2+1$, we have
	\begin{equation*}
		\left(\kappa q^{j-(S+1)/2}-\frac{1}{\kappa q^{j-(S+1)/2}}\right)^{-1}=
		\frac{\kappa q^{\frac{1}{2} (S+t+2x+1)}}{\kappa ^2 q^{2 x+1}-q^{S+t}}
		=\mathrm{const}_0(t,T,S,N)\cdot \frac{q^{x}}{1-\kappa^2 q^{2x-S-t+1}},
	\end{equation*}
	which completes the proof.
\end{proof}

From \Cref{prop:q_kappa_weight_of_ensemble_x_as_a_product_over_xi}, one can readily notice the following degenerations of the
$q$-Racah measure on nonintersecting path ensembles:
\begin{enumerate}[$\bullet$]
	\item As $\kappa\to0$, the probability weight of a path
		ensemble becomes proportional to $q^{\mathsf{vol}}$, where
		$\mathsf{vol}$ is the volume under the three-dimensional
		surface corresponding to the lozenge tiling.
	\item As $\kappa\to 0$ and further $q\to 1$, the probability
		measure becomes uniform.
\end{enumerate}
Throughout the paper, we assume that $\kappa\ne0$. Otherwise, the measure $q^{\mathsf{vol}}$
(studied in much more detail in the literature)
does not display a dimensional collapse phenomenon.
For definiteness in some formulas involving $\kappa$ to the first power,
we also assume that $\kappa\in \mathbf{i}\mathbb{R}_{>0}$.

\subsection{$q$-Racah orthogonal polynomial ensemble}
\label{sub:q_racah_orthogonal_polynomials}

Under the $q$-Racah measure on nonintersecting path
ensembles, the tuples $\{X(t)\}_{0\le t\le T}$ form a Markov
chain, where $t$ plays the role of time. For each fixed $t$,
the distribution of $X(t)$ is connected to the \emph{$q$-Racah
orthogonal polynomial ensemble}. We refer to \cite{Konig2005}
for a general survey of orthogonal polynomial ensembles.

In this subsection, we recall the $q$-Racah orthogonal polynomials and define
the corresponding orthogonal polynomial ensemble.
In the following \Cref{sub:qRacah_to_tilings}, we will
connect the orthogonal polynomial ensemble to the distribution of $X(t)$ coming from the nonintersecting
paths.
For the notation
around the $q$-Racah polynomials, we follow
\cite[Chapter~3.2]{Koekoek1996}. The connection of random
nonintersecting paths of \Cref{def:qRacah_tilings} to
$q$-Racah polynomials follows
\cite[Theorem~4.1]{borodin-gr2009q} (see also
\cite[Theorem~7.3.5]{dimitrov2019log} for a compact account
of these results).

Throughout the paper, $(a;q)_k\coloneqq (1-a)(1-aq)\ldots (1-aq^{k-1})$ stands for the $q$-Pochhammer
symbol.\footnote{Since $|q|<1$, the infinite $q$-Pochhammer symbol $(a;q)_\infty=\prod_{k=0}^{\infty} (1-aq^k)$
is also well-defined.}
The $q$-hypergeometric function ${}_4\phi_3$ is given by
\cite[(1.2.22)]{GasperRahman}:
\begin{equation}
	\label{eq:q_hypergeometric_function_43}
	\begin{split}
		& {}_4\phi_3\left(\begin{array}{c}
			a, b, c, d \\
			e, f, g
		\end{array} \Big\vert\, q; z\right) \coloneqq \sum_{n=0}^{\infty} \frac{(a ; q)_n(b ; q)_n(c ; q)_n(d ; q)_n}{(e ; q)_n(f ; q)_n(g ; q)_n(q ; q)_n} z^n
	\end{split}
\end{equation}

Let $M\in \mathbb{Z}_{\ge0}$ and $\alpha,\beta,\gamma,\delta\in \mathbb{R}$ be such that $\gamma q=q^{-M}$. Define the following \emph{$q$-Racah weight function} on $\left\{ 0,1,\ldots,M  \right\}$:
\begin{equation}
	\label{eq:wqR}
	w^{qR}(x)\coloneqq
\frac{(\alpha q;q)_x (\beta \delta q;q)_x (\gamma q;q)_x (\gamma \delta q ; q)_x}
{(q;q)_x (\alpha^{-1} \gamma \delta q;q)_x (\beta^{-1} \gamma q;q)_x (\delta q ; q )_x} \frac{\left(1-\gamma \delta q^{2 x+1}\right)}{(\alpha \beta q)^x(1-\gamma \delta q)},
	\qquad x\in \left\{ 0,1,\ldots,M  \right\}.
\end{equation}
The weight $w^{qR}$ is the orthogonality weight for the $q$-Racah polynomials which are defined as
\begin{equation}
	\label{eq:qR_polynomials}
	R_n(\mu(x);\alpha,\beta,\gamma,\delta\mid q)\coloneqq
	{}_4\phi_3\left(\begin{array}{c}
			q^{-n}, \alpha \beta q^{n+1}, q^{-x}, \gamma \delta q^{x+1} \\
			\alpha q,\beta \delta q, \gamma q
	\end{array} \Big\vert\, q; q\right),\qquad n=0,1,\ldots,M .
\end{equation}
These are polynomials of degree $n$ in the variable $\mu(x)$ defined as
\begin{equation}
	\label{eq:mu_qRacah_defn}
	\mu(x)\coloneqq q^{-x}+\gamma \delta q^{x+1}.
\end{equation}
The orthogonality means that
\begin{equation}
	\label{eq:qRacah_orthogonality}
	\sum_{x=0}^{M}
	w^{qR}(x)\ssp R_m(\mu(x))R_n(\mu(x))=\mathbf{1}_{m=n}\ssp h_n,
\end{equation}
where
the squared norms $h_n$
have the form
\begin{equation}
	\label{eq:qRacah_norms}
	h_n=
	\frac{(\alpha \beta q^2;q)_M (\delta^{-1} ; q)_M}{(\alpha \delta^{-1} q;q)_M (\beta q ; q)_M}
	\frac{(1-\alpha \beta q)(\delta q^{-M})^n}{(1-\alpha \beta q^{2 n+1})}
	\frac{(q;q)_n (\alpha \beta q^{M+2};q)_n (\alpha \delta^{-1} q;q)_n (\beta q ; q)_n}
	{(\alpha q;q)_n (\alpha \beta q;q)_n ( \beta \delta q;q)_n  (q^{-M} ; q)_n}.
\end{equation}

The $q$-Racah polynomials $R_n$ are eigenfunctions of a
distinguished difference operator acting in~$x$
\cite[(3.2.6)]{Koekoek1996}, that is, they satisfy
\begin{equation}
	\label{eq:qRacah_eigenfunction}
	\begin{split}
		&q^{-n}(1-q^n)(1-\alpha \beta q^{n+1}) R_n(\mu(x))
		\\&\hspace{50pt}
		=
		B(x) R_n(\mu(x+1))-[B(x)+D(x)] R_n(\mu(x))(x)+D(x) R_n(\mu(x-1)),
	\end{split}
\end{equation}
where
\begin{equation}
	\label{eq:qRacah_BD_coefficients}
	\begin{split}
		B(x)&\coloneqq
		\frac{(1-\alpha q^{x+1})(1-\beta \delta q^{x+1})(1-\gamma q^{x+1})(1-\gamma \delta q^{x+1})}{(1-\gamma \delta q^{2 x+1})(1-\gamma \delta q^{2 x+2})}
		;
		\\
		D(x)&\coloneqq
		\frac{q(1-q^x)(1-\delta q^x)(\beta-\gamma q^x)(\alpha-\gamma \delta q^x)}{(1-\gamma \delta q^{2 x})(1-\gamma \delta q^{2 x+1})}.
	\end{split}
\end{equation}
In \eqref{eq:qRacah_eigenfunction}, we have $x=0,1,\ldots,M$. Indeed,
$B(M)=D(0)=0$, so the values of $R_n(\mu(x))$ for $x=-1$ or $x=M+1$ do not enter the eigenrelation \eqref{eq:qRacah_eigenfunction}.

\begin{remark}
	\label{rmk:three_regimes_in_qRacah}
The $q$-Racah polynomials admit three parameter regimes
$(\alpha,\beta,\gamma,\delta)$ that ensure the weight
function vanishes for $x>M$. These regimes are specified by
$\alpha q=q^{-M}$, $\beta\delta q=q^{-M}$, or $\gamma
q=q^{-M}$. Accordingly, the norms $h_n$ take different forms
in each regime. For random tilings, it suffices to consider
only the third regime $\gamma q=q^{-M}$, in which case $h_n$
is given by \eqref{eq:qRacah_norms}.
\end{remark}

Let us now describe the $N$-particle \emph{$q$-Racah orthogonal polynomial ensemble} (abbreviated  \emph{$q$-Racah OPE}) on $\{ 0,1,\ldots,M \}$ with parameters
$\alpha,\beta,\gamma,\delta,M$ satisfying
\begin{equation}
	\label{eq:qRacah_parameters_nonnegative}
	M \ge N-1,\qquad q\in(0,1),\qquad \gamma q=q^{-M},
	\qquad \alpha \ge \gamma,\qquad \beta \ge \gamma,\qquad \delta\le 0.
\end{equation}
With these restrictions on the parameters, one can check that the weights $w^{qR}(x)$
\eqref{eq:wqR}
are positive for all $x\in \{ 0,1,\ldots,M \}$.

\begin{definition}
	\label{def:qRacah_ensemble}
	The $q$-Racah ensemble
	$\mathfrak{R}^{qR(N)}_{M,\alpha,\beta,\gamma,\delta}$
	depending on the parameters satisfying \eqref{eq:qRacah_parameters_nonnegative},
	is a probability measure on $N$-tuples of particles $(x_1<\ldots<x_N )$ in $\{0,1,\ldots,M \}$,
	with probability weights given by
	\begin{equation}
		\label{eq:qRacah_ensemble_defn}
		\mathfrak{R}^{qR(N)}_{M,\alpha,\beta,\gamma,\delta}(x_1,\ldots,x_N )=
		\frac{1}{Z(N,M,\alpha,\beta,\gamma,\delta)}
		\prod_{1\le i<j\le N}\left( \mu(x_i)-\mu(x_j) \right)^2
		\prod_{i=1}^{N}w^{qR}(x_i).
	\end{equation}
	Here $Z(N,M,\alpha,\beta,\gamma,\delta)$ is the normalization constant ensuring that the total probability is~$1$. The condition $M \ge N-1$ in \eqref{eq:qRacah_parameters_nonnegative} is required so that $N$ particles can fit into $\{0,1,\ldots, M \}$.
\end{definition}

\subsection{From orthogonal polynomials to lozenge tilings}
\label{sub:qRacah_to_tilings}

Let us now connect the random nonintersecting path ensembles of \Cref{def:qRacah_tilings} to the $q$-Racah OPE. For this, we need to define four zones within the hexagon with sides $a,b,c,a,b,c$ (recall that $T=b+c$, $S=c$, $N=a$), where the $q$-Racah parameters take different forms. These zones are determined by the value of the horizontal coordinate (time) $t$, and are defined as follows:
\begin{align}
	\label{eq:hexagon_zone_1}
	0\le t\le \min(S-1,T-S-1)&\quad \Rightarrow \quad
	\parbox{.5\textwidth}{$M=t+N-1$, $\alpha=q^{-S-N}$,\\$\beta=q^{S-T-N}$, $\gamma=q^{-t-N}$, $\delta=\kappa^2 q^{-S+N}$;}
	\\[7pt]
	\label{eq:hexagon_zone_2}
	S\le t\le T-S-1&\quad \Rightarrow \quad
	\parbox{.5\textwidth}{$M=S+N-1$, $\alpha=q^{-t-N}$,\\$\beta=q^{t-T-N}$, $\gamma=q^{-S-N}$, $\delta=\kappa^2 q^{-t+N}$;}
	\\[7pt]
	\label{eq:hexagon_zone_3}
	T-S\le t\le S-1&\quad \Rightarrow \quad
	\parbox{.5\textwidth}{$M=T-S+N-1$, $\alpha=q^{-T-N+t}$,\\$\beta=q^{-t-N}$, $\gamma=q^{-T-N+S}$, $\delta=\kappa^2 q^{-T+t+N}$;}
	\\[7pt]
	\label{eq:hexagon_zone_4}
	\max(S,T-S)\le t\le T&\quad \Rightarrow \quad
	\parbox{.5\textwidth}{$M=T-t+N-1$, $\alpha=q^{-T-N+S}$,\\$\beta=q^{-S-N}$, $\gamma=q^{-T-N+t}$, $\delta=\kappa^2 q^{-T+S+N}$;}
\end{align}
For each given hexagon, at most three of these zones are present, as conditions \eqref{eq:hexagon_zone_2} and \eqref{eq:hexagon_zone_3} are mutually exclusive. See \Cref{fig:hexagon_zones} for illustrations.

\begin{figure}[htpb]
	\centering
	\begin{tikzpicture}
		\node[anchor=west] at (0,0) {\includegraphics[width=\textwidth]{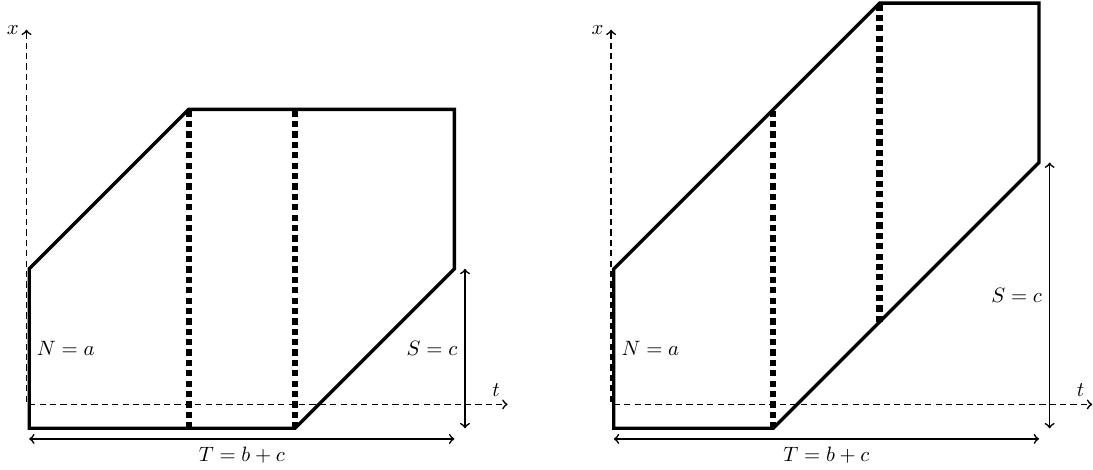}};
		\node at (1.7,-1+0) {\small\eqref{eq:hexagon_zone_1}};
		\node at (1.7+2,-1+.5) {\small\eqref{eq:hexagon_zone_2}};
		\node at (1.7+4,-1+1) {\small\eqref{eq:hexagon_zone_4}};
		\node at (1.65+9,-1) {\small\eqref{eq:hexagon_zone_1}};
		\node at (1.65+1.75+9,-1+1.25) {\small\eqref{eq:hexagon_zone_3}};
		\node at (1.65+3.5+9,-1+2.5) {\small\eqref{eq:hexagon_zone_4}};
	\end{tikzpicture}
	\caption{Zones inside the hexagon in which
	$X(t)$ (the path configuration at a slice
	$t=\textnormal{const}$) has the $q$-Racah OPE distribution
	with different choices of parameters (see
	\Cref{thm:qRacah_ensemble_and_tilings}). For all $T,S,N$,
	the zones \eqref{eq:hexagon_zone_1} and
	\eqref{eq:hexagon_zone_4} are present inside the hexagon.
	The zone \eqref{eq:hexagon_zone_2} is present if and only
	if $S\le T-S$, and otherwise we have the zone
	\eqref{eq:hexagon_zone_3}. On a border slice between two
	zones, the $q$-Racah parameters can be chosen in either of
	the two ways (for example, formulas in
	\eqref{eq:hexagon_zone_1} and \eqref{eq:hexagon_zone_2}
	coincide for $t=S$).}
	\label{fig:hexagon_zones}
\end{figure}

\begin{theorem}[{\cite[Theorem~4.1]{borodin-gr2009q} and \cite[Theorem~7.3.5]{dimitrov2019log}}]
	\label{thm:qRacah_ensemble_and_tilings}
	Fix $T,S,N$. In the zones \eqref{eq:hexagon_zone_1} and \eqref{eq:hexagon_zone_2}, the distribution of $X(t)$ \eqref{eq:nonintersecting_path_ensemble_X_t_defn} under the $q$-Racah measure (\Cref{def:qRacah_tilings}) is given by the $N$-particle $q$-Racah OPE on $\{0,1,\ldots,M\}$. In the zones \eqref{eq:hexagon_zone_3} and \eqref{eq:hexagon_zone_4}, the distribution of the shifted configuration
	\begin{equation}
		\label{eq:shifted_X_t}
		X(t)+T-t-S=(x_1(t)+T-t-S<x_2(t)+T-t-S<\ldots<x_N(t)+T-t-S)
	\end{equation}
	is given by the $N$-particle $q$-Racah OPE on $\{0,1,\ldots,M\}$. In all cases, the parameters $M, \alpha,\beta,\gamma,\delta$ are given by the corresponding formulas in \eqref{eq:hexagon_zone_1}--\eqref{eq:hexagon_zone_4}.
\end{theorem}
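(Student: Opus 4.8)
The plan is to realize the $q$-Racah measure as a determinantal ensemble of nonintersecting lattice paths with deterministic boundary data and then pass to a single slice. By \Cref{prop:q_kappa_weight_of_ensemble_x_as_a_product_over_xi} the weight of a path ensemble \eqref{eq:q_kappa_weight_of_ensemble_x} factors as a product over the $T+1$ vertical slices of single-particle factors $g_s(y)\coloneqq q^{y}/(1-\kappa^{2}q^{2y-S-s+1})$, so the Karlin--McGregor / Lindstr\"om--Gessel--Viennot lemma applies. Splitting the time interval $\{0,\ldots,T\}$ at slice $t$ gives
\begin{equation*}
	\mathbb{P}\bigl(X(t)=(x_1<\cdots<x_N)\bigr)
	=\frac{1}{\widetilde{Z}}\,
	\det\bigl[\Phi(i-1,x_j)\bigr]_{i,j=1}^{N}\,
	\det\bigl[\Psi(x_i,S+j-1)\bigr]_{i,j=1}^{N},
\end{equation*}
where $\Phi(a,x)$ is the sum over up-right lattice paths from $(0,a)$ to $(t,x)$ of the product of the factors $g_{s}$ along the path (with the slice-$t$ factor attached to $\Phi$), and $\Psi(x,b)$ is the analogous sum for paths from $(t,x)$ to $(T,b)$. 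Everything is now reduced to evaluating these two half-path sums and recombining.

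First I would show that, after peeling off a single prefactor depending only on $x$, each half-path sum becomes a polynomial in the quadratic variable $\mu(x)=q^{-x}+\kappa^{2}q^{x+1-S-t}$:
\begin{equation*}
	\Phi(a,x)=c^{\Phi}(a)\,\phi_{0}(x)\,P_{a}\bigl(\mu(x)\bigr),
	\qquad
	\Psi(x,b)=c^{\Psi}(b)\,\psi_{0}(x)\,Q_{b-S}\bigl(\mu(x)\bigr),
\end{equation*}
where $\phi_{0},\psi_{0}$ do not depend on $a,b$, the scalars $c^{\Phi}(a),c^{\Psi}(b)$ are nonzero, and the polynomials $P_{0},\ldots,P_{N-1}$ have degree $\le N-1$ and are linearly independent (the latter is immediate from $\Phi(a,x)=0$ for $x<a$ and $\Phi(a,a)\ne 0$, which makes the relevant matrix triangular), and likewise for $Q_{0},\ldots,Q_{N-1}$. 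Granting this, the classical ``generalized Vandermonde'' evaluation turns each determinant into a nonzero constant times $\prod_{i<j}(\mu(x_j)-\mu(x_i))$ times the product of the $\phi_0$'s (resp.\ $\psi_0$'s); multiplying the two determinants yields
\begin{equation*}
	\mathbb{P}\bigl(X(t)=(x_1<\cdots<x_N)\bigr)\ \propto\
	\prod_{1\le i<j\le N}\bigl(\mu(x_i)-\mu(x_j)\bigr)^{2}\
	\prod_{i=1}^{N}\phi_{0}(x_i)\,\psi_{0}(x_i).
\end{equation*}
It then remains to simplify the explicit product-form expression for $\phi_{0}(x)\psi_{0}(x)$ --- a routine manipulation of $q$-Pochhammer symbols --- and to check that it equals, up to an $x$-independent constant, the $q$-Racah weight $w^{qR}(x)$ of \eqref{eq:wqR} for the parameters $(M,\alpha,\beta,\gamma,\delta)$ listed in \eqref{eq:hexagon_zone_1}--\eqref{eq:hexagon_zone_2}; along the way one verifies $\gamma q=q^{-M}$, that the range forced on $x_i(t)$ by \eqref{eq:hexagon_constraints} is exactly $\{0,\ldots,M\}$, and that $\mu$ above agrees with \eqref{eq:mu_qRacah_defn}. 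Since an orthogonal polynomial ensemble is determined by the pair $(\mu,\text{weight})$, this identifies the law of $X(t)$ with the $q$-Racah OPE, proving the statement in zones \eqref{eq:hexagon_zone_1}--\eqref{eq:hexagon_zone_2}. The zones \eqref{eq:hexagon_zone_3}--\eqref{eq:hexagon_zone_4} follow from the same computation applied after the shift \eqref{eq:shifted_X_t}, which is a pure relabelling of particles, or alternatively from the top--bottom symmetry of the hexagon.

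The hard part is the explicit evaluation of $\Phi$ and $\Psi$. Because each factor $g_s$ carries the nontrivial denominator $1-\kappa^{2}q^{2y-S-s+1}$, the weight of a path depends on its whole trajectory $p_{0}\le p_{1}\le\cdots\le p_{t}$ and not only on its endpoints, so $\Phi(a,x)$ is a genuine terminating $q$-hypergeometric quantity; bringing it to the form $c^{\Phi}(a)\,\phi_{0}(x)\,P_{a}(\mu(x))$ with control of the four $q$-Racah parameters is exactly the place where the terminal node of the $q$-Askey scheme enters, and this is also what pins down the parameter choices in \eqref{eq:hexagon_zone_1}--\eqref{eq:hexagon_zone_4}. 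Equivalently one may evaluate the Lindstr\"om determinant $\det[\Phi(i-1,x_j)]$ directly as a product formula for a $q$-weighted count of interlacing triangular arrays; either route relies on the same $q$-series identity. This computation is the content of \cite[Theorem~4.1]{borodin-gr2009q} (with a streamlined exposition in \cite[Theorem~7.3.5]{dimitrov2019log}), and the plan above reconstructs that argument; once the closed forms are in hand, the generalized Vandermonde evaluation and the weight matching are purely algebraic. A conceivable alternative --- induction on $t$, starting from the trivial law of $X(0)$ and showing that the inter-slice Markov transition carries the $q$-Racah OPE with parameters \eqref{eq:hexagon_zone_1} to the next one via contiguous relations of ${}_{4}\phi_{3}$ --- merely relocates the same analytic difficulty into the one-step identity.
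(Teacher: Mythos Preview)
The paper does not give its own proof of this statement: it is quoted verbatim from \cite[Theorem~4.1]{borodin-gr2009q} (see also \cite[Theorem~7.3.5]{dimitrov2019log}) and used as a black box. Your outline is a faithful reconstruction of the argument in those references --- LGV/Karlin--McGregor on the product weight of \Cref{prop:q_kappa_weight_of_ensemble_x_as_a_product_over_xi}, splitting at slice $t$, recognizing each half-path sum as a polynomial in $\mu(x)$ times a common prefactor, and matching the product of prefactors with $w^{qR}$ --- and you correctly flag the evaluation of $\Phi,\Psi$ as the nontrivial $q$-hypergeometric input. Since there is no in-paper proof to compare against, your proposal is as close to ``the paper's proof'' as one can get: it is the cited proof.
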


\subsection{Determinantal correlation kernel and its operator interpretation}
\label{sub:dpp_and_operator_interpretation}

It is known \cite{Konig2005}, \cite[Section~4]{Borodin2009}
that orthogonal polynomial ensembles (in particular,
the~$N$-particle $q$-Racah OPE on $\{0,1,\ldots,M\}$) are
determinantal point processes.
Moreover, the two-dimensional
random point configuration
$\mathscr{X}\coloneqq\{X(t)\}_{0\le t\le T}$ forms a
determinantal point process on $\mathbb{Z}^2$.
The two-dimensional statement follows from the particular
way of how the distributions of $X(t)$ on different slices
are stitched together, and may be derived from
\cite[Section~4]{Borodin2009}.
Alternatively,
the determinantal structure of $\mathscr{X}$ follows from a
dimer interpretation of the $q$-Racah random tiling model,
cf.~\cite[Corollary~3]{Kenyon2007Lecture}.

Let the space-time correlation functions be defined as
\begin{equation*}
	\rho_n(t_1, x_1 ; \ldots ; t_n, x_n)
	\coloneqq
	\operatorname{\mathbb{P}} \left\{
	\textnormal{the configuration $\mathscr{X}$ contains all of $(t_1, x_1),\ldots,(t_n, x_n)$}\right\},
\end{equation*}
where $(t_1, x_1),\ldots,(t_n, x_n)$ are $n$ pairwise distinct points.
For any $n\ge 1$, these correlation functions are given by determinants
\begin{equation}
	\label{eq:dpp}
	\rho_n(t_1, x_1 ; \ldots ; t_n, x_n)=\det [K( t_i, x_i ; t_j, x_j)]_{i,j=1}^n,
\end{equation}
coming from a single function of two points inside the hexagon.
This function
$K(t,x;s,y)$ is called the \emph{correlation kernel}.

An explicit expression for $K(t,x;s,y)$ was obtained in \cite[Theorem~7.5]{borodin-gr2009q} in terms of $q$-Racah polynomials. We now recall this result, together with an operator interpretation of the kernel
\begin{equation}
	\label{eq:K_t_x_y_fixed_slice}
	K_t(x,y)\coloneqq K(t,x;t,y)
\end{equation}
on a fixed slice $t=\textnormal{const}$, which also appears in \cite{borodin-gr2009q}.

\medskip

Consider a $q$-Racah OPE $\mathfrak{R}^{qR(N)}_{M,\alpha,\beta,\gamma,\delta}$ on $\{0,1,\ldots,M\}$ with parameters satisfying \eqref{eq:qRacah_parameters_nonnegative}. Let us pass from the $q$-Racah polynomials \eqref{eq:qR_polynomials} to the functions
\begin{equation}
	\label{eq:qRacah_orthonormal_basis_f_n}
	f_n(x)=f_n(x; \alpha,\beta,\gamma,\delta\mid q)
	\coloneqq \frac{R_n(\mu(x))}{\sqrt{h_n}}\ssp \sqrt{w^{qR}(x)},
	\qquad
	n=0,1,\ldots, M,
\end{equation}
which form an orthonormal basis in the Hilbert space
$\ell^2\left( \left\{ 0,1,\ldots,M  \right\} \right)$.
Thanks to \eqref{eq:qRacah_eigenfunction}, the $f_n(x)$'s are eigenfunctions of the difference operator
\begin{equation}
	\label{eq:qRacah_orthonormal_difference_operator}
	\bigl(\mathfrak{D}^{qR}g\bigr)(x)\coloneqq
	\sqrt{\frac{w^{qR}(x)}{w^{qR}(x+1)}}B(x)\ssp g(x+1)-[B(x)+D(x)]\ssp g(x)
	+
	\sqrt{\frac{w^{qR}(x)}{w^{qR}(x-1)}}D(x)\ssp g(x-1),
\end{equation}
where $B(x)$ and $D(x)$ are given by \eqref{eq:qRacah_BD_coefficients}.
The eigenvalue of $f_n(x)$ is
\begin{equation}
	\label{eq:qRacah_eigenvalues}
	\mathrm{ev}^{qR}_n\coloneqq q^{-n}(1-q^n)(1-\alpha \beta q^{n+1}).
\end{equation}
In particular, $\mathrm{ev}^{qR}_0=0$.
Due to our assumptions on the parameters \eqref{eq:qRacah_parameters_nonnegative}, the eigenvalues $\mathrm{ev}^{qR}_n$ are all nonpositive, and strictly decrease in $n$ for $0\le n\le M$.

\begin{proposition}
	[{\cite{mehta2004random}, \cite[Lemma~2.8]{Konig2005}}]
	\label{prop:qRacah_kernel_OPE}
	The correlation kernel of the $N$-particle $q$-Racah OPE
	on $\{0,1,\ldots,M\}$
	is given by
	\begin{equation}
		\label{eq:qRacah_kernel}
		K^{qR}(x,y)=\sum_{n=0}^{N-1} f_n(x)f_n(y).
	\end{equation}
	This is a kernel of the orthogonal projection operator onto
	$\operatorname{\mathrm{span}}\left( f_0,f_1,\ldots,f_{N-1}  \right)$
	in the Hilbert space
	$\ell^2\left( \left\{ 0,1,\ldots,M  \right\} \right)$.
\end{proposition}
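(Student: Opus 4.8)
The plan is to establish \eqref{eq:qRacah_kernel} by the standard argument for orthogonal polynomial ensembles, which produces a determinantal point process with a projection kernel, and then to identify the range of the projection with $\operatorname{\mathrm{span}}(f_0,\ldots,f_{N-1})$. First I would recall that the ensemble density \eqref{eq:qRacah_ensemble_defn} factors through the squared Vandermonde in the variables $\mu(x_i)$: since $\mu$ is strictly monotone on $\{0,1,\ldots,M\}$ under the assumptions \eqref{eq:qRacah_parameters_nonnegative}, the map $x_i\mapsto\mu(x_i)$ is a bijection onto its image, so $\prod_{i<j}(\mu(x_i)-\mu(x_j))^2$ is a genuine Vandermonde determinant up to the ordering convention. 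Then I would rewrite $\prod_{i<j}(\mu(x_i)-\mu(x_j))=\det[\mu(x_i)^{j-1}]_{i,j=1}^N=\det[p_{j-1}(\mu(x_i))]$ for any monic (or merely degree-graded) family of polynomials $p_0,\ldots,p_{N-1}$, and specialize $p_n=R_n$, the $q$-Racah polynomials \eqref{eq:qR_polynomials}, which have degree $n$ in $\mu(x)$.

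Next I would fold one copy of $\sqrt{w^{qR}(x_i)}$ into each of the two determinants, so that the probability weight becomes $\frac1Z\det[f_i^{\mathrm{col}}(x_k)]\det[f_j^{\mathrm{col}}(x_k)]$ with $f_n$ as in \eqref{eq:qRacah_orthonormal_basis_f_n}; here I would carry the normalizing constants $\sqrt{h_n}$ through the row operations, which is legitimate because scaling rows of a determinant only changes it by a fixed scalar that gets absorbed into $Z$. At this point the orthonormality relation \eqref{eq:qRacah_orthogonality}, rephrased as $\sum_{x=0}^M f_m(x)f_n(x)=\mathbf 1_{m=n}$, shows that the Gram matrix of the rows is the identity. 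Applying the Andr\'eief (Heine) identity then gives $Z=N!$ and, more importantly, the Eynard--Mehta/Borodin machinery (or directly \cite[Lemma~2.8]{Konig2005}) yields that the $n$-point correlation functions are determinants of $K^{qR}(x,y)=\sum_{n=0}^{N-1}f_n(x)f_n(y)$, establishing \eqref{eq:dpp} and \eqref{eq:qRacah_kernel} simultaneously. The second assertion is then immediate: because the $f_n$ are orthonormal in $\ell^2(\{0,\ldots,M\})$, the operator with kernel $\sum_{n=0}^{N-1}f_n(x)f_n(y)$ is by inspection the orthogonal projection onto their span --- it is self-adjoint, and applying it to $f_k$ returns $f_k$ for $k<N$ and $0$ for $k\ge N$.

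The only genuine subtlety, and the step I would treat most carefully, is the transition from the Vandermonde in $x_i$ to the Vandermonde in $\mu(x_i)$ together with the correct handling of orderings and signs: one must check that replacing $\prod_{i<j}(x_i-x_j)^2$-type reasoning by $\prod_{i<j}(\mu(x_i)-\mu(x_j))^2$ does not introduce spurious sign factors, and that the monotonicity of $\mu$ (which follows from $\gamma\delta<0$, a consequence of $\delta\le0$ and $\gamma>0$ in \eqref{eq:qRacah_parameters_nonnegative}) guarantees that ordering the $x_i$ increasingly is compatible with the determinantal rewriting. Everything else is the textbook argument, so rather than reproduce it I would simply cite \cite{mehta2004random} and \cite[Lemma~2.8]{Konig2005}, as the statement of the proposition already does, and note that the eigenfunction property \eqref{eq:qRacah_eigenfunction}--\eqref{eq:qRacah_eigenvalues} identifies $\operatorname{\mathrm{span}}(f_0,\ldots,f_{N-1})$ with the spectral subspace of $\mathfrak D^{qR}$ corresponding to the $N$ largest (i.e., closest to $0$) eigenvalues, which is the operator interpretation used later in the paper.
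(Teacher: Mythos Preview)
Your sketch is correct and reconstructs exactly the standard argument from the cited references; note that the paper itself does not supply a proof of this proposition but simply attributes it to \cite{mehta2004random} and \cite[Lemma~2.8]{Konig2005}, so there is no separate ``paper's proof'' to compare against. One small nit: you write $\gamma\delta<0$, but \eqref{eq:qRacah_parameters_nonnegative} only gives $\delta\le0$; the degenerate case $\delta=0$ makes $\mu(x)=q^{-x}$, which is still strictly monotone, so nothing breaks.
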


Following \cite[Section~8.2]{borodin-gr2009q} which implements the operator approach pioneered in
\cite{borodin2007asymptotics}, \cite{Olshansk2008-difference}
(see also \cite[Chapter~3.3]{TaoRMbook}), we view $K^{qR}$ as the kernel of the orthogonal spectral projection onto the subspace of $\ell^2\left( \left\{ 0,1,\ldots,M  \right\} \right)$ corresponding to the spectral interval
\begin{equation}
	\label{eq:qRacah_kernel_spectral_interval_for_N_particles}
	\left[ -q^{-N+1}(1-q^{N-1})(\alpha \beta q^{N}-1),0 \right]
\end{equation}
of the operator $\mathfrak{D}^{qR}$ \eqref{eq:qRacah_orthonormal_difference_operator}.

\medskip

Let us now apply the spectral interpretation of $K^{qR}$ to the correlation kernel $K_t(x,y)$ \eqref{eq:K_t_x_y_fixed_slice}
on a fixed vertical slice inside the hexagon.
Observe that in all four zones
\eqref{eq:hexagon_zone_1}--\eqref{eq:hexagon_zone_4}, we
have $\alpha\beta q^N=q^{-T-N}$. Therefore, the spectral
interval
\eqref{eq:qRacah_kernel_spectral_interval_for_N_particles}
stays the same throughout the hexagon.
Denote
\begin{equation}
	\label{eq:tilde_x}
	\tilde x\coloneqq\begin{cases}
		x,&\textnormal{if $t$ is in the zone \eqref{eq:hexagon_zone_1} or \eqref{eq:hexagon_zone_2}};\\
		x+T-t-S,&\textnormal{if $t$ is in the zone \eqref{eq:hexagon_zone_3} or \eqref{eq:hexagon_zone_4}}.
	\end{cases}
\end{equation}
One can check that the coefficients of the operator $\mathfrak{D}^{qR}$ \eqref{eq:qRacah_orthonormal_difference_operator} evaluated at $\tilde x$ are also the same in all four zones \eqref{eq:hexagon_zone_1}--\eqref{eq:hexagon_zone_4}. More precisely, we have
\begin{equation}
	\label{eq:qRacah_BD_coefficients_tilde_x_for_hexagon}
	\begin{split}
		B(\tilde x)&=
		\frac{q^{-2 N-T} (q^T-\kappa ^2 q^{x+1}) (q^{N+S}-q^{x+1}) (q^{N+t}-q^{x+1})
		(q^{S+t}-\kappa ^2 q^{x+1})}{(q^{S+t}-\kappa ^2 q^{2 x+1}) (q^{S+t}-\kappa ^2 q^{2 x+2})},
		\\
		D(\tilde x)&=
		\frac{(1-q^x) q^{-2 N-T+1} (q^S-\kappa ^2 q^{N+x}) (q^t-\kappa ^2 q^{N+x})
		(q^{S+t}-q^{T+x})}{(q^{S+t}-\kappa ^2 q^{2 x}) (q^{S+t}-\kappa ^2 q^{2 x+1})},
	\end{split}
\end{equation}
and
\begin{equation}
	\label{eq:qRacah_BD_coefficients_tilde_x_for_hexagon_2_with_weights}
	\begin{split}
		&\frac{w^{qR}(\tilde x)}{w^{qR}(\tilde x+1)}
		=
		\frac{q(1-q^{x+1}) (q^S-\kappa ^2 q^{N+x+1}) (q^t-\kappa
		^2 q^{N+x+1}) (q^{S+t}-\kappa ^2 q^{2 x+1})
			(q^{S+t}-q^{T+x+1})}{(q^T-\kappa ^2 q^{x+1})
			(q^{N+S}-q^{x+1}) (q^{N+t}-q^{x+1}) (q^{S+t}-\kappa ^2
			q^{x+1}) (q^{S+t}-\kappa ^2 q^{2 x+3})}
		;
		\\
		&\frac{w^{qR}(\tilde x)}{w^{qR}(\tilde x-1)}
		=
		\frac{(q^T-\kappa ^2 q^x) (q^{N+S}-q^x)
		(q^{N+t}-q^x) (q^{S+t}-\kappa ^2 q^x) (q^{S+t}-\kappa ^2
			q^{2 x+1})}{(q^x-1) (\kappa ^2 q^{N+x}-q^S) (\kappa ^2
			q^{N+x}-q^t) (q^{S+t+1}-\kappa ^2 q^{2 x})
			(q^{T+x}-q^{S+t})}.
	\end{split}
\end{equation}
%
We conclude that the kernel $K_t(x,y)$ has the following
spectral description, which is the same in all four zones
\eqref{eq:hexagon_zone_1}--\eqref{eq:hexagon_zone_4}:
\begin{corollary}
	\label{cor:qRacah_hexagon_kernel_spectral_description}
	The fixed-slice correlation kernel $K_t(x,y)$
	\eqref{eq:K_t_x_y_fixed_slice} of the $q$-Racah
	nonintersecting path ensemble inside the hexagon with
	sides $a,b,c,a,b,c$ (where $T=b+c$, $S=c$, $N=a$) is the
	kernel of the orthogonal spectral projection onto the
	subspace corresponding to the spectral interval
	\begin{equation}
		\label{eq:spectral_interval_original_N_in_corollary}
		\left[ -q^{-N+1}(1-q^{N-1})(q^{-T-N}-1),0 \right]
	\end{equation}
	of the
	difference operator
	\begin{equation}
		\label{eq:qRacah_orthonormal_difference_operator_for_hexagon_final_in_s2}
		\bigl(\mathfrak{D}g\bigr)(x)\coloneqq
		\sqrt{\frac{w^{qR}(\tilde x)}{w^{qR}(\tilde  x+1)}}B(\tilde  x)\ssp g(x+1)
		-[B(\tilde x)+D(\tilde x)]\ssp g(x)
		+
		\sqrt{\frac{w^{qR}(\tilde x)}{w^{qR}(\tilde x-1)}}D(\tilde x)\ssp g(x-1).
	\end{equation}
	This operator on functions of $x$
	acts in the (finite-dimensional)
	$\ell^2$ space on the $t$-th vertical slice of the hexagon.
	Its coefficients are read off from
	\eqref{eq:qRacah_BD_coefficients_tilde_x_for_hexagon} and
	\eqref{eq:qRacah_BD_coefficients_tilde_x_for_hexagon_2_with_weights}.
\end{corollary}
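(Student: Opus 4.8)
The plan is to assemble the statement from the results recalled above together with a direct substitution; no new idea is required. First I would use \Cref{thm:qRacah_ensemble_and_tilings} to reduce to the $q$-Racah OPE: on any slice $t$ in zone \eqref{eq:hexagon_zone_1} or \eqref{eq:hexagon_zone_2} the configuration $X(t)$ is the $N$-particle $q$-Racah OPE on $\{0,\dots,M\}$ with the corresponding parameters, while in zone \eqref{eq:hexagon_zone_3} or \eqref{eq:hexagon_zone_4} the shifted configuration $X(t)+T-t-S$ is; hence, writing $\tilde x$ for the shift \eqref{eq:tilde_x}, uniformly over all four zones $K_t(x,y)=K^{qR}(\tilde x,\tilde y)$. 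By \Cref{prop:qRacah_kernel_OPE}, $K^{qR}$ is the orthogonal projection onto $\operatorname{span}(f_0,\dots,f_{N-1})$ in $\ell^2(\{0,\dots,M\})$. Since each $f_n$ is an eigenfunction of $\mathfrak{D}^{qR}$ \eqref{eq:qRacah_orthonormal_difference_operator} with eigenvalue $\mathrm{ev}^{qR}_n$ \eqref{eq:qRacah_eigenvalues}, and these eigenvalues are nonpositive and \emph{strictly} decreasing on $\{0,1,\dots,M\}$ under \eqref{eq:qRacah_parameters_nonnegative}, that span is exactly the spectral subspace of $\mathfrak{D}^{qR}$ for $[\mathrm{ev}^{qR}_{N-1},\mathrm{ev}^{qR}_0]=[\mathrm{ev}^{qR}_{N-1},0]$, i.e.\ the interval \eqref{eq:qRacah_kernel_spectral_interval_for_N_particles}.

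It then remains to do two zone-independence checks and one transport step. First, plugging the listed $\alpha,\beta$ into each of \eqref{eq:hexagon_zone_1}--\eqref{eq:hexagon_zone_4} gives $\alpha\beta q^N=q^{-T-N}$, which turns \eqref{eq:qRacah_kernel_spectral_interval_for_N_particles} into the asserted interval \eqref{eq:spectral_interval_original_N_in_corollary}. Second, I would substitute the zone parameters into \eqref{eq:qRacah_BD_coefficients} and into the weight \eqref{eq:wqR}, perform the shift $x\mapsto x+T-t-S$ inside $B$, $D$ and the ratios $w^{qR}(\cdot)/w^{qR}(\cdot\pm1)$ in zones \eqref{eq:hexagon_zone_3}--\eqref{eq:hexagon_zone_4}, and verify that all four regimes collapse to the single set of rational-in-$q^x$ expressions \eqref{eq:qRacah_BD_coefficients_tilde_x_for_hexagon}--\eqref{eq:qRacah_BD_coefficients_tilde_x_for_hexagon_2_with_weights}. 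Finally, since $x\mapsto\tilde x$ is a bijection from the $t$-th slice onto $\{0,\dots,M\}$ it induces a unitary $U$; under $U$ the operator $\mathfrak{D}^{qR}$ (in the variable $\tilde x$) becomes the single operator $\mathfrak{D}$ of \eqref{eq:qRacah_orthonormal_difference_operator_for_hexagon_final_in_s2} (in the variable $x$), and conjugating its spectral projection by $U$ carries the kernel $K^{qR}(\tilde x,\tilde y)$ to $K_t(x,y)$, which is the corollary.

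The only obstacle is bookkeeping: there are four regimes (five, counting the overlapping boundary slices), each contributing numerous rational factors to $B(\tilde x)$, $D(\tilde x)$ and the two weight ratios, so the second check above is a lengthy but entirely mechanical simplification. I expect it is cleanest to track which combinations of $\alpha,\beta,\gamma,\delta$ (together with the shift $T-t-S$) actually enter $B$, $D$ and $w^{qR}$, and to verify that those combinations agree across zones; consistency on the shared boundary slices is then automatic, since the parameter formulas were already noted to coincide there (for instance \eqref{eq:hexagon_zone_1} and \eqref{eq:hexagon_zone_2} agree at $t=S$). Everything else is a direct quotation of \Cref{thm:qRacah_ensemble_and_tilings}, \Cref{prop:qRacah_kernel_OPE}, and the eigenrelation \eqref{eq:qRacah_eigenfunction}, so no genuine difficulty remains.
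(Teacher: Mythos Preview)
Your proposal is correct and follows essentially the same route as the paper: the corollary is not given a separate proof there but is stated as an immediate consequence of the preceding discussion, which consists of exactly the ingredients you list---\Cref{thm:qRacah_ensemble_and_tilings} for the reduction to the $q$-Racah OPE (with the shift $\tilde x$ in zones \eqref{eq:hexagon_zone_3}--\eqref{eq:hexagon_zone_4}), \Cref{prop:qRacah_kernel_OPE} together with the eigenrelation \eqref{eq:qRacah_eigenfunction} for the spectral-projection interpretation, the observation that $\alpha\beta q^N=q^{-T-N}$ in all four zones to unify the spectral interval, and the mechanical check that the coefficients $B(\tilde x)$, $D(\tilde x)$ and the weight ratios collapse to the single set of formulas \eqref{eq:qRacah_BD_coefficients_tilde_x_for_hexagon}--\eqref{eq:qRacah_BD_coefficients_tilde_x_for_hexagon_2_with_weights}. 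Your explicit mention of the unitary transport $x\mapsto\tilde x$ is a welcome clarification of what the paper leaves implicit in the notation.
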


\begin{remark}
	\label{rmk:selfadjoint}
	One can check that
	\begin{equation*}
		\sqrt{\frac{w^{qR}(\tilde x-1)}{w^{qR}(\tilde  x)}}B(\tilde  x-1)
		=
		\sqrt{\frac{w^{qR}(\tilde x)}{w^{qR}(\tilde x-1)}}D(\tilde x),
	\end{equation*}
	which means that the operator $\mathfrak{D}$ is
	self-adjoint with respect to the standard inner product in
	$\ell^2$ on the $t$-th vertical slice.
\end{remark}

\section{Scaling regime}
\label{sec:scaling_regime}

\subsection{Traditional scaling of $q$-weighted lozenge tilings}
\label{sub:traditional_scaling_regime}

We consider the asymptotic regime when the sides of the hexagon
are scaled proportionally to some large parameter $L\to+\infty$.
Typically, the parameter $q$
(for the $q^{\mathsf{vol}}$ or the $q$-Racah measure on lozenge tilings)
also goes to $1$, at the rate
$\exp\left( -c/L \right)$, where $c\in \mathbb{R}$ is fixed.
In this regime, random tilings develop a
so-called \emph{liquid region},
in which local lattice distributions follow translation-invariant
ergodic Gibbs measures (\emph{pure states}, for
short) on lozenge tilings of the whole plane $\mathbb{Z}^2$.
Since in the limit the parameter $q$ becomes $1$, the limiting pure states
(for both the $q^{\mathsf{vol}}$ and the $q$-Racah ensembles)
satisfy the Gibbs property with respect to the
\textit{uniform resampling} of a tiling in any finite subregion,
conditioned on the boundary of this subregion.
Such Gibbs measures are uniquely determined by two parameters
\cite{Sheffield2008}, which in terms of nonintersecting paths
may be taken as their vertical density and horizontal slope.
These pure states universally arise as local lattice
limits in uniformly random lozenge tilings of arbitrary polygons
\cite{aggarwal2019universality}.

The literature on this asymptotic regime of the measure $q^{\mathsf{vol}}$ is vast, and includes
\cite{nienhuis1984triangular},
\cite{CohnLarsenPropp},
\cite{cerf2001low},
\cite{okounkov2003correlation},
\cite{ferrari2003step},
\cite{KOS2006},
\cite{OkounkovKenyon2007Limit},
\cite{BG2011non},
\cite{mkrtchyan2017gue},
\cite{DiFran2019qvol},
\cite{petrov2023asymptotics},
\cite{Ahn2022lozenge}.
Main results on the asymptotic behavior of the
uniform and
$q^{\mathsf{vol}}$ measures are summarized in
\cite{Kenyon2007Lecture} and \cite{gorin2021lectures}.
Asymptotic behavior of the $q$-Racah measure as $q\to 1$
was studied in
\cite{borodin-gr2009q},
\cite{dimitrov2019log},
\cite{gorin2022dynamical},
\cite{Duits2024lozenge}.
In these references, the limiting objects within the liquid
region are two-dimensional, such as the pure Gibbs states at the lattice
level, Gaussian Free Field describing global fluctuations,
or the GUE corners process at points where the liquid region
comes close to the boundary of the polygon.

The novelty of the present work is that we keep the parameter $q$ fixed
while scaling the sides of the hexagon to infinity.
As was first observed in simulations in \cite{borodin-gr2009q}, this leads to
formation of a so-called \emph{waterfall region} inside the hexagon.
We describe this region in \Cref{sub:scaling_and_waterfall_region} below.

\subsection{Waterfall scaling}
\label{sub:scaling_and_waterfall_region}

We consider the limit behavior of the
$q$-Racah nonintersecting path ensemble as the sides of the
hexagon grow proportionally to infinity as
$L\to+\infty$:
\begin{equation}
		\label{eq:limit_regime}
		T=\lfloor L\mathsf{T} \rfloor,
		\
		S=\lfloor L\mathsf{S} \rfloor ,
		\
		N=\lfloor L\mathsf{N} \rfloor ,
	\
	\mathsf{T}>\mathsf{S}>0,\ \mathsf{N}>0;
	\quad
	q\in(0,1),
	\,
	\kappa\in \mathbf{i}\mathbb{R}_{>0} \ \textnormal{fixed}.
\end{equation}
In this regime, the
$q$-Racah nonintersecting paths display a very different
limit behavior from the traditional one (\Cref{sub:traditional_scaling_regime}).
Namely, for fixed $q$ and $\kappa$, 
the two-dimensional lattice behavior
\emph{collapses} into a one-dimensional random stepped interface (which we call a \emph{barcode}).
One can also think that this random interface satisfies (a degenerate version of)
the
$q$-deformed Gibbs property,
which does not allow any changes under resampling.

Let us define the macroscopic
\emph{waterfall region} where this new behavior
occurs.

\begin{definition}[Waterfall region]
	\label{def:waterfall_region}
	Fix the scaled dimensions
	$\mathsf{T},\mathsf{S},\mathsf{N}$
	of the hexagon, and let $(\mathsf{t},\mathsf{x})$ be the scaled
	coordinates. Denote by $\mathcal{H}$ the scaled hexagon, that is,
	\begin{equation}
		\label{eq:scaled_hexagon}
		\mathcal{H}\coloneqq \left\{
		(\mathsf{t},\mathsf{x})\colon 0< \mathsf{t}<\mathsf{T},\
		\max(0,\mathsf{t}-\mathsf{T}+\mathsf{S})< \mathsf{x}<
		\min(\mathsf{S}+\mathsf{N},\mathsf{t}+\mathsf{N})\right\}.
	\end{equation}
	Let
	\begin{equation}
		\label{eq:tl_tr}
		\mathsf{t}_l\coloneqq
		|\mathsf{N}-\mathsf{S}|
		,
		\qquad
		\mathsf{t}_r\coloneqq
		\min(\mathsf{N}+\mathsf{S},2\mathsf{T}-\mathsf{N}-\mathsf{S}).
	\end{equation}
	Denote the \emph{waterfall region} by
	\begin{equation}
		\label{eq:waterfall_region}
		\mathcal{W}\coloneqq \left\{ (\mathsf{t},\mathsf{x})\in \mathcal{H}\colon
			\mathsf{t}_l< \mathsf{t}< \mathsf{t}_r,
			\
			|2\mathsf{x}-\mathsf{S}-\mathsf{t}|< \mathsf{N}
		\right\}.
	\end{equation}
	Also, let the \emph{center line} be the following line segment of slope $\frac{1}{2}$:
	\begin{equation}
		\label{eq:center_line}
		\mathcal{C}\coloneqq
		\left\{ (\mathsf{t},\mathsf{x})\in \mathcal{H}\colon
		\mathsf{t}_l< \mathsf{t}< \mathsf{t}_r,\
		\mathsf{x}=\tfrac12(\mathsf{t}+\mathsf{S})
		\right\}
		\subset \mathcal{W}.
	\end{equation}
	See \Cref{fig:waterfall_region} for illustrations of the waterfall
	region and the corresponding exact samples
	from the $q$-Racah measure generated by the
	shuffling algorithm of \cite{borodin-gr2009q}.
\end{definition}

\begin{remark}
	\label{rmk:thin_waterfall}
	Note that when $\mathsf{N}\ge \mathsf{T}$, we have
	$\mathsf{t}_l\ge \mathsf{t}_r$. This means that the
	waterfall region is nonempty only for $\mathsf{N}<
	\mathsf{T}$.
	In
	\Cref{sec:asymptotic_clustering_of_nonintersecting_paths}
	below,
	we show that tor $\mathsf{N}>\mathsf{T}$,
	the nonintersecting paths can only have asymptotic slopes
	$0$ or $1$ (depending on the part of the hexagon), so
	the
	waterfall behavior does not occur.
	It would be interesting to probe 
	the case of ``thin waterfall'' when
	$T/N\to1$
	while $T-N\gg 1$ (see \Cref{fig:waterfall_region_frozen}
	for exact samples), but we do not address this
	question in the present work.
\end{remark}

\begin{figure}[htbp]
	\centering
	\includegraphics[height=210pt]{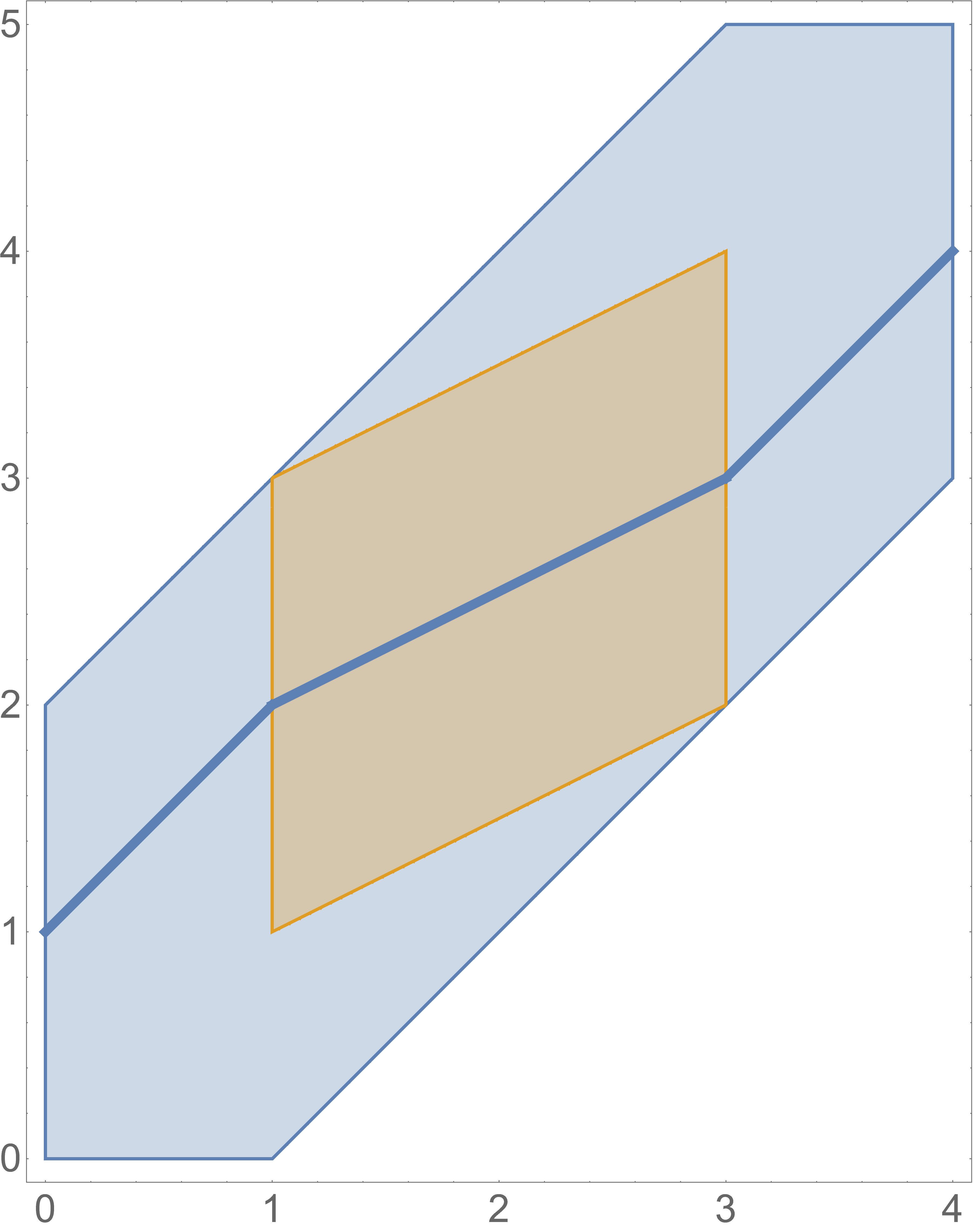}\qquad\qquad
	\includegraphics[height=210pt]{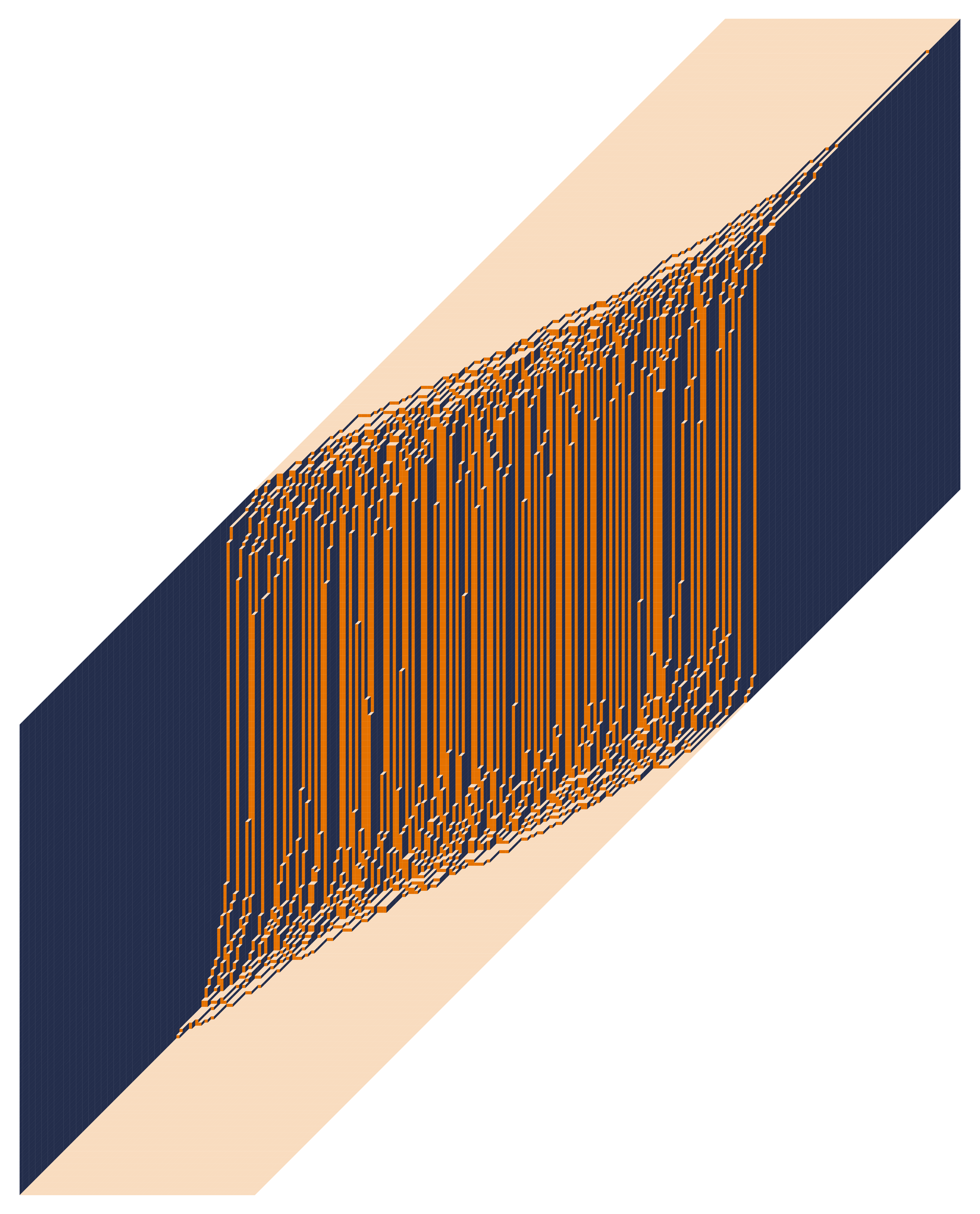}\qquad
	\\\vspace*{10pt}
	\begin{tabular}{cc}
		\includegraphics[height=126pt]{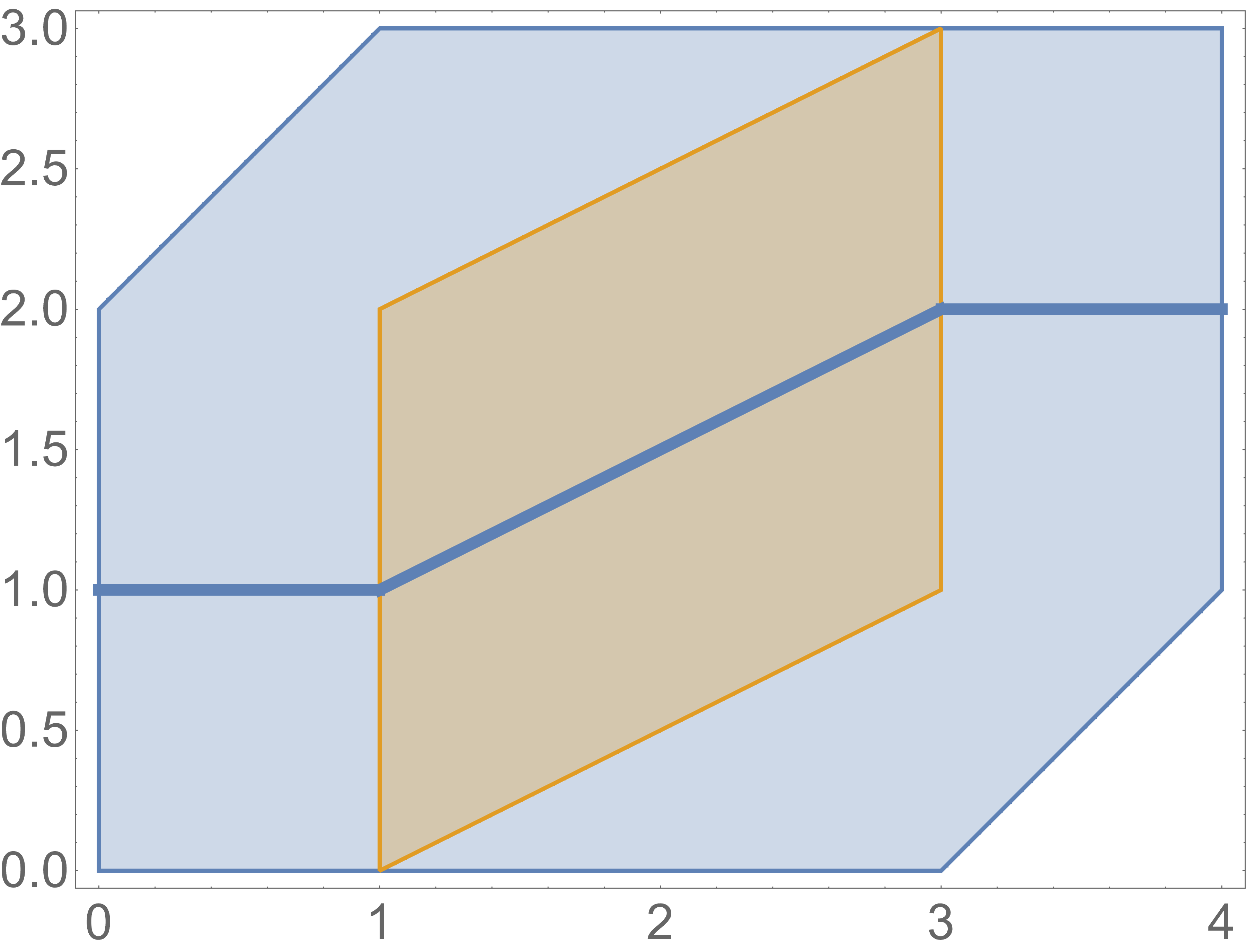}\qquad \qquad \includegraphics[height=126pt]{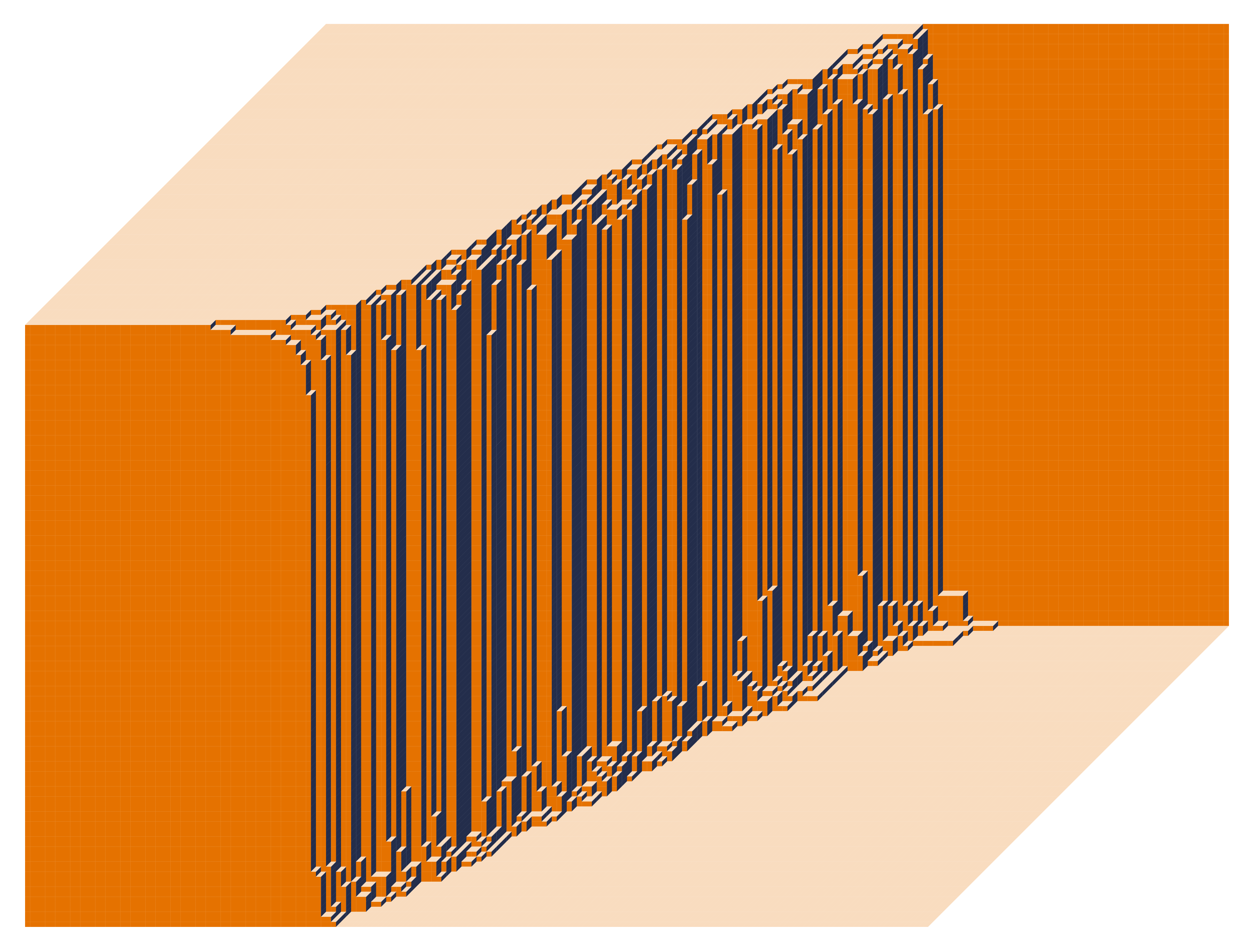}\\[10pt]
		\includegraphics[height=126pt]{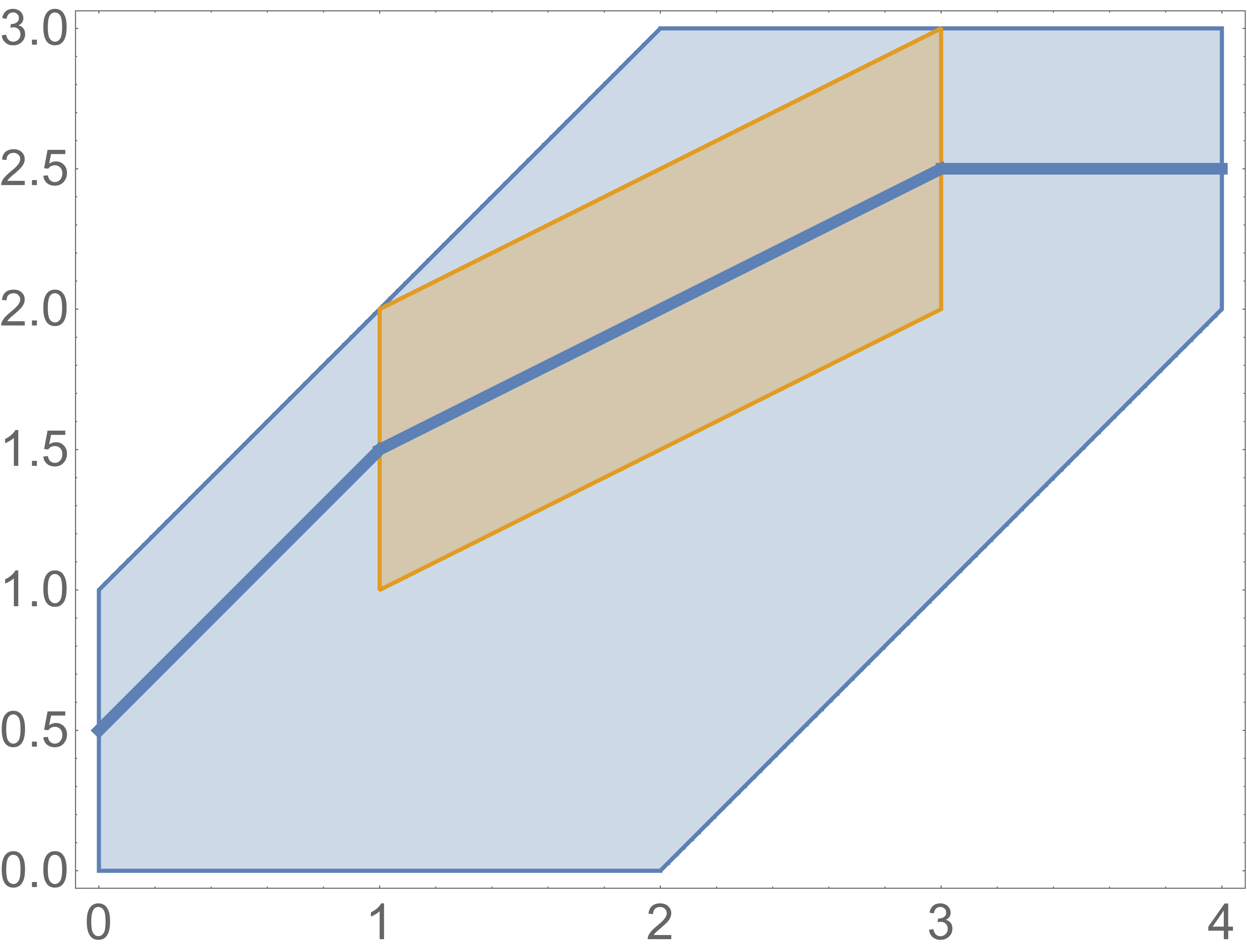}\qquad \qquad \includegraphics[height=126pt]{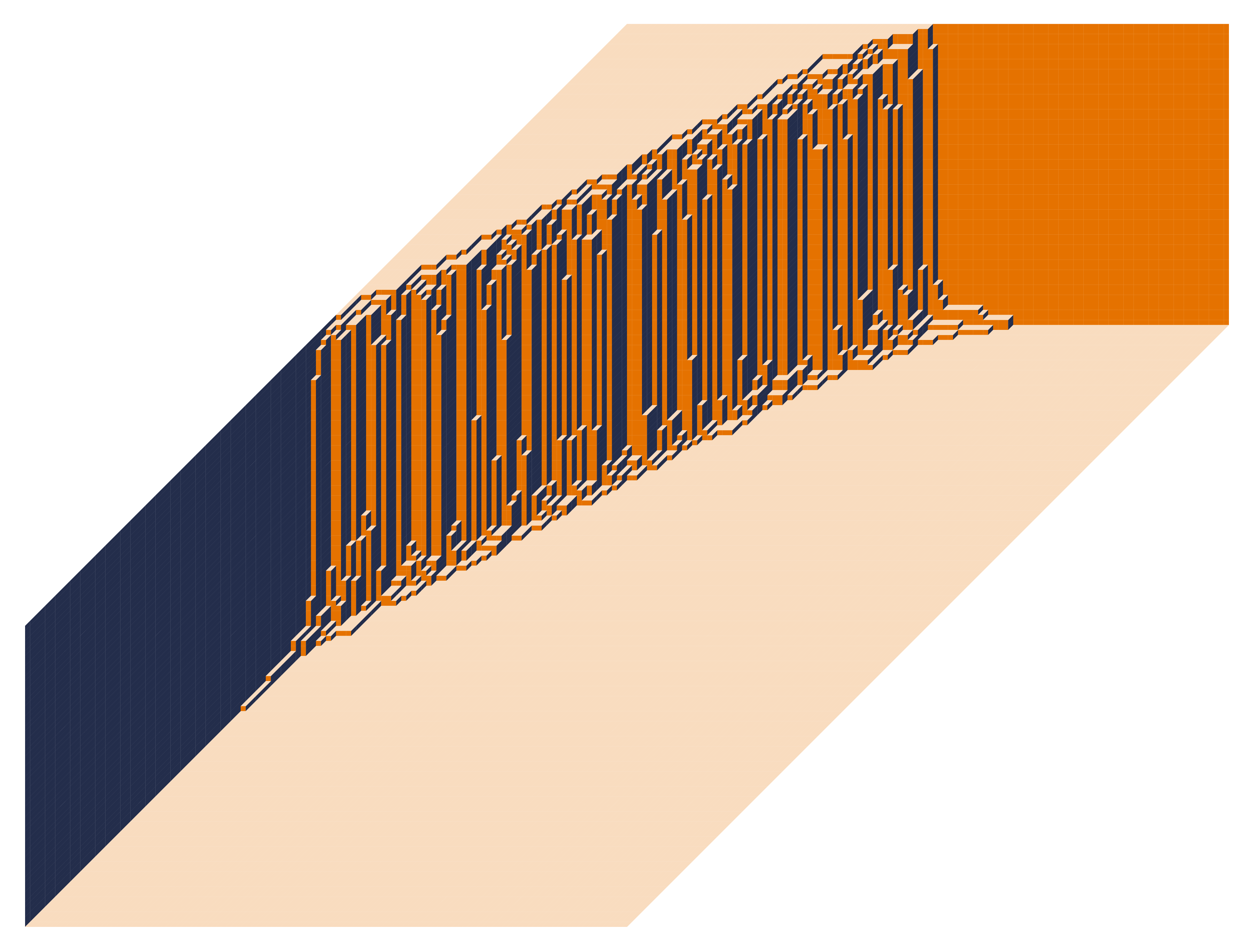}
	\end{tabular}
	\caption{Examples of the waterfall region $\mathcal{W}$
		inside the hexagon in the coordinate system
		$(\mathsf{t},\mathsf{x})$, together with an exact sample
		produced by the shuffling algorithm from \cite{borodin-gr2009q}.
		On the left, we have displayed the large-scale limit of
		the trajectory of the nonintersecting path starting in
		the middle.  Inside~$\mathcal{W}$, this path stays close
		to the center line~$\mathcal{C}$, and in particular, has
		asymptotic slope~$1/2$.  Outside~$\mathcal{W}$, the
		slope is either $0$ or $1$, and the path proceeds right
		or right-up without fluctuations. Limit
		trajectories of all other nonintersecting paths are
		parallel translations of the center one. The parameters
		of the simulations are, from top to bottom:
		$(T,S,N,q)=(300,225,150,0.9), (240,60,120,0.8),
		(240,120,60,0.8)$, and $\kappa=\mathbf{i}$ throughout.
		The horizontal lozenges (the lighter-colored
		ones) are exponentially (in $L$) rare in the waterfall region.
		This behavior corresponds to the asymptotic
		clustering of the paths.}
	\label{fig:waterfall_region}
\end{figure}
\begin{figure}[htpb]
	\centering
	\includegraphics[height=160pt]{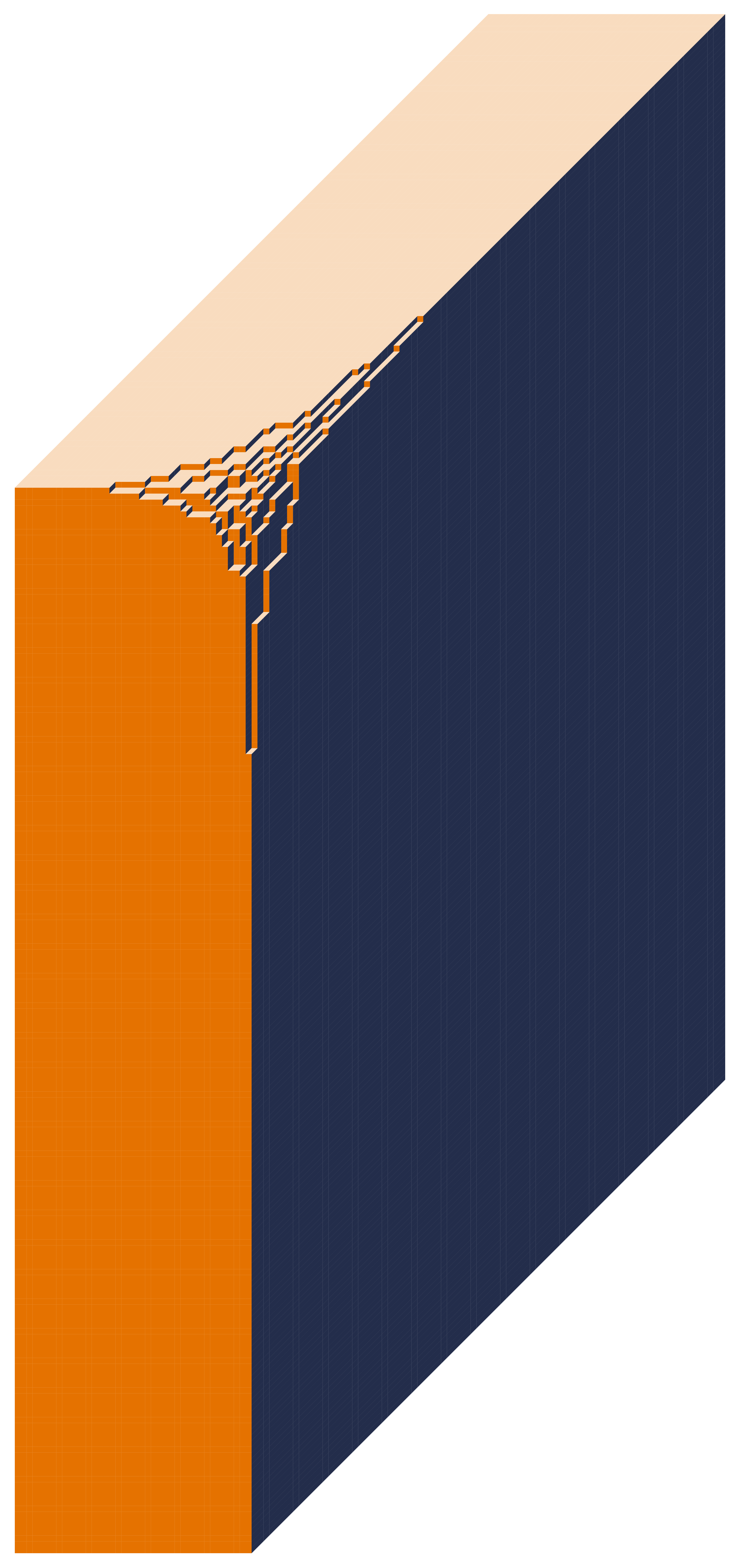}\hspace{80pt}
	\includegraphics[height=160pt]{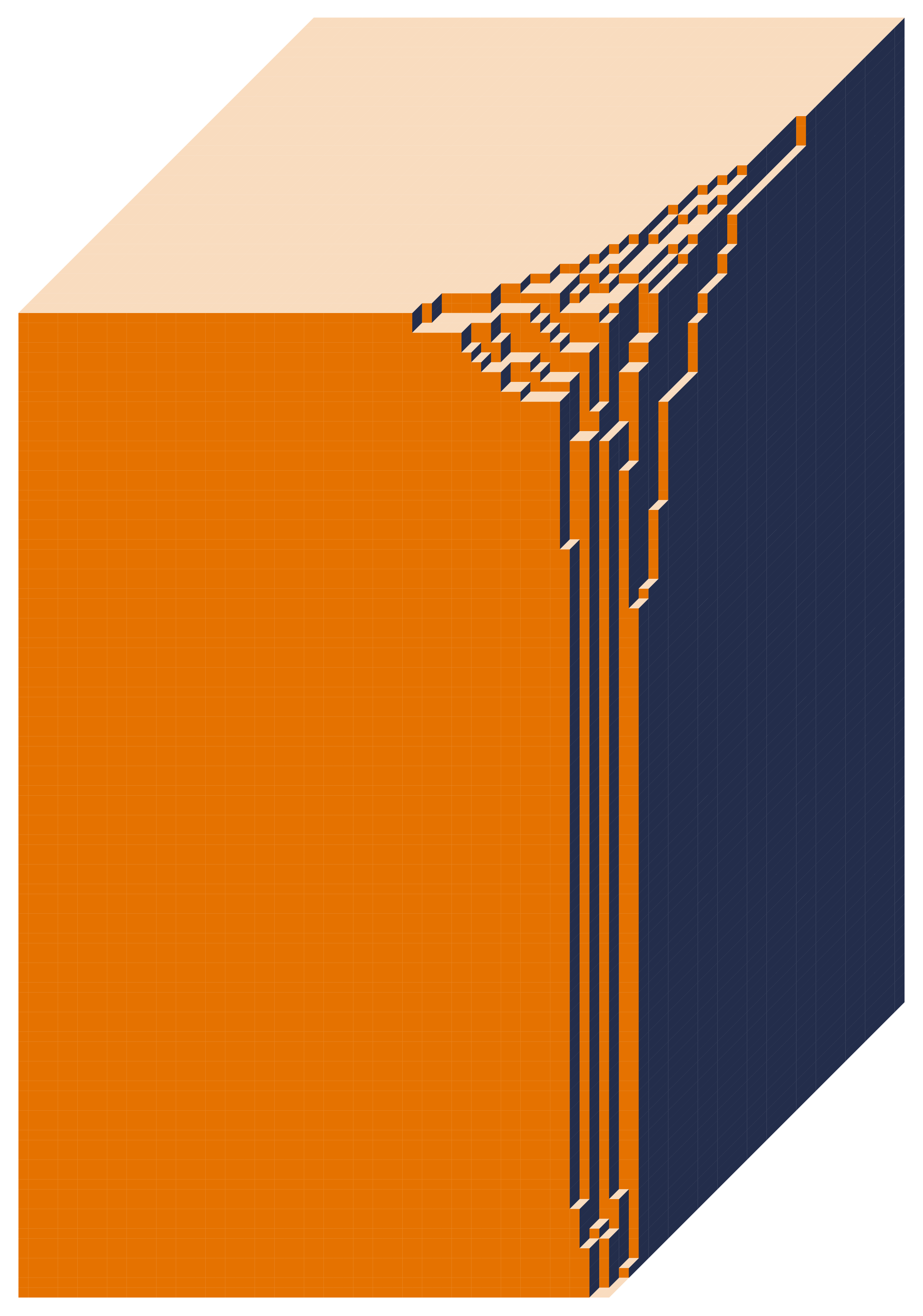}
	\caption{Exact samples from the $q$-Racah measure generated
	by the shuffling algorithm of \cite{borodin-gr2009q} in the
	regime $N>T$. The parameters are $q=0.85$,
	$\kappa=\mathbf{i}$, and $(N,T,S)=(180,120,80)$ (left) and
	$(N,T,S)=(100,90,60)$ (right). In the left-hand picture no
	waterfall behavior is present; presumably the limit shape at
	scales $\ll N$ coincides with that for $q^{\mathsf{vol}}$
	random plane partitions without boundary studied in
	\cite{cerf2001low},
	\cite{ferrari2003step},
	\cite{okounkov2003correlation}. In the right-hand picture we observe a
	``thin waterfall,'' which appears when $T/N$ is close to $1$
	(see also \Cref{rmk:thin_waterfall}).}
	\label{fig:waterfall_region_frozen}
\end{figure}

\begin{definition}
	\label{def:above_below_waterfall}
	Denote by $\mathcal{W}^{\pm}$ the regions above and below the waterfall, that is,
	\begin{equation*}
		\mathcal{W}^{+}\coloneqq
		\begin{cases}
			\{(\mathsf{t},\mathsf{x})\in \mathcal{H}\colon
			\mathsf{x}>\mathsf{N}+(\mathsf{t}-\mathsf{T}+\mathsf{S})^+\},&\textnormal{if $\mathsf{N}>\mathsf{T}$};\\
				\{(\mathsf{t},\mathsf{x})\in \mathcal{H}\colon
			\mathsf{t}_l<\mathsf{t}<\mathsf{t}_r,\ \mathsf{x}>\tfrac12(\mathsf{S+\mathsf{t}+\mathsf{N}})
		\}\\\hspace{20pt}
			\cup
				\{(\mathsf{t},\mathsf{x})\in \mathcal{H}\colon
					\mathsf{t}\le \mathsf{t}_l,\
					\mathsf{x}>\mathsf{N}
			\}\mathbf{1}_{\mathsf{N}>\mathsf{S}}
			\\\hspace{20pt}
			\cup
				\{(\mathsf{t},\mathsf{x})\in \mathcal{H}\colon
					\mathsf{t}\ge \mathsf{t}_r,\
					\mathsf{x}>\mathsf{S}+\mathsf{N}+\mathsf{t}-\mathsf{T}
				\}\mathbf{1}_{\mathsf{S}+\mathsf{N}>\mathsf{T}}
		,
		&\textnormal{if $\mathsf{N}<\mathsf{T}$},\\
		\end{cases}
	\end{equation*}
	and
	\begin{multline*}
		\mathcal{W}^{-}\coloneqq
				\{(\mathsf{t},\mathsf{x})\in \mathcal{H}\colon
			\mathsf{t}_l<\mathsf{t}<\mathsf{t}_r,\ \mathsf{x}<\tfrac12(\mathsf{S+\mathsf{t}-\mathsf{N}})
		\}
			\\
			\cup
				\{(\mathsf{t},\mathsf{x})\in \mathcal{H}\colon
					\mathsf{t}\le \mathsf{t}_l,\
					\mathsf{x}<\mathsf{t}
			\}\mathbf{1}_{\mathsf{N}<\mathsf{S}}
			\cup
				\{(\mathsf{t},\mathsf{x})\in \mathcal{H}\colon
					\mathsf{t}\ge \mathsf{t}_r,\
					\mathsf{x}<\mathsf{S}
				\}\mathbf{1}_{\mathsf{S}+\mathsf{N}<\mathsf{T}}.
	\end{multline*}
	Here the indicator $\mathbf{1}_{\mathsf{N}>\mathsf{S}}$ means that the
	subset in the union is present only if $\mathsf{N}>\mathsf{S}$, and similarly for all
	other indicators.
	In the samples in \Cref{fig:waterfall_region}, right,
	the regions $\mathcal{W}^{\pm}$ are the lighter-colored ones. They consist solely of the horizontal lozenges
	\begin{tikzpicture}[baseline = (current bounding
		box.south),scale=.25]
	\draw[thick] (0,0)--++(1,1)--++(1,0)--++(-1,-1)--cycle;
	\end{tikzpicture}.\footnote{The below region $\mathcal{W}^-$ exists only if
	$\mathsf{N}<\mathsf{T}$. In \Cref{sec:asymptotic_clustering_of_nonintersecting_paths} below,
	we show that for
	$\mathsf{N}>\mathsf{T}$, the nonintersecting paths stay
	as low as they can: First, they go straight with
	slope $0$ until $\mathsf{t}=\mathsf{T}-\mathsf{S}$, and then
	continue diagonally with slope $1$.}

	Also denote $\mathcal{P}\coloneqq \mathcal{H}\setminus(\mathcal{W}^+\cup \mathcal{W}^-)$,
	this is the region inside the hexagon where we expect the nonintersecting paths to cluster together.
	We call $\mathcal{P}$ the \emph{saturation band}. It has constant width~$\mathsf{N}$
	and includes the waterfall region.
\end{definition}

\subsection{Heuristics for the saturation band and the waterfall region}
\label{sub:heuristics_for_saturation_band_and_waterfall_region}

Heuristically, the appearance of the saturation band $\mathcal{P}$ where the nonintersecting
paths cluster together with density~$1$
can be observed from the weights
$\mathsf{w}_{q,\kappa}$ of the holes in the path ensemble, see \eqref{eq:q_kappa_weight_of_ensemble}. Indeed, we have
\begin{equation}
	\label{eq:heuristics_waterfall}
	\mathsf{w}_{q,\kappa}(x-\tfrac{t}{2}+1)=\kappa q^{\frac12+( x-\frac{t+S}{2} )}-\left( \kappa q^{\frac12+( x-\frac{t+S}{2} )} \right)^{-1}.
\end{equation}
Since $\kappa\in \mathbf{i}\mathbb{R}_{>0}$, both summands in
\eqref{eq:heuristics_waterfall} have the same sign. When
$(t,x)$ is close to the center line~$\mathcal{C}$
\eqref{eq:center_line}, both summands are asymptotically
of order $1$. However, when $(t,x)$ is far
from~$\mathcal{C}$, one of the summands dominates and goes
to infinity exponentially in $L$. Therefore, all holes in the path ensemble are
encouraged to stay as far from the center line as
possible. There are two possibilities, depending on whether $\mathsf{t}_l<\mathsf{t}<\mathsf{t}_r$:

\begin{enumerate}[$\bullet$]
	\item
		If $\mathsf{t}_l<\mathsf{t}<\mathsf{t}_r$,
		there is enough room inside the hexagon $\mathcal{H}$ for
		the holes to stay away from~$\mathcal{C}$ symmetrically.
		This leads to the formation of the waterfall region $\mathcal{W}$: It is a band of width
		$\mathsf{N}$ centered around $\mathcal{C}$.
		Since the two summands in
		\eqref{eq:heuristics_waterfall} are symmetric with respect
		to $\mathcal{C}$, the waterfall region $\mathcal{W}$ indeed must inherit this
		symmetry.
	\item
		When $\mathsf{t}\notin [\mathsf{t}_l,\mathsf{t}_r]$ or $\mathsf{N}>\mathsf{T}$,
		there is no room inside $\mathcal{H}$ to fit a symmetric band of width
		$\mathsf{N}$ around~$\mathcal{C}$. Then the holes
		live only on one side of $\mathcal{C}$, and the nonintersecting
		paths are frozen and have slope $0$ or $1$.
\end{enumerate}
In
\Cref{sec:asymptotic_clustering_of_nonintersecting_paths}
below, we engage in a detailed analysis of the $q$-Racah
orthogonal polynomial ensemble probability weights
\eqref{eq:qRacah_ensemble_defn} to make this heuristic
rigorous, and prove our main result (\Cref{thm:waterfall_LLN_intro}).

\section{Asymptotic behavior on a vertical slice via spectral projections}
\label{sec:behavior_on_slice_via_spectral_projections}

In this section, we consider the asymptotic behavior of the
fixed-slice correlation kernel $K_t(x,y)$
\eqref{eq:K_t_x_y_fixed_slice} in the regime described in
\Cref{sub:scaling_and_waterfall_region}.
We employ the spectral description of $K_t$ via the
difference operator $\mathfrak{D}$ given by
\eqref{eq:qRacah_orthonormal_difference_operator_for_hexagon_final_in_s2}.
It turns out that the spectral approach, which successfully
worked throughout the whole hexagon in the traditional scaling regime $q\to 1$,
can only describe the asymptotic behavior
on the center line $\mathcal{C}$ when $q$ is fixed.

\subsection{Limit of the coefficients}
\label{sub:limit_of_coefficients_difference_operator}

Recall the limit regime \eqref{eq:limit_regime}.
We consider a fixed vertical slice at
$t=\lfloor L\ssp \mathsf{t} \rfloor$ inside the hexagon
(where $\mathsf{t}$ is the scaled position of the slice).
We aim to
find the limit of the (suitably shifted and scaled)
difference operator $\mathfrak{D}$
\eqref{eq:qRacah_orthonormal_difference_operator_for_hexagon_final_in_s2}:
\begin{equation}
	\label{eq:diff_operator_scaling_shifting}
	\bigl(\mathfrak{D}^{\mathrm{scaled}}g\bigr)(x)\coloneqq
	\underbrace{
		q^{N+T+(N-|2\lfloor L\mathsf{x}\rfloor-S-t|)^+}
		}_{\eqcolon C_{\mathsf{x}}}
	\Bigl[
	\bigr(\mathfrak{D}g\bigl)(x)+q^{-N+1}(1-q^{N-1})(q^{-T-N}-1)\ssp g(x)\Bigr].
\end{equation}
Here and throughout the paper, we use the notation
\begin{equation}
	\label{eq:a_plus_minus_notation}
	a^+ \coloneqq \max(a,0),\quad
	a^- \coloneqq \min(a,0),\qquad a\in \mathbb{R}.
\end{equation}
We will also abbreviate $C_{\mathsf{x}}\coloneqq q^{N+T+(N-|2\lfloor L\mathsf{x}\rfloor-S-t|)^+}$,
which is the scaling prefactor in \eqref{eq:diff_operator_scaling_shifting}.

The operator $\mathfrak{D}^{\mathrm{scaled}}$ is symmetric
with respect to the standard inner product in
$\ell^2(\mathbb{Z})$ (see \Cref{rmk:selfadjoint}). Its
coefficients have compact support in $x$, so
$\mathfrak{D}^{\mathrm{scaled}}$ is bounded and thus
self-adjoint. The subspace $\ell^2_0(\mathbb{Z})$
consisting of finite linear combinations of the standard
basis vectors $e_x(y)\coloneqq\mathbf{1}_{y=x}$ is an
\emph{essential domain}\footnote{This subspace is also
sometimes called the \emph{core} of an operator.}
for $\mathfrak{D}^{\mathrm{scaled}}$.
Due to the shift in
\eqref{eq:diff_operator_scaling_shifting}, the fixed-slice correlation
kernel $K_t$ \eqref{eq:K_t_x_y_fixed_slice} is the kernel of
the orthogonal projection operator in $\ell^2(\mathbb{Z})$
onto the spectral interval $[0,+\infty)$ of the operator
$\mathfrak{D}^{\mathrm{scaled}}$.

Assume that in \eqref{eq:limit_regime}
we have $\mathsf{N}<\mathsf{T}$, which guarantees the existence of the
waterfall region.
Let $x = \lfloor L\ssp \mathsf{x}\rfloor+\Delta x$
and $t=\lfloor L\ssp \mathsf{t} \rfloor$, where $\Delta x\in
\mathbb{Z}$ is fixed. We also assume that  $\mathsf{t}_l <
\mathsf{t} < \mathsf{t}_r$.
Throughout the rest of this
subsection, we consider the asymptotic behavior of the
coefficients of $\mathfrak{D}^{\mathrm{scaled}}$ one by one,
depending on the position of $(\mathsf{t},\mathsf{x})$
inside the hexagon.

\begin{lemma}[Diagonal coefficient]
	\label{lemma:D_scaled_limit_of_diagonal_coefficient}
	Let $\mathsf{t}_l < \mathsf{t} < \mathsf{t}_r$, and the
	point $(\mathsf{t},\mathsf{x})$ be inside the scaled
	hexagon $\mathcal{H}$ \eqref{eq:scaled_hexagon}.
	Under the scaling described before the lemma,
	the diagonal coefficient of the operator $\mathfrak{D}^{\mathrm{scaled}}$ \eqref{eq:diff_operator_scaling_shifting} has the following limit:
	\begin{equation}
		\label{eq:D_scaled_limit_of_diagonal_coefficient}
		\begin{split}
			&\lim_{L\to+\infty}
			C_{\mathsf{x}}\cdot
			\bigl(
				q^{-N+1}(1-q^{N-1})(q^{-T-N}-1)
				-B(\tilde x)-D(\tilde x)
			\bigr)
			\\
			&\hspace{40pt}
			=
			\begin{cases}
				-1, & \mathsf{x}<\tfrac{1}{2}(\mathsf{S}+\mathsf{t}-\mathsf{N});\\
				-1-\kappa^{-2}(1+q)\ssp q^{-1-2\Delta x},
				& \mathsf{x}=\tfrac12(\mathsf{S}+\mathsf{t}-\mathsf{N}) ,\ \lfloor L\ssp \mathsf{x}\rfloor=\tfrac{1}{2}(S+t-N)
				;\\
				-\kappa^{-2}(1+q)\ssp q^{-1-2\Delta x}, & \tfrac{1}{2}(\mathsf{S}+\mathsf{t}-\mathsf{N})<\mathsf{x}<\tfrac{1}{2}(\mathsf{S}+\mathsf{t});\\
				\dfrac{-\kappa^2 (1+q) \ssp q^{2\Delta x+1}}{(1-\kappa^2 q^{2 \Delta x})(1-\kappa^2 q^{2\Delta x+2})}, & \mathsf{x}=\tfrac12(\mathsf{S}+\mathsf{t}),\ \lfloor L\ssp \mathsf{x}\rfloor=\tfrac{1}{2}(S+t);\\
				-\kappa^2(1+q) \ssp q^{2\Delta x+1}, & \tfrac{1}{2}(\mathsf{S}+\mathsf{t})<\mathsf{x}<\tfrac{1}{2}(\mathsf{S}+\mathsf{t}+\mathsf{N});\\
				-1-\kappa^2(1+q) \ssp q^{2\Delta x+1}, & \mathsf{x}= \tfrac12(\mathsf{S}+\mathsf{t}+\mathsf{N}) ,\ \lfloor L\ssp \mathsf{x}\rfloor=\tfrac{1}{2}(S+t+N);
				\\
				-1, & \mathsf{x}>\tfrac{1}{2}(\mathsf{S}+\mathsf{t}+\mathsf{N}).
			\end{cases}
		\end{split}
	\end{equation}
\end{lemma}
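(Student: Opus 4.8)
The plan is to work directly from the exact product formulas \eqref{eq:qRacah_BD_coefficients_tilde_x_for_hexagon} for $B(\tilde x)$ and $D(\tilde x)$. Substituting $x=\lfloor L\mathsf{x}\rfloor+\Delta x$ together with $S=\lfloor L\mathsf{S}\rfloor$, $t=\lfloor L\mathsf{t}\rfloor$, $N=\lfloor L\mathsf{N}\rfloor$, $T=\lfloor L\mathsf{T}\rfloor$, every factor in $B$ and $D$ has the form $q^{a}-c\,q^{b}$ with $c\in\{1,\kappa^2\}$ and $a,b$ affine in $L$ with integer slopes. Writing such a factor as $q^{\min(a,b)}\bigl(q^{(a-b)^+}-c\,q^{(b-a)^+}\bigr)$ reduces it to $q^{\min(a,b)}$ times a bounded expression that converges to $1$ (if $b-a\to+\infty$), to $-c$ (if $a-b\to+\infty$), or, when $a-b$ stays bounded, to the nondegenerate $\Delta x$-dependent quantity $q^{(a-b)^+}-c\,q^{(b-a)^+}$. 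Collecting the extracted powers yields $B(\tilde x)=q^{E_B}\beta_L$ and $D(\tilde x)=q^{E_D}\delta_L$ with $\beta_L,\delta_L$ bounded and $E_B,E_D$ explicit affine functions of $L$. The same normalization identifies the $x$-independent shift in \eqref{eq:diff_operator_scaling_shifting} as $-\mathrm{ev}^{qR}_{N-1}=q^{1-2N-T}-q^{1-N}-q^{-N-T}+1$ (using $\alpha\beta q^{N}=q^{-T-N}$, which holds in every zone), with dominant term $q^{1-2N-T}$.

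Next I would run the case analysis dictated by the three thresholds $\mathsf{x}=\tfrac12(\mathsf{S}+\mathsf{t}-\mathsf{N})$, $\mathsf{x}=\tfrac12(\mathsf{S}+\mathsf{t})$, $\mathsf{x}=\tfrac12(\mathsf{S}+\mathsf{t}+\mathsf{N})$ --- the lower boundary of $\mathcal{W}$, the center line, and the upper boundary of $\mathcal{W}$. In each of the seven resulting regimes the hypotheses $\mathsf{t}_l<\mathsf{t}<\mathsf{t}_r$ and $(\mathsf{t},\mathsf{x})\in\mathcal{H}$ fix the sign of $a-b$ in every binomial: for instance $\mathsf{t}<\mathsf{t}_r$ gives $\tfrac12(\mathsf{S}+\mathsf{t}+\mathsf{N})<\mathsf{T}$, so $q^{T}-\kappa^2q^{x+1}\sim-\kappa^2q^{x+1}$ throughout $\mathcal{W}$, while the sign of $2\mathsf{x}-\mathsf{S}-\mathsf{t}$ controls the denominator factors $q^{S+t}-\kappa^2q^{2x+j}$, which is why the center line is special. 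One then reads off $E_B,E_D$ and checks that $C_{\mathsf{x}}=q^{N+T+(N-|2\lfloor L\mathsf{x}\rfloor-S-t|)^+}$ is exactly the prefactor making $C_{\mathsf{x}}q^{1-2N-T}$ converge. Off $\mathcal{W}$ and on its boundary $C_{\mathsf{x}}=q^{N+T}$, so $C_{\mathsf{x}}(-\mathrm{ev}^{qR}_{N-1})=q^{1-N}-q^{T+1}-1+q^{N+T}$ diverges like $q^{1-N}$; strictly inside $\mathcal{W}$ the prefactor is smaller and $C_{\mathsf{x}}(-\mathrm{ev}^{qR}_{N-1})$ diverges at the rate $q^{1-|2\mathsf{x}-\mathsf{S}-\mathsf{t}|L}$; only on the center line ($C_{\mathsf{x}}=q^{2N+T}$) is it bounded, tending to $q$. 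In each regime one of $B(\tilde x),D(\tilde x)$ carries the matching $L$-linear exponent $1-2N-T$, and the heart of the argument is to show that the divergences of $C_{\mathsf{x}}(-\mathrm{ev}^{qR}_{N-1})$ and of $C_{\mathsf{x}}\bigl(B(\tilde x)+D(\tilde x)\bigr)$ cancel term by term (this is where $\mathrm{ev}^{qR}_0=0$ and the product form of $B,D$ enter), and that all strictly lower-order terms are annihilated by $C_{\mathsf{x}}$. The finite remainder is the claimed value: outside $\mathcal{W}$ only the $-1$ coming from the $-q^{-N-T}$ term of $-\mathrm{ev}^{qR}_{N-1}$ survives; inside $\mathcal{W}$ that term too is suppressed (as $C_{\mathsf{x}}q^{-N-T}\to0$) and only the $B+D$ contribution $-\kappa^2(1+q)q^{2\Delta x+1}$ (resp.\ the corresponding $(q^{-1},\kappa^{-1})$-symmetric expression below the center line) remains; on $\partial\mathcal{W}$ both survive.

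Finally, the three boundary regimes --- where $\mathsf{x}$ equals one of the thresholds and $\lfloor L\mathsf{x}\rfloor$ hits the corresponding half-integer combination of $S,t,N$, which forces the relevant parity --- are handled by the same computation but retaining the $\Delta x$-dependent brackets instead of collapsing them. The center line is the most delicate: there $2x=S+t+2\Delta x$, so the two denominator factors of $B(\tilde x)$ become $q^{2(S+t)}(1-\kappa^2q^{2\Delta x+1})(1-\kappa^2q^{2\Delta x+2})$ and those of $D(\tilde x)$ become $q^{2(S+t)}(1-\kappa^2q^{2\Delta x})(1-\kappa^2q^{2\Delta x+1})$, none of whose factors is dominant; putting $B(\tilde x)+D(\tilde x)$ over a common denominator, the shared factor $1-\kappa^2q^{2\Delta x+1}$ cancels, producing the $(1-\kappa^2q^{2\Delta x})(1-\kappa^2q^{2\Delta x+2})$ denominator of \eqref{eq:D_scaled_limit_of_diagonal_coefficient}, and the combination $q-\lim C_{\mathsf{x}}\bigl(B(\tilde x)+D(\tilde x)\bigr)$ simplifies to the stated rational function of $q^{2\Delta x}$. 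I expect the main obstacle to be exactly this bookkeeping --- tracking, across all seven cases, which term of each binomial dominates, which powers of $q$ are extracted, and how the leading divergence cancels --- with the center line requiring the most care, since the computation there is genuinely two-term in both numerator and denominator.
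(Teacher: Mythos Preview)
Your plan is sound and will give the stated limits, but it is organised differently from the paper's argument. The paper does \emph{not} keep $B(\tilde x)$, $D(\tilde x)$ and $-\mathrm{ev}^{qR}_{N-1}$ separate: it first combines $-\mathrm{ev}^{qR}_{N-1}-B(\tilde x)-D(\tilde x)$ algebraically over the common denominator $(q^{S+t}-\kappa^2 q^{2x})(q^{S+t}-\kappa^2 q^{2x+2})$ (the shared factor $(q^{S+t}-\kappa^2 q^{2x+1})$ drops out as an exact identity at this stage, not as an asymptotic fact), and then fully expands the numerator into roughly thirty monomials $q^{(\text{affine in }S,t,T,N,x)}$ grouped by powers of $\kappa^2$. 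In each of the seven regimes one only has to identify the single dominant term of the denominator (either $\kappa^{-4}q^{-4x-2}$, or $q^{-2(S+t)}$, or the full product on the center line), multiply through, and inspect monomial by monomial which terms survive. Thus the paper's argument is a brute-force expansion followed by a mechanical survival check; all cancellations between $-\mathrm{ev}^{qR}_{N-1}$ and $B+D$ happen symbolically before any limit is taken.

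Your factor-by-factor approach is more structural and keeps the product form of $B,D$ visible, but it trades the expansion for a subtler obligation: when you write $B(\tilde x)=q^{1-2N-T}(1+\epsilon_L)$ and cancel the divergent $q^{1-N}$ against the leading term of $C_{\mathsf{x}}(-\mathrm{ev}^{qR}_{N-1})$, you must check that $q^{1-N}\epsilon_L\to0$, i.e.\ that every subleading correction in the product decays faster than $q^{N}$. This does hold (each binomial $1-q^{a-b}$ contributes a correction with exponent strictly exceeding $N$ once $\mathsf{x}$ is fixed strictly inside its region), but you should make that explicit in each case rather than leave it to ``bookkeeping''. Two minor points: your invocation of $\mathrm{ev}^{qR}_0=0$ is a red herring --- nothing in the cancellation uses it, only the explicit four-term expansion of $\mathrm{ev}^{qR}_{N-1}$ --- and within a single regime (e.g.\ $\mathsf{x}<\tfrac12(\mathsf{S}+\mathsf{t}-\mathsf{N})$) the dominant term in factors like $q^S-\kappa^2 q^{N+x}$ of $D(\tilde x)$ can switch depending on the sign of $\mathsf{S}-\mathsf{N}-\mathsf{x}$, so your case analysis may bifurcate further than the paper's; the paper avoids this because in the expanded numerator each monomial is analysed independently.
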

Note that depending on the geometry of the hexagon (determined by $\mathsf{T},\mathsf{S},\mathsf{N}$), not all of the above cases may be present.
\begin{remark}
	\label{rmk:discrete_assumptions_at_the_border}
	In the three border cases (when $\mathsf{x}$ is not in an
	interval), we need to make more precise discrete assumptions
	about the parameters to fix the shift of the $x$-coordinate
	around the border. We also assume that the integers
	multiplied by $\frac12$ are even. The latter can always be
	achieved by replacing $S$ with $S\pm 1$. This shift does not
	affect the generality of the asymptotic analysis because the
	limiting quantity $\mathsf{S}$ can still be an arbitrary
	real number within a suitable interval.
\end{remark}
\begin{proof}[Proof of \Cref{lemma:D_scaled_limit_of_diagonal_coefficient}]
	Combining all the terms in
	$q^{-N+1}(1-q^{N-1})(q^{-T-N}-1)
	-B(\tilde x)-D(\tilde x)$, we have
	\begin{align}
		\nonumber
			&C_{\mathsf{x}}\cdot
			\bigl(
				q^{-N+1}(1-q^{N-1})(q^{-T-N}-1)
				-B(\tilde x)-D(\tilde x)
			\bigr)
			\\ \nonumber &\hspace{10pt}
			=
			\frac{q^{(N-|2\lfloor L\mathsf{x}\rfloor-S-t|)^+}}
			{(q^{S+t}-\kappa^2q^{2x})(q^{S+t}-\kappa^2 q^{2x+2})}\ssp
			\Bigl(
				\bigl[
					q^{2S+t+T+x+1}+q^{S+2t+T+x+1}-q^{2S+2t+T+1}
					\\ \nonumber&\hspace{80pt}
					-q^{2S+2t}
					+q^{-N+S+t+T+x+1}-q^{-N+S+t+T+2x+1}
					\\ \nonumber&\hspace{80pt}
					-q^{-N+S+t+T+2x+2}+q^{-N+2S+2t+x+1}
				\bigr]
				\\ \nonumber&\hspace{40pt}
				-\kappa^2
				\bigl[
					-q^{N+S+t+T+x+1}-q^{S+t+T+2x+1}-q^{S+t+T+2x+3}+q^{N+S+t+T+2x+1}
					\\ \nonumber&\hspace{80pt}
					+q^{N+S+t+T+2x+2}-q^{2S+t+x+1}-q^{S+2t+x+1}-q^{N+2S+2t+x+1}
					\\&\hspace{80pt}
					\label{eq:D_scaled_limit_of_diagonal_coefficient_proof1}
					-q^{S+t+2x}-q^{S+t+2x+2}+q^{2S+t+2x+1}+q^{2S+t+2x+2}
					\\ \nonumber&\hspace{80pt}
					+q^{S+2t+2x+1}+q^{S+2t+2x+2}+q^{S+T+2x+1}+q^{S+T+2x+2}
					\\ \nonumber&\hspace{80pt}
					-q^{S+T+3x+2}+q^{t+T+2x+1}+q^{t+T+2x+2}-q^{t+T+3x+2}
					\\ \nonumber&\hspace{80pt}
					+q^{-N+S+t+2x+1}+q^{-N+S+t+2x+2}-q^{-N+S+t+3x+2}-q^{-N+T+3x+2}
				\bigr]
				\\ \nonumber&\hspace{40pt}
				+\kappa^4
				\bigl[
					-q^{N+S+t+2x+1}-q^{N+S+t+2x+2}+q^{N+S+t+3x+2}+q^{S+3x+2}
					\\ \nonumber&\hspace{80pt}
					+q^{t+3x+2}+q^{N+T+3x+2}-q^{T+4x+3}-q^{4x+2}
				\bigr]
			\Bigr).
	\end{align}
	Let us now consider the cases in \eqref{eq:D_scaled_limit_of_diagonal_coefficient} one by one.

	\medskip\noindent
	\textbf{Case $\mathsf{x}<(\mathsf{S}+\mathsf{t}-\mathsf{N})/2$.}
	We have $(N-|2\lfloor L\mathsf{x}\rfloor-S-t|)^+=0$.
	The dominant contribution from the denominator in \eqref{eq:D_scaled_limit_of_diagonal_coefficient_proof1} is $\kappa^{-4}q^{-4x-2}$. Combining this power with all the powers in the long sum in the numerator, one can directly check that all terms except for $\kappa^{-4}q^{-4x-2}\cdot(-\kappa^4 q^{4x+2})=-1$ vanish. This yields the first case of \eqref{eq:D_scaled_limit_of_diagonal_coefficient}.

	\medskip\noindent
	\textbf{Case $\mathsf{x}=(\mathsf{S}+\mathsf{t}-\mathsf{N})/2$.}
	Here, we still have $(N-|2\lfloor L\mathsf{x}\rfloor-S-t|)^+=0$, and the dominant contribution from the denominator is the same as in the previous case. Plugging $x=\Delta x+\frac12(S+t-N)$ into \eqref{eq:D_scaled_limit_of_diagonal_coefficient_proof1}, one can directly check that there are three surviving terms in the limit as $L\to+\infty$. One term is $(-1)$, as in the previous case. The other two terms arise from the coefficient by $-\kappa^2$, and after multiplication by the contribution of the denominator, they have the form
	\begin{equation}
		\label{eq:D_scaled_limit_of_diagonal_coefficient_proof2}
		-\kappa^{-2}(1+q)\ssp q^{-N+S+t-2x-1}=-\kappa^{-2}(1+q)\ssp q^{-2\Delta x-1}.
	\end{equation}
	This yields the second case of \eqref{eq:D_scaled_limit_of_diagonal_coefficient}.

	\medskip\noindent
	\textbf{Case $(\mathsf{S}+\mathsf{t}-\mathsf{N})/2<\mathsf{x}<(\mathsf{S}+\mathsf{t})/2$.}
	Here the extra factor $q^{(N-|2\lfloor L\mathsf{x}\rfloor-S-t|)^+}
	=
	q^{N-S-t+2 \lfloor L\mathsf{x}\rfloor}$ is not $1$ anymore, while the contribution from the denominator stays the same as in the previous two cases.
	One can check that with this extra factor, there are no new surviving terms in the limit. The summand $(-1)$ multiplied by $q^{N-S-t+2 \lfloor L\mathsf{x}\rfloor}$ vanishes in the limit. The two remaining terms become the same as in \eqref{eq:D_scaled_limit_of_diagonal_coefficient_proof2}:
	\begin{equation*}
		-\kappa^{-2}q^{N-S-t+2 \lfloor L\mathsf{x}\rfloor}(1+q)\ssp q^{-N+S+t-2x-1}=-\kappa^{-2}(1+q)\ssp q^{-2\Delta x-1}.
	\end{equation*}
	This establishes the third case of \eqref{eq:D_scaled_limit_of_diagonal_coefficient}.

	\medskip\noindent
	\textbf{Case $\mathsf{x}=(\mathsf{S}+\mathsf{t})/2$.}
	In the denominator in \eqref{eq:D_scaled_limit_of_diagonal_coefficient_proof1}, the contribution comes from all terms:
	\begin{equation*}
		(q^{S+t}-\kappa^2q^{2x})(q^{S+t}-\kappa^2 q^{2x+2})=q^{2S+2t}(1-\kappa^2 q^{2 \Delta x})(1-\kappa^2 q^{2\Delta x+2}).
	\end{equation*}
	The combined prefactor in front of the numerator is thus $q^{(N-|2\lfloor L\mathsf{x}\rfloor-S-t|)^+-2S-2t}=
	q^{N-2S-2t}$. One can check that with this prefactor, all terms in the long sum in \eqref{eq:D_scaled_limit_of_diagonal_coefficient_proof1} except two vanish in the limit. These two terms come from the same summands as in \eqref{eq:D_scaled_limit_of_diagonal_coefficient_proof2}:
	\begin{equation*}
		-\kappa^2 q^{N-2S-2t} (1+q) \ssp q^{-N+S+t+2x+1}=
		-\kappa^2 (1+q) \ssp q^{2\Delta x+1}.
	\end{equation*}
	This establishes the fourth case of \eqref{eq:D_scaled_limit_of_diagonal_coefficient}.

	\medskip\noindent
	\textbf{Remaining cases $\mathsf{x}>(\mathsf{S}+\mathsf{t})/2$.}
	The remaining three cases in \eqref{eq:D_scaled_limit_of_diagonal_coefficient} are symmetric with the first three cases. In all of them, the dominant contribution from the denominator in \eqref{eq:D_scaled_limit_of_diagonal_coefficient_proof1} is $q^{-2S-2t}$, and the rest of their computation is very similar to the first three cases. We omit the details.
\end{proof}

\begin{lemma}
	[Off-diagonal coefficients]
	\label{lemma:D_scaled_limit_of_off_diagonal_coefficient}
	Let $\mathsf{t}_l < \mathsf{t} < \mathsf{t}_r$, and the point $(\mathsf{t},\mathsf{x})$ is inside the scaled hexagon $\mathcal{H}$ \eqref{eq:scaled_hexagon}.
	Under the scaling described before \Cref{lemma:D_scaled_limit_of_diagonal_coefficient},
	the off-diagonal coefficients of the operator $\mathfrak{D}^{\mathrm{scaled}}$ \eqref{eq:diff_operator_scaling_shifting} have the following limits (cf. \Cref{rmk:discrete_assumptions_at_the_border} for a discussion of assumptions in the border cases):
	\begin{equation}
		\label{eq:D_scaled_limit_of_off_diagonal_coefficient}
		\begin{split}
			&
			\lim_{L\to+\infty}
			C_{\mathsf{x}} \cdot \sqrt{\frac{w^{qR}(\tilde x-1)}{w^{qR}(\tilde  x)}}B(\tilde  x-1)
			=
			\lim_{L\to+\infty}
			C_{\mathsf{x}} \cdot \sqrt{\frac{w^{qR}(\tilde x)}{w^{qR}(\tilde  x-1)}}D(\tilde  x)
			\\
			&\hspace{10pt}
			=
			\begin{cases}
				0,
				& \mathsf{x}<\tfrac{1}{2}(\mathsf{S}+\mathsf{t}-\mathsf{N});\\
				-\kappa^{-2}q^{-2\Delta x +1/2},
				& \mathsf{x}=\tfrac12(\mathsf{S}+\mathsf{t}-\mathsf{N}) ,\ \lfloor L\ssp \mathsf{x}\rfloor=\tfrac{1}{2}(S+t-N)
				;\\
				-\kappa^{-2}q^{-2\Delta x +1/2},
				& \tfrac{1}{2}(\mathsf{S}+\mathsf{t}-\mathsf{N})<\mathsf{x}<\tfrac{1}{2}(\mathsf{S}+\mathsf{t});\\
				\dfrac{-\kappa^2 q^{2\Delta x+1/2}}{(1 - \kappa^2 q^{2\Delta x})\sqrt{(1 - \kappa^2 q^{2\Delta x-1}) (1 - \kappa^2 q^{2\Delta x+1})}},
				& \mathsf{x}=\tfrac12(\mathsf{S}+\mathsf{t}),\ \lfloor L\ssp \mathsf{x}\rfloor=\tfrac{1}{2}(S+t);\\
				-\kappa^{2}q^{2\Delta x +1/2},
				& \tfrac{1}{2}(\mathsf{S}+\mathsf{t})<\mathsf{x}<\tfrac{1}{2}(\mathsf{S}+\mathsf{t}+\mathsf{N});\\
				-\kappa^{2}q^{2\Delta x +1/2},
				& \mathsf{x}= \tfrac12(\mathsf{S}+\mathsf{t}+\mathsf{N}) ,\ \lfloor L\ssp \mathsf{x}\rfloor=\tfrac{1}{2}(S+t+N);
				\\
				0,
				& \mathsf{x}>\tfrac{1}{2}(\mathsf{S}+\mathsf{t}+\mathsf{N}).
			\end{cases}
		\end{split}
	\end{equation}
\end{lemma}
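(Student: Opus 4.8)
The plan is to reduce the computation to the one already carried out in \Cref{lemma:D_scaled_limit_of_diagonal_coefficient} by first \emph{squaring} the quantity of interest, which replaces the square root of a weight ratio by an honest finite product of binomials $q^A-q^B$; after that the asymptotics are pure bookkeeping of $q$-powers.

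First I would invoke \Cref{rmk:selfadjoint}: the two off-diagonal coefficients on the left-hand side of \eqref{eq:D_scaled_limit_of_off_diagonal_coefficient} coincide, so it suffices to analyze, say, $C_{\mathsf{x}}\sqrt{w^{qR}(\tilde x)/w^{qR}(\tilde x-1)}\,D(\tilde x)$, whose square equals $C_{\mathsf{x}}^2\,B(\tilde x-1)D(\tilde x)$. Substituting the product formulas \eqref{eq:qRacah_BD_coefficients_tilde_x_for_hexagon} (applying $x\mapsto x-1$ in $B$) and recalling $C_{\mathsf{x}}^2=q^{2N+2T+2(N-|2\lfloor L\mathsf{x}\rfloor-S-t|)^+}$, one gets
\begin{equation*}
	C_{\mathsf{x}}^2\,B(\tilde x-1)D(\tilde x)=
	q^{-2N-T+1+2(N-|2\lfloor L\mathsf{x}\rfloor-S-t|)^+}\,
	\frac{\Pi_{\mathrm{num}}(x)}{(q^{S+t}-\kappa^2 q^{2x-1})(q^{S+t}-\kappa^2 q^{2x})^2(q^{S+t}-\kappa^2 q^{2x+1})},
\end{equation*}
where $\Pi_{\mathrm{num}}(x)=(1-q^x)(q^T-\kappa^2 q^x)(q^{N+S}-q^x)(q^{N+t}-q^x)(q^{S+t}-\kappa^2 q^x)(q^S-\kappa^2 q^{N+x})(q^t-\kappa^2 q^{N+x})(q^{S+t}-q^{T+x})$ is a product of eight binomials.

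Next I would substitute $x=\lfloor L\mathsf{x}\rfloor+\Delta x$, $t=\lfloor L\mathsf{t}\rfloor$ and $T,S,N$ as in \eqref{eq:limit_regime}. In each binomial $q^A-q^B$ one exponent is $O(1)$ in $\Delta x$ and the other grows like $cL$, with $c$ a fixed linear combination of $\mathsf{T},\mathsf{S},\mathsf{N},\mathsf{x},\mathsf{t}$; hence the binomial is asymptotic to $\pm q^{\min(A,B)}$ (the sign being $-1$ exactly when a bare $q^x$-term beats a $q^{\cdot}$-term without $x$), \emph{unless} $A-B$ stays bounded, which for fixed $\Delta x$ happens only in the three border cases of \eqref{eq:D_scaled_limit_of_off_diagonal_coefficient}, where the relevant factor then survives as $q^{S+t}(1-\kappa^2 q^{2\Delta x\pm\cdot})$ (or $q^{\cdot}(1-\kappa^2 q^{\cdot})$). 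Using the inclusion $(\mathsf{t},\mathsf{x})\in\mathcal{H}$ and $\mathsf{t}_l<\mathsf{t}<\mathsf{t}_r$, one checks that which of $A,B$ is smaller in each binomial is governed entirely by how $\mathsf{x}$ compares with $\tfrac12(\mathsf{S}+\mathsf{t})$ and $\tfrac12(\mathsf{S}+\mathsf{t}\pm\mathsf{N})$, which produces exactly the seven cases. Collecting all $q$-powers — and noting that the exponent $2(N-|2\lfloor L\mathsf{x}\rfloor-S-t|)^+$ in $C_{\mathsf{x}}^2$ is designed precisely to kill the $L$-linear part of the total exponent in each regime — the limit of $C_{\mathsf{x}}^2\,B(\tilde x-1)D(\tilde x)$ is a nonnegative rational function of $q^{\Delta x}$ and $\kappa^2$; for instance on the center line it equals $\kappa^4 q^{4\Delta x+1}\bigl[(1-\kappa^2 q^{2\Delta x-1})(1-\kappa^2 q^{2\Delta x})^2(1-\kappa^2 q^{2\Delta x+1})\bigr]^{-1}$, and in the interior of the upper half of $\mathcal{W}$ it equals $\kappa^4 q^{4\Delta x+1}$. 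Finally, since $D(\tilde x)>0$ on the relevant range (the only sign-sensitive binomial $q^{S+t}-q^{T+x}$ being positive there because $\mathsf{x}<\mathsf{T}$), the off-diagonal coefficient is the \emph{positive} square root of this limit, which is exactly the right-hand side of \eqref{eq:D_scaled_limit_of_off_diagonal_coefficient} (using $\sqrt{\kappa^4}=-\kappa^2$ since $\kappa^2<0$).

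The main obstacle is simply the volume of the bookkeeping: there are eight binomials upstairs and four downstairs, and in the three border cases one must carefully identify which sub-collection of $(1-\kappa^2 q^{(\cdot)})$- and $(1-q^{(\cdot)})$-type factors survives, and verify the exact cancellation of the $L$-linear exponent against $C_{\mathsf{x}}^2$ in each of the seven regimes (not all of which occur for a given hexagon, cf.\ \Cref{rmk:discrete_assumptions_at_the_border}). Conceptually nothing new is needed beyond the squaring reduction; once $C_{\mathsf{x}}^2\,B(\tilde x-1)D(\tilde x)$ is written in product form, the argument is mechanical and parallels the proof of \Cref{lemma:D_scaled_limit_of_diagonal_coefficient}.
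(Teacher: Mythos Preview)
Your approach is correct and essentially the same as the paper's: both reduce to a case-by-case analysis of the dominant $q$-powers in a product of binomials after squaring, then take the positive square root. Your observation that the square of the off-diagonal coefficient is simply $C_{\mathsf{x}}^2\,B(\tilde x-1)D(\tilde x)$ (the weight ratios cancelling) is a slightly cleaner reduction than the paper's, which writes out the coefficient with the explicit square root of the weight ratio and only then squares the $\kappa$-dependent part.

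Two minor slips to fix: your prefactor exponent should be $-2N+1+2(N-|2\lfloor L\mathsf{x}\rfloor-S-t|)^+$, not $-2N-T+1+\cdots$ (the $2T$ in $C_{\mathsf{x}}^2$ cancels exactly against the $-2T$ coming from $q^{-2N-T}\cdot q^{-2N-T+1}$); and your justification ``$\mathsf{x}<\mathsf{T}$'' for the positivity of $q^{S+t}-q^{T+x}$ is not the right inequality --- the correct one is $x\ge\max(0,t+S-T)$ from the hexagon constraints \eqref{eq:hexagon_constraints}, which gives $T+x\ge S+t$ and hence $q^{S+t}-q^{T+x}\ge 0$.
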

\begin{proof}
	The equality between the two quantities in the first line in \eqref{eq:D_scaled_limit_of_off_diagonal_coefficient} holds before the limit, see \Cref{rmk:selfadjoint}. Thus, we can consider only the first one.
	We have using \eqref{eq:qRacah_BD_coefficients_tilde_x_for_hexagon}--\eqref{eq:qRacah_BD_coefficients_tilde_x_for_hexagon_2_with_weights}:
	\begin{equation}
		\label{eq:D_scaled_limit_of_off_diagonal_coefficient_proof1}
		\begin{split}
			&
			q^{N+T}
			\sqrt{\frac{w^{qR}(\tilde x-1)}{w^{qR}(\tilde  x)}}B(\tilde  x-1)
			=
			\frac{q^{-N+1/2} }
			{(q^{S+t}-\kappa ^2 q^{2 x})\sqrt{(q^{S+t}-\kappa ^2 q^{2 x-1}) (q^{S+t}-\kappa ^2 q^{2 x+1})}}
			\\&\hspace{110pt}
			\times
			\Bigl(
				(1-q^x) (q^{S+t}-q^{T+x})(q^{N+S}-q^x) (q^{N+t}-q^x)
			\\&\hspace{120pt}
			\times
				(q^S-\kappa ^2 q^{N+x})
				(q^t-\kappa ^2 q^{N+x})
				(q^T-\kappa ^2 q^x)
				(q^{S+t}-\kappa ^2 q^x)
			\Bigr)^{\frac12}.
		\end{split}
	\end{equation}
	We have for all seven cases in \eqref{eq:D_scaled_limit_of_off_diagonal_coefficient}:
	\begin{equation*}
		1-q^x\sim 1, \qquad
		q^{S+t}-q^{T+x}\sim q^{S+t}, \qquad
		q^{N+S}-q^x\sim -q^{x}, \qquad
		q^{N+t}-q^x\sim -q^{x}.
	\end{equation*}
	Thus, it remains to address the product of the four other factors that contain $\kappa$.
	When $\mathsf{x}<\frac12(\mathsf{S}+\mathsf{t})$, the dominant contribution coming from the denominator in \eqref{eq:D_scaled_limit_of_off_diagonal_coefficient_proof1} is $\kappa^{-4}q^{-4x}$.
	Let us expand:
	\begin{equation}
		\label{eq:D_scaled_limit_of_off_diagonal_coefficient_proof2}
		\begin{split}
			&\Biggl[\kappa^{-4}q^{-4x-N+1/2}
			\sqrt{q^{S+t+2x}
			(q^S-\kappa ^2 q^{N+x})
			(q^t-\kappa ^2 q^{N+x})
			(q^T-\kappa ^2 q^x)
			(q^{S+t}-\kappa ^2 q^x)}\Biggr]^2
			\\&\hspace{20pt}=
			\kappa^{-8}q^{-6x-2N+S+t+1}
			\Bigl[
				q^{2 S+2 t+T}
				-\kappa^2
				q^{S+t+x} \left(q^{N+S+T}+q^{N+t+T}+q^{S+t}+q^T\right)
				\\&\hspace{60pt}
				+
				\kappa^4
				q^{2 x} \left(q^{2 N+S+t+T}+q^{N+2 S+t}+q^{N+S+2 t}+q^{N+S+T}+q^{N+t+T}+q^{S+t}\right)
				\\&\hspace{60pt}
				- \kappa^6
				q^{N+3 x} \left(q^{N+S+t}+q^{N+T}+q^t+q^S\right)
				+
				\kappa^8
				q^{2 N+4 x}
			\Bigr].
		\end{split}
	\end{equation}
	We consider the limits of
	\eqref{eq:D_scaled_limit_of_off_diagonal_coefficient_proof2} in the first three cases in
	\eqref{eq:D_scaled_limit_of_off_diagonal_coefficient}, namely, when $\mathsf{x}<\frac12(\mathsf{S}+\mathsf{t})$.

	\medskip\noindent
	\textbf{Case $\mathsf{x}<(\mathsf{S}+\mathsf{t}-\mathsf{N})/2$.}
	One can readily check that the limit of each individual power of $q$ in \eqref{eq:D_scaled_limit_of_off_diagonal_coefficient_proof2} is $0$. This establishes the first case in \eqref{eq:D_scaled_limit_of_off_diagonal_coefficient}.

	\medskip\noindent
	\textbf{Case $\mathsf{x}=(\mathsf{S}+\mathsf{t}-\mathsf{N})/2$.}
	Setting
	$\lfloor L\ssp \mathsf{x}\rfloor=\tfrac{1}{2}(S+t-N)$ and $x=\Delta x+\frac12(S+t-N)$, we see that the only surviving term in the limit in \eqref{eq:D_scaled_limit_of_off_diagonal_coefficient_proof2} is $\kappa^{-4}q^{-2N+2 S+2 t-4 x+1}= \kappa^{-4}q^{-4\Delta x+1}$. Taking the square root, we get the second case in \eqref{eq:D_scaled_limit_of_off_diagonal_coefficient}.

	\medskip\noindent
	\textbf{Case $(\mathsf{S}+\mathsf{t}-\mathsf{N})/2<\mathsf{x}<(\mathsf{S}+\mathsf{t})/2$.}
	In this case, the expansion \eqref{eq:D_scaled_limit_of_off_diagonal_coefficient_proof2} gets an additional factor $q^{N-S-t+2 \lfloor L\mathsf{x}\rfloor}$ coming from $C_{\mathsf{x}}$. This factor keeps the only surviving term in the expansion \eqref{eq:D_scaled_limit_of_off_diagonal_coefficient_proof2} to be the same as in the previous case:
	\begin{equation*}
		\kappa^{-4}q^{-2N+2 S+2 t-4 x+1}q^{2(N-S-t+2 \lfloor L\mathsf{x}\rfloor)} =
		\kappa^{-4}q^{4 \lfloor L\mathsf{x}\rfloor-4 x+1}= \kappa^{-4}q^{-4\Delta x+1}.
	\end{equation*}
	This establishes the third case in \eqref{eq:D_scaled_limit_of_off_diagonal_coefficient}.

	\medskip\noindent
	\textbf{Case $\mathsf{x}=(\mathsf{S}+\mathsf{t})/2$.} In this case, all terms in the denominator in \eqref{eq:D_scaled_limit_of_off_diagonal_coefficient_proof1} are of the same order:
	\begin{equation*}
		\begin{split}
			&(q^{S+t}-\kappa ^2 q^{2 x})\sqrt{(q^{S+t}-\kappa ^2 q^{2 x-1}) (q^{S+t}-\kappa ^2 q^{2 x+1})}
			\\&\hspace{60pt}
			=
			q^{2(S+t)}(1 - \kappa^2 q^{2\Delta x})\sqrt{(1 - \kappa^2 q^{2\Delta x-1}) (1 - \kappa^2 q^{2\Delta x+1})}.
		\end{split}
	\end{equation*}
	Combining the dominant power $q^{-2(S+t)}$ with the prefactor $q^{N}$ coming from $C_{\mathsf{x}}$, we see that the numerator in \eqref{eq:D_scaled_limit_of_off_diagonal_coefficient_proof1} behaves as
	\begin{equation*}
		\begin{split}
			&
			q^{\Delta x+1/2}
			\sqrt{
			(q^{\frac12(S-t)}-\kappa ^2 q^{N+\Delta x})
			(q^{\frac12(t-S)}-\kappa ^2 q^{N+\Delta x})
			(q^{T-\frac12(S+t)}-\kappa ^2 q^{\Delta x})
			(q^{\frac12(S+t)}-\kappa ^2 q^{\Delta x})}
			\\&
			\hspace{20pt}\sim
			q^{\Delta x+1/2}
			\sqrt{\kappa^4 q^{2\Delta x}}=-\kappa^2 q^{2\Delta x+1/2}.
		\end{split}
	\end{equation*}
	This yields the fourth case in \eqref{eq:D_scaled_limit_of_off_diagonal_coefficient}.

	\medskip\noindent
	\textbf{Remaining cases $\mathsf{x}>(\mathsf{S}+\mathsf{t})/2$.}
	When $\mathsf{x}>\frac12(\mathsf{S}+\mathsf{t})$, the dominant contribution from the denominator is $q^{-2(S+t)}$, and the remaining limits in \eqref{eq:D_scaled_limit_of_off_diagonal_coefficient_proof1} are obtained in a symmetric manner. We omit the details.
\end{proof}

\Cref{lemma:D_scaled_limit_of_diagonal_coefficient,lemma:D_scaled_limit_of_off_diagonal_coefficient}
complete the proof of \Cref{prop:limit_of_difference_operator_intro}
from the Introduction.
There, we
assumed that $\mathsf{N}<\mathsf{T}$ and
$\mathsf{t}_l<\mathsf{t}<\mathsf{t}_r$.
Let us record the limits in all the other cases
when $(\mathsf{t},\mathsf{x})$ belongs to a region with asymptotically no nonintersecting paths.
These are the
lighter-colored regions inside the hexagons in the exact samples in \Cref{fig:waterfall_region,fig:waterfall_region_frozen}.
Recall the notation $\mathcal{W}^{\pm}$
(\Cref{def:above_below_waterfall}).

\begin{lemma}
	\label{lemma:case_of_no_waterfall}
	Assume that $\mathsf{t}\notin(\mathsf{t}_l,\mathsf{t}_r)$ and $(\mathsf{t},\mathsf{x})\in \mathcal{W}^+\cup \mathcal{W}^-$.
	Then $C_{\mathsf{x}}=q^{N+T}$, and
	the limits as in the two previous lemmas
	take the form
	\begin{equation}
		\label{eq:case_of_no_waterfall}
		\begin{split}
			&\lim_{L\to+\infty}
			q^{N+T}
			\bigl(
				q^{-N+1}(1-q^{N-1})(q^{-T-N}-1)
				-B(\tilde x)-D(\tilde x)
			\bigr)
			=-1,
			\\
			&\lim_{L\to+\infty}
			q^{N+T}\sqrt{\frac{w^{qR}(\tilde x-1)}{w^{qR}(\tilde  x)}}B(\tilde  x-1)
			=
			\lim_{L\to+\infty}
			q^{N+T}\sqrt{\frac{w^{qR}(\tilde x)}{w^{qR}(\tilde  x-1)}}D(\tilde  x)
			= 0.
		\end{split}
	\end{equation}
\end{lemma}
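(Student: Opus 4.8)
The plan is to reduce the statement to the two extreme cases, $\mathsf{x}<\tfrac12(\mathsf{S}+\mathsf{t}-\mathsf{N})$ and $\mathsf{x}>\tfrac12(\mathsf{S}+\mathsf{t}+\mathsf{N})$, already settled in \Cref{lemma:D_scaled_limit_of_diagonal_coefficient,lemma:D_scaled_limit_of_off_diagonal_coefficient}, after first checking that the scaling prefactor has collapsed to $C_{\mathsf{x}}=q^{N+T}$.

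First I would establish $C_{\mathsf{x}}=q^{N+T}$, which is equivalent to $|2\mathsf{x}-\mathsf{S}-\mathsf{t}|>\mathsf{N}$: this inequality forces $|2\lfloor L\mathsf{x}\rfloor-S-t|>N$ for all large $L$, hence $(N-|2\lfloor L\mathsf{x}\rfloor-S-t|)^+=0$. This is a short finite case check over the pieces of $\mathcal{W}^{\pm}$ in \Cref{def:above_below_waterfall}, using the hypothesis $\mathsf{t}\le\mathsf{t}_l$ or $\mathsf{t}\ge\mathsf{t}_r$ together with $\mathsf{t}_l=|\mathsf{N}-\mathsf{S}|$ and $\mathsf{t}_r=\min(\mathsf{N}+\mathsf{S},2\mathsf{T}-\mathsf{N}-\mathsf{S})$. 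For instance, on $\{\mathsf{t}\le\mathsf{t}_l,\ \mathsf{x}>\mathsf{N}\}$ (which occurs only when $\mathsf{N}>\mathsf{S}$, so $\mathsf{t}_l=\mathsf{N}-\mathsf{S}$) one has $\mathsf{S}+\mathsf{t}\le\mathsf{N}<2\mathsf{x}-\mathsf{N}$; the other pieces of $\mathcal{W}^{+}$ and $\mathcal{W}^{-}$ (including the case $\mathsf{N}>\mathsf{T}$, where $\mathcal{W}^{-}$ is empty and $(\mathsf{t}_l,\mathsf{t}_r)$ is empty as well) are analogous. In particular $(\mathsf{t},\mathsf{x})$ lies strictly above or strictly below the center line, with a definite sign of $2\mathsf{x}-\mathsf{S}-\mathsf{t}$.

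Given $C_{\mathsf{x}}=q^{N+T}$, the identities \eqref{eq:D_scaled_limit_of_diagonal_coefficient_proof1} and \eqref{eq:D_scaled_limit_of_off_diagonal_coefficient_proof1} hold verbatim (they are valid in all four hexagon zones), now with the outer factor $q^{(N-|2\lfloor L\mathsf{x}\rfloor-S-t|)^+}$ equal to $1$. On $\mathcal{W}^{-}$ we have $2\mathsf{x}<\mathsf{S}+\mathsf{t}$, so $\kappa^2 q^{2x}$ dominates $q^{S+t}$ and the denominators behave exactly as in the case $\mathsf{x}<\tfrac12(\mathsf{S}+\mathsf{t}-\mathsf{N})$; symmetrically, $\mathcal{W}^{+}$ reduces to the case $\mathsf{x}>\tfrac12(\mathsf{S}+\mathsf{t}+\mathsf{N})$, where the denominators behave like $q^{2S+2t}$. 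Repeating the term-by-term comparison of powers of $q$ from those proofs, a single summand survives in the diagonal coefficient, contributing $-1$, and no summand survives in the off-diagonal coefficient, which gives \eqref{eq:case_of_no_waterfall}.

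The only genuine work is re-verifying that every subleading power of $q$ in the long sums of \eqref{eq:D_scaled_limit_of_diagonal_coefficient_proof1} and \eqref{eq:D_scaled_limit_of_off_diagonal_coefficient_proof2} still tends to $0$, i.e. has a strictly positive scaled exponent, in these regions. In the proofs of the two preceding lemmas those inequalities were read off under the blanket assumption $\mathsf{t}_l<\mathsf{t}<\mathsf{t}_r$; here I would instead derive them from $(\mathsf{t},\mathsf{x})\in\mathcal{H}$ together with the explicit constraints $\mathsf{x}<\min(\mathsf{t},\mathsf{S})$ valid on the relevant subregions of $\mathcal{W}^{-}$ and $\mathsf{x}>\max(\mathsf{N},\mathsf{S}+\mathsf{N}+\mathsf{t}-\mathsf{T})$ valid on those of $\mathcal{W}^{+}$. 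I expect this bookkeeping --- tracking borderline exponents such as $\mathsf{N}+\mathsf{T}-\mathsf{x}$ and $\mathsf{T}-\mathsf{t}$, and separately handling the $\mathsf{N}>\mathsf{T}$ subcase where only the ``above'' analysis is needed --- to be the main and only obstacle; no new analytic idea beyond \Cref{lemma:D_scaled_limit_of_diagonal_coefficient,lemma:D_scaled_limit_of_off_diagonal_coefficient} should be required.
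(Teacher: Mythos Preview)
Your proposal is correct and follows exactly the route the paper indicates: the paper's own proof is simply ``very similar to the cases in the proofs of \Cref{lemma:D_scaled_limit_of_diagonal_coefficient,lemma:D_scaled_limit_of_off_diagonal_coefficient} when $(\mathsf{t},\mathsf{x})$ is outside the waterfall region, so we omit it.'' Your write-up in fact supplies more detail than the paper does, in particular the case check showing $|2\mathsf{x}-\mathsf{S}-\mathsf{t}|>\mathsf{N}$ on each piece of $\mathcal{W}^{\pm}$, which is precisely the verification that $C_{\mathsf{x}}=q^{N+T}$.
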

\begin{proof}
	The proof is very similar to the cases in the proofs of
	\Cref{lemma:D_scaled_limit_of_diagonal_coefficient,lemma:D_scaled_limit_of_off_diagonal_coefficient}
	when $(\mathsf{t},\mathsf{x})$ is outside the waterfall region, so we omit it.
\end{proof}

\subsection{Diagonalization on the center line}
\label{sub:diagonalization_of_the_operator_on_the_center_line}

We can explicitly diagonalize the operator that emerges in the limit along the center line. Define
\begin{equation}
	\label{eq:T_center_def}
		\bigl( \mathfrak{T}^{\ssp\mathrm{ctr}} g \bigr)(x)
		\coloneqq
		d^{\ssp\mathrm{ctr}}(x+1)\ssp g(x+1)
		+
		a^{\mathrm{ctr}}(x)\ssp g(x)
		+
		d^{\ssp\mathrm{ctr}}(x)\ssp g(x-1),
\end{equation}
where
\begin{equation*}
			d^{\ssp\mathrm{ctr}}(x)\coloneqq \dfrac{-\kappa^2 q^{2 x+1/2}}{(1 - \kappa^2 q^{2 x})\sqrt{(1 - \kappa^2 q^{2 x-1}) (1 - \kappa^2 q^{2 x+1})}},
			\quad
			a^{\mathrm{ctr}}(x)\coloneqq \dfrac{-\kappa^2 (1+q) \ssp q^{2 x+1}}{(1-\kappa^2 q^{2  x})(1-\kappa^2 q^{2 x+2})},
\end{equation*}
and
we used the variable $x\in \mathbb{Z}$ instead of $\Delta x$
to simplify the notation.
Thanks to the exponential decay of the coefficients as $x\to \pm\infty$, the symmetric operator \eqref{eq:T_center_def} is bounded in $\ell^2(\mathbb{Z})$, hence self-adjoint.

Consider the functions
\begin{equation}
	\label{eq:T_center_eigenfunctions}
	g_n(x)\coloneqq
	q^{x(x+1)}(-\kappa^{2})^x\sqrt{q^{-x}-\kappa^2 q^{x+1}}
	\sum_{i=0}^{n}
	\kappa ^{2 i} q^{(2i-n)x-i(n-i-1)}\,\frac{(q;q)_n}{(q;q)_i(q;q)_{n-i}}
	,
\end{equation}
where $n=0,1,2,\ldots$ and $x\in \mathbb{Z}$.

\begin{proposition}
	\label{prop:center_line_orthogonality}
	The functions $g_n(x)$, $x\in \mathbb{Z}$,
	$n=0,1,2,\ldots$, given by \eqref{eq:T_center_eigenfunctions}
	form an orthogonal basis in
	$\ell^2(\mathbb{Z})$ and are eigenfunctions of the
	operator $\mathfrak{T}^{\ssp\mathrm{ctr}}$. The eigenvalue
	of $g_n(x)$ is $q^{n+1}$, $n=0,1,2,\ldots$.
\end{proposition}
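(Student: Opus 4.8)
The plan is to treat $\mathfrak{T}^{\ssp\mathrm{ctr}}$ as a \emph{compact} self-adjoint operator and read off its spectral decomposition. The coefficients $d^{\ssp\mathrm{ctr}}(x),a^{\mathrm{ctr}}(x)$ are real, with $d^{\ssp\mathrm{ctr}}(x)>0$ for every $x\in\mathbb{Z}$ (recall $\kappa^2<0$), and both tend to $0$ as $x\to\pm\infty$ (like $q^{2x}$ on the right and like $q^{-2x}$ on the left), so $\mathfrak{T}^{\ssp\mathrm{ctr}}$ is a norm limit of finite-rank truncations. For a compact self-adjoint operator the spectral theorem gives $\ell^2(\mathbb{Z})=\ker\mathfrak{T}^{\ssp\mathrm{ctr}}\oplus\overline{\bigoplus_{\lambda}\ker(\mathfrak{T}^{\ssp\mathrm{ctr}}-\lambda)}$, the sum over nonzero eigenvalues. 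Hence it suffices to establish: (i) $g_n\in\ell^2(\mathbb{Z})$ and $\mathfrak{T}^{\ssp\mathrm{ctr}}g_n=q^{n+1}g_n$; (ii) every eigenvalue is simple, so that $\mathbb{C}g_n=\ker(\mathfrak{T}^{\ssp\mathrm{ctr}}-q^{n+1})$; (iii) the point spectrum is exactly $\{q^{n+1}\}_{n\ge0}$, in particular $0$ is not an eigenvalue. Orthogonality of the $g_n$ is then automatic from self-adjointness and distinctness of the eigenvalues.

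For step (i) I would factor $g_n(x)=\Phi(x)\ssp P_n(x)$, where $\Phi(x):=q^{x(x+1)}(-\kappa^2)^x\sqrt{q^{-x}-\kappa^2 q^{x+1}}$ is a positive function and $P_n(x)$ is the finite $q$-binomial sum in \eqref{eq:T_center_eigenfunctions}. Conjugating, $\Phi^{-1}\mathfrak{T}^{\ssp\mathrm{ctr}}\Phi$ is again tridiagonal but now rational in $q^x$: writing $q^{-x}-\kappa^2 q^{x+1}=q^{-x}(1-\kappa^2 q^{2x+1})$ the square roots cancel, its upper off-diagonal entry collapses to $\kappa^4 q^{4x+4}\big/\bigl[(1-\kappa^2 q^{2x+1})(1-\kappa^2 q^{2x+2})\bigr]$, and the diagonal stays $a^{\mathrm{ctr}}(x)$. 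Then $\mathfrak{T}^{\ssp\mathrm{ctr}}g_n=q^{n+1}g_n$ is equivalent to $(\Phi^{-1}\mathfrak{T}^{\ssp\mathrm{ctr}}\Phi)P_n=q^{n+1}P_n$, a finite identity among Laurent polynomials in $q^x$ that one checks by reindexing $i\mapsto i\pm1$ in $P_n(x\pm1)$ so that the three shifted copies telescope. A parallel route — which also fixes the normalization — is to observe that $\mathfrak{T}^{\ssp\mathrm{ctr}}$ and $g_n$ are the center-line degenerations of the $q$-Racah difference operator \eqref{eq:qRacah_eigenfunction} and of its eigenfunctions $f_n$: the coefficient-wise limits are exactly those of \Cref{lemma:D_scaled_limit_of_diagonal_coefficient,lemma:D_scaled_limit_of_off_diagonal_coefficient} at $\mathsf{x}=\tfrac12(\mathsf{S}+\mathsf{t})$, while the rescaled-shifted eigenvalue $C_{\mathsf{x}}\bigl(\mathrm{ev}^{qR}_n+q^{-N+1}(1-q^{N-1})(q^{-T-N}-1)\bigr)$ tends to $q^{n+1}$ using $\alpha\beta q^{N}=q^{-T-N}$. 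Finally $g_n\in\ell^2(\mathbb{Z})$ because $|\Phi(x)|=q^{x^2+O(|x|)}$ decays like a Gaussian and dominates the at-most-exponential growth of $P_n(x)$, so $g_n$ is super-exponentially small; the squared norm can be recovered as the limit of the $q$-Racah norms $h_n$ of \eqref{eq:qRacah_norms}, or by a direct $q$-series evaluation.

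For (ii)--(iii): the eigenrelation $\mathfrak{T}^{\ssp\mathrm{ctr}}\psi=\lambda\psi$ is a second-order recurrence with $d^{\ssp\mathrm{ctr}}(x)\neq0$ throughout, so two independent $\ell^2$ solutions would make the discrete Wronskian $d^{\ssp\mathrm{ctr}}(x)\bigl[\psi_1(x)\psi_2(x-1)-\psi_1(x-1)\psi_2(x)\bigr]$ a nonzero constant in $x$, contradicting its decay at $\pm\infty$; hence every eigenvalue is simple. That $0$ is not an eigenvalue follows from a Poincar\'e--Perron analysis: at $\lambda=0$ the coefficients $a^{\mathrm{ctr}}(x)$ and $d^{\ssp\mathrm{ctr}}(x)$ have the \emph{same} order ($\sim q^{2x}$ as $x\to+\infty$ and $\sim q^{-2x}$ as $x\to-\infty$), so the recurrence becomes asymptotically constant-coefficient with fundamental solutions $\sim(-q^{-1/2})^x$ and $\sim(-q^{-3/2})^x$ at $+\infty$ (and $\sim(-q^{1/2})^x$, $\sim(-q^{3/2})^x$ at $-\infty$), all unbounded, so there is no $\ell^2$ null vector. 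It remains to exclude a stray nonzero eigenvalue $\mu\notin\{q^{n+1}\}$. First, $\mathfrak{T}^{\ssp\mathrm{ctr}}$ is positive semidefinite: one checks $\mathfrak{T}^{\ssp\mathrm{ctr}}=q\ssp\mathfrak{U}\mathfrak{U}^{*}$, where $\bigl(\mathfrak{U}g\bigr)(x)=\mathfrak{a}(x+\tfrac12)\ssp g(x-1)+\mathfrak{a}(x+1)\ssp g(x)$ with $\mathfrak{a}$ as in the definition of $\mathfrak{U}^{\mathrm{barcode}}$, via the elementary identities $\mathfrak{a}(x+\tfrac12)^2+\mathfrak{a}(x+1)^2=q^{-1}a^{\mathrm{ctr}}(x)$ and $\mathfrak{a}(x+\tfrac12)\ssp\mathfrak{a}(x)=q^{-1}d^{\ssp\mathrm{ctr}}(x)$; so $\sigma(\mathfrak{T}^{\ssp\mathrm{ctr}})\subset[0,\infty)$. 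To then see that the $g_n$ exhaust the eigenvectors I would either identify $\{P_n\}$ with a rescaled family of continuous $q^{-1}$-Hermite polynomials whose orthogonality measure $\Phi(\cdot)^2$ on $\mathbb{Z}$ is $N$-extremal in the sense of Ismail--Masson (so the polynomials are dense in $L^2$ of their measure), or verify the Poisson-kernel identity $\sum_{n\ge0}g_n(x)g_n(y)/\lVert g_n\rVert^2=\mathbf{1}_{x=y}$ directly as a $q$-series evaluation.

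The main obstacle is precisely this last point — \emph{completeness} of $\{g_n\}$, i.e.\ that $\mathfrak{T}^{\ssp\mathrm{ctr}}$ has no eigenvalues beyond $\{q^{n+1}\}_{n\ge0}$. Ruling these out is not delivered by soft functional analysis alone; it requires either the $N$-extremality input from the (indeterminate) $q^{-1}$-Hermite moment problem, or an explicit resummation of $\sum_n g_n(x)g_n(y)/\lVert g_n\rVert^2$. The eigenfunction verification in step (i), while elementary once the gauge factor $\Phi$ is stripped off, is the most computation-heavy part, whereas simplicity and the non-existence of a null vector are routine.
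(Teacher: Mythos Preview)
Your outline is correct, and your step~(i) --- conjugating by the Gaussian-like weight $\Phi$ and reducing the eigenrelation to a Laurent-polynomial identity --- is exactly what the paper does. The factorisation $\mathfrak{T}^{\ssp\mathrm{ctr}}=q\ssp\mathfrak{U}\mathfrak{U}^{*}$ is a nice observation not in the paper.

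Where the two approaches diverge is completeness, which you correctly flag as the crux. The paper avoids your compactness/simplicity/Poincar\'e--Perron machinery and the heavier $N$-extremality or Poisson-kernel routes altogether. Instead it makes one structural observation: after stripping off $\Phi$, the function $\tilde g_n(x)=g_n(x)/\Phi(x)$ is a \emph{polynomial of degree $n$ in the single variable} $\mu(x)\coloneqq q^{-x}+\kappa^{2}q^{x+1}$ (seen by pairing the terms $i\leftrightarrow n-i$ in the $q$-binomial sum, which share the same $q$-binomial coefficient). The weight $w^{\mathrm{ctr}}(x)=\Phi(x)^{2}$ decays like $q^{2x^{2}}$, so its pushforward under $x\mapsto\mu(x)$ has all exponential moments finite; the Hamburger moment problem is determinate and polynomials in $\mu$ are dense in $\ell^{2}(\mathbb{Z},w^{\mathrm{ctr}})$. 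That immediately gives $\overline{\operatorname{span}}\{g_n\}=\ell^{2}(\mathbb{Z})$, from which orthogonality (distinct eigenvalues) upgrades to ``orthogonal basis''. Your steps (ii)--(iii) then become redundant: once the $g_n$ span, there is no room for stray eigenvalues and no need to exclude $0$ separately. So the paper's argument is shorter and entirely elementary; your route is sound but works harder than necessary, with its heaviest ingredient ($N$-extremality of the $q^{-1}$-Hermite problem) replaceable by the one-line determinacy observation above.
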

\begin{proof}
	Define
	\begin{equation}
		\label{eq:w_center_orthog_weight}
		w^\mathrm{ctr}(x)\coloneqq q^{2x(x+1)}(-\kappa^{2})^{2x}(q^{-x}-\kappa^2 q^{x+1}),
		\qquad \tilde g_n(x) \coloneqq \frac{g_n(x)}{\sqrt{w^\mathrm{ctr}(x)}}.
	\end{equation}
	For each $n\in\mathbb{Z}_{\ge0}$, $\tilde g_n(x)$ is a Laurent polynomial in $q^x$ which contains powers $q^{(2n-i)x}$, $i=0,\ldots,n $.
	These Laurent polynomials must be eigenfunctions of the modified operator
	\begin{equation*}
		\bigl(\widetilde{\mathfrak{T}}^{\mathrm{ctr}}\tilde g\bigr)(x)\coloneqq
		B^{\mathrm{ctr}}(x)
		\ssp \tilde g(x+1) + a^{\mathrm{ctr}}(x)\ssp \tilde g(x) +
		d^{\ssp\mathrm{ctr}}(x)\ssp \tilde g(x-1),
	\end{equation*}
	where
	\begin{equation*}
		\begin{split}
			B^{\mathrm{ctr}}(x)&\coloneqq
			d^{\ssp\mathrm{ctr}}(x+1)\ssp \sqrt{\frac{w^\mathrm{ctr}(x+1)}{w^\mathrm{ctr}(x)}}
			=
			\frac{q^{4(x+1)}\kappa^4}{(1-\kappa^2 q^{2x+1})(1-\kappa^2 q^{2x+2})},
			\\
			d^{\ssp\mathrm{ctr}}(x)&\coloneqq
			d^{\ssp\mathrm{ctr}}(x)\ssp \sqrt{\frac{w^\mathrm{ctr}(x-1)}{w^\mathrm{ctr}(x)}}=
			\frac{q}{(1-\kappa^2 q^{2x+1})(1-\kappa^2 q^{2x})}.
		\end{split}
	\end{equation*}
	The eigenrelation
	$\bigl(\widetilde{\mathfrak{T}}^{\mathrm{ctr}}\tilde g_n\bigr)(x) = q^{n+1}\tilde g_n(x)$, $n=0,1,\ldots $,
	is equivalent to an identity of Laurent polynomials in $q^x$. The latter identity is verified in a straightforward
	manner using their explicit coefficients.

	The functions $g_n(x)$ belong to $\ell^2(\mathbb{Z})$ thanks to the rapid decay of the power $q^{x(x+1)}$
	at $x\to\pm\infty$. Since they are eigenfunctions of $\mathfrak{T}^{\ssp\mathrm{ctr}}$,
	they are orthogonal in $\ell^2(\mathbb{Z})$.

	Each normalized function $\tilde
	g_n(x)\in\ell^2(\mathbb{Z},w^{\mathrm{ctr}})$ is a polynomial in $(q^{-x}+\kappa^2 q^{x+1})$ of
	degree~$n$. This follows
	by combining the opposite powers $q^{(2i-n)x}$ and $q^{(n-2i)x}$ with the
	same $q$-binomial coefficients
	in front.
	The powers $(q^{-x}+\kappa^2 q^{x+1})^m$, $m\in \mathbb{Z}_{\ge0}$, span the weighted space $\ell^2(\mathbb{Z},w^{\mathrm{ctr}})$.
	This implies
	that the $g_n(x)$'s span $\ell^2(\mathbb{Z})$,
	and completes the proof.
\end{proof}

\Cref{prop:center_line_orthogonality} implies that the operator $\mathfrak{T}^{\ssp\mathrm{ctr}}$ has positive spectrum.
Note that the orthogonal spectral projection onto the nonnegative part of the spectrum of $\mathfrak{T}^{\ssp\mathrm{ctr}}$ is
the identity operator $Id$ in $\ell^2(\mathbb{Z})$, which has the kernel
$\mathbf{1}_{x=y}$, $x,y\in\mathbb{Z}$.

\begin{remark}
	The Laurent polynomials $\tilde g_n$ \eqref{eq:w_center_orthog_weight} are the \emph{Stieltjes-Wigert orthogonal polynomials} $S_n(y;q)$ (e.g., see \cite[Chapter~3.27]{Koekoek1996}), up to normalization and change of variable:
	\begin{equation*}
		\tilde g_n(x)=q^{-n x} (q;q)_n\, S_n\left( -\kappa^2 q^{2x-(n-1)^+};q \right),\qquad
		S_n(y;q)\coloneqq \frac{1}{(q;q)_n}\,_1\phi_1
		\left(\begin{array}{c}q^{-n} \\ 0\end{array} \Big\vert\, q ;-q^{n+1} y\right).
	\end{equation*}
	The Stieltjes-Wigert polynomials are at the same level of
	the $q$-Askey scheme as the $q$-Hermite ones, and their
	orthogonality weight $w^\mathrm{ctr}(x)$ on $\mathbb{Z}$
	(given by \eqref{eq:w_center_orthog_weight}) may be
	thought of as a \mbox{$q$-analogue} of the usual Gaussian
	distribution.  However, due to this change of variable,
	the form of the difference equation for the $\tilde
	g_n$'s in the proof of
	\Cref{prop:center_line_orthogonality} is very different
	from the one for the Stieltjes-Wigert polynomials
	\cite[(3.27.5)]{Koekoek1996}. We do not use the
	connection to the Stieltjes-Wigert polynomials in \Cref{prop:center_line_orthogonality}
	or elsewhere in the
	paper.
\end{remark}

\subsection{Convergence of spectral projections}
\label{sub:convergence_of_the_spectral_projection}

The limits of the difference operator $\mathfrak{D}^{\mathrm{scaled}}$ \eqref{eq:diff_operator_scaling_shifting}
on the center line and outside the waterfall region are
bounded operators in $\ell^2(\mathbb{Z})$
(namely, $\mathfrak{T}^{\ssp\mathrm{ctr}}$ \eqref{eq:T_center_def}
and the negative identity $(-Id)$, respectively).
This allows us to
obtain the convergence of the correlation kernel $K_t$ \eqref{eq:K_t_x_y_fixed_slice}
on a slice in these parts of the hexagon. We employ the following general result:

\begin{theorem}[{\cite[combination of Theorems VIII.24(b) and VIII.25(a)]{reed1972}}]
	\label{thm:funkan}
	Assume that $A_N$ and $A$ are self-adjoint operators on the same Hilbert space $H$,
	and they have a common essential domain $H_0\subset H$.
	Assume that $A_N\varphi\to A\varphi$ as $N\to +\infty$ for all $\varphi\in H_0$.\footnote{The
	convergence on the common essential domain implies that $A_N\to A$ in the strong resolvent sense.}
	Let $a,b\in \mathbb{R}\cup\{\pm\infty\}$, $a<b$, be such that $a,b$ are not eigenvalues of $A$.
	Then
	\begin{equation*}
		P_{(a,b)}(A_N)\ssp\varphi \to P_{(a,b)}(A)\ssp\varphi\quad \textnormal{as $N\to+\infty$\quad for all
		$\varphi\in H$},
	\end{equation*}
	where $P_{(a,b)}(A_N)$ denotes the orthogonal spectral projection onto the interval $(a,b)$ corresponding
	to the operator $A_N$, and similarly for $P_{(a,b)}(A)$.
\end{theorem}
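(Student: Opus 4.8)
The plan is to factor the statement through strong resolvent convergence and then through convergence of the $C_\infty$ functional calculus; this is the standard route, and is why the result is quoted from \cite{reed1972}. First I would upgrade the hypothesis --- convergence $A_N\varphi\to A\varphi$ on the common core $H_0$ --- to strong resolvent convergence, i.e.\ $(A_N-z)^{-1}\to(A-z)^{-1}$ strongly for every non-real $z$. For $\varphi\in H_0$, putting $\psi\coloneqq(A-z)\varphi$ one has the one-line identity
\[
	(A_N-z)^{-1}\psi-(A-z)^{-1}\psi=(A_N-z)^{-1}\bigl((A-A_N)\varphi\bigr),
\]
whose right-hand side tends to $0$ since $(A-A_N)\varphi\to0$ and $\|(A_N-z)^{-1}\|\le|\operatorname{Im}z|^{-1}$. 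Because $H_0$ is a core for $A$, the subspace $(A-z)H_0$ is dense in $H$, and the uniform bound $\|(A_N-z)^{-1}\|\le|\operatorname{Im}z|^{-1}$ promotes convergence on this dense set to convergence on all of $H$.

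Next I would deduce $f(A_N)\to f(A)$ strongly for every $f\in C_\infty(\mathbb{R})$: for $f$ in the unital algebra generated by the resolvents $x\mapsto(x-z)^{-1}$ this follows from the previous step (using the uniform bound $\|(A_N-z)^{-1}\|\le|\operatorname{Im}z|^{-1}$ to handle products), and a Stone--Weierstrass argument on the one-point compactification of $\mathbb{R}$ extends it to all of $C_\infty(\mathbb{R})$. I would then squeeze $\mathbf{1}_{(a,b)}$ between elements of $C_\infty(\mathbb{R})$: choose $g_n\in C_c(\mathbb{R})$ with $0\le g_n\le\mathbf{1}_{(a,b)}$ and $g_n\uparrow\mathbf{1}_{(a,b)}$ pointwise, and, when $(a,b)$ is bounded, $h_n\in C_c(\mathbb{R})$ with $\mathbf{1}_{(a,b)}\le h_n\le1$ and $h_n\downarrow\mathbf{1}_{[a,b]}$ pointwise (when $(a,b)$ is a half-line or all of $\mathbb{R}$, bound $P_{(a,b)}$ from above via the complementary projection and a $C_c$-approximant of the complementary half-line from inside). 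Order-preservation of the functional calculus gives $g_n(A_N)\le P_{(a,b)}(A_N)\le h_n(A_N)$, so for every unit vector $\varphi$,
\[
	\langle\varphi,g_n(A_N)\varphi\rangle\le\langle\varphi,P_{(a,b)}(A_N)\varphi\rangle\le\langle\varphi,h_n(A_N)\varphi\rangle .
\]
Letting $N\to\infty$ (previous step) and then $n\to\infty$, the outer terms converge to $\langle\varphi,\mathbf{1}_{(a,b)}(A)\varphi\rangle$ and $\langle\varphi,\mathbf{1}_{[a,b]}(A)\varphi\rangle$, which coincide precisely because $a,b$ are not eigenvalues of $A$, so the spectral measure $\langle\varphi,E_A(\cdot)\varphi\rangle$ carries no atom at $a$ or $b$. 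Hence $\langle\varphi,P_{(a,b)}(A_N)\varphi\rangle\to\langle\varphi,P_{(a,b)}(A)\varphi\rangle$ for every $\varphi\in H$.

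Finally, for orthogonal projections convergence of the diagonal quadratic forms forces strong convergence: writing $P_N\coloneqq P_{(a,b)}(A_N)$ and $P\coloneqq P_{(a,b)}(A)$, polarization gives $P_N\to P$ weakly, and then, using $P_N^2=P_N=P_N^{*}$,
\[
	\|(P_N-P)\varphi\|^2=\langle\varphi,P_N\varphi\rangle-2\operatorname{Re}\langle\varphi,P_N P\varphi\rangle+\langle\varphi,P\varphi\rangle\longrightarrow\langle\varphi,P\varphi\rangle-2\langle\varphi,P\varphi\rangle+\langle\varphi,P\varphi\rangle=0 .
\]
I expect the endpoint analysis in the second paragraph to be the main obstacle: the approximants $g_n,h_n$ must be arranged so that $h_n-g_n$ collapses onto exactly $\{a,b\}$, the monotone/dominated passages in the finite spectral measures must be justified, and the unbounded-interval cases must be reduced to bounded ones by complementation --- and it is precisely the hypothesis that $a,b$ are not eigenvalues that yields $\mathbf{1}_{(a,b)}(A)=\mathbf{1}_{[a,b]}(A)$ and makes the squeeze close.
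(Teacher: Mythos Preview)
The paper does not prove this theorem; it quotes it verbatim from Reed--Simon \cite{reed1972} as a combination of Theorems~VIII.25(a) (core convergence $\Rightarrow$ strong resolvent convergence) and~VIII.24(b) (strong resolvent convergence $\Rightarrow$ strong convergence of spectral projections onto intervals whose endpoints are not eigenvalues). Your proposal is a correct and complete sketch of exactly that standard argument, so there is nothing to compare --- you have reproduced the proof the paper outsources to the reference.
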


Let us now recall the limit regime for the kernel. We assume that the sides of the hexagon
$T,S,N$ grow proportionally to $L\to+\infty$ as in \eqref{eq:limit_regime},
and the point
$(\mathsf{t},\mathsf{x})$ belongs to the scaled hexagon
$\mathcal{H}$ \eqref{eq:scaled_hexagon}. Recall the notation
$\mathsf{t}_l,\mathsf{t}_r$ \eqref{eq:tl_tr}.
We have the following convergence of the correlation kernel $K_t$
on the center line and outside the waterfall region:

\begin{proposition}[Convergence of the correlation kernel]
	\label{prop:limit_of_kernel}
	We have the following limits of the correlation kernel $K_t$ on a slice.
	On the center line, that is
	$\mathsf{N}<\mathsf{T}$,
	$\mathsf{t}_l<\mathsf{t}<\mathsf{t}_r$,
	$\mathsf{x}=\tfrac12(\mathsf{S}+\mathsf{t})$, and $\lfloor L\ssp \mathsf{x}\rfloor=\tfrac{1}{2}(S+t)$,
	we have
	\begin{equation}
		\label{eq:idenity_limit_of_kernel}
		\lim_{L\to+\infty} K_{\lfloor L\ssp \mathsf{t} \rfloor }(\lfloor L\ssp \mathsf{x}\rfloor+\Delta x,
		\lfloor L\ssp \mathsf{x}\rfloor+\Delta y)=\mathbf{1}_{\Delta x=\Delta y}\quad \textnormal{for all
		$\Delta x,\Delta y\in \mathbb{Z}$}.
	\end{equation}
	Outside the waterfall region, that is,
	for $(\mathsf{t},\mathsf{x})\in \mathcal{W}^+\cup \mathcal{W}^-$,
	we have
	\begin{equation}
		\label{eq:zero_limit_of_kernel}
		\lim_{L\to+\infty}
		K_{\lfloor L\ssp \mathsf{t} \rfloor }
		(\lfloor L\ssp \mathsf{x}\rfloor+\Delta x,
		\lfloor L\ssp \mathsf{x}\rfloor+\Delta y)=0\quad \textnormal{for all
		$\Delta x,\Delta y\in \mathbb{Z}$}.
	\end{equation}
\end{proposition}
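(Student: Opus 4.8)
The plan is to invoke the spectral projection principle of \Cref{thm:funkan}, now that \Cref{lemma:D_scaled_limit_of_diagonal_coefficient,lemma:D_scaled_limit_of_off_diagonal_coefficient,lemma:case_of_no_waterfall} supply the limiting coefficients. Recall from \eqref{eq:diff_operator_scaling_shifting} and the paragraph following it that, after rescaling and shifting to the macroscopic location $(\mathsf{t},\mathsf{x})$ and passing to the local variable, the fixed-slice kernel $K_{\lfloor L\ssp\mathsf{t}\rfloor}(\lfloor L\ssp\mathsf{x}\rfloor+\,\cdot\,,\lfloor L\ssp\mathsf{x}\rfloor+\,\cdot\,)$ is the kernel of the orthogonal spectral projection $P_{[0,+\infty)}(\mathfrak{D}^{\mathrm{scaled}})$, and that for each fixed $L$ the operator $\mathfrak{D}^{\mathrm{scaled}}$ is bounded and self-adjoint on $\ell^2(\mathbb{Z})$ with essential domain $\ell^2_0(\mathbb{Z})$ (finitely supported sequences). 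The candidate limits are the bounded self-adjoint operators $\mathfrak{T}^{\ssp\mathrm{ctr}}$ of \eqref{eq:T_center_def} on the center line and $(-Id)$ outside $\mathcal{W}$, for both of which $\ell^2_0(\mathbb{Z})$ is a core.

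The first step is to upgrade the coefficientwise limits to strong convergence on $\ell^2_0(\mathbb{Z})$: since $\mathfrak{D}^{\mathrm{scaled}}$ is tridiagonal, $\mathfrak{D}^{\mathrm{scaled}}e_y$ is supported on $\{y-1,y,y+1\}$, so it suffices to pass to the limit in each matrix entry. On the center line (under the discrete assumptions of \Cref{rmk:discrete_assumptions_at_the_border}) the case $\mathsf{x}=\tfrac12(\mathsf{S}+\mathsf{t})$ of \Cref{lemma:D_scaled_limit_of_diagonal_coefficient,lemma:D_scaled_limit_of_off_diagonal_coefficient} produces exactly the coefficients $a^{\mathrm{ctr}},d^{\ssp\mathrm{ctr}}$ of $\mathfrak{T}^{\ssp\mathrm{ctr}}$ (using \Cref{rmk:selfadjoint} to match the sub- and super-diagonal entries), so $\mathfrak{D}^{\mathrm{scaled}}\varphi\to\mathfrak{T}^{\ssp\mathrm{ctr}}\varphi$ for $\varphi\in\ell^2_0(\mathbb{Z})$; outside $\mathcal{W}$ the extreme cases $\mathsf{x}<\tfrac12(\mathsf{S}+\mathsf{t}-\mathsf{N})$ and $\mathsf{x}>\tfrac12(\mathsf{S}+\mathsf{t}+\mathsf{N})$ of those lemmas together with \Cref{lemma:case_of_no_waterfall} give $\mathfrak{D}^{\mathrm{scaled}}\varphi\to(-Id)\varphi$. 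Hence \Cref{thm:funkan} applies on any interval whose endpoints miss the point spectrum of the limit operator.

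The delicate point is that \Cref{thm:funkan} cannot be applied with the interval $[0,+\infty)$ directly, since $0$ is in general an eigenvalue of $\mathfrak{D}^{\mathrm{scaled}}$ (it is the image of the boundary eigenvalue $\mathrm{ev}^{qR}_{N-1}$ under the shift in \eqref{eq:diff_operator_scaling_shifting}); I would get around this by sandwiching $K_t$ between admissible spectral projections, using positivity of orthogonal projections and the identity $K_t=K_t^*=K_t^2$. Outside $\mathcal{W}$ only an upper bound is needed: $-\tfrac12$ is not an eigenvalue of $(-Id)$, so \Cref{thm:funkan} gives $P_{(-1/2,+\infty)}(\mathfrak{D}^{\mathrm{scaled}})\to0$ strongly, and from $K_t=P_{[0,+\infty)}(\mathfrak{D}^{\mathrm{scaled}})\le P_{(-1/2,+\infty)}(\mathfrak{D}^{\mathrm{scaled}})$ one gets $0\le K_t(x,x)=\|K_t e_x\|^2\le\langle P_{(-1/2,+\infty)}(\mathfrak{D}^{\mathrm{scaled}})e_x,e_x\rangle\to0$, whence $|K_t(x,y)|=|\langle K_t e_x,K_t e_y\rangle|\le\sqrt{K_t(x,x)\,K_t(y,y)}\to0$. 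On the center line one also needs a lower bound, and here \Cref{prop:center_line_orthogonality} enters crucially: since the $g_n$ span $\ell^2(\mathbb{Z})$, the value $0$ is not an eigenvalue of $\mathfrak{T}^{\ssp\mathrm{ctr}}$ and $P_{(c,+\infty)}(\mathfrak{T}^{\ssp\mathrm{ctr}})\uparrow Id$ strongly as $c\downarrow0$. Given $\varepsilon>0$, pick $c>0$ with $c\notin\{q^{n+1}\colon n\ge0\}$ and $\|P_{(c,+\infty)}(\mathfrak{T}^{\ssp\mathrm{ctr}})e_x-e_x\|<\varepsilon$; then \Cref{thm:funkan} yields $P_{(c,+\infty)}(\mathfrak{D}^{\mathrm{scaled}})\to P_{(c,+\infty)}(\mathfrak{T}^{\ssp\mathrm{ctr}})$ strongly, and since $c>0$ we have $P_{(c,+\infty)}(\mathfrak{D}^{\mathrm{scaled}})\le K_t\le Id$, so $\liminf_{L}K_t(x,x)\ge\langle P_{(c,+\infty)}(\mathfrak{T}^{\ssp\mathrm{ctr}})e_x,e_x\rangle\ge1-\varepsilon^2$. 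Letting $\varepsilon\to0$ gives $K_t(x,x)\to1$; then $\|K_t e_x-e_x\|^2=1-K_t(x,x)\to0$ (using $\|K_t e_x\|^2=K_t(x,x)$), so $K_t(x,y)=\langle K_t e_x,e_y\rangle\to\mathbf{1}_{x=y}$.

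The main obstacle is precisely this endpoint handling: the cutoff $0$ in the spectral projection defining $K_t$ is a limit point of the relevant eigenvalues, so the argument forces one to know that $0$ is not an eigenvalue of the limiting operator $\mathfrak{T}^{\ssp\mathrm{ctr}}$ — which is why the completeness (spanning) half of \Cref{prop:center_line_orthogonality} is indispensable, and not merely the orthogonality together with the eigenrelation. Everything else — identifying the limiting coefficients with those of $\mathfrak{T}^{\ssp\mathrm{ctr}}$ and $(-Id)$, and converting strong operator convergence into entrywise kernel convergence via $K_t=K_t^*=K_t^2$ — is routine.
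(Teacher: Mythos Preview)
Your argument is correct and follows the same spectral-projection strategy as the paper (coefficient convergence on $\ell^2_0(\mathbb{Z})$, then \Cref{thm:funkan}), but you handle the endpoint at $0$ differently and, in fact, more robustly. The paper computes the eigenvalues of $\mathfrak{D}^{\mathrm{scaled}}$ adjacent to $0$ as $-(1-q)(1-q^T)$ and $q^{-1}(1-q)(1-q^{T+2})$, concludes that $K_t=P_{(-(1-q)/2,+\infty)}(\mathfrak{D}^{\mathrm{scaled}})$ for all large $L$, and then applies \Cref{thm:funkan} with that single fixed interval. This eigenvalue computation tacitly uses the normalisation $C_{\mathsf{x}}=q^{N+T}$, which is correct outside the waterfall; on the center line one has $C_{\mathsf{x}}=q^{2N+T}$, so the gap acquires an extra factor $q^{N}$, and the $n=N$ eigenvalue $-(1-q)(1-q^T)q^{N}$ eventually lands inside $(-(1-q)/2,0)$, so the claimed equality $K_t=P_{(-(1-q)/2,+\infty)}(\mathfrak{D}^{\mathrm{scaled}})$ fails there. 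Your two-sided sandwich $P_{(c,+\infty)}(\mathfrak{D}^{\mathrm{scaled}})\le K_t\le Id$, with $c\downarrow0$ through values avoiding the point spectrum of $\mathfrak{T}^{\ssp\mathrm{ctr}}$, sidesteps this entirely and correctly isolates the role of the completeness half of \Cref{prop:center_line_orthogonality}: it is precisely what ensures $P_{(c,+\infty)}(\mathfrak{T}^{\ssp\mathrm{ctr}})\uparrow Id$. Outside $\mathcal{W}$ only the upper bound is needed, and there either route works.
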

\begin{proof}
	The correlation kernel $K_t$ is the kernel
	of the orthogonal projection operator in $\ell^2(\mathbb{Z})$ onto the spectral interval $[0,+\infty)$ of the operator $\mathfrak{D}^{\mathrm{scaled}}$ \eqref{eq:diff_operator_scaling_shifting}.
	By \eqref{eq:qRacah_eigenvalues},
	the eigenvalues of
	$\mathfrak{D}^{\mathrm{scaled}}$ around $0$ are
	equal to
	\begin{equation*}
		\{\ldots,-(1-q) (1-q^T) ,0 ,
			q^{-1}(1-q)(1-q^{T+2})
		,\ldots\}\sim
		\{\ldots,-(1-q),0,q^{-1}(1-q),\ldots\}
		,
	\end{equation*}
	so, for large $L$, the spectral projection onto the open
	interval $(-(1-q)/2,+\infty)$ yields the same operator $K_t$.

	We now apply the general \Cref{thm:funkan}.
	The operator $\mathfrak{D}^{\mathrm{scaled}}$ and the
	limiting operators $(-Id)$ and
	$\mathfrak{T}^{\ssp\mathrm{ctr}}$ are bounded in
	$H=\ell^2(\mathbb{Z})$.
	Taking $H_0=\ell_0^2(\mathbb{Z})$ (the space of finitely supported functions),
	we see that the convergence of the coefficients of $\mathfrak{D}^{\mathrm{scaled}}$
	(\Cref{lemma:D_scaled_limit_of_diagonal_coefficient,lemma:D_scaled_limit_of_off_diagonal_coefficient,lemma:case_of_no_waterfall})
	implies the required convergence on the common essential
	domain $H_0$. This completes the proof.
\end{proof}

\begin{remark}
	\label{rmk:not_so_easy_for_other_cases}
	Inside the waterfall region but not on the center line,
	the limiting operators arising from the
	coefficients of $\mathfrak{D}^{\mathrm{scaled}}$
	(\Cref{lemma:D_scaled_limit_of_diagonal_coefficient,lemma:D_scaled_limit_of_off_diagonal_coefficient,lemma:case_of_no_waterfall})
	are
	not bounded. Moreover, they are
	symmetric, but
	\emph{not self-adjoint}. One can check that each of these operators has
	von Neumann deficiency indices $(1,1)$.
	Thus, these limiting operators each
	possess a one-parameter family of self-adjoint extensions.
	Therefore, the general \Cref{thm:funkan} does not immediately apply
	in these regions.
	We do not address the spectral analysis of
	the limiting non-self-adjoint operators
	outside the center line
	further in this paper.
\end{remark}

\Cref{prop:limit_of_kernel} implies that the probability to
find a path at any given location outside the saturation
band $\mathcal{P}$ goes to zero. In principle, it does not
rule out the presence of individual random paths that roam
in $\mathcal{H}\setminus \mathcal{P}$, nor the presence of
holes between the paths in the saturation band. In
\Cref{sec:asymptotic_clustering_of_nonintersecting_paths}
below,
we address these questions using a different approach
based on the heuristics discussed in
\Cref{sub:heuristics_for_saturation_band_and_waterfall_region}.

\section{Concentration in the $q$-Racah ensemble. Proof of \texorpdfstring{\Cref{thm:waterfall_LLN_intro}}{the law of large numbers}}
\label{sec:asymptotic_clustering_of_nonintersecting_paths}

In this section we employ a method different from the one used in \Cref{sec:behavior_on_slice_via_spectral_projections}. Specifically, we work with the explicit probability weights of the $q$-Racah orthogonal polynomial ensemble (OPE)~\eqref{eq:qRacah_ensemble_defn} to derive a concentration-type estimate for the probability that a hole
\begin{tikzpicture}[baseline = (current
bounding box.south),scale=.25]
\draw[thick] (0,0)--++(1,1)--++(1,0)--++(-1,-1)--cycle;
\end{tikzpicture}
appears in the nonintersecting path ensemble inside the saturation band $\mathcal{P}\subset \mathcal{H}$ (see \Cref{def:waterfall_region,def:above_below_waterfall}).
This probability is expected to be small because, by
\Cref{prop:limit_of_kernel}, the nonintersecting paths do
not typically wander outside the saturation band
$\mathcal{P}$ as $L\to+\infty$. Our objective, however, is
to obtain a uniform bound that applies to every location
where paths might stray into
$\mathcal{H}\setminus\mathcal{P}$, or holes might appear
within $\mathcal{P}$.  We establish an exponential upper
bound for this probability by considering the ratio of
$q$-Racah OPE probabilities of two $N$-particle
configurations on the same slice that differ only by moving
one particle from $(t,x)$ to $(t,y)$.

Our plan is as follows. In
\Cref{sub:estimating_qRacah_probabilities} we identify the
exact leading power of $q$ in this ratio of the $q$-Racah
OPE probabilities. Then, in
\Cref{sub:minimizing_over_configuration,sub:exponential_estimates},
we bound this leading power in the asymptotic regime where
the sides of the hexagon grow proportionally. These bounds
yield the main result of the section,
\Cref{thm:no_holes_in_waterfall_region}, and
complete the proof of
\Cref{thm:waterfall_LLN_intro} from the Introduction.

\subsection{Ratio of the $q$-Racah probabilities}
\label{sub:estimating_qRacah_probabilities}

Fix the dimensions of the hexagon
$(T,S,N)$ as in \Cref{sub:tilings_and_paths},
and consider two points $(t,x)$ and $(t,y)$ on a given vertical slice.
Let us move a particle from $(t,x)$ to $(t,y)$, and
estimate the ratio of the corresponding $q$-Racah OPE
probabilities \eqref{eq:qRacah_ensemble_defn}.
Let $\vec z = (z_1<\ldots<z_{N-1} )$ (with $z_i\ne x,y$ for all $i$) denote the
configuration of the particles which do not move.
Recall the
$q$-Racah orthogonality measure $w^{qR}$ \eqref{eq:wqR} on the segment $\{0,1,\ldots,M \}$,
the four zones~\eqref{eq:hexagon_zone_1}--\eqref{eq:hexagon_zone_4}
inside the hexagon
for the $q$-Racah parameters $(M,\alpha,\beta,\gamma,\delta)$, and the notation
$\mu(x)$~\eqref{eq:mu_qRacah_defn} and $\tilde x$~\eqref{eq:tilde_x}.
We have
\begin{equation}
	\label{eq:qRacah_ratio_prelimit}
	\frac
	{\operatorname{\mathbb{P}}\left( X(t)=\vec z \cup \{x\} \right)}
	{\operatorname{\mathbb{P}}\left( X(t)=\vec z \cup \{y\} \right)}
	=\frac{w^{qR}(\tilde x)}{w^{qR}(\tilde y)}
	\prod_{i=1}^{N-1}\left( \frac{\mu(\tilde x)-\mu(\tilde z_i)}{\mu(\tilde y)-\mu(\tilde z_i)} \right)^2.
\end{equation}
Our first objective is to extract the leading powers of $q$ in this product.
Denote
\begin{equation}
	\label{eq:bold_k_notation}
	\mathbf{k}\coloneqq \frac{\log(-\kappa^2)}{\log q}\in \mathbb{R},
	\quad \textnormal{so that }{-\kappa^2}=q^{\mathbf{k}}.
\end{equation}
Define the functions
\begin{equation}
	\label{eq:W_ratio_big}
	\begin{split}
		&
		\mathscr{W}_{\mathbf{k}}(x\mid T,S,N,t)
		\coloneqq
		S+t-2x-1
		+\min (S,N+x+\mathbf{k}+1)
		\\&
		\hspace{60pt}
		+\min (t,N+x+\mathbf{k}+1)
		+\min (S+t,\mathbf{k}+2 x+1)
		\\&
		\hspace{60pt}
		-\min (T,  x+\mathbf{k}+1)
		-\min (S+t,x+\mathbf{k}+1)
		-\min (S+t,\mathbf{k}+2 x+3)
		,
	\end{split}
\end{equation}
and
\begin{equation}
	\label{eq:G_function_big}
	\begin{split}
		\mathscr{G}_{\mathbf{k}}(x,y,z\mid A)
		\coloneqq
		(z-x)^-
		+
		(z-A+x+\mathbf{k}+1)^-
		-
		(z-y)^-
		-
		(z-A+y+\mathbf{k}+1)^-.
	\end{split}
\end{equation}
In \eqref{eq:G_function_big}, we used the notation \eqref{eq:a_plus_minus_notation}.
In the next two
\Cref{lemma:W_ratio_big_extraction,lemma:G_function_big_extraction}
we obtain estimates (up to multiplicative constants) for the
factors in the product in \eqref{eq:qRacah_ratio_prelimit}.
These estimates are given in terms of the functions
$\mathscr{W}_{\mathbf{k}}$ and $\mathscr{G}_{\mathbf{k}}$.

\begin{lemma}
	\label{lemma:W_ratio_big_extraction}
	There exists an absolute constant $c_1=c_1(q,\kappa)>0$ not depending on the size of the hexagon, such that for all $T,S,N,t$ and points
	$x,y$ on the $t$-th vertical slice of the hexagon, we have
	\begin{equation}
		\label{eq:W_ratio_big_asymptotic}
		c_1
		<
		q^{
			\operatorname{\mathrm{sgn}}(x-y)
		\sum_{u=\min(x,y)}^{\max(x,y)-1}
		\mathscr{W}_{\mathbf{k}}(u\mid T,S,N,t)}
		\cdot
		\frac{w^{qR}(\tilde x)}{w^{qR}(\tilde y)}
		<
		c_1^{-1}.
	\end{equation}
\end{lemma}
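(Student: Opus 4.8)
The plan is to telescope $w^{qR}(\tilde x)/w^{qR}(\tilde y)$ into one-step weight ratios and control each of them by means of the explicit product formula \eqref{eq:qRacah_BD_coefficients_tilde_x_for_hexagon_2_with_weights}. Since passing from $u$ to $u+1$ shifts $\tilde u$ by exactly $1$, telescoping gives
\begin{equation*}
	q^{\ssp\operatorname{\mathrm{sgn}}(x-y)\sum_{u}\mathscr{W}_{\mathbf{k}}(u\mid T,S,N,t)}\cdot\frac{w^{qR}(\tilde x)}{w^{qR}(\tilde y)}
	=\prod_{u=\min(x,y)}^{\max(x,y)-1}\Bigl(q^{\ssp\mathscr{W}_{\mathbf{k}}(u\mid T,S,N,t)}\,\frac{w^{qR}(\tilde u+1)}{w^{qR}(\tilde u)}\Bigr)^{\operatorname{\mathrm{sgn}}(x-y)}.
\end{equation*}
For every $u$ in the product range the hexagon constraints \eqref{eq:hexagon_constraints} force $\max(0,t+S-T)\le u<\min(t+N-1,S+N-1)$, so \eqref{eq:qRacah_BD_coefficients_tilde_x_for_hexagon_2_with_weights} applies; and since $\operatorname{\mathrm{sgn}}(x-y)\in\{-1,0,1\}$, it suffices to prove that
$\frac{w^{qR}(\tilde u)}{w^{qR}(\tilde u+1)}=q^{\ssp\mathscr{W}_{\mathbf{k}}(u\mid T,S,N,t)}\,E_u$ for an ``error'' factor $E_u>0$ whose product $\prod_{u=\min(x,y)}^{\max(x,y)-1}E_u$ stays in a fixed interval $[c_1,c_1^{-1}]$, uniformly in $T,S,N,t$ and in the endpoints (this yields \eqref{eq:W_ratio_big_asymptotic}).

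To obtain this factorization I would substitute $-\kappa^2=q^{\mathbf{k}}$ (see \eqref{eq:bold_k_notation}) into \eqref{eq:qRacah_BD_coefficients_tilde_x_for_hexagon_2_with_weights}. Each of its eleven factors then becomes a monomial $q$, one of six binomials $q^{a}+q^{b+\mathbf{k}}$ carrying a $\kappa$, or one of the four $\kappa$-free differences $1-q^{u+1}$, $q^{S+t}-q^{T+u+1}$, $q^{N+S}-q^{u+1}$, $q^{N+t}-q^{u+1}$. Using $\max(0,t+S-T)\le u+1\le\min(t+N-1,S+N-1)$ one checks that in these four differences the exponent gaps are \emph{positive integers}, namely $u+1\ge1$, $\,T+u+1-S-t\ge1$, $\,N+S-u-1\ge1$, $\,N+t-u-1\ge1$, so that the sign of each difference is constant over the whole slice and uniform over the four parameter zones \eqref{eq:hexagon_zone_1}--\eqref{eq:hexagon_zone_4}. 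Hence each factor equals $\pm q^{\min(a,b)}$ times a number of the form $1+q^{p}$ (for the $\kappa$-binomials, $p\ge0$) or $1-q^{p}$ (for the $\kappa$-free differences, $p\ge1$); collecting the powers of $q$ reproduces precisely the right-hand side of \eqref{eq:W_ratio_big}, and $E_u$ comes out as a ratio of ten expressions $1\pm q^{e_i(u)}$ --- six of them of the form $1+q^{e}$ with $e\ge0$, four of the form $1-q^{e}$ with $e\ge1$.

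It then remains to bound $\prod_u E_u$ above and below. Every factor $1+q^{e}$ lies in $(1,2]$ and every $1-q^{e}$ with $e\ge1$ lies in $[1-q,1)$, so both $E_u$ and $E_u^{-1}$ lie in a fixed compact interval $J\subset(0,\infty)$ depending only on $q$; in particular $|\log(1+\eta)|\le C'(q)\,|\eta|$ for all $1+\eta\in J$. Writing $E_u=\prod_{i=1}^{10}(1+\eta_i(u))$, with the four denominator factors taken through their reciprocals (which again lie in $J$), one has $|\eta_i(u)|\le(1-q)^{-1}q^{\ssp e_i(u)}$, hence $|\log E_u|\le C''(q)\sum_{i=1}^{10}q^{\ssp e_i(u)}$. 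Each exponent $e_i(u)$ is of the form $|\alpha_i u+\beta_i|$ with $\alpha_i\in\{-2,-1,1\}$ and $\beta_i$ a fixed real depending on $T,S,N,t$ and $\mathbf{k}$; since $|\alpha_i|\ge1$, $\sum_{u\in\mathbb{Z}}q^{|\alpha_i u+\beta_i|}\le 2\sum_{m\ge0}q^{m}=2(1-q)^{-1}$. Summing over $i$ and over $u$ in the range gives $\bigl|\sum_u\log E_u\bigr|\le\sum_u|\log E_u|\le C_0(q)$, so $\prod_u E_u\in[e^{-C_0(q)},e^{C_0(q)}]$, whence \eqref{eq:W_ratio_big_asymptotic} with $c_1=e^{-C_0(q)}$ (the $\kappa$-dependence is harmless: $\kappa$ being fixed, $\mathbf{k}$ is a fixed real constant).

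The step I expect to be the main obstacle is the sign-and-exponent bookkeeping of the second paragraph: one must verify, uniformly over the four $q$-Racah parameter zones \eqref{eq:hexagon_zone_1}--\eqref{eq:hexagon_zone_4} and over all admissible single-particle positions $u$, that the four $\kappa$-free differences never degenerate and that combining their $q^{\min(a,b)}$-parts with those of the six $\kappa$-binomials produces \emph{exactly} the function $\mathscr{W}_{\mathbf{k}}$ of \eqref{eq:W_ratio_big}. This is a somewhat lengthy but mechanical computation that relies crucially on \eqref{eq:hexagon_constraints}. The analytic conclusion --- that the error factors $E_u$, although not individually close to $1$, satisfy $\sum_u|\log E_u|\le C_0(q)$ because each of their ten factors deviates from $1$ by an amount decaying geometrically in the distance from one of finitely many fixed ``corner'' locations --- is then immediate.
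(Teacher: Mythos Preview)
Your proposal is correct and follows essentially the same route as the paper: telescope via \eqref{eq:qRacah_BD_coefficients_tilde_x_for_hexagon_2_with_weights}, extract the dominant power $q^{\min(a,b)}$ from each linear factor to recover $\mathscr{W}_{\mathbf{k}}$, and then show that the leftover error product is uniformly bounded. The only substantive difference lies in how the error product is controlled. The paper bounds it for the six $\kappa$-containing factors via the explicit theta-type identity
\[
\prod_{u\in\mathbb{Z}}(q^A-\kappa^2 q^{u})\,q^{-\min(A,u+\mathbf{k})}
=(-q^{\{\mathbf{k}\}};q)_\infty\,(-q^{1-\{\mathbf{k}\}};q)_\infty,
\]
which is bounded away from $0$ and $\infty$ uniformly in $A$; the $\kappa$-free factors are handled implicitly (their error products are partial $q$-Pochhammer symbols bounded between $(q;q)_\infty$ and $1$). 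Your argument instead treats all ten factors at once by the logarithmic estimate $|\log E_u|\le C''(q)\sum_i q^{e_i(u)}$ together with the geometric-series bound $\sum_{u\in\mathbb{Z}}q^{|\alpha_i u+\beta_i|}\le 2(1-q)^{-1}$. Your route is slightly more elementary and makes the treatment of the $\kappa$-free factors explicit; the paper's route gives a sharper (indeed, exact) value for the $\kappa$-dependent part of the error constant. Either way the conclusion and the constants' dependence on $(q,\kappa)$ alone are the same.
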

\begin{proof}
	We use the first formula in \eqref{eq:qRacah_BD_coefficients_tilde_x_for_hexagon_2_with_weights}
	which provides a unified\ expression
	for the ratio
	$\frac{w^{qR}(\tilde u)}{w^{qR}(\tilde u+1)}$ in all four zones in the hexagon.
	From each
	linear factor in this product,
	we extract
	$q$ raised to the minimal power. In the factors
	not containing $\kappa$, the dominating term which we extract is always the
	same. For example, in $(q^{N+t}-q^{x+1})$, we always have
	$N+t\ge x+1$, so
	the dominating term is $q^{x+1}$.
	In factors containing $\kappa$, we write $-\kappa^2=q^{\mathbf{k}}$,
	and include expressions involving~$\mathbf{k}$ in the comparison.
	The combined power of $q$
	thus arising from $\frac{w^{qR}(\tilde u)}{w^{qR}(\tilde u+1)}$
	becomes equal to
	$\mathscr{W}_{\mathbf{k}}(u\mid T,S,N,t)$ \eqref{eq:W_ratio_big}.
	Representing
	$\frac{w^{qR}(\tilde x)}{w^{qR}(\tilde y)}$ as a telescoping product of
	consecutive ratios, we get the desired power of $q$ in
	\eqref{eq:W_ratio_big_asymptotic}.
	Indeed, if $x<y$, then the compensating sum is from $x$ to $y-1$ with the minus sign, and
	if $x>y$, then we write
	$\frac{w^{qR}(\tilde x)}{w^{qR}(\tilde y)}=
	\bigr(
		\frac{w^{qR}(\tilde y)}{w^{qR}(\tilde x)}
	\bigr)^{-1}$,
	and the compensating sum is from $y$ to $x-1$, but with the plus sign.

	There could be a multiplicative correction
	in extracting the power $q^{\mathscr{W}_{\mathbf{k}}(u\mid T,S,N,t)}$
	from
	$\frac{w^{qR}(\tilde u)}{w^{qR}(\tilde u+1)}$
	if $u\in \mathbb{Z}$ is such that
	both terms in a given linear factor in \eqref{eq:qRacah_BD_coefficients_tilde_x_for_hexagon_2_with_weights}
	are comparable (for example, if $q^{S+t}\sim -\kappa^2 q^{2u+1}$).
	However, the product over all such values $u$ is bounded, as one can see from the
	following computation:
	\begin{equation}
		\label{eq:sample_theta_like_computation}
		\prod\nolimits_{u\in \mathbb{Z}}
		(q^A-\kappa^2 q^u)\ssp
		q^{-\min(A,u+\mathbf{k})}=
		( -q^{\{\mathbf{k}\}};q )_{\infty}
		( -q^{1-\{\mathbf{k}\}};q )_{\infty},
	\end{equation}
	where $\{\mathbf{k}\}=\mathbf{k}-\lfloor \mathbf{k} \rfloor$ is the fractional part of $\mathbf{k}$.
	The right-hand side of \eqref{eq:sample_theta_like_computation} is
	bounded away from $0$ and $+\infty$ by an absolute constant depending only on $q$ and $\kappa$.
	Combining these bounds for the six linear factors in
	\eqref{eq:qRacah_BD_coefficients_tilde_x_for_hexagon_2_with_weights}
	containing $\kappa$, we obtain the multiplicative bounds by $c_1,c_1^{-1}$
	in~\eqref{eq:W_ratio_big_asymptotic}.
	This completes the proof.
\end{proof}

\begin{lemma}
	\label{lemma:G_function_big_extraction}
	There exists an absolute constant $c_2=c_2(q,\kappa)>0$
	not depending on the size of the hexagon, such that for any $t$, any
	configuration
	$\vec z = (z_1<\ldots<z_{N-1} )$,
	and any $x,y$ on the $t$-th vertical slice of the hexagon
	(with $z_i\ne x,y$ for all $i$), we have
	\begin{equation}
		\label{eq:G_function_big_asymptotic}
		c_2
		<
		q^{-2\sum_{i=1}^{N-1}
		\mathscr{G}_{\mathbf{k}}(x,y,z_i\mid S+t)}
		\cdot
		\prod_{i=1}^{N-1}\left( \frac{\mu(\tilde x)-\mu(\tilde z_i)}{\mu(\tilde y)-\mu(\tilde z_i)} \right)^2
		<
		c_2^{-1}.
	\end{equation}
\end{lemma}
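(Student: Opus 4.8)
The plan is to exhibit $\mu(\tilde x)-\mu(\tilde z)$ as a product of two elementary factors whose leading powers of $q$ are manifest, and then to show that the multiplicative error accumulated over the $N-1$ non-moved particles stays bounded by an absolute constant. Since the ratio in \eqref{eq:G_function_big_asymptotic} is unchanged under an affine rescaling of the spectral variable, we may replace $\mu(\tilde x)$ throughout by $q^{-x}+\kappa^{2}q^{x+1-S-t}$, the slice variable of the OPE \eqref{eq:qRacah_OPE_intro}. A one-line computation then gives
\[
	\mu(\tilde x)-\mu(\tilde z)=(q^{x}-q^{z})\bigl(\kappa^{2}q^{1-S-t}-q^{-x-z}\bigr)=-(q^{x}-q^{z})\bigl(q^{\mathbf{k}+1-S-t}+q^{-x-z}\bigr),
\]
using $-\kappa^{2}=q^{\mathbf{k}}$. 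For distinct integers $x\ne z$ we have $|q^{x}-q^{z}|=q^{\min(x,z)}(1-q^{|x-z|})$ with $1-q\le 1-q^{|x-z|}<1$, while the second factor, being a sum of positive powers of $q$, satisfies $q^{\mathbf{k}+1-S-t}+q^{-x-z}=q^{\min(\mathbf{k}+1-S-t,\,-x-z)}\bigl(1+q^{|\mathbf{k}+1-S-t+x+z|}\bigr)$. Writing $\min(x,z)=x+(z-x)^{-}$ and $\min(\mathbf{k}+1-S-t,-x-z)=-x-z+(z-(S+t)+x+\mathbf{k}+1)^{-}$, the exponent of the leading power of $q$ in $|\mu(\tilde x)-\mu(\tilde z)|$ equals $-z+(z-x)^{-}+(z-(S+t)+x+\mathbf{k}+1)^{-}$; subtracting the same expression with $x$ replaced by $y$ yields exactly $\mathscr{G}_{\mathbf{k}}(x,y,z\mid S+t)$ from \eqref{eq:G_function_big}, so after squaring and multiplying over $z=z_{i}$ we obtain the power $q^{\,2\sum_{i}\mathscr{G}_{\mathbf{k}}(x,y,z_{i}\mid S+t)}$.

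It remains to bound uniformly the leftover factor
\[
	\biggl[\,\prod_{i=1}^{N-1}\frac{(1-q^{|x-z_{i}|})\,(1+q^{|\mathbf{k}+1-S-t+x+z_{i}|})}{(1-q^{|y-z_{i}|})\,(1+q^{|\mathbf{k}+1-S-t+y+z_{i}|})}\,\biggr]^{2}.
\]
The crucial combinatorial observation is that the $z_{i}$ are distinct integers with $z_i\ne x,y$: for each integer $d\ge 1$ at most two indices $i$ have $|x-z_{i}|=d$, so $\prod_{i}(1-q^{|x-z_{i}|})$ lies between $(q;q)_{\infty}^{2}$ and $1$; and since the reals $\mathbf{k}+1-S-t+x+z_{i}$ differ pairwise by nonzero integers, at most two of them lie in any unit interval, hence $\lfloor|\mathbf{k}+1-S-t+x+z_{i}|\rfloor$ attains each nonnegative integer value at most twice, which (via $q^{|e|}\le q^{\lfloor|e|\rfloor}$) bounds $\prod_{i}(1+q^{|\mathbf{k}+1-S-t+x+z_{i}|})$ between $1$ and $4(-q;q)_{\infty}^{2}$. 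The identical bounds apply with $y$ in place of $x$, so the displayed leftover factor lies in $[c_{2},c_{2}^{-1}]$ for an absolute $c_{2}=c_{2}(q,\kappa)>0$, which is the assertion. This is the same device as the theta-like product estimate \eqref{eq:sample_theta_like_computation} used in \Cref{lemma:W_ratio_big_extraction}.

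The main obstacle is precisely this uniformity step. Each individual ratio in the product carries only an $O(1)$ multiplicative distortion, so a careless bound would compound to something like $c^{-L}$ over the $N-1\sim L\mathsf{N}$ factors; the argument succeeds only because the ``bad'' distances $|x-z_{i}|$ and the ``resonant'' exponents $\mathbf{k}+1-S-t+x+z_{i}$ can each repeat at most twice, which turns the product of distortions into convergent $q$-Pochhammer products and keeps $c_{2}$ independent of the size of the hexagon.
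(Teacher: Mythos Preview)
Your proof is correct and follows essentially the same approach as the paper: both factor $\mu(\tilde x)-\mu(\tilde z)$ into the two linear factors $(q^{x}-q^{z})$ and $(\kappa^{2}q^{1-S-t}-q^{-x-z})$, extract the dominant power of $q$ from each to recover $\mathscr{G}_{\mathbf{k}}$, and control the residual product via a theta-like bound. The only difference is presentational---the paper forms the ratio first (obtaining \eqref{eq:mu_ratio_asymptotic_new}) and then defers the multiplicative bound to the argument of \Cref{lemma:W_ratio_big_extraction}, whereas you spell out explicitly how distinctness of the $z_i$ converts the residual product into bounded $q$-Pochhammer products.
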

\begin{proof}
	For pairwise distinct $x,y,z_i$ on the $t$-th vertical slice inside the hexagon,
	in all four zones \eqref{eq:hexagon_zone_1}--\eqref{eq:hexagon_zone_4}, we have
	\begin{equation}
		\label{eq:mu_ratio_asymptotic_new}
		\left( \frac{\mu(\tilde x)-\mu(\tilde z_i)}{\mu(\tilde y)-\mu(\tilde z_i)} \right)^2 =
		\left(
			\frac{1-q^{z_i-x}}{1-q^{z_i-y}}\ssp
			\frac{1-\kappa^2 q^{z_i-S-t+x+1}}{1-\kappa^2 q^{z_i-S-t+y+1}}
		\right)^2.
	\end{equation}
	Extracting the smallest power of $q$ from each linear factor
	in \eqref{eq:mu_ratio_asymptotic_new}, exactly as in the
	proof of \Cref{lemma:W_ratio_big_extraction}, we obtain the
	factor $q^{2\mathscr{G}_{\mathbf{k}}(x,y,z_i\mid S+t)}$ (see
	\eqref{eq:G_function_big}). This provides the required power
	of $q$ in \eqref{eq:G_function_big_asymptotic}. The
	multiplicative bounds by $c_2$ and $c_2^{-1}$ follow in the
	same way as in \Cref{lemma:W_ratio_big_extraction}.
\end{proof}

\subsection{Minimizing over the \texorpdfstring{$(N-1)$}{(N-1)}-point configurations}
\label{sub:minimizing_over_configuration}

By \Cref{lemma:W_ratio_big_extraction,lemma:G_function_big_extraction},
to estimate the ratio
\eqref{eq:qRacah_ratio_prelimit}
of the $q$-Racah OPE probabilities, it suffices to consider the exponent
\begin{equation}
	\label{eq:W_G_exponent}
	\begin{split}
		\mathscr{E}_{\mathbf{k}}(x,y)&=
		\mathscr{E}_{\mathbf{k}}(x,y\mid T,S,N,t\mid \vec z)
		\\&\hspace{10pt}\coloneqq
		-\operatorname{\mathrm{sgn}}(x-y)
		\sum_{u=\min(x,y)}^{\max(x,y)-1}
		\mathscr{W}_{\mathbf{k}}(u\mid T,S,N,t)
		+2
		\sum_{i=1}^{N-1}
		\mathscr{G}_{\mathbf{k}}(x,y,z_i\mid S+t).
	\end{split}
\end{equation}
We aim to find pairs $(x,y)$ for which $\mathscr{E}_{\mathbf{k}}(x,y)$ is positive and grows with $L$.
If this is the case, then it is advantageous (under the $q$-Racah OPE measure) to replace the configuration $\vec z\cup \{x\}$ by $\vec z\cup \{y\}$.
For an integer $A$, denote
\begin{equation}
	\label{eq:d_x_y_notation}
	d_{\mathbf{k}}^{A}(x;y)\coloneqq
	\left|x-\tfrac12(A-\mathbf{k}-1)\right|-
	\left|y-\tfrac12(A-\mathbf{k}-1)\right|,
\end{equation}
where $\mathbf{k}$ is given by \eqref{eq:bold_k_notation}.
Let us begin to lower bound \eqref{eq:W_G_exponent} by first picking a configuration $\vec z$ which minimizes the second sum:

\begin{lemma}
	\label{lemma:G_function_minimization}
	Fix $A\in \mathbb{Z}$, and
	let $d_{\mathbf{k}}^{A}(x;y)>0$.
	Then for any
	$(N-1)$-point configuration $\vec z$ in $\mathbb{Z}$, we have
	\begin{equation}
		\label{eq:G_function_minimization}
		\sum_{i=1}^{N-1}\mathscr{G}_{\mathbf{k}}(x,y,z_i\mid A)
		\ge
		\min
		\Bigl\{
			\sum_{i\in I}\mathscr{G}_{\mathbf{k}}(x,y,i\mid A),
			\sum_{i\in I'}\mathscr{G}_{\mathbf{k}}(x,y,i\mid A)
		\Bigr\},
	\end{equation}
	where $I$ and $I'$ are $(N-1)$-point intervals in $\mathbb{Z}$ defined as
	\begin{equation}
		\label{eq:G_function_minimization_interval}
		\begin{split}
			I&\coloneqq
			\Bigl[
				\lfloor \tfrac12(A-\mathbf{k}-1) \rfloor -
				\lfloor \tfrac{N-1}2 \rfloor +1,
				\lfloor \tfrac12(A-\mathbf{k}-1) \rfloor
				+
				\lfloor \tfrac N2 \rfloor
			\Bigr]\cap \mathbb{Z},\\
			I'&\coloneqq
			\Bigl[
				\lfloor \tfrac12(A-\mathbf{k}-1) \rfloor -
				\lfloor \tfrac N2 \rfloor +1,
				\lfloor \tfrac12(A-\mathbf{k}-1) \rfloor
				+
				\lfloor \tfrac{N-1}2 \rfloor
			\Bigr]\cap \mathbb{Z}.
		\end{split}
	\end{equation}
\end{lemma}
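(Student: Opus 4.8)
The plan is to reduce the minimization to a one-dimensional monotone-rearrangement problem.

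\emph{Step 1 (simplify $\mathscr{G}_{\mathbf k}$).} Put $\xi\coloneqq\tfrac12(A-\mathbf{k}-1)$. Since $z-A+w+\mathbf{k}+1=z-(2\xi-w)$, each of the four summands of $\mathscr{G}_{\mathbf k}$ has the shape $(z-a)^-+\bigl(z-(2\xi-a)\bigr)^-$; writing $(z-a)^-=\min(z,a)-a$ and using $\{\min,\max\}(a,2\xi-a)=\xi\mp|a-\xi|$, such an expression equals $(z-\xi)-\max(|z-\xi|,|a-\xi|)$. Subtracting the $y$-terms from the $x$-terms yields
\[
	\mathscr{G}_{\mathbf{k}}(x,y,z\mid A)=\max\bigl(|z-\xi|,|y-\xi|\bigr)-\max\bigl(|z-\xi|,|x-\xi|\bigr),
\]
so $\mathscr{G}_{\mathbf k}(x,y,z\mid A)=g(|z-\xi|)$ with $g(r)\coloneqq\max(r,p)-\max(r,P)$, where $p\coloneqq|y-\xi|$ and $P\coloneqq|x-\xi|$. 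The hypothesis $d_{\mathbf{k}}^{A}(x;y)>0$ is exactly $P>p\ge0$, and then $g$ is \emph{non-decreasing} on $[0,\infty)$: it equals $p-P$ for $r\le p$, rises linearly to $0$ on $[p,P]$, and vanishes for $r\ge P$.

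\emph{Step 2 (rearrangement).} List $\mathbb{Z}=\{\zeta_1,\zeta_2,\dots\}$ with $|\zeta_1-\xi|\le|\zeta_2-\xi|\le\cdots$. For any $(N-1)$-point configuration $\vec z$, relabel its elements as $z_{(1)},\dots,z_{(N-1)}$ so that $|z_{(1)}-\xi|\le\cdots\le|z_{(N-1)}-\xi|$; since among $z_{(1)},\dots,z_{(m)}$ at most $m-1$ can lie in $\{\zeta_1,\dots,\zeta_{m-1}\}$, one of them lies in $\{\zeta_m,\zeta_{m+1},\dots\}$, giving $|z_{(m)}-\xi|\ge|\zeta_m-\xi|$ for every $m$. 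By monotonicity of $g$, $\sum_i\mathscr{G}_{\mathbf k}(x,y,z_i\mid A)=\sum_m g(|z_{(m)}-\xi|)\ge\sum_m g(|\zeta_m-\xi|)$. The set $J\coloneqq\{\zeta_1,\dots,\zeta_{N-1}\}$ of $N-1$ integers nearest to $\xi$ is a block $[\ell_J,\ell_J+N-2]\cap\mathbb{Z}$ of consecutive integers (each successive nearest integer is adjoined immediately to the left or right of the current block), so with $F(\ell)\coloneqq\sum_{j=\ell}^{\ell+N-2}g(|j-\xi|)=\sum_{j=\ell}^{\ell+N-2}\mathscr{G}_{\mathbf k}(x,y,j\mid A)$ we obtain $\sum_i\mathscr{G}_{\mathbf k}(x,y,z_i\mid A)\ge F(\ell_J)$; moreover $F(\ell_J)=\min_{\ell\in\mathbb Z}F(\ell)$, since $J$ minimizes the sum over all $(N-1)$-subsets and is itself a block.

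\emph{Step 3 (identify the optimal block).} From $F(\ell+1)-F(\ell)=g(|\ell+N-1-\xi|)-g(|\ell-\xi|)$ and the elementary equivalence $|\ell+N-1-\xi|\le|\ell-\xi|\Leftrightarrow\ell\le\xi-\tfrac{N-1}{2}$, the function $F$ is non-increasing for $\ell\le\xi-\tfrac{N-1}{2}$ and non-decreasing for $\ell\ge\xi-\tfrac{N-1}{2}$; hence $\min_\ell F$ is attained at $\ell=\lceil\xi-\tfrac{N-1}{2}\rceil$ (and, when $\xi-\tfrac{N-1}{2}\in\mathbb Z$, also at the adjacent block, since the two endpoints swapped there are equidistant from $\xi$, leaving $F$ unchanged). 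A direct computation with floor functions, separating the parity of $N$, then shows that the left endpoints $\lfloor\xi\rfloor-\lfloor\tfrac{N-1}{2}\rfloor+1$ and $\lfloor\xi\rfloor-\lfloor\tfrac{N}{2}\rfloor+1$ of $I$ and $I'$ are exactly $\lceil\xi-\tfrac{N-1}{2}\rceil$ and its left neighbour (they coincide when $N$ is odd), so that $\min\{F(\ell_I),F(\ell_{I'})\}=\min_\ell F(\ell)$. Combining with Step 2 gives $\sum_i\mathscr{G}_{\mathbf k}(x,y,z_i\mid A)\ge\min\bigl\{\sum_{i\in I}\mathscr{G}_{\mathbf k}(x,y,i\mid A),\sum_{i\in I'}\mathscr{G}_{\mathbf k}(x,y,i\mid A)\bigr\}$, which is the claim. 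The main obstacle is precisely this last bookkeeping: matching the abstractly-described optimal block with the explicit intervals $I,I'$, and treating the ties that arise in the degenerate case $\mathbf{k}\in\mathbb{Z}$, i.e.\ $\xi\in\tfrac12\mathbb{Z}$, via the equidistant-endpoint remark.
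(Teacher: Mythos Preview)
Your proof is correct and follows essentially the same idea as the paper's: both observe that $\mathscr{G}_{\mathbf{k}}(x,y,\cdot\mid A)$ is unimodal in $z$ with its minimum attained near the midpoint $\xi=\tfrac12(A-\mathbf{k}-1)$, so the minimizing configuration is densely packed around $\xi$. Your treatment is more explicit---the closed form $\mathscr{G}_{\mathbf{k}}=\max(|z-\xi|,|y-\xi|)-\max(|z-\xi|,|x-\xi|)$ and the monotone-rearrangement argument make the unimodality and the reduction to blocks fully transparent, whereas the paper simply asserts the shape of $\mathscr{G}_{\mathbf{k}}$ (with a picture) and concludes directly---but the underlying mechanism is identical.
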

See \Cref{rmk:lemma:G_function_minimization} below for a discussion of the
intervals $I$ and $I'$ \eqref{eq:G_function_minimization_interval}.
\begin{proof}[Proof of \Cref{lemma:G_function_minimization}]
	Under the assumptions on $x,y$,
	$\mathscr{G}_{\mathbf{k}}$ is nonpositive as a function of $z$.
	Moreover, it weakly decreases for $z\le \tfrac12(A-\mathbf{k}-1)$, weakly increases
	afterwards, and stays constant in an interval around $\tfrac12(A-\mathbf{k}-1)$.
	See \Cref{fig:graphs_g} for an illustration.\begin{figure}[htpb]
		\centering
		\includegraphics[width=.4\textwidth]{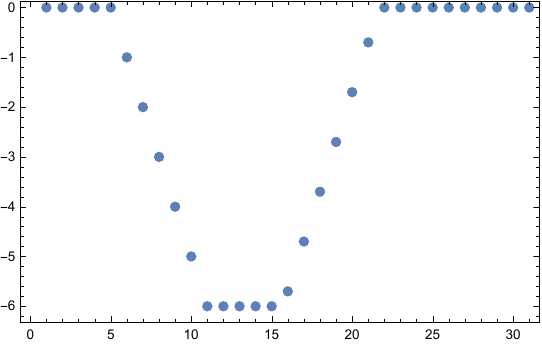}
		\caption{The plot of $\mathscr{G}_{\mathbf{k}}(x,y,z\mid A)$ as a function of $z$,
		where $x=4$, $y=10$, $A=15$, and $\mathbf{k}=4.3$.}
		\label{fig:graphs_g}
	\end{figure}

	This behavior of~$\mathscr{G}_{\mathbf{k}}$ implies that to minimize
	the sum \eqref{eq:G_function_minimization}, we should pick
	the $N-1$ points $z_i$ to be as close as possible to the
	midpoint $\tfrac12(A-\mathbf{k}-1)$. This is achieved by
	densely packing the $z_i$'s together, symmetrically around
	the middle point $\tfrac12(A-\mathbf{k}-1)$. Therefore,
	the configuration $\vec z$ must fill the interval $I$ or
	$I'$ \eqref{eq:G_function_minimization_interval}.

	The intervals $I$ and $I'$ are the same for odd $N$.
	For even $N$, we need to account for the (possibly
	non-integer) shift in one of the strictly monotone parts
	of the plot in \Cref{fig:graphs_g} which arises because of
	the presence of $\mathbf{k}$.  This completes the proof.
\end{proof}
\begin{remark}
	\label{rmk:lemma:G_function_minimization}
	The shift-by-one difference
	between the intervals $I$ and $I'$ in
	\Cref{lemma:G_function_minimization} matters only for
	even~$N$. Then the two sums in the right-hand side of
	\eqref{eq:G_function_minimization}
	differ by
	\begin{equation*}
		\Bigl|
			\mathscr{G}_{\mathbf{k}}
			(x,y,\lfloor \tfrac{1}{2} (A-\mathbf{k}-1)\rfloor+N/2\mid A)
			-
			\mathscr{G}_{\mathbf{k}}
			(x,y,\lfloor \tfrac{1}{2} (A-\mathbf{k}-1)\rfloor-N/2+1\mid A)
			\Bigr|.
	\end{equation*}
	One can check that this expression is always $\le 1$,
	and thus the difference between $I$ and $I'$ will be inessential for
	our asymptotic estimates in \Cref{sub:exponential_estimates} below.
\end{remark}

Notice that in \Cref{lemma:G_function_minimization}, we tacitly assumed
that the minimizing configuration of the $N-1$ particles $\vec z$ fits in an interval around
the middle point $\tfrac12(A-\mathbf{k}-1)=\tfrac12(S+t-\mathbf{k}-1)$.
However, in $\mathscr{E}_{\mathbf{k}}(x,y)$
\eqref{eq:W_G_exponent}, depending on $T,S,N,t$,
the middle point may be at a distance less than $\sim N/2$
from the boundary of the hexagon. In these cases, the
minimizing configuration $\vec z$ must still be densely
packed, but contained in an interval bordering the boundary
of the hexagon.
This corresponds to~$t$ being outside the waterfall bounds
$\lfloor L\ssp \mathsf{t}_l \rfloor \le
t
\le \lfloor L\ssp \mathsf{t}_r \rfloor$.
In more detail, we have:
\begin{lemma}
	\label{lemma:G_function_minimization_outside_waterfall}
	Fix $A\in \mathbb{Z}$, and
	let
	$d_{\mathbf{k}}^{A}(x;y)>0$.
	Assume that the configuration $\vec z$ is restricted to an interval
	$a\le z_1<\ldots<z_N \le b$, and
	\begin{equation}
		\label{eq:G_function_minimization_outside_waterfall}
		a\ge
		\lfloor \tfrac12(A-\mathbf{k}-1) \rfloor -
		\lfloor \tfrac{N-1}2 \rfloor +1
		\qquad
		\textnormal{or}
		\qquad
		b\le
		\lfloor \tfrac12(A-\mathbf{k}-1) \rfloor
		+ \lfloor \tfrac{N-1}2 \rfloor.
	\end{equation}
	Then for any such restricted $(N-1)$-point configuration $\vec z$, we have
	in the two cases in \eqref{eq:G_function_minimization_outside_waterfall}, respectively:
	\begin{equation*}
		\sum\nolimits_{i=1}^{N-1}\mathscr{G}_{\mathbf{k}}(x,y,z_i\mid A)
		\ge
		\sum\nolimits_{i=a}^{a+N-2}\mathscr{G}_{\mathbf{k}}(x,y,i\mid A)
		\qquad
		\textnormal{or}
		\qquad
		\ge
		\sum\nolimits_{i=b-N+2}^{b}\mathscr{G}_{\mathbf{k}}(x,y,i\mid A).
	\end{equation*}
\end{lemma}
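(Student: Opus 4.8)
The plan is to rerun the unimodality/exchange argument from the proof of \Cref{lemma:G_function_minimization}, now inside the one-sided window prescribed by \eqref{eq:G_function_minimization_outside_waterfall}. Set $c\coloneqq\tfrac12(A-\mathbf{k}-1)$, the center point appearing in \eqref{eq:d_x_y_notation}, and recall from that proof that, under $d_{\mathbf{k}}^{A}(x;y)>0$, the function $z\mapsto\mathscr{G}_{\mathbf{k}}(x,y,z\mid A)$ is nonpositive, weakly decreasing on $(-\infty,c]$, weakly increasing on $[c,+\infty)$, and constant on a plateau containing $c$ (\Cref{fig:graphs_g}); inspecting \eqref{eq:G_function_big} one sees in addition that it is symmetric about $c$, i.e.\ $\mathscr{G}_{\mathbf{k}}(x,y,2c-z\mid A)=\mathscr{G}_{\mathbf{k}}(x,y,z\mid A)$. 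It suffices to treat the first alternative in \eqref{eq:G_function_minimization_outside_waterfall}, namely $a\ge\lfloor c\rfloor-\lfloor\tfrac{N-1}{2}\rfloor+1$ (the left endpoint of $I$ in \eqref{eq:G_function_minimization_interval}); the second alternative reduces to it via the reflection $z\mapsto 2c-z$, which interchanges the roles of $I$ and $I'$.

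First I would argue that the minimum over admissible $\vec z\subset[a,b]$ is attained on a contiguous block, by exactly the exchange reasoning of \Cref{lemma:G_function_minimization}: moving a particle so as to close an internal gap, toward whichever side of the gap lies nearer the plateau of $\mathscr{G}_{\mathbf{k}}$, does not increase the sum and keeps the configuration inside $[a,b]$; iterating, one reaches a block $[a',a'+N-2]$ with $a'\ge a$ (and $b\ge a+N-2$ automatically, otherwise $[a,b]$ holds fewer than $N-1$ integers and the hypothesis is vacuous). Then I would show that among such blocks the left-flush one $a'=a$ is optimal: shifting $[a',a'+N-2]$ to $[a'+1,a'+N-1]$ changes the sum by $\mathscr{G}_{\mathbf{k}}(x,y,a'+N-1\mid A)-\mathscr{G}_{\mathbf{k}}(x,y,a'\mid A)$, and the hypothesis $a\ge\lfloor c\rfloor-\lfloor\tfrac{N-1}{2}\rfloor+1$ forces the block to be centered at or to the right of $c$ (modulo the floors, cf.\ \Cref{rmk:lemma:G_function_minimization}), so in particular $a'+N-1\ge c$; combining the symmetry about $c$ with $2c-a'\le 2c-a\le a+N-2\le a'+N-1$ and the weak monotonicity on $[c,+\infty)$ yields $\mathscr{G}_{\mathbf{k}}(x,y,a'\mid A)=\mathscr{G}_{\mathbf{k}}(x,y,2c-a'\mid A)\le\mathscr{G}_{\mathbf{k}}(x,y,a'+N-1\mid A)$, so the shift cannot lower the sum. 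Hence $[a,a+N-2]$ realizes the minimum, which is the asserted inequality; the second alternative follows by applying the same to the reflected configuration.

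The main obstacle is the contiguity/placement step: a priori an admissible configuration might do better by leaving the leftmost sites of $[a,b]$ empty and clustering the particles around the plateau of $\mathscr{G}_{\mathbf{k}}$ near $c$, which would beat the flush block. This is precisely what $a\ge\lfloor c\rfloor-\lfloor\tfrac{N-1}{2}\rfloor+1$ rules out: it says the unconstrained near-optimal block $I$ already has its left endpoint at or below $a$, so within $[a,+\infty)$ one can no longer push mass further toward $c$, and every site of the flush block carries a $\mathscr{G}_{\mathbf{k}}$-value no smaller than that of its mirror site further to the right. The residual care is the usual parity and floor bookkeeping already isolated in \Cref{rmk:lemma:G_function_minimization}, which perturbs the relevant block by at most one site and is harmless for the estimate; these bounds then feed into the exponential estimates of \Cref{sub:exponential_estimates}.
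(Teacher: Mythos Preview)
Your argument is correct and follows essentially the same approach as the paper, which simply states that the lemma ``is established similarly to the proof of \Cref{lemma:G_function_minimization}.'' You have fleshed out that one-line proof by making explicit the symmetry of $\mathscr{G}_{\mathbf{k}}$ about $c$ and the exchange/shift argument showing that under the one-sided constraint the flush block at $a$ (resp.\ $b$) is optimal; the residual floor/parity bookkeeping you flag is indeed the same harmless issue handled by \Cref{rmk:lemma:G_function_minimization}.
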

\begin{proof}
	This is established similarly to the proof of \Cref{lemma:G_function_minimization}.
\end{proof}

\Cref{lemma:G_function_minimization,lemma:G_function_minimization_outside_waterfall}
establish a lower bound
for the second sum in
the leading $q$-power
$\mathscr{E}_{\mathbf{k}}(x,y)$
\eqref{eq:W_G_exponent}
over all configurations $\vec z$.
The minimizing configuration $\vec z$ is always densely packed.
For uniformity of notation, we denote it by
$\vec z_*=\{z_0,z_0+1,\ldots,z_0+N-2\}$.

In our estimates, the point $y$ (into which we move the particle from $x$)
is in the saturation band. This allows us to strengthen (increase) the
lower bounds of \Cref{lemma:G_function_minimization,lemma:G_function_minimization_outside_waterfall}
by recalling that
the configuration $\vec z$ must satisfy $y\ne z_i$ for all $i$.
Thus, instead of $y$, the minimizing configuration~$\vec z$
must include one of the particles at distance $1$ from the boundary of $\vec z_*$.
The next statement, which formalizes this observation, is straightforward.

\begin{lemma}
	\label{lemma:G_function_minimization_stronger_with_y}
	In the setting of \Cref{lemma:G_function_minimization} or
	\ref{lemma:G_function_minimization_outside_waterfall},
	assume that
	$y\in \vec z_*$, where $\vec z_*$ is
	the densely packed configuration defined above.
	Then for any $(N-1)$-point configuration $\vec z$ in $\mathbb{Z}$ satisfying $z_i\ne x,y$ for all $i$
	(and additional restrictions of \Cref{lemma:G_function_minimization_outside_waterfall},
	if applicable), we have
	\begin{equation}
		\label{eq:G_function_minimization_stronger_with_y}
		\begin{split}
			&
			\sum_{i=1}^{N-1}\mathscr{G}_{\mathbf{k}}(x,y,z_i\mid A)
			\ge
			\sum_{i=z_0}^{z_0+N-2}
			\mathscr{G}_{\mathbf{k}}(x,y,i\mid A)
			\\&
			\hspace{80pt}
			-\mathscr{G}_{\mathbf{k}}(x,y,y\mid A)
			+
			\min
			\bigl\{
				\mathscr{G}_{\mathbf{k}}(x,y,z_0-1\mid A),
				\mathscr{G}_{\mathbf{k}}(x,y,z_0+N-1\mid A)
			\bigr\}.
		\end{split}
	\end{equation}
\end{lemma}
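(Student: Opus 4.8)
The plan is to upgrade the minimization of \Cref{lemma:G_function_minimization,lemma:G_function_minimization_outside_waterfall} by feeding in the single extra constraint $z_i\ne y$. First I would recall the shape of the map $z\mapsto \mathscr{G}_{\mathbf{k}}(x,y,z\mid A)$ established in the proof of \Cref{lemma:G_function_minimization}: under the hypothesis $d_{\mathbf{k}}^{A}(x;y)>0$ this function is nonpositive, weakly decreasing for $z\le \tfrac12(A-\mathbf{k}-1)$, constant on a plateau containing $\tfrac12(A-\mathbf{k}-1)$, and weakly increasing afterwards (\Cref{fig:graphs_g}). The consequence I need is a monotone-rearrangement principle: for any finite $Z\subset\mathbb{Z}$, replacing a point of $Z$ by the adjacent site one step closer to $\tfrac12(A-\mathbf{k}-1)$ never increases $\sum_{z\in Z}\mathscr{G}_{\mathbf{k}}(x,y,z\mid A)$, provided that adjacent site is not already occupied.

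Second, I would run the standard exchange argument. Starting from an arbitrary admissible $\vec z$ (with $z_i\ne x,y$, and, in the setting of \Cref{lemma:G_function_minimization_outside_waterfall}, confined to the prescribed half-line bordering the hexagon), repeatedly push the particle farthest from $\tfrac12(A-\mathbf{k}-1)$ one step inward whenever the neighbouring site is free. This never increases the second sum in $\mathscr{E}_{\mathbf{k}}$ and terminates at a configuration that is an interval of $N-1$ consecutive integers \emph{except} that it must skip the single forbidden site $y$ (in the regime where this lemma is applied, the other forbidden site $x$ lies outside $\vec z_{*}$, so it imposes no obstruction). Because $y$ is interior to $\vec z_{*}=\{z_0,\dots,z_0+N-2\}$, deleting the contribution $\mathscr{G}_{\mathbf{k}}(x,y,y\mid A)$ and re-inserting the displaced particle at the nearest available site forces it to land at either $z_0-1$ or $z_0+N-1$; retaining whichever endpoint yields the smaller value gives precisely the right-hand side of \eqref{eq:G_function_minimization_stronger_with_y}.

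The only delicate points are bookkeeping rather than genuine obstacles. First, the even-$N$ shift-by-one ambiguity between the two candidate packings $I,I'$ of \Cref{lemma:G_function_minimization}: by \Cref{rmk:lemma:G_function_minimization} this alters the bound by at most $1$ and can be absorbed into the $\min$ without affecting the later asymptotics in \Cref{sub:exponential_estimates}. Second, when \Cref{lemma:G_function_minimization_outside_waterfall} applies — i.e.\ $\vec z_{*}$ abuts the boundary of the hexagon — only one of $z_0-1$, $z_0+N-1$ is available, in which case the $\min$ in \eqref{eq:G_function_minimization_stronger_with_y} is automatically realized by that single admissible endpoint and the argument goes through verbatim. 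I do not anticipate a hard step: the entire content is the monotonicity of $\mathscr{G}_{\mathbf{k}}$ already isolated in \Cref{lemma:G_function_minimization}, now applied with one lattice site removed.
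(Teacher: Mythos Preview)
Your proposal is correct and follows exactly the approach the paper has in mind; the paper simply declares the lemma ``straightforward'' after explaining in the preceding paragraph that the minimizing configuration must replace the forbidden site $y\in\vec z_{*}$ by a point adjacent to the boundary of $\vec z_{*}$, and you have supplied precisely those details via the monotone-rearrangement argument. One small point worth tightening: your parenthetical that $x$ lies outside $\vec z_{*}$ is not needed for the lemma as stated --- even if $x\in\vec z_{*}$, the right-hand side of \eqref{eq:G_function_minimization_stronger_with_y} remains a valid lower bound, since dropping the constraint $z_i\ne x$ can only decrease the infimum.
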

The first line of \eqref{eq:G_function_minimization_stronger_with_y} coincides with the bound established in \Cref{lemma:G_function_minimization,lemma:G_function_minimization_outside_waterfall}. In the proof of \Cref{lemma:bound_waterfall_boundary_L} below, we will show that the terms appearing in the second line are non-negative; hence, the lower bound in \eqref{eq:G_function_minimization_stronger_with_y} is indeed stronger.

\subsection{Estimates of the leading power and the probability of a hole}
\label{sub:exponential_estimates}

If the configuration $\vec z = \vec z_*
=\{z_0,z_0+1,\ldots,z_0+N-2\}$ is densely packed, then we can rewrite
the leading $q$-power
$\mathscr{E}_{\mathbf{k}}(x,y\mid T,S,N,t\mid \vec z_*)$
\eqref{eq:W_G_exponent}
as a discrete
integral between $x$ and $y$:
\begin{lemma}
	\label{lemma:discrete_integral_representation_of_E}
	With the above notation, we have
	\begin{equation}
		\label{eq:discrete_integral_representation_of_E}
		\mathscr{E}_{\mathbf{k}}(x,y\mid T,S,N,t\mid \vec z_*)
		=
		\operatorname{\mathrm{sgn}}(y-x)\sum\nolimits_{u=\min(x,y)}^{\max(x,y)-1}
		\mathscr{H}_{\mathbf{k}}(u,z_0\mid T,S,N,t),
	\end{equation}
	where
	\begin{equation}
		\label{eq:discrete_integral_representation_of_E_star_function_H}
		\begin{split}
			\mathscr{H}_{\mathbf{k}}(u,z_0\mid T,S,N,t)
			&\coloneqq
			\mathscr{W}_{\mathbf{k}}(u\mid T,S,N,t)
			+2\bigl[
				(z_0-S-t+u+\mathbf{k}+1)^-
				-(z_0-u-1)^-
				\\&\hspace{60pt}
				-(z_0-S-t+u+N+\mathbf{k})^-
				+
				(z_0-u+N-2)^-
			\bigr].
		\end{split}
	\end{equation}
\end{lemma}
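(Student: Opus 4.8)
The plan is to split $\mathscr{E}_{\mathbf{k}}(x,y\mid T,S,N,t\mid \vec z_*)$ \eqref{eq:W_G_exponent} into its two pieces and show that each is a discrete integral between $x$ and $y$ of a common integrand, which will then be identified with $\mathscr{H}_{\mathbf{k}}$ \eqref{eq:discrete_integral_representation_of_E_star_function_H}. The $\mathscr{W}_{\mathbf{k}}$-piece is already of this shape: since $-\operatorname{\mathrm{sgn}}(x-y)=\operatorname{\mathrm{sgn}}(y-x)$, it equals $\operatorname{\mathrm{sgn}}(y-x)\sum_{u=\min(x,y)}^{\max(x,y)-1}\mathscr{W}_{\mathbf{k}}(u\mid T,S,N,t)$. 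So the whole statement reduces to rewriting $2\sum_{i=1}^{N-1}\mathscr{G}_{\mathbf{k}}(x,y,z_i\mid S+t)$ (for the densely packed $\vec z_*=\{z_0,\ldots,z_0+N-2\}$) as $\operatorname{\mathrm{sgn}}(y-x)$ times the discrete integral of twice the bracket in \eqref{eq:discrete_integral_representation_of_E_star_function_H}.

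First I would observe from \eqref{eq:G_function_big} that $\mathscr{G}_{\mathbf{k}}(x,y,z\mid A)=\phi(x,z)-\phi(y,z)$, where $\phi(v,z)\coloneqq (z-v)^-+(z-A+v+\mathbf{k}+1)^-$ and $A=S+t$. Setting $\Phi(v)\coloneqq\sum_{j=0}^{N-2}\phi(v,z_0+j)$, this gives $\sum_{i=1}^{N-1}\mathscr{G}_{\mathbf{k}}(x,y,z_i\mid S+t)=\Phi(x)-\Phi(y)$. The elementary telescoping identity $\Psi(x)-\Psi(y)=\operatorname{\mathrm{sgn}}(y-x)\sum_{u=\min(x,y)}^{\max(x,y)-1}\bigl(\Psi(u)-\Psi(u+1)\bigr)$, valid for any function $\Psi$ on $\mathbb{Z}$ and any $x,y\in\mathbb{Z}$, then reduces everything to computing the one-step increment $\Phi(u)-\Phi(u+1)$.

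To compute the increment I would write $\Phi=P_1+P_2$ with $P_1(u)=\sum_{j=0}^{N-2}(z_0+j-u)^-$ and $P_2(u)=\sum_{j=0}^{N-2}(z_0+j-S-t+u+\mathbf{k}+1)^-$. Each $P_i(u)$ is a sum of $(\cdot)^-$ over a length-$(N-1)$ arithmetic-progression window, and $u\mapsto u+1$ slides that window by one step --- \emph{down} for $P_1$ and \emph{up} for $P_2$, because $u$ enters the two kinds of factors of $\mu$ with opposite signs. The only tool needed is the window identity $\sum_{j\in\{b,\ldots,b+N-2\}}f(j)-\sum_{j\in\{b+1,\ldots,b+N-1\}}f(j)=f(b)-f(b+N-1)$, valid for any real $b$ (this is the point of care, since $\mathbf{k}\notin\mathbb{Z}$ in general) and any $f$. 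Applying it with $f=(\cdot)^-$ yields $P_1(u)-P_1(u+1)=(z_0-u+N-2)^- -(z_0-u-1)^-$ and $P_2(u)-P_2(u+1)=(z_0-S-t+u+\mathbf{k}+1)^- -(z_0-S-t+u+\mathbf{k}+N)^-$; adding these gives exactly the bracket in \eqref{eq:discrete_integral_representation_of_E_star_function_H}. Hence $\mathscr{H}_{\mathbf{k}}(u,z_0\mid T,S,N,t)=\mathscr{W}_{\mathbf{k}}(u\mid T,S,N,t)+2\bigl(\Phi(u)-\Phi(u+1)\bigr)$, and assembling the two telescoping sums gives \eqref{eq:discrete_integral_representation_of_E}.

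There is no genuine obstacle here: the statement is a bookkeeping identity and the argument is only a few lines once one isolates the telescoping structure $\sum_i\mathscr{G}_{\mathbf{k}}=\Phi(x)-\Phi(y)$. The sole subtlety is getting the direction of the two window shifts right under $u\mapsto u+1$, and using the window identity in its real-shift form so that the non-integer offset $\mathbf{k}$ is handled uniformly; a routine check that $P_1,P_2$ indeed have exactly $N-1$ terms and that the overlaps cancel correctly completes the computation.
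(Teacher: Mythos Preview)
Your proposal is correct and is essentially the same argument as the paper's: both proofs perform the same two telescopings, one in $u$ (writing $\Phi(x)-\Phi(y)$ as a discrete integral of its one-step increment) and one in $z$ (your ``window identity'' collapsing the $N-1$ terms of $\Phi(u)-\Phi(u+1)$ to two boundary terms). The only cosmetic difference is the order of presentation --- the paper first telescopes each $\mathscr{G}_{\mathbf{k}}(x,y,z)$ in $u$ and then sums and telescopes in $z$, whereas you sum in $z$ first --- but the computations and the resulting terms are identical.
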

\begin{proof}
	This follows by two 
	telescopings.
	First, observe that
	\begin{equation*}
		\mathscr{G}_{\mathbf{k}}(x,y,z\mid A)
		=
		\operatorname{\mathrm{sgn}}(y-x)\sum\nolimits_{u=\min(x,y)}^{\max(x,y)-1}
		g_{\mathbf{k}}(u,z\mid A),
	\end{equation*}
	where
	\begin{equation*}
		g_{\mathbf{k}}(x,z\mid A)=
		(z-x)^-
		+
		(z-A+x+\mathbf{k}+1)^-
		-
		\bigl(
		(z-x-1)^-
		+
		(z-A+x+1+\mathbf{k}+1)^-\bigr).
	\end{equation*}
	Then, the sum
	of $\mathscr{G}_{\mathbf{k}}(x,y,i\mid A)$
	over $i\in \vec z=\vec z_*$ becomes
	\begin{equation*}
		2
		\operatorname{\mathrm{sgn}}(y-x)\sum\nolimits_{u=\min(x,y)}^{\max(x,y)-1}
		\sum\nolimits_{z=z_0}^{z_0+N-2}
		g_{\mathbf{k}}(u,z\mid S+t).
	\end{equation*}
	The inner sum telescopes to the last four terms in
	\eqref{eq:discrete_integral_representation_of_E_star_function_H},
	and we are done.
\end{proof}

Let us interpret
$\mathscr{H}_{\mathbf{k}}$
\eqref{eq:discrete_integral_representation_of_E_star_function_H}
viewed as a function of $u$
through cumulative distribution functions (cdfs) of Lebesgue
measures on certain segments of $\mathbb{R}$.
Denote the cdf of the density $1$ Lebesgue measure on $[A,B]$ by
$F_{A,B}(u)$. The next statement is checked directly:

\begin{lemma}
	\label{lemma:interpretation_of_H_via_cdf}
	We have
	\begin{equation}
		\label{eq:interpretation_of_H_via_cdf}
		\begin{split}
			&\mathscr{H}_{\mathbf{k}}(u,z_0\mid T,S,N,t)
			=
			S + t - 2 u-1
			-F_{S-N-\mathbf{k}-1,T-\mathbf{k}-1}(u)
			-F_{t-N-\mathbf{k}-1,t+S-\mathbf{k}-1}(u)
			\\&\hspace{15pt}
			+
			2F_{z_0-1,z_0+N-2}(u)
			+
			2F_{S+t-N-\mathbf{k}-z_0,S+t-1-\mathbf{k}-z_0}(u)
			+
			2F_{\frac12(S+t-\mathbf{k}-1)-1,\frac12(S+t-\mathbf{k}-1)}(u)
			.
		\end{split}
	\end{equation}
\end{lemma}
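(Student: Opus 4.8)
The plan is to prove \eqref{eq:interpretation_of_H_via_cdf} by a direct algebraic identity, rewriting both sides purely in terms of the negative-part function $(\cdot)^{-}$. The three elementary facts I would use repeatedly are the linearization $\min(a,b)=a+(b-a)^{-}$; the two forms of the ramp function, $F_{A,B}(u)=(u-A)^{+}-(u-B)^{+}=(B-A)+(u-B)^{-}-(u-A)^{-}$, valid for all real $u$ whenever $A\le B$; and the identities $(v)^{+}=v-(v)^{-}$ and $(-v)^{-}=-(v)^{+}$. From the second of these one extracts the ``pairing rule'' $(u-A)^{-}-(u-B)^{-}=(B-A)-F_{A,B}(u)$ for $A\le B$, which is the workhorse of the computation.

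First I would dispose of the bracketed term in the definition of $\mathscr{H}_{\mathbf{k}}$ (the last four summands in \eqref{eq:discrete_integral_representation_of_E_star_function_H}). Since $z_0-u-1=-(u-(z_0-1))$ and $z_0-u+N-2=-(u-(z_0+N-2))$, the identity $(-v)^{-}=-(v)^{+}$ gives $-(z_0-u-1)^{-}+(z_0-u+N-2)^{-}=(u-(z_0-1))^{+}-(u-(z_0+N-2))^{+}=F_{z_0-1,z_0+N-2}(u)$. Writing $c\coloneqq z_0-S-t+\mathbf{k}$, the remaining pair $(u+c+1)^{-}-(u+c+N)^{-}$ collapses, via $(v)^{-}=v-(v)^{+}$ and then the first form of $F$, to $F_{S+t-N-\mathbf{k}-z_0,\,S+t-1-\mathbf{k}-z_0}(u)-(N-1)$, the interval being non-degenerate since $N\ge1$. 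Hence the whole bracket contributes exactly $2F_{z_0-1,z_0+N-2}(u)+2F_{S+t-N-\mathbf{k}-z_0,\,S+t-1-\mathbf{k}-z_0}(u)-2(N-1)$, accounting (after cancellation) for the two $z_0$-dependent $F$-terms on the right of \eqref{eq:interpretation_of_H_via_cdf}.

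It then remains to show that $\mathscr{W}_{\mathbf{k}}(u)-2(N-1)$ equals $S+t-2u-1-F_{S-N-\mathbf{k}-1,\,T-\mathbf{k}-1}(u)-F_{t-N-\mathbf{k}-1,\,t+S-\mathbf{k}-1}(u)+2F_{\frac12(S+t-\mathbf{k}-1)-1,\,\frac12(S+t-\mathbf{k}-1)}(u)$. I would apply $\min(a,b)=a+(b-a)^{-}$ to each of the six $\min$-terms of $\mathscr{W}_{\mathbf{k}}$ in \eqref{eq:W_ratio_big}, taking $a$ to be the constant argument; the two terms $\min(\mathrm{const},2u+\cdots)$ produce a factor $2$ via $(2v)^{-}=2(v)^{-}$. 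The constant parts telescope to $S+t-T$, so that $\mathscr{W}_{\mathbf{k}}(u)=S+t-T-2u-1+\Sigma(u)$, with $\Sigma(u)$ a signed sum of six $(\cdot)^{-}$-terms. Grouping these into three pairs — with shifts $S-N-\mathbf{k}-1$ and $T-\mathbf{k}-1$; with shifts $t-N-\mathbf{k}-1$ and $S+t-\mathbf{k}-1$; and with shifts $\tfrac12(S+t-\mathbf{k}-1)$ and $\tfrac12(S+t-\mathbf{k}-1)-1$ — and invoking the pairing rule yields three $F$-terms plus explicit constants $T-S+N$, $S+N$, and $-2$, so that $\Sigma(u)=(T+2N-2)-F_{S-N-\mathbf{k}-1,\,T-\mathbf{k}-1}(u)-F_{t-N-\mathbf{k}-1,\,t+S-\mathbf{k}-1}(u)+2F_{\frac12(S+t-\mathbf{k}-1)-1,\,\frac12(S+t-\mathbf{k}-1)}(u)$. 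The orientations $S-N\le T$, $-N\le S$, $-1\le0$ needed for these intervals follow from $\mathsf{T}>\mathsf{S}>0$ and $\mathsf{N}>0$. Substituting back, the accumulated constants ($-T$ from $\mathscr{W}_{\mathbf{k}}$, $T+2N-2$ from $\Sigma$, $-2(N-1)$ from the shift) cancel identically, leaving exactly the asserted right-hand side.

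The argument is a bookkeeping exercise rather than a conceptual one; the only points demanding care are tracking the numerous constant terms through the telescopings and checking that every interval $[A,B]$ introduced satisfies $A\le B$ — precisely where the hexagon inequalities $\mathsf{T}>\mathsf{S}>0$ and $\mathsf{N}>0$ are used. A short verification at a small sample of parameter values (comparing both sides of \eqref{eq:interpretation_of_H_via_cdf} and of the intermediate identity for $\mathscr{W}_{\mathbf{k}}$) suffices to guard against sign errors.
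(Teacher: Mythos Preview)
Your proposal is correct and follows the same route as the paper, which simply records that the identity ``is checked directly'' without any further detail; you have supplied that direct check in full. One cosmetic point: the interval orientations $S-N\le T$ and $-N\le S$ hold already for the unscaled hexagon parameters (since $a=N$, $b=T-S$, $c=S$ are positive integers), so you need not invoke the scaled quantities $\mathsf{T},\mathsf{S},\mathsf{N}$.
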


Recall that our
limit regime
\eqref{eq:limit_regime}
is
$T=\lfloor L\ssp \mathsf{T} \rfloor$,
$S=\lfloor L\ssp \mathsf{S} \rfloor$,
$N=\lfloor L\ssp \mathsf{N} \rfloor$,
$t=\lfloor L\ssp \mathsf{t} \rfloor$,
and $L\to +\infty$.
We are interested in the regime when $\mathsf{T}>\mathsf{N}$
and $\mathsf{t}_l<\mathsf{t}<\mathsf{t}_r$ (see
\eqref{eq:tl_tr} for the notation), that is, when there is
waterfall behavior at the $\mathsf{t}$-th vertical slice of
$\mathcal{H}$. In the other cases when there is no waterfall
on the $\mathsf{t}$-th vertical slice (either
$\mathsf{N}<\mathsf{T}$ and
$\mathsf{t}\notin [\mathsf{t}_l,\mathsf{t}_r]$, or
$\mathsf{N}>\mathsf{T}$), one can
use \Cref{lemma:interpretation_of_H_via_cdf,lemma:discrete_integral_representation_of_E,lemma:G_function_minimization,lemma:G_function_minimization_outside_waterfall,lemma:G_function_minimization_stronger_with_y} to
obtain exponential estimates similarly to \Cref{lemma:easy_bound_waterfall,lemma:bound_waterfall_boundary_L} below.
They will imply that the nonintersecting paths are clustered when they have slope~$0$ or~$1$,
which corresponds to the frozen behavior of the paths.
We do not explicitly formulate these similar estimates here, as they are not needed for the waterfall region.

We begin with a simpler exponential estimate of order $L^2$, which illustrates our approach when the point $y$ is inside the saturation band while $x$ is outside.

\begin{lemma}
	\label{lemma:easy_bound_waterfall}
	Let $\mathsf{t}_l<\mathsf{t}<\mathsf{t}_r$,
	\begin{equation*}
		\left|\mathsf{y}-\tfrac{\mathsf{S}+\mathsf{t}}{2}\right|\le \tfrac{1}{2}\mathsf{N},
		\qquad
		\left|\mathsf{x}-\tfrac{\mathsf{S}+\mathsf{t}}{2}\right|> \tfrac{1}{2}\mathsf{N},
	\end{equation*}
	such that the points $(\mathsf{t},\mathsf{x})$ and $(\mathsf{t},\mathsf{y})$ are inside
	$\mathcal{H}$.
	There exists a constant
	$c>0$ depending on $\mathsf{T},\mathsf{S},\mathsf{N},\mathsf{t},\mathbf{k},
	\left|\mathsf{x}-\tfrac{\mathsf{S}+\mathsf{t}}{2}\right|- \tfrac{1}{2}\mathsf{N}$,
	such that for any $\vec z$ with
	$x,y\ne z_i$ for all $i$,
	any fixed $\Delta x,\Delta y\in \mathbb{Z}$, and any~$L$ large enough, we have
	\begin{equation*}
		\mathscr{E}_{\mathbf{k}}(x,y\mid T,S,N,t\mid \vec z)
		> c \ssp L^2,
		\qquad x=\lfloor L\ssp \mathsf{x} \rfloor +\Delta x,
		\quad y=\lfloor L\ssp \mathsf{y} \rfloor +\Delta y.
	\end{equation*}
\end{lemma}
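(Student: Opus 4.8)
The plan is to combine the configuration minimization of \Cref{sub:minimizing_over_configuration} with the discrete-integral representation \eqref{eq:discrete_integral_representation_of_E}, thereby reducing the problem to summing an explicit linear function. Since the first sum in $\mathscr{E}_{\mathbf{k}}(x,y\mid T,S,N,t\mid \vec z)$ \eqref{eq:W_G_exponent} does not depend on $\vec z$, and since $\mathsf{t}_l<\mathsf{t}<\mathsf{t}_r$ guarantees that a densely packed $(N-1)$-point block centered at $\tfrac12(S+t-\mathbf{k}-1)$ fits inside the $t$-th vertical slice, \Cref{lemma:G_function_minimization} bounds $\mathscr{E}_{\mathbf{k}}(x,y\mid T,S,N,t\mid \vec z)$ from below by its value on one of the two densely packed intervals $I$, $I'$ of \eqref{eq:G_function_minimization_interval}; by \Cref{rmk:lemma:G_function_minimization} these two values differ by $O(1)$, so it is enough to estimate $\mathscr{E}_{\mathbf{k}}(x,y\mid T,S,N,t\mid \vec z_*)$ for a single densely packed $\vec z_*=\{z_0,\dots,z_0+N-2\}$. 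The hypothesis $d_{\mathbf{k}}^{S+t}(x;y)>0$ needed for \Cref{lemma:G_function_minimization} holds for all large $L$, because $|\mathsf{x}-\tfrac{\mathsf{S}+\mathsf{t}}{2}|>\tfrac12\mathsf{N}\ge|\mathsf{y}-\tfrac{\mathsf{S}+\mathsf{t}}{2}|$ forces $x$ to be macroscopically farther from $\tfrac12(S+t-\mathbf{k}-1)$ than $y$ is (the two reference points $\tfrac12(S+t)$ and $\tfrac12(S+t-\mathbf{k}-1)$ differing only by the fixed quantity $\tfrac12(\mathbf{k}+1)$).

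Next I would apply \Cref{lemma:discrete_integral_representation_of_E} to rewrite $\mathscr{E}_{\mathbf{k}}(x,y\mid T,S,N,t\mid \vec z_*)=\operatorname{\mathrm{sgn}}(y-x)\sum_{u=\min(x,y)}^{\max(x,y)-1}\mathscr{H}_{\mathbf{k}}(u,z_0\mid T,S,N,t)$, and then insert the cumulative-distribution-function form of $\mathscr{H}_{\mathbf{k}}$ from \Cref{lemma:interpretation_of_H_via_cdf}. The decisive observation is that every cdf appearing there takes values in $[0,1]$, so $\mathscr{H}_{\mathbf{k}}(u,z_0\mid T,S,N,t)=S+t-2u-1+\theta(u)$ with $|\theta(u)|\le C_0$ for an absolute constant $C_0$, uniformly in $u$, in $z_0$, and in the hexagon parameters. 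Summing the linear part gives $\sum_{u=\min(x,y)}^{\max(x,y)-1}(S+t-2u-1)=|x-y|\,(S+t-x-y)$, and multiplying by $\operatorname{\mathrm{sgn}}(y-x)$ and completing the square with $a\coloneqq x-\tfrac12(S+t)$ and $b\coloneqq y-\tfrac12(S+t)$ yields $\mathscr{E}_{\mathbf{k}}(x,y\mid T,S,N,t\mid \vec z_*)=(y-x)(S+t-x-y)+R=a^{2}-b^{2}+R$, where $|R|\le C_0\,|x-y|=O(L)$.

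To finish, write $\epsilon\coloneqq|\mathsf{x}-\tfrac{\mathsf{S}+\mathsf{t}}{2}|-\tfrac12\mathsf{N}>0$. Then $|a|\ge(\tfrac12\mathsf{N}+\epsilon)L-C_1$ while $|b|\le\tfrac12\mathsf{N}\,L+C_1$, with $C_1$ absorbing the floor functions and the fixed shifts $\Delta x,\Delta y$; hence $a^{2}-b^{2}\ge(\mathsf{N}\epsilon+\epsilon^{2})L^{2}-C_2 L$, and combining with the $O(L)$ error term gives $\mathscr{E}_{\mathbf{k}}(x,y\mid T,S,N,t\mid \vec z_*)\ge\tfrac12(\mathsf{N}\epsilon+\epsilon^{2})L^{2}$ for all large $L$. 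By the reduction of the first paragraph this bound holds for every admissible $\vec z$, which proves the lemma with $c=\tfrac12\epsilon(\mathsf{N}+\epsilon)$. The two points requiring verification — $d_{\mathbf{k}}^{S+t}(x;y)>0$ and the uniform bound on $\theta$ — are both straightforward; the genuinely delicate estimate in this section is not the present $O(L^{2})$ statement but the $O(L)$ bound near the waterfall boundary in \Cref{lemma:bound_waterfall_boundary_L}, where the $O(1)$ error above becomes of the same order as the main term and one must exploit the full piecewise-linear shape of $\mathscr{H}_{\mathbf{k}}$ together with the refined minimization \Cref{lemma:G_function_minimization_stronger_with_y}.
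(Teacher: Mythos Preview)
Your argument has a genuine gap at the ``decisive observation.'' The functions $F_{A,B}$ in \Cref{lemma:interpretation_of_H_via_cdf} are not probability cdfs: the paper defines $F_{A,B}$ as the cdf of the \emph{density~$1$} Lebesgue measure on $[A,B]$, so $F_{A,B}$ ranges over $[0,B-A]$, not $[0,1]$. Four of the five intervals in \eqref{eq:interpretation_of_H_via_cdf} have length of order $L$ (namely $T-S+N$, $S+N$, $N-1$, $N-1$), so those cdf terms are $O(L)$, not $O(1)$. Your correction $\theta(u)$ is therefore of size $O(L)$, the accumulated remainder $R$ is $O(L^{2})$, and the inequality $a^{2}-b^{2}+R\ge cL^{2}$ no longer follows. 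In fact the cdf terms \emph{exactly cancel} the linear part $S+t-2u$ inside the waterfall band (this is why the paper's limiting function $\mathsf{H}$ vanishes there), so your formula $\mathscr{E}_{\mathbf{k}}\approx a^{2}-b^{2}$ is off by an amount comparable to the main term; the true leading contribution comes only from the portion of $[\mathsf{x},\mathsf{y}]$ outside the band and scales like $\epsilon^{2}L^{2}$ for small $\epsilon$, not like $\epsilon\mathsf{N}L^{2}$.

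The fix is exactly what the paper does: rescale and pass to the piecewise-linear limit $\mathsf{H}(\mathsf{u})$ in \eqref{eq:H_asymptotics}, verify that $\mathsf{H}=0$ on $[\tfrac12(\mathsf{S}+\mathsf{t}-\mathsf{N}),\tfrac12(\mathsf{S}+\mathsf{t}+\mathsf{N})]$ and has the correct sign outside, and conclude that the Riemann-sum approximation gives $\mathscr{E}_{\mathbf{k}}\ge L^{2}\int_{\mathsf{x}}^{\mathsf{y}}\mathsf{H}(\mathsf{u})\,d\mathsf{u}+O(L)$ with a strictly positive integral. Your reduction to $\vec z_*$ via \Cref{lemma:G_function_minimization} and your verification of $d_{\mathbf{k}}^{S+t}(x;y)>0$ are fine; only the treatment of the cdf terms needs to be replaced.
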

\begin{proof}
	We can use the estimate of \Cref{lemma:G_function_minimization}, since the condition
	$d_{\mathbf{k}}^{S+t}(x;y)>0$ \eqref{eq:d_x_y_notation} holds for large enough $L$.
	Let us
	lower bound the expression
	$\mathscr{E}_{\mathbf{k}}(x,y\mid T,S,N,t\mid \vec z_*)$
	given
	by~\eqref{eq:discrete_integral_representation_of_E} and~\eqref{eq:interpretation_of_H_via_cdf}.
	Since our slice contains the waterfall
	region, we have $L^{-1}z_0\to \frac12 (\mathsf{S}+\mathsf{t}-\mathsf{N})$ as $L\to +\infty$.
	The function
	$\mathscr{H}_{\mathbf{k}}$
	grows proportionally to $L$:
\begin{equation}
\label{eq:H_asymptotics}
\begin{split}
	&\mathscr{H}_{\mathbf{k}}\bigl(\lfloor L\ssp \mathsf{u} \rfloor , z_0\mid T,S,N,t\bigr)
	\\&\hspace{40pt}=
	L\bigl(
	\underbrace{\mathsf{S}+\mathsf{t}-2\ssp \mathsf{u}
	-
	F_{\mathsf{S}-\mathsf{N},\mathsf{T}}(\mathsf{u})
	-
	F_{\mathsf{t}-\mathsf{N},\mathsf{t}+\mathsf{S}}(\mathsf{u})
	+
	4F_{\frac12(\mathsf{S}+\mathsf{t}-\mathsf{N}),\frac12(\mathsf{S}+\mathsf{t}+\mathsf{N})}(\mathsf{u})}_{\eqcolon \mathsf{H}(\mathsf{u})}
	\bigr)
	+O(1).
\end{split}
\end{equation}
	To see that the error terms are indeed of order $O(1)$,
	observe that
	\begin{equation*}
		\begin{split}
			\bigl|F_{\lfloor L\ssp \mathsf{a} \rfloor +\Delta a,\lfloor L\ssp \mathsf{b} \rfloor+\Delta b }
			(\lfloor L\ssp \mathsf{u} \rfloor )
			-
			L\ssp
			F_{ \mathsf{a},\mathsf{b}}( \mathsf{u} )\bigr|
			&=
			L\ssp \bigl|F_{L^{-1}\lfloor L\ssp \mathsf{a} \rfloor +L^{-1}\Delta a,
			L^{-1}\lfloor L\ssp \mathsf{b} \rfloor+L^{-1}\Delta b }
			(L^{-1}\lfloor L\ssp \mathsf{u} \rfloor )
			-
			F_{ \mathsf{a},\mathsf{b}}( \mathsf{u} )\bigr|
			\\&\le |\Delta a|+|\Delta b|+3,
		\end{split}
	\end{equation*}
	uniformly in $\mathsf{u}$.

	The last cdf in \eqref{eq:interpretation_of_H_via_cdf}
	corresponds to a distribution supported on an interval of length $1$, and so it is of order $O(1)$
	and can be incorporated into the error term. Let us deal with the other four cdf terms in \eqref{eq:interpretation_of_H_via_cdf}.

	The discrete integral
	\eqref{eq:discrete_integral_representation_of_E}
	of $\mathscr{H}_{\mathbf{k}}$
	is the Riemann sum of a continuous integral from $\mathsf{x}$ to $\mathsf{y}$,
	which provides one more power of $L$ in the asymptotics.
	Namely, we have
	\begin{equation*}
		\mathscr{E}_{\mathbf{k}}(x,y\mid T,S,N,t\mid \vec z)
		\ge
		L^2 \int _{\mathsf{x}}^{\mathsf{y}} \mathsf{H}(\mathsf{u})\ssp d\mathsf{u}
		+O(L).
	\end{equation*}
	The integrand $\mathsf{H}(\mathsf{u})$ defined in
	\eqref{eq:H_asymptotics}
	is a piecewise linear function.
	One readily sees that
	\begin{equation*}
		\begin{cases}
			\mathsf{H}(\mathsf{u})=0,& \mathsf{u}\in
			\left[\tfrac{1}{2}(\mathsf{S}+\mathsf{t}-\mathsf{N}),\tfrac{1}{2}(\mathsf{S}+\mathsf{t}+\mathsf{N})\right],
			\\
			\mathsf{H}(\mathsf{u})>0,& \mathsf{u}<
			\tfrac{1}{2}(\mathsf{S}+\mathsf{t}-\mathsf{N}),
			\\
			\mathsf{H}(\mathsf{u})<0,& \mathsf{u}>\tfrac{1}{2}(\mathsf{S}+\mathsf{t}+\mathsf{N}),
		\end{cases}
	\end{equation*}
	see \Cref{fig:graphs_H}, right, for an illustration.
	Under our assumptions on $\mathsf{x},\mathsf{y}$, the integral
	from $\mathsf{x}$ to $\mathsf{y}$ is always positive, which produces the desired estimate.
\end{proof}

\begin{figure}[htpb]
	\centering
	\includegraphics[width=.4\textwidth]{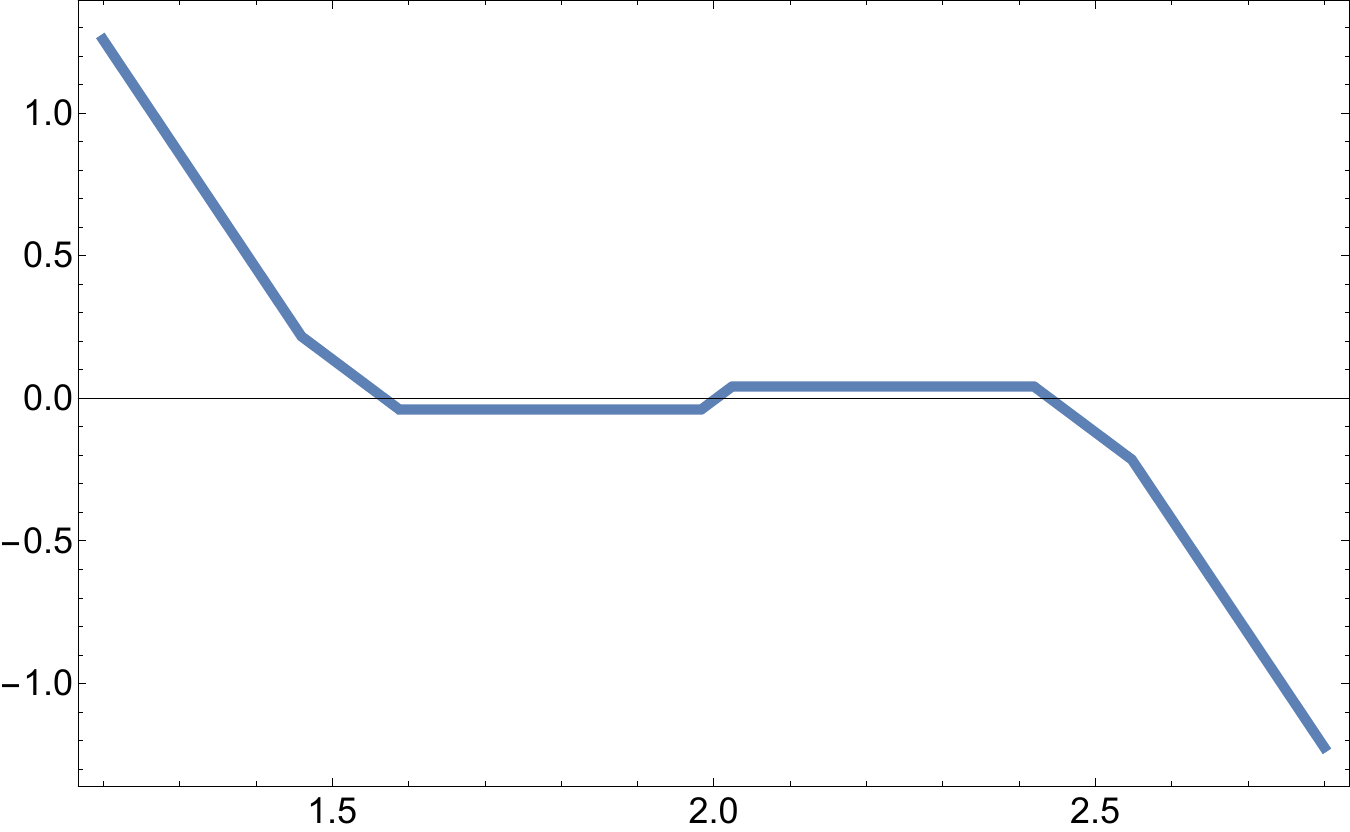}
	\qquad
	\includegraphics[width=.4\textwidth]{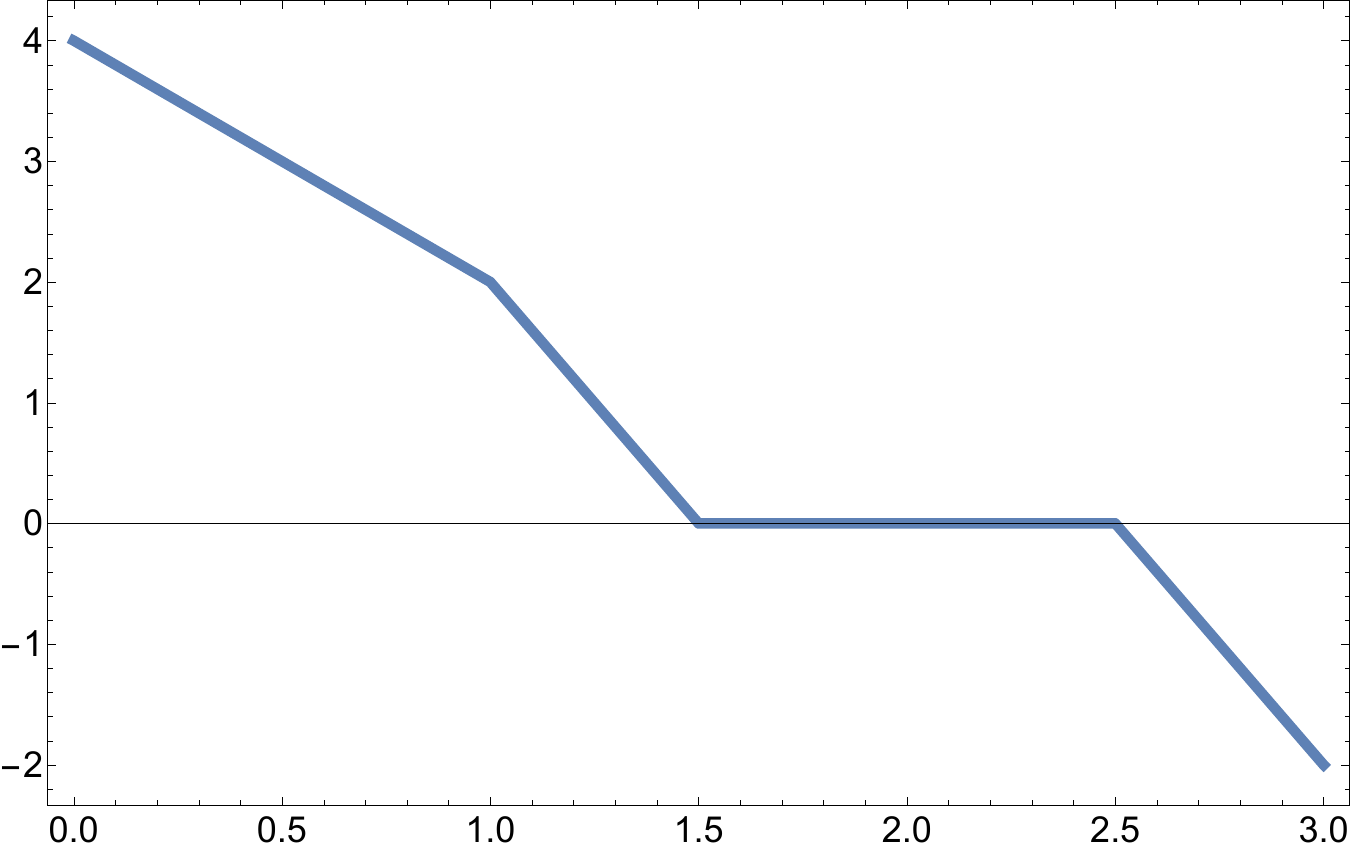}
	\caption{Left:
		The plot of $L^{-1}\mathscr{H}_{\mathbf{k}}(\lfloor L\ssp \mathsf{u} \rfloor ,z_0\mid T,S,N,t)$
		as a function of $\mathsf{u}$.
		Right: The plot of its continuous analogue $\mathsf{H}(\mathsf{u})$ on a larger interval.
		The parameters are
		$(\mathsf{T},\mathsf{S},\mathsf{N})=(4,2,1)$,
		$\mathsf{t}=2$, $\mathbf{k}=-2.2$, and $L=25$, and
		the saturation band is~$(1.5,2.5)$.
		In the left plot, the values close to zero are equal to $\pm L^{-1}$.}
	\label{fig:graphs_H}
\end{figure}

When $x$ can be close to the boundary of the saturation band, we need a more delicate
estimate.
\begin{lemma}
	\label{lemma:bound_waterfall_boundary_L}
	Assume that $\mathsf{t}_l<\mathsf{t}<\mathsf{t}_r$
	and $y=\lfloor L\ssp \mathsf{y} \rfloor+\Delta y$, where $\mathsf{y}$ is in the saturation band, that is,
	$\left|\mathsf{y}-\tfrac{\mathsf{S}+\mathsf{t}}{2}\right|\le \tfrac{1}{2}\mathsf{N}$.
	Let $x$ be such that
	$d_{\mathbf{k}}^{S+t}(x;y)>K$.
	For any $\delta>0$,
	sufficiently large fixed $K>0$, all
	$L$ large enough, and
	any configuration $\vec z$ with $x,y\ne z_i$ for all $i$,
	we have
	\begin{equation}
		\label{eq:best_bound_E}
		\mathscr{E}_{\mathbf{k}}(x,y\mid T,S,N,t\mid \vec z)
		> \min\bigl\{
			\tfrac18\ssp d_{\mathbf{k}}^{S+t}(x;y),
			\delta L
		\bigr\}.
	\end{equation}
\end{lemma}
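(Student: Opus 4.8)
The strategy is to bound $\mathscr{E}_{\mathbf{k}}(x,y\mid T,S,N,t\mid\vec z)$ from below \emph{uniformly} in $\vec z$ by reducing to the densely packed configuration $\vec z_*$ and then reading off the explicit discrete integral from \Cref{lemma:discrete_integral_representation_of_E}. Since $d_{\mathbf{k}}^{S+t}(x;y)>K>0$ and $\mathsf{t}_l<\mathsf{t}<\mathsf{t}_r$, the saturation band at slice $t$ lies strictly inside the hexagon, so \Cref{lemma:G_function_minimization} and \Cref{lemma:G_function_minimization_stronger_with_y} apply. I would first note that the extra terms in \eqref{eq:G_function_minimization_stronger_with_y}, namely
\[
	-\mathscr{G}_{\mathbf{k}}(x,y,y\mid S+t)+\min\bigl\{\mathscr{G}_{\mathbf{k}}(x,y,z_0-1\mid S+t),\,\mathscr{G}_{\mathbf{k}}(x,y,z_0+N-1\mid S+t)\bigr\},
\]
are nonnegative: when $d_{\mathbf{k}}^{S+t}(x;y)>0$, the function $z\mapsto\mathscr{G}_{\mathbf{k}}(x,y,z\mid S+t)$ is the nonpositive ``valley'' described in the proof of \Cref{lemma:G_function_minimization}, and $y$ always lies in its flat bottom, so $\mathscr{G}_{\mathbf{k}}(x,y,y\mid S+t)=\min_{z}\mathscr{G}_{\mathbf{k}}(x,y,z\mid S+t)$. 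Combined with \Cref{lemma:discrete_integral_representation_of_E}, this gives, for every configuration $\vec z$ with $z_i\ne x,y$,
\[
	\mathscr{E}_{\mathbf{k}}(x,y\mid T,S,N,t\mid\vec z)\ \ge\ \operatorname{\mathrm{sgn}}(y-x)\sum_{u=\min(x,y)}^{\max(x,y)-1}\mathscr{H}_{\mathbf{k}}(u,z_0\mid T,S,N,t)\ +\ 2\bigl[-\mathscr{G}_{\mathbf{k}}(x,y,y\mid S+t)+\min\{\cdots\}\bigr].
\]
If $y$ fails the hypothesis $y\in\vec z_*$ of \Cref{lemma:G_function_minimization_stronger_with_y} (i.e.\ $y$ is within $O(1)$ of a band edge), then $d_{\mathbf{k}}^{S+t}(x;y)>K$ forces $x$ to lie well outside the band and \Cref{lemma:easy_bound_waterfall} already applies.

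\emph{Shape of $\mathscr{H}_{\mathbf{k}}$.} Using \Cref{lemma:interpretation_of_H_via_cdf} and \eqref{eq:H_asymptotics}: for $u$ \emph{outside} the saturation band one has $\mathscr{H}_{\mathbf{k}}(\lfloor L\mathsf{u}\rfloor,z_0)=L\,\mathsf{H}(\mathsf{u})+O(1)$ with $\mathsf{H}>0$ below the band, $\mathsf{H}<0$ above it, and $\mathsf{H}$ piecewise linear and vanishing to first order at the band edges. For $u$ strictly \emph{inside} the band the $O(L)$ contributions cancel, and evaluating the cdf's shows $\mathscr{H}_{\mathbf{k}}(u,z_0)$ is $O(1)$ and, in fact, essentially $\pm1$-valued ($=-1+2F_{m-1,m}(u)$ with $m\coloneqq\tfrac12(S+t-\mathbf{k}-1)$, i.e.\ $-1$ for $u<m$ and $+1$ for $u>m$, up to one transitional site).

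\emph{Assembly.} Split $\sum_u\mathscr{H}_{\mathbf{k}}(u,z_0)$ into its out-of-band and in-band pieces. Because $y$ lies in the band, the out-of-band piece has the favorable sign and contributes a nonnegative amount of order $(\text{lattice distance by which }x\text{ lies beyond the band})^2$; in particular it is $\gtrsim L^2$ once $x$ is macroscopically outside, which recovers (and sharpens) \Cref{lemma:easy_bound_waterfall}. The in-band piece telescopes to a sum of the $\pm1$ values; on its own it produces the \emph{negative} quantity $-d_{\mathbf{k}}^{S+t}(x;y)$ when $x$ is inside the band, but it cancels against the correction $2[-\mathscr{G}_{\mathbf{k}}(x,y,y\mid S+t)+\min\{\cdots\}]$ up to an error $O(1)$, yielding $\ge d_{\mathbf{k}}^{S+t}(x;y)-O(1)$. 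Choosing the absolute constant $K$ large enough to absorb this $O(1)$ slack, one gets $\mathscr{E}_{\mathbf{k}}(x,y\mid\cdots\mid\vec z)\ge\tfrac18 d_{\mathbf{k}}^{S+t}(x;y)$ whenever $x$ stays near the band, with room to spare since the true coefficient is $\approx1$; and once $x$ leaves the band the quadratic out-of-band term dominates and exceeds $\delta L$ for all $L$ large enough in terms of $\delta$. In all cases the right-hand side of \eqref{eq:best_bound_E} is dominated.

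\emph{Main obstacle.} The crux is the in-band regime, where the leading $O(L^2)$ (and $O(L)$) terms vanish identically because $\mathsf{H}\equiv0$ on the band, even though $d_{\mathbf{k}}^{S+t}(x;y)$ may itself be of order $L$ there. One is thus forced down to the $O(1)$-per-site level, and must verify that the three $O(1)$-scale inputs --- the $\pm1$ values of $\mathscr{H}_{\mathbf{k}}$ inside the band, the telescoped boundary terms of \Cref{lemma:discrete_integral_representation_of_E}, and the correction of \Cref{lemma:G_function_minimization_stronger_with_y} --- align with matching signs and a genuine constant gap. This is precisely what dictates the hypothesis ``$K$ sufficiently large'' and the slack in the constant $\tfrac18$. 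A secondary nuisance is the $\sqrt{L}$-scale window in which $x$ lies only slightly beyond a band edge, where neither the quadratic out-of-band bound nor the linear in-band bound is individually decisive and both must be carried through the same estimate.
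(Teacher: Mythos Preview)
Your plan matches the paper's proof closely: both minimize over $\vec z$ via \Cref{lemma:G_function_minimization,lemma:G_function_minimization_stronger_with_y}, write the bound as the discrete $\mathscr{H}_{\mathbf{k}}$-integral plus the correction from \eqref{eq:G_function_minimization_stronger_with_y}, and exploit that $\mathscr{H}_{\mathbf{k}}$ is $\pm1$ inside the band and grows linearly outside. Your observation that $y$ sits at the flat bottom of the valley, hence $\mathscr{G}_{\mathbf{k}}(x,y,y\mid S+t)=-d_{\mathbf{k}}^{S+t}(x;y)$, is correct and is exactly what drives the paper's estimate \eqref{eq:second_line_nonnegative_estimate}.

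The one place where your sketch stops short is precisely the window you flag at the end. When $x$ lies outside the band by $\ell$, your ``correction $\approx 2d$'' degrades to $2(d-\ell)$ (because now $\min\{\cdots\}\approx-\ell$, not $0$), while your quadratic bound gives only $\sim\ell^2$, which is $\ll\delta L$ for $\ell\ll\sqrt{L}$. The paper closes this by proving the single inequality \eqref{eq:second_line_nonnegative_estimate}---that the correction exceeds $\tfrac54 d$, not $2d$---on a range extending a distance $\gamma L$ past the band edge; this is verified by hand via the explicit piecewise-linear function \eqref{eq:second_line_nonnegative_estimate_proof_specialized}. Together with the in-band integral bound \eqref{eq:discrete_integral_representation_of_E_bound} this gives $\mathscr{E}\ge\tfrac14 d-C$ uniformly across the window. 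In fact your own pieces already suffice if you keep them together: summing your out-of-band quadratic, in-band $-(d-\ell)$, and correction $2(d-\ell)$ gives $\mathscr{E}\gtrsim s\ell^2/2+(d-\ell)\ge d-\tfrac{1}{2s}$ for all $\ell\ge0$, so the ``nuisance'' dissolves once you stop separating the two regimes---which is what the paper effectively does through \eqref{eq:second_line_nonnegative_estimate}.
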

In \Cref{lemma:bound_waterfall_boundary_L} we assume that the parameters $T,S,N,t$ scale with $L$ in the usual way, and that $y$, lying inside the saturation band, also scales with~$L$. For $x$ we require that it stay at distance at least $K$ from both $y$ and the point $S+t-\mathbf{k}-1-y$ (the mirror image of $y$ with respect to the midpoint of the saturation band), that is,
$d_{\mathbf{k}}^{S+t}(x;y)>K$
(recall the notation~\eqref{eq:d_x_y_notation}).
The constant $K$ may depend on
$\mathsf{T},\mathsf{S},\mathsf{N},\mathsf{t},\mathbf{k},\delta$
but not on $L$.
This allows $x$ to be close to the boundary of the saturation band, and even go inside the band.

Note that if
$x/L\to \mathsf{x}$ and
$\mathsf{x}\notin \{\mathsf{y},\mathsf{S}+\mathsf{t}-\mathsf{y}\}$,
then the right-hand side of \eqref{eq:best_bound_E} grows as~$O(L)$, which produces an upper bound
$\exp(-\mathrm{const}\cdot L)$
for the
ratio of the $q$-Racah probabilities \eqref{eq:qRacah_ratio_prelimit}.
\begin{proof}[Proof of \Cref{lemma:bound_waterfall_boundary_L}]
	Throughout the proof, we assume that $x$ is to the left of
	the midpoint of the saturation band
	$\tfrac12(S+t-\mathbf{k}-1)$.
	The other case is analogous.

	\medskip\noindent
	\textbf{Case 1.}
	When $x<(S+t-N)/2-\alpha\sqrt{L}$ for some sufficiently
	large constant $\alpha>0$ (independent of $L$), an argument
	analogous to the proof of \Cref{lemma:easy_bound_waterfall}
	immediately yields a lower bound of order $\delta L$.
	Indeed, consider the discrete integral of
	$\mathscr{H}_{\mathbf{k}}$ in
	\eqref{eq:discrete_integral_representation_of_E} taken from
	$(S+t-N)/2-\alpha_2\sqrt{L}$ to $(S+t-N)/2-\alpha_1\sqrt{L}$
	with $\alpha_2>\alpha_1>0$. Because
	$\mathscr{H}_{\mathbf{k}}$ is linearly decreasing on this
	interval with slope at most $-1$, this integral is of
	magnitude $\beta L$. By choosing $\alpha_1$ large enough
	(still independent of $L$) we can make the constant $\beta$
	arbitrarily large. The resulting positive contribution
	$\beta L$ offsets the discrete integral of
	$\mathscr{H}_{\mathbf{k}}(u)=\pm1$ over $u$ inside the
	saturation band, which can be negative and is of order
	$O(L)$. We analyze this negative contribution in the next
	case.

	\medskip\noindent
	\textbf{Case 2.}
	It remains to consider the case when
	$x>(S+t-N)/2-\alpha\sqrt L$ for some $\alpha>0$.
	We use
	\Cref{lemma:G_function_minimization_stronger_with_y}, and
	lower bound the right-hand side of
	\eqref{eq:G_function_minimization_stronger_with_y}.
	As in
	\Cref{lemma:discrete_integral_representation_of_E,lemma:interpretation_of_H_via_cdf},
	we can rewrite the first sum over $i=z_0,z_0+1,\ldots,z_0+N-2$
	(this is the sum over $\vec z_*$) as a discrete integral
	of
	$\mathscr{H}_{\mathbf{k}}$ between $x$ and $y$. Under our
	hypotheses, this discrete integral may be negative.
	However, we can lower bound it for large $L$ as
	\begin{equation}
		\label{eq:discrete_integral_representation_of_E_bound}
		\operatorname{\mathrm{sgn}}(y-x)\sum_{u=\min(x,y)}^{\max(x,y)-1}
		\mathscr{H}_{\mathbf{k}}(u,z_0\mid T,S,N,t)
		> -\ssp d_{\mathbf{k}}^{S+t}(x;y)-C(K,\mathbf{k}),\qquad C(K,\mathbf{k})>0.
	\end{equation}
	Indeed, $-d_{\mathbf{k}}^{S+t}(x;y)$ is (up to an additive constant) equal to
	\begin{equation*}
		\operatorname{\mathrm{sgn}}(y-x)\sum_{u=\min(x,y)}^{\max(x,y)-1}
		\bigl(  -\mathbf{1}_{u<(S+t-\mathbf{k}-1)/2}+\mathbf{1}_{u>(S+t-\mathbf{k}-1)/2} \bigr),
	\end{equation*}
	where the above summand mimics $\mathscr{H}_{\mathbf{k}}$ in the saturation band,
	see \Cref{fig:graphs_H}, left.
	The additive constant $C(K,\mathbf{k})$ in
	\eqref{eq:discrete_integral_representation_of_E_bound} is independent of $L$.
	This constant accounts for discrete effects coming from the presence of $\mathbf{k}$,
	and these effects are negligible for large $L$.
	When $x$ is in \textbf{Case 1},
	the right-hand side of \eqref{eq:discrete_integral_representation_of_E_bound} can be lower bounded
	by $-\beta' L$ for fixed $\beta'>0$. Thus, we obtain a bound of order $\delta L$ in \textbf{Case 1},
	which completes the proof of that case.

	Continuing with \textbf{Case 2}, notice that for
	$y$ in the saturation band, we have the extra second line in
	\eqref{eq:G_function_minimization_stronger_with_y}.
	This second line
	turns out to be nonnegative. More precisely, to complete the
	proof, it remains to show that for large $L$ and for $x,y$ satisfying our assumptions, we have
	\begin{equation}
		\label{eq:second_line_nonnegative_estimate}
		2
		\min
		\bigl\{
			\mathscr{G}_{\mathbf{k}}(x,y,z_0-1\mid S+t),
			\mathscr{G}_{\mathbf{k}}(x,y,z_0+N-1\mid S+t)
		\bigr\}
		-2\mathscr{G}_{\mathbf{k}}(x,y,y\mid S+t)
		>
		\tfrac{5}{4}\ssp
		d_{\mathbf{k}}^{S+t}(x;y).
	\end{equation}
	Let us derive the estimate \eqref{eq:second_line_nonnegative_estimate}
	for large $L$ from
	its limiting version.
	To obtain the latter,
	we replace $(x,y,S,t,N)$ by
	$(\mathsf{x},\mathsf{y},\mathsf{S},\mathsf{t},\mathsf{N})$,
	and set $\mathbf{k}=-1$ (which eliminates the additive constants $\mathbf{k}+1$ in
	$\mathscr{G}_{\mathbf{k}}$ and $d_{\mathbf{k}}^{\mathsf{S}+\mathsf{t}}$).
	We also replace $z_0-1$ and $z_0+N-1$ by $\tfrac12(S+t-\mathsf{N})$ and $\tfrac12(S+t+\mathsf{N})$, respectively.
	Observe that then
	$\mathscr{G}_{-1}(\mathsf{x},\mathsf{y},\tfrac12(S+t-\mathsf{N})\mid \mathsf{S}+\mathsf{t})=
	\mathscr{G}_{-1}(\mathsf{x},\mathsf{y},\tfrac12(S+t+\mathsf{N})\mid \mathsf{S}+\mathsf{t})$,
	so no minimum is required in~\eqref{eq:second_line_nonnegative_estimate}.

	The resulting difference of the two sides
	in the limiting version of \eqref{eq:second_line_nonnegative_estimate}
	has the form
	\begin{equation}
		\label{eq:second_line_nonnegative_estimate_proof}
		2
			\mathscr{G}_{-1}(\mathsf{x},\mathsf{y},\tfrac12(S+t-\mathsf{N})\mid \mathsf{S}+\mathsf{t})
		-2\mathscr{G}_{-1}(\mathsf{x},\mathsf{y},\mathsf{y}\mid \mathsf{S}+\mathsf{t})
		-
		\tfrac{5}{4}\ssp
		d_{-1}^{\mathsf{S}+\mathsf{t}}(\mathsf{x};\mathsf{y}).
	\end{equation}
	Observe that
	\eqref{eq:second_line_nonnegative_estimate_proof}
	is invariant under simultaneous shifts of $\mathsf{x},\mathsf{y}$ by $B$ and $\mathsf{S}+\mathsf{t}$ by $-2B$,
	so we can assume that $\mathsf{S}+\mathsf{t}=0$.
	Moreover, \eqref{eq:second_line_nonnegative_estimate_proof} is
	homogeneous under the simultaneous rescaling of $\mathsf{x},\mathsf{y},\mathsf{N}$, so we can set $\mathsf{N}=1$.
	The resulting expression is an even function
	separately in $\mathsf{x}$ and $\mathsf{y}$, so we can further assume that
	$\mathsf{x}\ge \mathsf{y}>0$
	(note that
	$d_{-1}^0(\mathsf{x};\mathsf{y})=|\mathsf{x}|-|\mathsf{y}|$
	must be nonnegative).
	Then,
	the
	resulting specialization of \eqref{eq:second_line_nonnegative_estimate_proof}
	has the form
	\begin{equation}
		\label{eq:second_line_nonnegative_estimate_proof_specialized}
		\tfrac34\ssp(\mathsf{x}-\mathsf{y})+(1-2\mathsf{x})^--(1-2\mathsf{y})^-.
	\end{equation}
	This explicit function is positive for
	$0< \mathsf{x}< \tfrac45$ and $0\le \mathsf{y}\le \min\{\mathsf{x},\tfrac{4-5\mathsf{x}}3\}$,
	see \Cref{fig:second_line_nonnegative_estimate_proof_graph}.
	\begin{figure}[htpb]
		\centering
		\includegraphics[width=.5\textwidth]{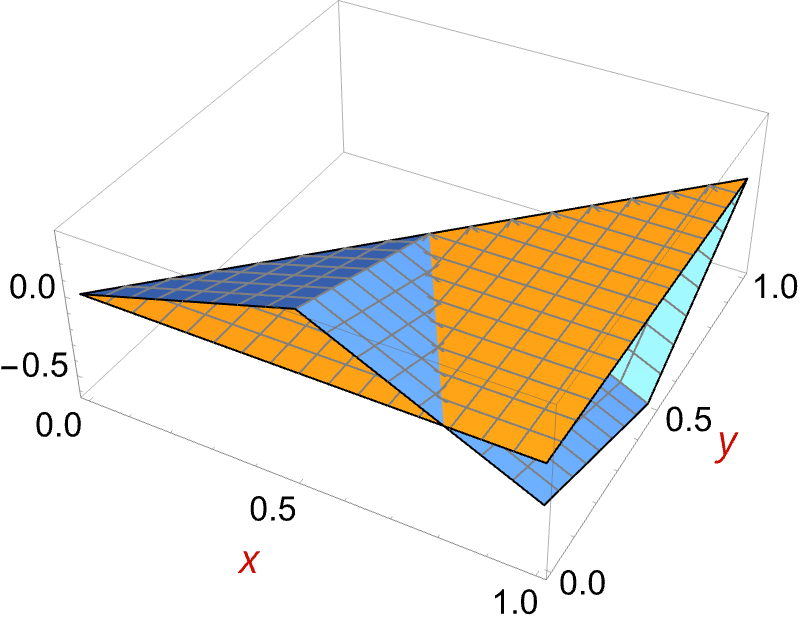}
		\caption{The plot of the function
			\eqref{eq:second_line_nonnegative_estimate_proof_specialized}
			for $\mathsf{x}\ge \mathsf{y}>0$,
			and its horizontal
			cross-section at height zero. We see that the positive part of the function lies above
			the triangle with vertices $(0,0),(\tfrac45,0)$, and $(\tfrac12,\tfrac12)$.}
		\label{fig:second_line_nonnegative_estimate_proof_graph}
	\end{figure}

	Since \eqref{eq:second_line_nonnegative_estimate_proof_specialized}
	is piecewise linear, we conclude that
	\eqref{eq:second_line_nonnegative_estimate} holds
	for $x>(S+t-N)/2-\gamma L$, where the constant $\gamma>0$
	depends only on the distance from $\mathsf{y}$ to the boundary of the saturation band.
	This range is larger than the assumption $x>(S+t-N)/2-\alpha\sqrt L$,
	so we are done with
	\textbf{Case 2}.
	This completes the proof.
\end{proof}

We can now formulate and prove the main result of this section:

\begin{theorem}
	\label{thm:no_holes_in_waterfall_region}
	Let the parameters
	$T,S,N,t$ grow proportionally to $L$ as in \eqref{eq:limit_regime}.
	Fix their scaled values $\mathsf{T},\mathsf{S},\mathsf{N},\mathsf{t}$.
	For any $\varepsilon>0$, there exists $c>0$ (depending on $\mathsf{T},\mathsf{S},\mathsf{N},\mathsf{t},\varepsilon$)
	such that
	\begin{equation*}
		\begin{split}
		&\operatorname{\mathbb{P}}
		\Bigl( \textnormal{there exists a hole }
			\begin{tikzpicture}[baseline = (current
			bounding box.south),scale=.25]
			\draw[thick] (0,0)--++(1,1)--++(1,0)--++(-1,-1)--cycle;
			\end{tikzpicture}
			\textnormal{ in the nonintersecting path ensemble at some point $(t,y)$,}
			\\&\hspace{140pt}
			\textnormal{where
			$
			\mathsf{t}_l+\varepsilon<
			\frac{t}{L}< \mathsf{t}_r-\varepsilon
			$
			and
				$\left|
				\frac{y}{L}-\frac{\mathsf{S}+\mathsf{t}}{2}
				\right|<\frac{\mathsf{N}-\varepsilon}{2}$
			}
		\Bigr)
		<e^{-cL},
		\end{split}
	\end{equation*}
	for all $L$ large enough.
\end{theorem}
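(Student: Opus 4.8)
The plan is to combine the ratio estimate from \Cref{sub:estimating_qRacah_probabilities,sub:minimizing_over_configuration,sub:exponential_estimates} with a union bound over the finitely many relevant positions on the slice. Fix $\varepsilon>0$. Suppose there is a hole at $(t,y)$ with $\mathsf{t}_l+\varepsilon<t/L<\mathsf{t}_r-\varepsilon$ and $|y/L-\tfrac{\mathsf S+\mathsf t}{2}|<\tfrac{\mathsf N-\varepsilon}{2}$. Then the configuration $X(t)$ contains this hole, i.e.\ $y\notin X(t)$ while $y$ is admissible. Since $N$ particles must occupy the $\sim L\mathsf N$ admissible sites of the saturation band together with whatever sites they take outside it, a hole at $y$ forces at least one particle $x\in X(t)$ to lie outside the (open) band $|x/L-\tfrac{\mathsf S+\mathsf t}{2}|\le\tfrac12\mathsf N$, or else to lie inside the band but sufficiently far from the center that moving it onto $y$ is strictly favorable. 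In either case I will produce a paired configuration $\vec z\cup\{y\}$, where $\vec z=X(t)\setminus\{x\}$, and apply the probability ratio bound.

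First I would reduce to the one-particle move. By \Cref{lemma:W_ratio_big_extraction,lemma:G_function_big_extraction}, for the configuration $\vec z=X(t)\setminus\{x\}$,
\begin{equation*}
	\frac{\mathbb P(X(t)=\vec z\cup\{x\})}{\mathbb P(X(t)=\vec z\cup\{y\})}
	=\Theta(1)\cdot q^{\mathscr E_{\mathbf k}(x,y\mid T,S,N,t\mid\vec z)},
\end{equation*}
with the implied constants depending only on $q,\kappa$. Since $\mathbb P(X(t)=\vec z\cup\{y\})\le 1$, we get $\mathbb P(X(t)=\vec z\cup\{x\})\le c_0\, q^{\mathscr E_{\mathbf k}(x,y)}$ for an absolute $c_0=c_0(q,\kappa)$. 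To make the right-hand side exponentially small I need $\mathscr E_{\mathbf k}(x,y)\ge cL$, and for this I invoke \Cref{lemma:easy_bound_waterfall} (when $x$ is macroscopically outside the band, giving even $\ge cL^2$) and \Cref{lemma:bound_waterfall_boundary_L} (when $x$ may be near or inside the band). Concretely: take $y$ to be the hole site, and take $x$ to be a particle of $X(t)$ realizing $d_{\mathbf k}^{S+t}(x;y)$ as large as possible; a counting argument shows that if a hole sits in the band at distance $>\tfrac{\varepsilon}{2}L$ from its edge, then there is a particle with $d_{\mathbf k}^{S+t}(x;y)\ge \tfrac{\varepsilon}{2}L-O(1)$ (the $N$ particles cannot all fit strictly between the hole and the near mirror edge). \Cref{lemma:bound_waterfall_boundary_L} with $\delta$ chosen depending on $\varepsilon$ then gives $\mathscr E_{\mathbf k}(x,y)>\min\{\tfrac18 d_{\mathbf k}^{S+t}(x;y),\delta L\}\ge c' L$ for all large $L$, with $c'$ depending on $\mathsf T,\mathsf S,\mathsf N,\mathsf t,\varepsilon,\mathbf k$.

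Finally I would run the union bound. For each admissible hole site $y$ on the slice $t$ within the prescribed window (there are $O(L)$ of them) and each $t$ in the window ($O(L)$ values), the event ``$(t,y)$ is a hole'' is contained in a union over the $N=O(L)$ choices of the displaced particle $x$ of the events $\{X(t)=\vec z\cup\{x\}\}$ with $d_{\mathbf k}^{S+t}(x;y)$ large; but more efficiently, I can sum directly over configurations: the probability that $(t,y)$ is a hole is the sum of $\mathbb P(X(t)=\vec z\cup\{x\})$ over all such pairs, each bounded by $c_0 q^{c'L}$. Since the number of pairs with a fixed slice is at most $(\#\text{admissible }x)\cdot(\#\text{configs }\vec z$ with $y\in\vec z_*$-type constraint$)$ — here I must be a bit careful, as the number of configurations $\vec z$ is not polynomial — the cleaner route is: $\mathbb P((t,y)\text{ is a hole})=\sum_{\vec z,x}\mathbb P(X(t)=\vec z\cup\{x\})$ where the sum is over all ways to have $X(t)=\vec z\cup\{x\}$ with $y\notin X(t)$, and by the bound each such probability transfers onto $\mathbb P(X(t)=\vec z\cup\{y\})$ up to $c_0 q^{c'L}$, so summing the latter over disjoint events gives $\le c_0 q^{c'L}\cdot 1$. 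Hence $\mathbb P((t,y)\text{ is a hole})\le c_0 q^{c'L}$ for each $(t,y)$ in the window, and a union bound over the $O(L^2)$ pairs $(t,y)$ absorbs the polynomial factor into a slightly smaller exponent, yielding $e^{-cL}$.

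The main obstacle is the combinatorial bookkeeping in the last paragraph: the injection ``delete $x$, insert $y$'' is not a bijection on configurations (several $x$ can map to the same $\vec z\cup\{y\}$), so I must organize the sum so that the images $\vec z\cup\{y\}$ I land on are pairwise distinct, or else control the multiplicity. The natural fix is to fix, for each hole configuration, a \emph{canonical} choice of the particle $x$ to move — e.g.\ the unique particle outside the band farthest from the center, or the particle adjacent to the outer edge of the densely packed block $\vec z_*$ — so that the map from $\{X(t):\,(t,y)\text{ hole}\}$ to target configurations is injective; then $\sum_{\vec z}\mathbb P(X(t)=\vec z\cup\{y\})\le 1$ closes the estimate cleanly. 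Verifying that this canonical particle indeed satisfies the hypotheses of \Cref{lemma:bound_waterfall_boundary_L} (in particular $d_{\mathbf k}^{S+t}(x;y)$ of order $L$, using $|y/L-\tfrac{\mathsf S+\mathsf t}{2}|<\tfrac{\mathsf N-\varepsilon}{2}$ and $t$ bounded away from $\mathsf t_l,\mathsf t_r$ to guarantee the band is genuinely of width $\mathsf N$ and interior to $\mathcal H$) is the one place real care is needed; everything else is a routine assembly of the lemmas already proved.
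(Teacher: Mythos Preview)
Your proposal is correct and follows essentially the same route as the paper: pick a particle $x_{\mathcal C}$ far from the center (guaranteed by counting, since $N$ particles cannot all fit in a sub-band of width $<N$), apply \Cref{lemma:bound_waterfall_boundary_L} to get $\mathscr E_{\mathbf k}(x,y)\ge c'L$, transfer probability to the target configuration, sum, and union-bound over $(t,y)$. The only difference is in the bookkeeping you flag as the ``main obstacle'': the paper does not bother making the choice of $x_{\mathcal C}$ canonical to force injectivity; instead it groups the sum by the value of $x_{\mathcal C}$, observes that within each group the map $\mathcal C\mapsto(\mathcal C\cup\{y\})\setminus\{x\}$ is injective, and pays an extra harmless factor $O(L)$ for the number of possible $x$ values---so your worry is resolved more cheaply than by constructing a canonical selection.
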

\begin{proof}
	Fix $(t,y)$ satisfying the hypotheses of the theorem.
	We need to upper bound
	$\sum\limits_{\mathcal{C}\colon y\notin \mathcal{C}}\operatorname{\mathbb{P}}[\mathcal{C}]$,
	where the sum is over all $N$-particle configurations on the
	$t$-th vertical slice of the hexagon.
	Due to the assumption on $y$,
	for each such $\mathcal{C}$, there exists $x_{\mathcal{C}}$ such that,
	by
	\Cref{lemma:bound_waterfall_boundary_L},
	we have the bound
	\begin{equation}
		\label{eq:bound_on_prob_C_in_proof_of_no_holes_theorem}
		\operatorname{\mathbb{P}}[\mathcal{C}]< q^{\varepsilon' L}
		\operatorname{\mathbb{P}}[(\mathcal{C}\cup \{y\})\setminus \{x_{\mathcal{C}}\}],
	\end{equation}
	where $\varepsilon'>0$ depends on $\varepsilon$. Indeed,
	since $y$ is $(L\varepsilon/2)$-inside the waterfall
	region, we can pick a particle $x_{\mathcal{C}}$ with
	$d^{S+t}_{\mathbf{k}}(x_{\mathcal{C}};y)$ positive and of
	order $L$.  In other words, since not all particles can
	fit $(L\varepsilon/4)$-inside the waterfall region, we can
	pick a particle $x_{\mathcal{C}}$ which did not fit.
	Then, the estimate
	\eqref{eq:best_bound_E}
	from \Cref{lemma:bound_waterfall_boundary_L}
	immediately leads to the desired bound
	\eqref{eq:bound_on_prob_C_in_proof_of_no_holes_theorem}.

	To bound the sum over $\mathcal{C}$, let us rewrite it as
	\begin{equation*}
		\sum_{\mathcal{C}\colon y\notin \mathcal{C}}\operatorname{\mathbb{P}}[\mathcal{C}]
		<
		q^{\varepsilon' L}
		\sum_{x}\sum_{\substack{\mathcal{C}\colon y\notin \mathcal{C}\\x_{\mathcal{C}}=x}}
		\operatorname{\mathbb{P}}[(\mathcal{C}\cup \{y\})\setminus \{x_{\mathcal{C}}\}]
		<
		\mathrm{const}\cdot L\ssp q^{\varepsilon' L}
		\sum_{\mathcal{C}} \operatorname{\mathbb{P}}[\mathcal{C}]\le
		\mathrm{const}\cdot L\ssp q^{\varepsilon' L},
	\end{equation*}
	where the sum over $x$ on the $t$-th vertical slice produces a factor $O(L)$,
	and we bound the sum over some configurations by the sum over all configurations,
	which is at most $1$.

	Thus, we have obtained an exponential upper bound on the
	probability that a hole appears at $(t,y)$. This bound is
	uniform over all $(t,y)$ such that
	$\mathsf{t}_l+\varepsilon< t/L<\mathsf{t}_r-\varepsilon$ and
	$\left|y/L-(\mathsf{S}+\mathsf{t})/2\right|<(\mathsf{N}-\varepsilon)/2$.
	Applying a union bound over all such pairs $(t,y)$ yields
	the desired result.
\end{proof}

Using \Cref{thm:no_holes_in_waterfall_region}, we can enlarge the region where
the correlation kernel $K_t$ converges to the identity to the whole slice inside the waterfall region:

\begin{corollary}
	\label{cor:kernel_goes_to_identity}
	Let
	$\mathsf{N}<\mathsf{T}$,
	$\mathsf{t}_l<\mathsf{t}<\mathsf{t}_r$,
	and $\frac{1}{2}(\mathsf{S}+\mathsf{t}-\mathsf{N})<\mathsf{x}<\frac{1}{2}(\mathsf{S}+\mathsf{t}+\mathsf{N})$.
	Then
	\begin{equation}
		\label{eq:idenity_limit_of_kernel_on_the_full_slice}
		\lim_{L\to+\infty}
		K_{\lfloor L\ssp \mathsf{t} \rfloor }(\lfloor L\ssp \mathsf{x}\rfloor+\Delta x,
		\lfloor L\ssp \mathsf{x}\rfloor+\Delta y)=\mathbf{1}_{\Delta x=\Delta y}\quad \textnormal{for all
		$\Delta x,\Delta y\in \mathbb{Z}$}.
	\end{equation}
\end{corollary}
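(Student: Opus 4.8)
The plan is to obtain \Cref{cor:kernel_goes_to_identity} as an immediate consequence of the exponential concentration in \Cref{thm:no_holes_in_waterfall_region}, combined with the fact (recalled in \Cref{cor:qRacah_hexagon_kernel_spectral_description} and \Cref{prop:qRacah_kernel_OPE}) that $K_t$ is the kernel of an orthogonal projection. The key observation is that for a projection kernel the diagonal value $K_t(x,x)$ is exactly the one-point density $\rho_1(t,x)=1-\operatorname{\mathbb{P}}(\text{hole at }(t,x))$, and the off-diagonal values are squeezed by the diagonal through the identity $K_t=K_t^2$. Since \Cref{thm:no_holes_in_waterfall_region} forces $\rho_1(t,x)\to1$ at every point strictly inside $\mathcal{W}$, these two features combine to pin the whole kernel to $\mathbf{1}_{x=y}$.

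Concretely, I would proceed as follows. Fix $\Delta x,\Delta y\in\mathbb{Z}$ and set $x=\lfloor L\mathsf{x}\rfloor+\Delta x$, $y=\lfloor L\mathsf{x}\rfloor+\Delta y$, $t=\lfloor L\mathsf{t}\rfloor$. Because $\mathsf{t}_l<\mathsf{t}<\mathsf{t}_r$ and $\bigl|\mathsf{x}-\tfrac12(\mathsf{S}+\mathsf{t})\bigr|<\tfrac12\mathsf{N}$, I would choose $\varepsilon>0$ with $\mathsf{t}_l+\varepsilon<\mathsf{t}<\mathsf{t}_r-\varepsilon$ and $\bigl|\mathsf{x}-\tfrac12(\mathsf{S}+\mathsf{t})\bigr|<\tfrac12(\mathsf{N}-\varepsilon)$; as $\Delta x$ is fixed, for all large $L$ the integer point $(t,x)$ still satisfies $\mathsf{t}_l+\tfrac\varepsilon2<\tfrac tL<\mathsf{t}_r-\tfrac\varepsilon2$ and $\bigl|\tfrac xL-\tfrac12(\mathsf{S}+\mathsf{t})\bigr|<\tfrac12(\mathsf{N}-\tfrac\varepsilon2)$. \Cref{thm:no_holes_in_waterfall_region} then yields a constant $c>0$ with $\operatorname{\mathbb{P}}(\text{hole at }(t,x))<e^{-cL}$ for all large $L$, hence $K_t(x,x)=\rho_1(t,x)\ge 1-e^{-cL}$; together with $0\le K_t(x,x)\le 1$ (projection) this gives $K_t(x,x)\to1$, which is the case $\Delta x=\Delta y$. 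For $\Delta x\ne\Delta y$, I would use that the functions $f_n$ are real (the $q$-Racah parameters are real when $\kappa\in\mathbf{i}\mathbb{R}$), so $K_t$ is real symmetric and the relation $K_t=K_t^2$ reads $K_t(x,x)=\sum_z K_t(x,z)^2\ge K_t(x,x)^2+K_t(x,y)^2$; therefore $K_t(x,y)^2\le K_t(x,x)\bigl(1-K_t(x,x)\bigr)\le 1-K_t(x,x)\le e^{-cL}\to 0$, which establishes \eqref{eq:idenity_limit_of_kernel_on_the_full_slice}.

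I do not anticipate a genuine obstacle here: all the analytic difficulty is already absorbed into \Cref{thm:no_holes_in_waterfall_region}, and what remains is the routine determinantal--projection squeeze. The only points requiring care are the elementary bookkeeping that $(t,x)$ lies strictly inside $\mathcal{W}$ with an $L$-independent margin, so that \Cref{thm:no_holes_in_waterfall_region} applies uniformly in the fixed shifts $\Delta x,\Delta y$, and the reality of the kernel. It is worth emphasizing that, in contrast with the center-line case, this argument does not invoke the spectral-projection convergence of \Cref{prop:limit_of_kernel} at all: it upgrades a pointwise density estimate to full kernel convergence purely from the determinantal structure.
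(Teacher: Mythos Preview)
Your proof is correct and follows essentially the same route as the paper: invoke \Cref{thm:no_holes_in_waterfall_region} to force the diagonal $K_t(x,x)\to1$, then use the real symmetry of $K_t$ together with its structure to kill the off-diagonal. The only cosmetic difference is that the paper squeezes the off-diagonal via the $2\times2$ principal minor (which is the two-point correlation and hence $\to1$), while you use the projection identity $K_t=K_t^2$ to get $K_t(x,y)^2\le K_t(x,x)(1-K_t(x,x))$; both are immediate consequences of $K_t$ being a real symmetric projection.
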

\Cref{cor:kernel_goes_to_identity} complements
\Cref{prop:limit_of_kernel} which dealt with the center line and the regions outside the waterfall.

\begin{proof}[Proof of \Cref{cor:kernel_goes_to_identity}]
	The clustering of the paths (\Cref{thm:no_holes_in_waterfall_region})
	immediately implies that all local correlations (in the vertical direction)
	inside the waterfall region go to one.
	This means that all principal minors of $K_{\lfloor L\ssp \mathsf{t} \rfloor }$
	converge to $1$. Since the one-dimensional kernel $K_t(x,y)$ living on a slice
	is symmetric in $(x,y)$, we conclude that all its off-diagonal elements
	converge to $0$. This completes the proof.
\end{proof}

We have completed the proof of \Cref{thm:waterfall_LLN_intro} from the Introduction.

\section{Correlations across vertical slices}
\label{sec:2d_correlations}

In this section, we recall the full two-dimensional correlation kernel
$K(s,x;t,y)$
of the $q$-Racah nonintersecting paths ensemble from \cite[Section~7]{borodin-gr2009q},
together with the inverse Kasteleyn matrix for the $q$-Racah
measure on lozenge tilings of the hexagon (equivalently, on dimer configurations on the underlying hexagonal grid).
We also present the pre-limit correlation kernel of the barcode process, which is the
one-dimensional random stepped interface appearing inside the waterfall region.
In the last part (\Cref{sub:2d_correlations_asymptotics_new}), we prove several partial asymptotic results that point toward the limiting
barcode kernel, which will be discussed in \Cref{sec:2d_correlations_asymptotics_numerics_conjectures} below.

\begin{remark}
	\label{rmk:zones}
	We focus in this section and the next on the first
	parameter zone of the hexagon, see
	\eqref{eq:hexagon_zone_1}. Doing so streamlines the
	notation and simplifies several computations. The
	remaining three zones,
	\eqref{eq:hexagon_zone_2}--\eqref{eq:hexagon_zone_4}, can
	be treated in exactly the same way, with only
	straightforward modifications.
\end{remark}

\subsection{Two-dimensional kernel and gauge transformations}
\label{sub:2d_correlations_kernel_new}

Recall from \Cref{sub:dpp_and_operator_interpretation}
the
two-dimensional correlation kernel
$K(s,x;t,y)$ which encodes the distribution of the
$q$-Racah nonintersecting path ensemble.
Here, we present a representation of $K$ in terms of the
$q$-Racah orthogonal
polynomials first obtained in
\cite[Section~7.1]{borodin-gr2009q}.
This representation extends the kernel $K_t(x,y)=K(t,x;t,y)$
from \Cref{prop:qRacah_kernel_OPE}.
This extension is an instance of a general principle
\cite[Section~4]{Borodin2009} for correlation kernels
of measures given by products of determinants.

Recall the notation of the \emph{orthonormal} $q$-Racah polynomials
$f_n(x)$ \eqref{eq:qRacah_orthonormal_basis_f_n},
and let $f_n^t(x)$ denote the specialization of $f_n(x)$
onto the $t$-th vertical slice of the hexagon. That is, in
$f_n^t(x)$, we have substituted the $q$-Racah parameters
$(\alpha,\beta,\gamma,\delta)$
in terms of the hexagon parameters $T,S,N,t$,
assuming that $t\le \min(S-1,T-S-1)$ since we work in the first zone \eqref{eq:hexagon_zone_1}.
Similarly, denote by $w_{t}(x)$ the weight function $w^{qR}(x)$~\eqref{eq:wqR}
specialized to the $t$-th slice.
From
\eqref{eq:wqR},
we have in the first zone \eqref{eq:hexagon_zone_1}:
\begin{equation}
	\label{eq:w_t_x_original}
	w_t(x)=
	\frac{q^{x(2N+T-1)}
	(q^{1-N-S};q)_{x}
	(q^{1-N-t};q)_{x}
	(q^{1-T} \kappa^{2};q)_{x}
	(q^{1-S-t} \kappa^{2};q)_{x}}
	{(q;q)_{x}
	(q^{1-S-t+T};q)_{x}
	(q^{N-S+1} \kappa^{2};q)_{x}
	(q^{N-t+1} \kappa^{2};q)_{x}
	}
	\frac{1-\kappa^{2} q^{1-S-t+2x}}
	{1-\kappa^{2} q^{1-S-t}}.
\end{equation}
To connect to formulas from \cite{borodin-gr2009q},
it is more convenient to renormalize the weight function $w_t$
as in \cite[Theorem~4.1]{borodin-gr2009q}.
Indeed,
while the functions $f_n^t(x)$ (orthonormal in the standard $\ell^2(\mathbb{R})$
space on the $t$-th slice) are defined canonically, the weight function $w_t(x)$
may be \emph{gauge transformed} into $\widetilde w_t(x)=\lambda(t)\ssp w_t(x)$, where
$\lambda(t)$ is any nonvanishing function of $t$ which does not depend on $x$,
but may depend on the hexagon parameters $T,S,N$. This gauge transformation does not affect the
fixed-slice kernel $K_t(x,y)$, but modifies $K(s,x;t,y)$ into
\begin{equation}
	\label{eq:new_2d_kernel_conjugation}
	\widetilde{K}(s,x;t,y)=K(s,x;t,y)\ssp \sqrt{\lambda(s)/\lambda(t)}.
\end{equation}
Clearly, the determinants \eqref{eq:dpp} of the new kernel
$\widetilde{K}$
are the same as those of the original kernel,
so this change does not affect the
determinantal process.

Denote the normalization of the $q$-Racah weight in \cite[Theorem~4.1]{borodin-gr2009q}
by $\widetilde{w}_t(x)$. Let us pick the gauge factor $\lambda(t)$ to be
\begin{equation}
	\label{eq:lambda_t_definition}
	\begin{split}
		\lambda(t)&\coloneqq
		(-1)^t
		\ssp \frac
		{\widetilde{w}_t(x)}
		{w_t(x)}
		=
		\frac{(-1)^t q^{\frac{1}{2} t (2N + t - 1) }
			(q^{N} \kappa^2; \frac{1}{q})_t
			(q^{T-S}; \frac{1}{q})_t}
			{(q^{N}; q)_t (q^{1-S} \kappa^2; \frac{1}{q})_t}
			\\&\hspace{100pt}\times\frac{
			(-1)^S
			(1 - \kappa^2 q^{1-S})
		}{
			(\frac{1}{q}; \frac{1}{q})_{N-1}
			(\frac{1}{q}; \frac{1}{q})_{N+S-1}
			(q; q)_{T-S}
			(q^{1-S} \kappa^2; q)_N
			(q^{1-T} \kappa^2; q)_{N+T}
		}.
	\end{split}
\end{equation}
These these factors are independent of $x$.
Note that the product in the second line in \eqref{eq:lambda_t_definition}
is also independent of $t$, so it does not play a role in
gauge transformations.
We included it only for completeness.

Denote
\begin{equation}
	\label{eq:a_0_a_1_definition}
	\begin{split}
		a_0^t(x)&\coloneqq
		(1-q^{x+T-t-S}) \ssp \frac{1-\kappa^2 q^{x+N-t}}{1-\kappa^2 q^{2 x-t-S+1}}; \\
		a_1^t(x)&\coloneqq
		q^{T+N-1-t}(q^{x-S-N+1}-1) \ssp \frac{1-\kappa^2 q^{x-T+1}}{1-\kappa^2 q^{2 x-t-S+1}}.
	\end{split}
\end{equation}
Define
\begin{equation}
	\label{eq:def:constants_C_n_t}
	\widetilde{\mathscr{C}}_n^t\coloneqq
	\left((q^{n-t-N}-1)(1-q^{T+N-t-n-1})
	\right)^{\frac12}.
\end{equation}
By
\cite[(24)]{borodin-gr2009q},
we have
\begin{equation}
	\label{eq:three_term_relation_operator_U}
	\widetilde{\mathscr{C}}_n^t f_n^{t+1}(x) =
	\sqrt{\frac{\widetilde{w}_t(x-1)}{\widetilde{w}_{t+1}(x)}}\ssp
	f_n^t(x-1)\ssp a_1^t(x-1)+
	\sqrt{\frac{\widetilde{w}_t(x)}{\widetilde{w}_{t+1}(x)}}\ssp
	f_n^t(x)\ssp a_0^t(x)
\end{equation}
for all $n$ and all $x$ on the $t$-th vertical slice.

\begin{remark}
	\label{rmk:signs_vs_BGR}
	We have changed the signs of
	$a_0^t(x),a_1^t(x)$ \eqref{eq:a_0_a_1_definition}
	compared to the coefficients in \cite[(24)]{borodin-gr2009q}
	(given by Lemma~7.4 in that paper),
	and also multiplied $\widetilde{\mathscr{C}}_n^t$ by an extra factor~$\mathbf{i}$
	to make the expression under the square root in \eqref{eq:def:constants_C_n_t}
	positive.
	These sign changes are convenient in our case of imaginary $\kappa$,
	and come from the sign $(-1)^t$ in \eqref{eq:lambda_t_definition}.
\end{remark}

With the notation developed above in this subsection,
we can now write down the two-dimensional correlation kernel:
\begin{proposition}[{\cite[Theorem~7.5]{borodin-gr2009q}}]
\label{prop:2d_kernel_BGR}
	The two-dimensional correlation kernel $\widetilde{K}(s,x;t,y)$
	of the $q$-Racah nonintersecting path ensemble is given by
	\begin{equation}
		\label{eq:K_kernel_two_dimensional}
		\widetilde{K}(s,x;t,y)=
		\begin{cases}
			\displaystyle
			\sum_{n=0}^{N-1}
			(\widetilde{\mathscr{C}}_n^{t} \widetilde{\mathscr{C}}_n^{t+1}\ldots \widetilde{\mathscr{C}}_n^{s-1})^{-1}\ssp
			f_n^s(x)\ssp f_n^t(y),&\qquad  s\ge t;\\[12pt]
			\displaystyle
			-\sum_{n\ge N}
			(\widetilde{\mathscr{C}}_n^s \widetilde{\mathscr{C}}_n^{s+1}\ldots \widetilde{\mathscr{C}}_n^{t-1})\ssp
			f_n^s(x)\ssp f_n^t(y)
			,&\qquad  s<t.
		\end{cases}
	\end{equation}
\end{proposition}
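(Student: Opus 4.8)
The plan is to derive \eqref{eq:K_kernel_two_dimensional} as a gauge-transformed instance of the Eynard--Mehta / Borodin--Rains formula for the correlation kernel of a determinantal point process built from a product of determinants, combined with the three-term relation \eqref{eq:three_term_relation_operator_U}. First I would recall the general principle from \cite[Section~4]{Borodin2009}: the configuration $\mathscr{X}=\{X(t)\}_{0\le t\le T}$ is a determinantal process whose kernel on two slices $s,t$ is obtained from the single-slice projection kernel $K_t$ by inserting the forward or backward transition operators connecting slice $t$ to slice $s$. Concretely, on the diagonal $s=t$ we already have $\widetilde K(t,x;t,y)=K_t(x,y)=\sum_{n=0}^{N-1}f_n^t(x)f_n^t(y)$ from \Cref{prop:qRacah_kernel_OPE} and the gauge-invariance noted after \eqref{eq:new_2d_kernel_conjugation}. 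The content of the proposition is the off-diagonal case, and the key is that the transition from slice $t$ to slice $t+1$ acts \emph{diagonally} in the basis $\{f_n\}$: by \eqref{eq:three_term_relation_operator_U}, applying the two-diagonal operator $\mathfrak U_t$ (with coefficients built from $a_0^t,a_1^t$ and the weight ratios) to $f_n^t$ produces exactly $\widetilde{\mathscr C}_n^t f_n^{t+1}$.

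The next step is to assemble these one-step relations into the $s>t$ formula. Iterating \eqref{eq:three_term_relation_operator_U} gives
\begin{equation*}
	\bigl(\mathfrak U_{s-1}\cdots\mathfrak U_{t+1}\mathfrak U_t f_n^t\bigr)(x)
	=\widetilde{\mathscr C}_n^t\widetilde{\mathscr C}_n^{t+1}\cdots\widetilde{\mathscr C}_n^{s-1}\,f_n^s(x),
\end{equation*}
so $f_n^s(x)=(\widetilde{\mathscr C}_n^t\cdots\widetilde{\mathscr C}_n^{s-1})^{-1}(\mathfrak U_{s-1}\cdots\mathfrak U_t f_n^t)(x)$. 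Substituting this into $\sum_{n=0}^{N-1}f_n^s(x)f_n^t(y)$ (or, equivalently, starting from the general product-of-determinants kernel which carries a string of transition matrices between slices $t$ and $s$ and recognizing that the $f_n$ diagonalize that string with eigenvalue $\prod \widetilde{\mathscr C}_n$) yields the first line of \eqref{eq:K_kernel_two_dimensional}. For $s<t$ one runs the same computation with the inverse transitions: the general formula has the transition matrices on the other side and picks up a minus sign together with a complementary sum $\sum_{n\ge N}$ rather than $\sum_{n=0}^{N-1}$, which is the standard feature of the Eynard--Mehta kernel when $s<t$ (the identity term $-\mathbf 1_{x=y}$ plus the projection splits the full resolution of identity into the $n\ge N$ tail). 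One must check that $\widetilde{\mathscr C}_n^t$ is nonzero in the relevant range so the inverses make sense — this follows from \eqref{eq:def:constants_C_n_t} together with the parameter constraints \eqref{eq:qRacah_parameters_nonnegative} adapted to zone \eqref{eq:hexagon_zone_1}, since $q^{n-t-N}-1$ and $1-q^{T+N-t-n-1}$ have a definite sign for $0\le n\le N-1$, $t<s\le T$.

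Finally I would address the gauge factor. The raw Borodin--Rains formula is stated in terms of the canonical orthonormal functions $f_n^t$ with respect to $w_t$, but \cite[Theorem~7.5]{borodin-gr2009q} uses the renormalized weight $\widetilde w_t$; the two are related by $\widetilde w_t=\lambda(t)w_t$ with $\lambda(t)$ from \eqref{eq:lambda_t_definition}, which by \eqref{eq:new_2d_kernel_conjugation} conjugates $K$ into $\widetilde K=K\sqrt{\lambda(s)/\lambda(t)}$. I would check that the weight ratios $\sqrt{\widetilde w_t(x-1)/\widetilde w_{t+1}(x)}$ and $\sqrt{\widetilde w_t(x)/\widetilde w_{t+1}(x)}$ appearing in \eqref{eq:three_term_relation_operator_U} are precisely the ones produced by this conjugation, and that the sign choices flagged in \Cref{rmk:signs_vs_BGR} (the $(-1)^t$ in $\lambda(t)$ and the extra $\mathbf i$ in $\widetilde{\mathscr C}_n^t$) make all square roots real and positive in the imaginary-$\kappa$ regime. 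The genuinely routine part is verifying \eqref{eq:three_term_relation_operator_U} itself — it is the contiguous relation for $q$-Racah polynomials combined with the explicit weight ratio — and this is exactly \cite[(24)]{borodin-gr2009q}, so I would cite it rather than reprove it. \textbf{The main obstacle} is bookkeeping: matching the direction of the transition operators, the placement of the constants $\widetilde{\mathscr C}_n^t$, and the $s<t$ sign and summation-range flip with the conventions of \cite{Borodin2009} and \cite{borodin-gr2009q}, while keeping the gauge factor $\lambda(t)$ consistent throughout; the analytic input (orthogonality, the contiguous relation, nonvanishing of $\widetilde{\mathscr C}_n^t$) is all already available.
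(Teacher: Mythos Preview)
The paper does not prove this proposition at all: it is stated as a direct citation of \cite[Theorem~7.5]{borodin-gr2009q} and no argument is given beyond the remark that the $s=t$ case reduces to the fixed-slice kernel \eqref{eq:qRacah_kernel}. Your proposal therefore supplies strictly more than the paper does. The outline you give --- Eynard--Mehta/Borodin--Rains for the product-of-determinants structure, combined with the contiguous relation \eqref{eq:three_term_relation_operator_U} diagonalizing the one-step transitions in the $f_n^t$ basis, plus the gauge bookkeeping with $\lambda(t)$ --- is exactly how the result is obtained in \cite{borodin-gr2009q}, and your identification of the main obstacle as sign/index bookkeeping rather than any new analytic input is accurate.
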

Clearly, for $s=t$, expression \eqref{eq:K_kernel_two_dimensional}
reduces to the fixed-slice kernel $K_t$
\eqref{eq:qRacah_kernel}.

\subsection{Operator for transitions between vertical slices}
\label{sub:2d_correlations_transitions_new}

Using \eqref{eq:three_term_relation_operator_U},
define
the operator $\mathfrak{U}_t$
by
\begin{equation}
	\label{eq:operator_U}
	\bigl(\mathfrak{U}_t f\bigr)(x)\coloneqq
	\sqrt{\frac{\widetilde{w}_t(x-1)}{\widetilde{w}_{t+1}(x)}}\ssp
	a_1^t(x-1)\ssp
	f(x-1)+
	\sqrt{\frac{\widetilde{w}_t(x)}{\widetilde{w}_{t+1}(x)}}\ssp
	a_0^t(x)
	\ssp
	f(x)
	,
\end{equation}
where the coefficients $a_0^t(x),a_1^t(x)$ are given by
\eqref{eq:a_0_a_1_definition}.
It is useful to view $\mathfrak{U}_t$ as an operator that maps functions on the $t$-th vertical slice to functions on the $(t+1)$-st slice. Recall that the $t$-th slice in the first zone \eqref{eq:hexagon_zone_1} is $\{0,1,\ldots,N+t-1\}$.
In particular, identity \eqref{eq:three_term_relation_operator_U}
implies that
\begin{equation}
	\label{eq:operator_U_action_on_f_n}
	(\mathfrak{U}_tf_n^t)(y)=\widetilde{\mathscr{C}}_n^t f_n^{t+1}(y),
	\qquad
	0\le n\le N+t-1,\quad 0\le y\le N+t.
\end{equation}
The $(t+1)$-st slice has one more point than the $t$-th slice. Let us define the
(partial) inverse of $\mathfrak{U}_t$ as follows:
\begin{equation}
	\label{eq:inverse_operator_U}
	\mathfrak{U}_t^{-1}f_n^{t+1}=
	\begin{cases}
		(\widetilde{\mathscr{C}}^t_n)^{-1}f_n^t,& 0\le n\le N+t-1;\\
		0,& n=N+t.
	\end{cases}
\end{equation}
Note also that
$\widetilde{\mathscr{C}}^t_{N+t}=0$, while the formula for $f_{N+t}^t(y)$
\eqref{eq:qRacah_orthonormal_basis_f_n}
also yields zero for $0\le y\le N+t$.
The composition $\mathfrak{U}_t^{-1}\mathfrak{U}_t$ is the identity operator on the $t$-th slice, while
the composition $\mathfrak{U}_t\mathfrak{U}_t^{-1}$ in the opposite order maps
$f_{N+t}^{t+1}$ to zero, and all the other basis functions
$f_n^{t+1}$, $0\le n\le N+t-1$, to themselves.

The operators \eqref{eq:operator_U} and \eqref{eq:inverse_operator_U}
provide the following operator interpretation of the
kernel:
\begin{proposition}[{\cite[Section~8]{borodin-gr2009q}}]
	\label{prop:2d_kernel_operator_interpretation}
	We have 
	\begin{equation}
		\label{eq:K_kernel_two_dimensional_operator_interpretation}
		\widetilde{K}(s,x;t,y)=
		\begin{cases}
			\displaystyle
			\left(
				\mathfrak{U}_t^{-1}
				\mathfrak{U}_{t+1}^{-1}
				\ldots
				\mathfrak{U}_{s-1}^{-1}
			\right)_y
			K_s(x,y)
			,&\qquad  s> t;\\[4pt]
			K_t(x,y),&\qquad s=t;\\[2pt]
			\displaystyle
			\left(
				\mathfrak{U}_{t-1}
				\ldots
				\mathfrak{U}_{s+1}
					\ssp
				\mathfrak{U}_{s}
			\right)_y
			\left( -\mathbf{1}_{x=y}+K_s(x,y) \right)
			,&\qquad  s<t.
		\end{cases}
	\end{equation}
	Here the subscript $y$ indicates that the operators act on
	functions in the variable $y$.
\end{proposition}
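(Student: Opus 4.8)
The plan is to deduce this from the explicit formula of \Cref{prop:2d_kernel_BGR} together with the intertwining identities \eqref{eq:operator_U_action_on_f_n} and \eqref{eq:inverse_operator_U}, so that the statement becomes pure index bookkeeping. The case $s=t$ is the definition. For $s>t$, I would start from the fixed-slice kernel $K_s(x,y)=\sum_{n=0}^{N-1}f_n^s(x)\ssp f_n^s(y)$ (see \eqref{eq:qRacah_kernel} specialized to the $s$-th slice), freeze $x$, and apply $\mathfrak{U}_{s-1}^{-1},\mathfrak{U}_{s-2}^{-1},\ldots,\mathfrak{U}_t^{-1}$ one at a time in the variable $y$. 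By \eqref{eq:inverse_operator_U}, $\mathfrak{U}_j^{-1}f_n^{j+1}=(\widetilde{\mathscr{C}}_n^j)^{-1}f_n^j$ as long as $0\le n\le N+j-1$; the support of the running sum stays in $\{0,\ldots,N-1\}$, and $N-1\le N+j-1$ for every $j\ge t\ge 0$, so the "dropped" index $N+j$ is never hit. Telescoping the constants gives $\sum_{n=0}^{N-1}(\widetilde{\mathscr{C}}_n^t\widetilde{\mathscr{C}}_n^{t+1}\cdots\widetilde{\mathscr{C}}_n^{s-1})^{-1}f_n^s(x)\ssp f_n^t(y)$, which is exactly the $s\ge t$ branch of \eqref{eq:K_kernel_two_dimensional}.

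For $s<t$ the only new ingredient is the term $-\mathbf{1}_{x=y}$. Since the $f_n^s$ form an orthonormal basis of $\ell^2(\{0,1,\ldots,N+s-1\})$ (see \eqref{eq:qRacah_orthonormal_basis_f_n}), the identity kernel on the $s$-th slice is $\mathbf{1}_{x=y}=\sum_{n=0}^{N+s-1}f_n^s(x)\ssp f_n^s(y)$, hence
\[
-\mathbf{1}_{x=y}+K_s(x,y)=-\sum_{n=N}^{N+s-1}f_n^s(x)\ssp f_n^s(y),
\]
i.e.\ (up to sign) the projection onto the complementary spectral subspace $\operatorname{span}(f_N^s,\ldots,f_{N+s-1}^s)$. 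I would then apply $\mathfrak{U}_s,\mathfrak{U}_{s+1},\ldots,\mathfrak{U}_{t-1}$ in order in the variable $y$, using \eqref{eq:operator_U_action_on_f_n} to replace $\mathfrak{U}_j f_n^j$ by $\widetilde{\mathscr{C}}_n^j f_n^{j+1}$; this is valid for $0\le n\le N+j-1$, and the running support $N\le n\le N+s-1$ satisfies $N+s-1\le N+j-1$ for every $j\ge s$, so no term is lost. Telescoping the constants yields $-\sum_{n=N}^{N+s-1}\widetilde{\mathscr{C}}_n^s\widetilde{\mathscr{C}}_n^{s+1}\cdots\widetilde{\mathscr{C}}_n^{t-1}\ssp f_n^s(x)\ssp f_n^t(y)$, which matches the $s<t$ branch of \eqref{eq:K_kernel_two_dimensional} once one recalls that $f_n^s\equiv 0$ for $n>N+s-1$ (consistently, $\widetilde{\mathscr{C}}_{N+s}^s=0$), so the sum $\sum_{n\ge N}$ there truncates at $n=N+s-1$.

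There is no substantive obstacle here: the argument is a straightforward reorganization of \Cref{prop:2d_kernel_BGR} via the two operator identities. The only point deserving a line of care is the compatibility of index ranges at each application of $\mathfrak{U}_j^{\pm 1}$ — namely that the running sum never reaches the index $N+j$ that $\mathfrak{U}_j^{-1}$ annihilates (respectively, the index $N+j$ beyond which \eqref{eq:operator_U_action_on_f_n} is not asserted) — which follows from the elementary inequalities $N-1\le N+j-1$ for $j\ge t$ in the first case and $N+s-1\le N+j-1$ for $j\ge s$ in the second.
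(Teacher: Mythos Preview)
Your proposal is correct and matches the paper's own proof essentially step for step: both cases are obtained by combining \Cref{prop:2d_kernel_BGR} with the intertwining identities \eqref{eq:operator_U_action_on_f_n}--\eqref{eq:inverse_operator_U} and the orthonormal expansion of $\mathbf{1}_{x=y}$. If anything, you are slightly more explicit than the paper about the index-range checks at each application of $\mathfrak{U}_j^{\pm1}$.
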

\begin{proof}
	We have for $s<t$:
	\begin{equation*}
		\begin{split}
			\widetilde{K}(s,x;t,y)&=-\sum_{n\ge N}
			(\widetilde{\mathscr{C}}_n^s \widetilde{\mathscr{C}}_n^{s+1}\ldots \widetilde{\mathscr{C}}_n^{t-1})\ssp
			f_n^s(x)\ssp f_n^t(y)
			\\&=-
			\left(
			\mathfrak{U}_{t-1}
			\ldots
			\mathfrak{U}_{s+1}
				\ssp
			\mathfrak{U}_{s}
			\right)_y
			\sum_{n\ge N}
			f_n^s(x)\ssp f_n^s(y)
			\\&=
			\left(
			\mathfrak{U}_{t-1}
			\ldots
			\mathfrak{U}_{s+1}
				\ssp
			\mathfrak{U}_{s}
			\right)_y
			\left( -\mathbf{1}_{x=y}+K_s(x,y) \right),
		\end{split}
	\end{equation*}
	where we used the fact that the sum of $f_n^s(x)f_n^s(y)$
	over all $n\ge0$ produces the identity operator, since
	the functions $f_n^s(x)$ are orthonormal.
	Similarly, for $s>t$, we have:
	\begin{equation}
		\label{eq:K_kernel_two_dimensional_operator_interpretation_proof}
		\widetilde{K}(s,x;t,y)=
		\sum_{n=0}^{N-1}
		(\widetilde{\mathscr{C}}_n^{t} \widetilde{\mathscr{C}}_n^{t+1}\ldots \widetilde{\mathscr{C}}_n^{s-1})^{-1}\ssp
		f_n^s(x)\ssp f_n^t(y)
		=
			\left(
				\mathfrak{U}_t^{-1}
				\mathfrak{U}_{t+1}^{-1}
				\ldots
				\mathfrak{U}_{s-1}^{-1}
			\right)_y
			K_s(x,y),
	\end{equation}
	as desired.
	Note that the application of the inverse operators
	in \eqref{eq:K_kernel_two_dimensional_operator_interpretation_proof}
	is valid since in the sum we have $n\le N-1<N+t$.
\end{proof}

\subsection{Inverse Kasteleyn matrix}
\label{sub:2d_correlations_inverse_kasteleyn_new}

The two-dimensional correlation kernel
$\widetilde{K}(s,x;t,y)$ \eqref{eq:K_kernel_two_dimensional}
describes the determinantal structure of the nonintersecting
paths. As is evident from the correspondence between
nonintersecting paths and lozenge tilings (see
\Cref{fig:tiling_and_path_picture}), $\widetilde{K}$ lets us
access only the non-horizontal lozenges
\begin{tikzpicture}[baseline = (current
bounding box.south),scale=.25]
\draw[thick] (0,0)--++(0,1)--++(1,0)--++(0,-1)--cycle;
\end{tikzpicture}
and
\begin{tikzpicture}[baseline = (current
bounding box.south),scale=.25]
\draw[thick] (0,0)--++(0,1)--++(1,1)--++(0,-1)--cycle;
\end{tikzpicture},
and it does not distinguish between these two types. These
are precisely the lozenges that appear in the waterfall
region (see \Cref{fig:waterfall_region,fig:barcode}), and we would like to tell them apart. Therefore, we
must upgrade the correlation kernel to the \emph{inverse
Kasteleyn matrix} which can access all three types of lozenges.
For a general overview of Kasteleyn theory in
the context of random dimer coverings (and, in particular,
lozenge tilings), see \cite[Chapter~3]{Kenyon2007Lecture}.
An expression for the inverse Kasteleyn matrix
through the kernel $\widetilde{K}(s,x;t,y)$
is given in
\cite[Section~7.2]{borodin-gr2009q}.

Represent each
lozenge in a tiling
as an occupied edge of a dimer covering of the
underlying hexagonal lattice. It is useful to coordinatize this
lattice as follows.
Divide the two-dimensional triangular lattice (dual to the hexagonal lattice)
into black and white triangles
(the black triangles point left,
and the white one point right),
such that each lozenge is a union of two neighboring triangles
as follows:
\begin{equation*}
	\begin{tikzpicture}[baseline = (current
	bounding box.south)]
	\draw[ultra thick, fill=gray] (0,0)--++(1,1)--++(0,-1)--cycle;
	\draw[ultra thick] (-2,1)--++(1,0)--++(-1,-1)--cycle;
	\draw[fill=blue,circle] (1,.5) circle (.1) node [right] {$(s,x)$};
	\draw[fill=red,circle] (-2,.5) circle (.1) node [left] {$(t,y)$};
	\end{tikzpicture}
	\hspace{60pt}
	\begin{tikzpicture}[baseline = (current
	bounding box.south)]
	\draw[fill=gray] (0,0)--++(1,1)--++(0,-1)--cycle;
	\draw[fill=white] (1.0,1.0)--++(1,0)--++(-1,-1)--cycle;
	\draw[ultra thick] (0,0)--++(1,1)--++(1,0)--++(-1,-1)--cycle;
	\begin{scope}[shift={(3,1)}]
		\draw[fill=gray] (0,0)--++(1,1)--++(0,-1)--cycle;
		\draw[fill=white] (.0,0)--++(1,0)--++(-1,-1)--cycle;
		\draw[ultra thick] (0,0)--++(1,1)--++(0,-1)--++(-1,-1)--cycle;
	\end{scope}
	\begin{scope}[shift={(5,0)}]
		\draw[fill=gray] (0,0)--++(1,1)--++(0,-1)--cycle;
		\draw[fill=white] (0,0)--++(0,1)--++(1,0)--cycle;
		\draw[ultra thick] (0,0)--++(0,1)--++(1,0)--++(0,-1)--cycle;
	\end{scope}
	\end{tikzpicture}
\end{equation*}
Let the lattice coordinates of a triangle be the coordinates of the midpoint of its vertical side.
These triangles represent vertices in the hexagonal grid, and
lozenges (unions of two neighboring triangles) correspond to edges included in a dimer covering.
To each lozenge, we associate its white and black triangle's coordinates. The Kasteleyn
matrix has rows and columns indexed by the white and black triangles, respectively, and is defined by
\begin{equation}
	\label{eq:Kasteleyn_matrix}
	\mathrm{Kast}\left( t,y;s,x \right)
	\coloneqq
	\begin{cases}
		\mathsf{w}_{q, \kappa}\bigl( y-\tfrac t2+1\bigr), & s = t, \ x                             = y;\\
		1,             & s                                    = t+1,                           \ x = y+1;\\
		1,             & s                                    = t+1,                           \ x = y;\\
		0,             & \textnormal{otherwise}.
	\end{cases}
\end{equation}
where $\mathsf{w}_{q,\kappa}(\cdot )$ is the weight of a horizontal lozenge
\begin{tikzpicture}[baseline = (current
bounding box.south),scale=.25]
\draw[thick] (0,0)--++(1,1)--++(1,0)--++(-1,-1)--cycle;
\end{tikzpicture}
\eqref{eq:q_kappa_weight_of_ensemble}.

The role of the inverse of the matrix \eqref{eq:Kasteleyn_matrix} is
that for any $(t_i,y_i)$ and $(s_i,x_i)$, $i=1,\ldots,m $, the probability
that a random lozenge tiling contains all $m$ lozenges
$[ (t_i,y_i);(s_i,x_i)]$
(the coordinates refer to the white and black triangles forming the lozenge)
is given by
\cite[Corollary~3]{Kenyon2007Lecture}:
\begin{equation}
	\label{eq:probability_of_lozenges_det_via_inverse_kasteleyn}
	\prod_{i=1}^m \mathrm{Kast}(t_i,y_i;s_i,x_i)
	\cdot
	\det\left[ \mathrm{Kast}^{-1}(s_i,x_i;t_j,y_j) \right]_{i,j=1}^{m}.
\end{equation}

\begin{proposition}[{\cite[Theorem~7.6]{borodin-gr2009q}}]
	\label{prop:inverse_kasteleyn_matrix}
	The inverse Kasteleyn matrix is expressed through the two-dimensional correlation kernel
	$\widetilde{K}$ \eqref{eq:K_kernel_two_dimensional}
	as follows:
	\begin{equation}
		\label{eq:inverse_kasteleyn_matrix}
		\mathrm{Kast}^{-1}\left( s,x;t,y \right)
		=
		\frac{G(s,x)}{G(t,y)}
		\frac{\mathbf{1}_{s=t}\mathbf{1}_{x=y}- \widetilde{K}(s,x;t,y)}{\mathsf{w}_{q,\kappa}(x-\tfrac s2+1)},
	\end{equation}
	where
	$(s,x)$ and $(t,y)$ are the coordinates of a black (resp., white) triangle.
	The factor $G$ is
	\begin{equation}
		\label{eq:G_factor_conjugation}
		G(t,x)\coloneqq
		\frac{1}{\sqrt{\widetilde{w}_t(x)}}
		\frac{(-1)^{x} \kappa^{-t}
		q^{x(T+N-t-1)+t(S/2-1/2)+t(t+1)/4} \left(1-\kappa^2 q^{2x-t-S+1}\right)}
		{(\frac{1}{q}; \frac{1}{q})_{S+N-1-x} (q;q)_{T-S+x-t} (\kappa^2 q^{x-T+1}; q)_{T+N-t}}.
	\end{equation}
\end{proposition}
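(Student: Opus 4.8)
The plan is to verify directly that the matrix defined by \eqref{eq:inverse_kasteleyn_matrix} is the inverse of the Kasteleyn matrix \eqref{eq:Kasteleyn_matrix}; this is \cite[Theorem~7.6]{borodin-gr2009q}, and our only additional task is to confirm that it survives our sign and gauge conventions. Since $\mathrm{Kast}$ is a finite square matrix (the hexagon is finite and the underlying bipartite graph has equally many black and white triangles), it suffices to check $\mathrm{Kast}\cdot\mathrm{Kast}^{-1}=\mathrm{Id}$. Because each row of $\mathrm{Kast}$ has at most three nonzero entries — a white triangle at $(t,y)$ is joined only to the black triangles $(t,y)$, $(t+1,y)$ and $(t+1,y+1)$ — this identity reduces, for every choice of a white triangle and a black triangle, to a single local relation among the three values of $\widetilde K$ obtained by placing its black argument at $(t,y)$, $(t+1,y)$, $(t+1,y+1)$, weighted by $\mathsf w_{q,\kappa}$ and by the ratios $G(t+1,y)/G(t,y)$ and $G(t+1,y+1)/G(t,y)$.

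The local relation is where the structure of $\widetilde K$ enters. Expanding $\widetilde K$ over the products $f_n^s(x)f_n^t(y)$ as in \eqref{eq:K_kernel_two_dimensional}, one applies the three-term identity \eqref{eq:three_term_relation_operator_U} — equivalently, the operator statements of \Cref{prop:2d_kernel_operator_interpretation} for $\mathfrak U_t$ and $\mathfrak U_t^{-1}$ — to the relevant combination of $\widetilde K$ values. After the weight ratios $\sqrt{\widetilde w_t(x-1)/\widetilde w_{t+1}(x)}$, $\sqrt{\widetilde w_t(x)/\widetilde w_{t+1}(x)}$ and the coefficients $a_0^t,a_1^t$ \eqref{eq:a_0_a_1_definition} are pulled out, the combination collapses, in the case where the two slices involved coincide, to the completeness relation $\sum_n f_n^t(y)f_n^t(y')=\mathbf 1_{y=y'}$ on a single finite slice, and telescopes so that all kernel contributions cancel in the remaining cases. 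The factor $G$ \eqref{eq:G_factor_conjugation} is exactly the multiplicative correction that turns the surviving weight-ratio and coefficient prefactors, together with the $\mathsf w_{q,\kappa}$ in \eqref{eq:Kasteleyn_matrix} and \eqref{eq:inverse_kasteleyn_matrix}, into the clean $\mathbf 1_{x=y}$ on the right-hand side; verifying this is a matter of substituting the explicit formulas \eqref{eq:w_t_x_original}, \eqref{eq:a_0_a_1_definition}, \eqref{eq:lambda_t_definition}, \eqref{eq:def:constants_C_n_t} and simplifying ratios of $q$-Pochhammer symbols. The extra point of the $(t+1)$-st slice, at which $\widetilde{\mathscr C}_{N+t}^t=0$ and $f_{N+t}^t\equiv 0$ (cf.\ \eqref{eq:inverse_operator_U}), and the boundary slices $t=0$ and $t=\min(S-1,T-S-1)$ of the first zone (\Cref{rmk:zones}), have to be treated separately but cause no difficulty.

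The main obstacle is precisely this bookkeeping: pinning the gauge factor down to exactly $G$, and especially keeping the signs consistent. The factor $(-1)^t$ inserted in $\lambda(t)$ \eqref{eq:lambda_t_definition}, the sign flips of $a_0^t,a_1^t$ relative to \cite{borodin-gr2009q}, and the extra $\mathbf i$ in $\widetilde{\mathscr C}_n^t$ \eqref{eq:def:constants_C_n_t} must cancel against one another, so that no stray sign or factor of $\mathbf i$ survives in \eqref{eq:inverse_kasteleyn_matrix}, and this has to be tracked through the gauge transformation \eqref{eq:new_2d_kernel_conjugation}. As an alternative, and a useful sanity check, one can instead invoke the general path-to-dimer dictionary \cite[Section~3]{Kenyon2007Lecture}: the non-horizontal lozenges form the determinantal process with kernel $\widetilde K$ \eqref{eq:dpp}, so by the general theory the inverse Kasteleyn matrix is automatically a gauge transform of $\mathbf 1_{s=t}\mathbf 1_{x=y}-\widetilde K$, and the single-lozenge probabilities read off from \eqref{eq:probability_of_lozenges_det_via_inverse_kasteleyn} fix the gauge as $G$.
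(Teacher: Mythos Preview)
The paper does not supply its own proof of this proposition: it is quoted verbatim as \cite[Theorem~7.6]{borodin-gr2009q}, with only \Cref{rmk:signs_vs_BGR_Kasteleyn} recording the sign adjustment in $G$ relative to that reference. Your sketch is therefore not competing against anything in the present paper, but is a correct outline of the standard direct verification (check $\mathrm{Kast}\cdot\mathrm{Kast}^{-1}=\mathrm{Id}$ row by row, reduce via the three-term relation \eqref{eq:three_term_relation_operator_U} to completeness on a slice, and match the residual $q$-Pochhammer ratios to the gauge $G$), which is essentially how the cited result is established. Your attention to the sign bookkeeping---the $(-1)^t$ in $\lambda(t)$, the flipped $a_0^t,a_1^t$, and the extra $\mathbf{i}$ in $\widetilde{\mathscr C}_n^t$---is exactly the point the paper flags in \Cref{rmk:signs_vs_BGR,rmk:signs_vs_BGR_Kasteleyn}, and is the only content beyond the cited theorem that needs checking here.
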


\begin{remark}
	\label{rmk:signs_vs_BGR_Kasteleyn}
	The function $G(t,x)$ \eqref{eq:G_factor_conjugation}
	differs from the one in \cite[(25)]{borodin-gr2009q} by the sign
	$(-1)^{t}$, which reflects the sign gauge in the definition of the correlation kernel,
	see \Cref{rmk:signs_vs_BGR}.
\end{remark}

\subsection{Pre-limit barcode kernel for a specific hexagon}
\label{sub:2d_correlations_prelimit_barcode_kernel_new}

Let us now focus on the waterfall region
of the $q$-Racah random lozenge tiling ensemble, see
\Cref{fig:barcode} for an illustration.
\begin{figure}[htpb]
	\centering
	\includegraphics[width=0.9\textwidth]{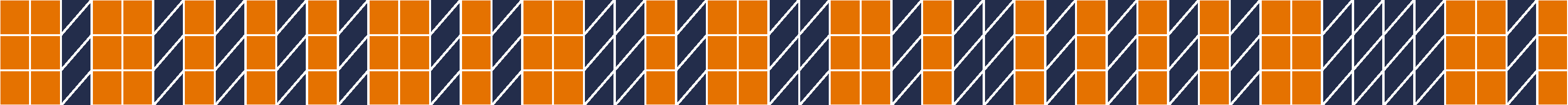}
	\caption{Local structure of the one-dimensional random stepped interface
		in the waterfall region (see \Cref{fig:waterfall_region} for the global picture).
		This barcode configuration
		is encoded as a sequence of particles (square lozenges, the lighter ones in the figure)
		and holes (vertical lozenges, the darker ones in the figure).
		The binary sequence (encoding particles and holes) for this barcode configuration is
		$110110101010110101100101100110100101010101100001101$.}
	\label{fig:barcode}
\end{figure}%
The
one-dimensional random stepped interface (the \emph{barcode})
we see in the waterfall region
can be represented as a random sequence of square and vertical lozenges,
\begin{tikzpicture}[baseline = (current
bounding box.south),scale=.25]
\draw[thick] (0,0)--++(1,0)--++(0,1)--++(-1,0)--cycle;
\end{tikzpicture}
and
\begin{tikzpicture}[baseline = (current
bounding box.south),scale=.25]
\draw[thick] (0,0)--++(0,1)--++(1,1)--++(0,-1)--cycle;
\end{tikzpicture}.
Let us interpret each square lozenge
$[(t-1,x);(t,x)]$
as a particle at $t\in \mathbb{Z}$.
We may pick the vertical coordinate $x$ arbitrarily so that
$(t,x)$ asymptotically lies in the waterfall region~$\mathcal{W}$ (recall \Cref{def:waterfall_region}).
Indeed, this is possible
thanks to the clustering
of the nonintersecting
paths in $\mathcal{W}$ (which is equivalent to the absence of
the horizontal lozenges
$\begin{tikzpicture}[baseline = (current
bounding box.south),scale=.25]
\draw[thick] (0,0)--++(1,1)--++(1,0)--++(-1,-1)--cycle;
\end{tikzpicture}$
in $\mathcal{W}$), see \Cref{thm:no_holes_in_waterfall_region}.
Thus, we arrive at the (pre-limit) \emph{barcode point process} on $\mathbb{Z}$.

To access the (conjectural) barcode kernel in the
limit, throughout the rest of the paper, we fix a hexagon with the specific scaled side
lengths $T=8L$ and $N=S=4L$.\footnote{While we believe that
the limiting barcode kernel is the same for any hexagon, a
specific choice of side lengths simplifies many of the
formulas in the remainder of this \Cref{sec:2d_correlations}
and in the next \Cref{sec:2d_correlations_asymptotics_numerics_conjectures}. See
also examples in \Cref{subsub:density_function,subsub:s7_density_and_correlations} below.} This leads to the following
definition.

\begin{definition}[Pre-limit barcode kernel]
	\label{def:prelimit_barcode_kernel}
	Let $L$ be a positive integer.
	Consider the hexagon with parameters $T=8L$, $N=S=4L$.
	Denote $T_0\coloneqq2L$, and $X_0\coloneqq3L$. The point $(T_0,X_0)$
	is in the first zone of the hexagon
	\eqref{eq:hexagon_zone_1},
	and asymptotically lies
	on the
	center line $\mathsf{x}=\frac12(\mathsf{t}+\mathsf{S})$ of the waterfall region.
	Let $s,t\in \mathbb{Z}$ be such that $|s|,|t|<L$.
	We define the \emph{pre-limit barcode kernel} by
	\begin{equation}
		\label{eq:prelimit_barcode_kernel}
		K^{\mathrm{barcode}}_{(L)}(s,t)\coloneqq
		\mathrm{Kast}^{-1}\left( T_0+s,X_0;T_0+t-1,X_0 \right),
	\end{equation}
	where $\mathrm{Kast}^{-1}$ is the inverse Kasteleyn matrix
	for the $q$-Racah measure on lozenge tilings
	of our specific hexagon.
	In other words, the pre-limit barcode kernel
	describes the joint distribution of the
	square lozenges
	$\begin{tikzpicture}[baseline = (current
	bounding box.south),scale=.25]
	\draw[thick] (0,0)--++(1,0)--++(0,1)--++(-1,0)--cycle;
	\end{tikzpicture}$
	around the point $(T_0,X_0)$
	in the waterfall region.
\end{definition}

\begin{proposition}
	\label{prop:squares_joint_distributions}
	For the hexagon with $L$-dependent sides as in \Cref{def:prelimit_barcode_kernel},
	for any $m$ and any distinct $t_1,\ldots,t_m\in \mathbb{Z}$,
	$|t_i|<L$, we have
	\begin{equation}
		\label{eq:probability_of_square_lozenges_det}
		\operatorname{\mathbb{P}}\bigl\{
			\textnormal{there are square lozenges at
			$(t_i+T_0,X_0)$, $i=1,\ldots,m$}
		\bigr\}
		=
		\det
		\bigl[
			K^{\mathrm{barcode}}_{(L)}(t_i,t_j)
		\bigr]_{i,j=1}^m.
	\end{equation}
\end{proposition}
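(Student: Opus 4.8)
The plan is to specialize the general dimer (lozenge) correlation formula \eqref{eq:probability_of_lozenges_det_via_inverse_kasteleyn} to the square lozenges lying along the horizontal strip through $(T_0,X_0)$, and then to recognize the resulting inverse-Kasteleyn entries as the pre-limit barcode kernel of \Cref{def:prelimit_barcode_kernel}. First I would recall that, by \eqref{eq:probability_of_lozenges_det_via_inverse_kasteleyn}, for any collection of lozenges $[(t_i,y_i);(s_i,x_i)]$, $i=1,\ldots,m$ (the coordinates of the white and black triangles, respectively), the probability that a $q$-Racah random tiling contains all of them equals
\begin{equation*}
	\prod_{i=1}^{m}\mathrm{Kast}(t_i,y_i;s_i,x_i)\cdot\det\bigl[\mathrm{Kast}^{-1}(s_i,x_i;t_j,y_j)\bigr]_{i,j=1}^m,
\end{equation*}
with $\mathrm{Kast}$ and $\mathrm{Kast}^{-1}$ given by \eqref{eq:Kasteleyn_matrix} and \Cref{prop:inverse_kasteleyn_matrix}.

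Next I would translate ``square lozenge at $(t_i+T_0,X_0)$'' into the dimer whose white triangle sits at $(T_0+t_i-1,X_0)$ and whose black triangle sits at $(T_0+t_i,X_0)$, following the convention of \Cref{sub:2d_correlations_prelimit_barcode_kernel_new} that a particle at $t$ records the lozenge $[(t-1,x);(t,x)]$. In the coordinatization of \Cref{sub:2d_correlations_inverse_kasteleyn_new} this is exactly the case $s=t+1$, $x=y$ in \eqref{eq:Kasteleyn_matrix}, so $\mathrm{Kast}(T_0+t_i-1,X_0;T_0+t_i,X_0)=1$ for every $i$; a parity check confirms that $(T_0+t_i,X_0)$ is a black-triangle coordinate and $(T_0+t_i-1,X_0)$ a white-triangle coordinate, matching the hypotheses of \Cref{prop:inverse_kasteleyn_matrix} and of \eqref{eq:prelimit_barcode_kernel}. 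Consequently $\prod_i\mathrm{Kast}=1$, and the $(i,j)$ entry of the determinant is $\mathrm{Kast}^{-1}(T_0+t_i,X_0;T_0+t_j-1,X_0)$, which by \eqref{eq:prelimit_barcode_kernel} is precisely $K^{\mathrm{barcode}}_{(L)}(t_i,t_j)$. This gives \eqref{eq:probability_of_square_lozenges_det}.

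Before applying the above I would check admissibility of the data. With $T=8L$, $N=S=4L$ the second and third zones of the hexagon degenerate, the first zone \eqref{eq:hexagon_zone_1} is $0\le t\le 4L-1$, and $|t_i|<L$ forces $T_0+t_i\in(L,3L)$, so all the relevant columns lie in the first zone, as needed to use the $q$-Racah formulas of \Cref{sec:2d_correlations}; the vertical constraint \eqref{eq:hexagon_constraints} at these columns reads $0\le X_0\le 6L-1$, which holds for $X_0=3L$. The $t_i$ being pairwise distinct guarantees that the $m$ square lozenges are pairwise distinct. I would also note (for context, though it is not used in the identity) that $(T_0,X_0)=(2L,3L)$ scales to the center line $\mathsf{x}=\tfrac12(\mathsf{t}+\mathsf{S})$ inside the waterfall region, consistently with \Cref{def:prelimit_barcode_kernel}.

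The main obstacle here is not conceptual but purely a matter of bookkeeping: keeping the white/black triangle labels, the index shift $t\mapsto t-1$ built into the barcode-particle convention, and the global offset $T_0$ all consistent with the conventions of \cite{borodin-gr2009q} recalled in \Cref{sub:2d_correlations_inverse_kasteleyn_new,sub:2d_correlations_prelimit_barcode_kernel_new}, and in particular verifying that the first coordinate pair of $\mathrm{Kast}^{-1}$ in \eqref{eq:prelimit_barcode_kernel} carries the black triangle. Once this coordinate dictionary is pinned down, the proposition follows by direct substitution into \eqref{eq:probability_of_lozenges_det_via_inverse_kasteleyn}.
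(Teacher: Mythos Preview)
Your proposal is correct and follows essentially the same approach as the paper: specialize the general inverse-Kasteleyn formula \eqref{eq:probability_of_lozenges_det_via_inverse_kasteleyn} to square lozenges, observe that their Kasteleyn weight is $1$ so the prefactor drops out, and identify the remaining determinant entries with the definition \eqref{eq:prelimit_barcode_kernel}. Your version is more explicit about the coordinate bookkeeping and zone checks than the paper's two-sentence proof, but the underlying argument is identical.
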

\begin{proof}
	This follows from
	the general formula \eqref{eq:probability_of_lozenges_det_via_inverse_kasteleyn}
	for the probability of finding a given set of lozenges
	in a random tiling. Note that the prefactor
	involving the Kasteleyn matrix is equal to $1$, since we are
	considering only square lozenges for which the edge weight is $1$, see \eqref{eq:Kasteleyn_matrix}.
\end{proof}

\subsection{Asymptotics of inter-slice coefficients and some prefactors}
\label{sub:2d_correlations_asymptotics_new}

In this subsection, we prove a number of lemmas on
the asymptotic behavior of various components entering the
pre-limit barcode kernel \eqref{eq:prelimit_barcode_kernel}.
Recall that for the rest of the paper we assume that the
hexagon has the specific side lengths
$T=8L$ and $N=S=4L$. Moreover, as in \eqref{eq:prelimit_barcode_kernel},
we set $t=T_0+t=2L+t$ and $x=X_0=3L$.

First, we need the following preliminary result.
We use the notation and properties of the Jacobi
theta function $\theta_q(z)$ from \Cref{app:a1_defn}.

\begin{lemma}
	\label{lemma:inf_qPochhammer_to_theta}
	Let $\alpha,\beta>0$ be fixed. Then
	\begin{equation*}
		\lim_{L\to\infty}
		\frac{(\alpha q^L;q)_\infty}{(\beta q^L;q)_\infty}=1,
	\end{equation*}
	and
	\begin{equation*}
		\lim_{L\to-\infty}
		\left( \frac{\beta}{\alpha} \right)^{L}
		\frac{(-\alpha q^{-L};q)_\infty}{(-\beta q^{-L};q)_\infty}=\frac{\theta_q(-\alpha)}{\theta_q(-\beta)}.
	\end{equation*}
\end{lemma}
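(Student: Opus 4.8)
\textbf{Proof plan for Lemma~\ref{lemma:inf_qPochhammer_to_theta}.}
The plan is to treat the two limits separately, both by manipulating the defining product $(a;q)_\infty=\prod_{k\ge0}(1-aq^k)$ and controlling tails with the standard fact that $\prod_{k\ge0}(1+x_k)$ converges (to a nonzero limit) whenever $\sum_k|x_k|<\infty$, which holds uniformly here because $q\in(0,1)$.

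For the first limit, $L\to+\infty$, I would write
\begin{equation*}
	\frac{(\alpha q^L;q)_\infty}{(\beta q^L;q)_\infty}
	=\prod_{k\ge0}\frac{1-\alpha q^{L+k}}{1-\beta q^{L+k}}
	=\prod_{j\ge L}\frac{1-\alpha q^{j}}{1-\beta q^{j}}.
\end{equation*}
Each factor tends to $1$, and for $L$ large enough all exponents $j\ge L$ are so big that $\alpha q^j,\beta q^j<\tfrac12$, say; then $\bigl|\tfrac{1-\alpha q^j}{1-\beta q^j}-1\bigr|\le C(\alpha,\beta)\,q^j$, so $\sum_{j\ge L}\bigl|\tfrac{1-\alpha q^j}{1-\beta q^j}-1\bigr|\le C(\alpha,\beta)\,q^L/(1-q)\to0$. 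Hence the infinite product converges to $1$. This step is essentially routine once the tail bound is in place.

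For the second limit, $L\to-\infty$, set $m=-L\to+\infty$ and split the product at the index $m$:
\begin{equation*}
	\Bigl(\tfrac{\beta}{\alpha}\Bigr)^{-m}
	\frac{(-\alpha q^{m};q)_\infty}{(-\beta q^{m};q)_\infty}
	=\Bigl(\tfrac{\beta}{\alpha}\Bigr)^{-m}
	\prod_{k\ge0}\frac{1+\alpha q^{m+k}}{1+\beta q^{m+k}}.
\end{equation*}
The factors with $m+k\ge0$ contribute, by the same tail argument as above, a product that tends to $1$ as $m\to\infty$; so I only need to understand $\Bigl(\tfrac{\beta}{\alpha}\Bigr)^{-m}\prod_{j=0}^{m-1}\tfrac{1+\alpha q^{-(m-j)}}{1+\beta q^{-(m-j)}}$, i.e. (reindexing $i=m-j$, so $i$ runs over $1,\dots,m$) $\Bigl(\tfrac{\beta}{\alpha}\Bigr)^{-m}\prod_{i=1}^{m}\tfrac{1+\alpha q^{-i}}{1+\beta q^{-i}}$. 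Here I would pull out the dominant power from each factor: $1+\alpha q^{-i}=\alpha q^{-i}(1+\alpha^{-1}q^{i})$, so
\begin{equation*}
	\prod_{i=1}^{m}\frac{1+\alpha q^{-i}}{1+\beta q^{-i}}
	=\Bigl(\tfrac{\alpha}{\beta}\Bigr)^{m}
	\prod_{i=1}^{m}\frac{1+\alpha^{-1}q^{i}}{1+\beta^{-1}q^{i}}.
\end{equation*}
The prefactor $\bigl(\tfrac{\alpha}{\beta}\bigr)^{m}$ cancels $\bigl(\tfrac{\beta}{\alpha}\bigr)^{-m}$ exactly, and the remaining product converges as $m\to\infty$ to $\frac{(-\alpha^{-1}q;q)_\infty}{(-\beta^{-1}q;q)_\infty}$ (again by the tail bound). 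Putting the pieces together,
\begin{equation*}
	\lim_{L\to-\infty}\Bigl(\tfrac{\beta}{\alpha}\Bigr)^{L}
	\frac{(-\alpha q^{-L};q)_\infty}{(-\beta q^{-L};q)_\infty}
	=\frac{(-\alpha^{-1}q;q)_\infty}{(-\beta^{-1}q;q)_\infty}.
\end{equation*}
It then remains to recognize this ratio as $\theta_q(-\alpha)/\theta_q(-\beta)$; by the Jacobi triple product identity (the form of $\theta_q$ recalled in Appendix~\ref{app:a1_defn}), $\theta_q(z)$ is, up to a factor $(q;q)_\infty$ that cancels in the ratio, equal to $(z;q)_\infty(q/z;q)_\infty$, so $\theta_q(-\alpha)=\mathrm{const}\cdot(-\alpha;q)_\infty(-q/\alpha;q)_\infty$. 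Note $(-\alpha;q)_\infty=(1+\alpha)(-\alpha q;q)_\infty$ and likewise for $\beta$; but in fact it is cleaner to match directly: $(-q/\alpha;q)_\infty=(-\alpha^{-1}q;q)_\infty$ already appears, and the remaining mismatch $(-\alpha;q)_\infty$ versus what we have should be absorbed by noting that the finite-product manipulation above actually produces exactly $(-\alpha^{-1}q;q)_\infty$ paired with the leading-power cancellation — i.e. the ``extra'' theta factor $(-\alpha;q)_\infty$ in the numerator and its $\beta$-analogue in the denominator must be tracked carefully against the split at index $m$ versus $m-1$ and the $k\ge0$ tail. The main obstacle, and the only genuinely delicate point, is exactly this bookkeeping: getting the split point and the leading-power extraction aligned so that what survives is precisely $(-\alpha;q)_\infty(-\alpha^{-1}q;q)_\infty$ (the triple-product form of $\theta_q(-\alpha)$ up to the $\alpha$-independent factor $(q;q)_\infty$), and similarly for $\beta$, with all stray finite factors like $(1+\alpha)$ accounted for. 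Once that identification is made, the ratio of the two theta functions follows and the lemma is proved.
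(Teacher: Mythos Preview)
Your treatment of the first limit is fine. The second limit, however, has a genuine gap stemming from two related errors.

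First, the substitution is inconsistent. With $m=-L$ and $L\to-\infty$ you get $m\to+\infty$, hence $q^{-L}=q^m\to0$; then $(-\alpha q^m;q)_\infty=\prod_{k\ge0}(1+\alpha q^{m+k})$ has \emph{all} exponents $m+k\ge m>0$, so there is nothing to ``split at the index $m$'' and no factors $1+\alpha q^{-i}$ ever appear. The entire product already tends to $1$ by your own tail argument, leaving the divergent prefactor $(\beta/\alpha)^{-m}$. (In fact the stated direction in the lemma is a typo: the applications in the proof of Lemma~\ref{lemma:limit_of_operator_U_coefficients_a_0_a_1} require $q^{-L}$ large, i.e.\ $L\to+\infty$; your subsequent manipulations with negative exponents are the correct ones for that regime, but the setup you wrote does not produce them.)

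Second, and more importantly, once the direction is fixed (say $L\to+\infty$, so $(-\alpha q^{-L};q)_\infty=\prod_{k\ge0}(1+\alpha q^{k-L})$), your claim that ``the factors with $m+k\ge0$ contribute a product that tends to $1$'' is wrong. Those factors are
\[
\prod_{j\ge0}\frac{1+\alpha q^{j}}{1+\beta q^{j}}
=\frac{(-\alpha;q)_\infty}{(-\beta;q)_\infty},
\]
a \emph{fixed} nonzero constant independent of $L$, not $1$: the individual factors do not depend on $L$ at all, so there is no tail going to $1$. This constant is exactly the $(-\alpha;q)_\infty$ piece you later cannot locate. Keeping it, your dominant-power extraction on the remaining factors $\prod_{i=1}^{L}(1+\alpha q^{-i})=\alpha^{L}q^{-L(L+1)/2}\prod_{i=1}^{L}(1+\alpha^{-1}q^{i})$ gives, after cancelling $(\alpha/\beta)^L$ against the prefactor,
\[
\frac{(-\alpha;q)_\infty\,(-\alpha^{-1}q;q)_\infty}{(-\beta;q)_\infty\,(-\beta^{-1}q;q)_\infty}
=\frac{\theta_q(-\alpha)}{\theta_q(-\beta)}
\]
immediately, with no further bookkeeping. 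The paper's own proof simply invokes this as a standard $q$-Pochhammer identity (Gasper--Rahman, Exercise~1.1).
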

\begin{proof}
	The first statement is evident, since both the numerator and the denominator
	rapidly approach $1$ as $L\to\infty$. The second statement follows from
	standard properties of the $q$-Pochhammer symbols,
	for example, see \cite[Exercise~1.1]{GasperRahman}.
\end{proof}

Let us denote
\begin{equation}
	\label{eq:mathfrak_a_definition}
	\mathfrak{a}(x)\coloneqq
	\biggl[
		\frac{-\kappa^2 q^{2x-1}}
		{ (1-\kappa ^2 q^{2x})(1-\kappa ^2 q^{2x-1})}
	\biggr]^{\frac12},\qquad x\in \tfrac12 \mathbb{Z}.
\end{equation}
Note that this expression rapidly decays as $x\to\pm\infty$.

\begin{lemma}
	\label{lemma:limit_of_operator_U_coefficients_a_0_a_1}
	The coefficients of the operator $\mathfrak{U}_t$ \eqref{eq:operator_U}
	have the following asymptotic behavior at coordinates $(T_0+t,X_0+x)$,
	where $t,x\in \mathbb{Z}$ are fixed:
	\begin{align*}
		\lim_{L\to\infty}q^{3L}\sqrt{\frac{\widetilde{w}_{T_0+t}(X_0+x-1)}{\widetilde{w}_{T_0+t+1}(X_0+x)}}\ssp
		a_1^{T_0+t}(X_0+x-1) &= q^{-\frac{t}{2}} \ssp \mathfrak{a}(x-\tfrac{t}{2});\\
		\lim_{L\to\infty}
		q^{3L}
		\sqrt{\frac{\widetilde{w}_{T_0+t}(X_0+x)}{\widetilde{w}_{T_0+t+1}(X_0+x)}}\ssp
		a_0^{T_0+t}(X_0+x)
		&=q^{-\frac{t}{2}} \ssp \mathfrak{a}(x-\tfrac{t}{2}+\tfrac12).
	\end{align*}
\end{lemma}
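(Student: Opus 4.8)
The plan is to compute the limit of each of the two coefficients separately, by reducing the ratio of gauged weights $\widetilde w_{T_0+t}/\widetilde w_{T_0+t+1}$ and the factors $a_0^{T_0+t},a_1^{T_0+t}$ to products of $q$-Pochhammer symbols, and then applying \Cref{lemma:inf_qPochhammer_to_theta} to the pieces whose parameters blow up as $L\to\infty$. First I would substitute $T=8L$, $N=S=4L$, $t\mapsto T_0+t=2L+t$, $x\mapsto X_0+x=3L+x$ into the explicit expression \eqref{eq:w_t_x_original} for $w_t(x)$ and into the gauge factor $\lambda(t)$ \eqref{eq:lambda_t_definition}, so that $\widetilde w_t(x)=\lambda(t)\,w_t(x)$ is written out as a ratio of finite $q$-Pochhammer symbols. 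The key structural observation is that most of these Pochhammer symbols have length growing linearly in $L$ but argument of the form $q^{cL}$ (or $q^{cL}$ times a fixed power of $q$ or $\kappa$) with $c>0$; such a factor $(\alpha q^{cL};q)_k$ with $k\sim c'L$ converges to $(\alpha q^{cL};q)_\infty$ up to an error that is $1+O(q^{cL})$, and similarly for the "reversed" Pochhammer symbols $(a;1/q)_k$. Thus in the ratio $\widetilde w_{T_0+t}(X_0+x-1)/\widetilde w_{T_0+t+1}(X_0+x)$ almost everything cancels or tends to $1$, and one is left with finitely many factors depending only on $x,t,q,\kappa$, plus an overall power of $q$ (which is what the prefactor $q^{3L}$ is designed to cancel) and possibly a bounded ratio of theta functions coming from the $L\to-\infty$-type limits in \Cref{lemma:inf_qPochhammer_to_theta}.

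Next I would handle the elementary factors $a_0^{T_0+t}(X_0+x)$ and $a_1^{T_0+t}(X_0+x-1)$ from \eqref{eq:a_0_a_1_definition}: substituting the scaled parameters, $1-q^{x+T-t-S}=1-q^{6L+x-t}\to 1$, $q^{x-S-N+1}-1=q^{x-8L+1}-1\to -1$, $q^{T+N-1-t}=q^{10L-1-t}$ contributes an explicit power of $q$ growing in $L$, while the genuinely $x,t$-dependent part is the rational factor $\dfrac{1-\kappa^2 q^{x+N-t}}{1-\kappa^2 q^{2x-t-S+1}}\to \dfrac{1}{1-\kappa^2 q^{2x-2L-t+1-4L}}$ — here I must be careful: after substituting $S=4L$, $t\to 2L+t$ the exponent $2x-t-S+1$ becomes $2x-t-2L-4L+1\to -\infty$ in the sense $q^{2x-6L-t}\to\infty$... no: $q$ to a large \emph{negative} power blows up, so in fact one should track that $2x-t-S+1$ with $S=4L$, $t=2L+t$ gives exponent $2x-6L-t+1\to-\infty$, hence $\kappa^2 q^{2x-6L-t+1}$ \emph{diverges}; the correct bookkeeping is that this divergence is absorbed into the $q^{3L}$ prefactor together with the $q^{10L}$ from $q^{T+N-1-t}$ and the $q^{(2N+T-1)x}$-type powers in $w_t(x)$. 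The cleanest route is therefore to \emph{not} evaluate the two pieces separately in intermediate steps but to expand $q^{3L}\sqrt{\widetilde w_{T_0+t}(X_0+x-1)/\widetilde w_{T_0+t+1}(X_0+x)}\,a_1^{T_0+t}(X_0+x-1)$ as one product, collect the total power of $q$ (verifying it is $0$, which pins down why the normalization is $q^{3L}$ and not some other power), and collect the total power of $-\kappa^2$; what remains is a finite product of factors $(1-\kappa^2 q^{2x+\text{const}})$ in numerator and denominator, which I would match term by term to $q^{-t/2}\mathfrak a(x-\tfrac t2)$ using the definition \eqref{eq:mathfrak_a_definition} of $\mathfrak a$. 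The square root in $\mathfrak a$ and in the coefficient matches because $\widetilde w$ enters under a square root and the $B,D$-type structure of \Cref{rmk:selfadjoint} guarantees the combination is a perfect square of the expected expression.

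For the second limit, the computation is the same but with $\widetilde w_{T_0+t}(X_0+x)$ in the numerator (no shift by $-1$) and $a_0$ in place of $a_1$; the net effect of replacing $a_1(X_0+x-1)$ by $a_0(X_0+x)$ and un-shifting one weight is precisely to shift the argument of $\mathfrak a$ by $\tfrac12$, giving $q^{-t/2}\mathfrak a(x-\tfrac t2+\tfrac12)$ — indeed this is consistent with the form of the limiting operator $\mathfrak U^{\mathrm{barcode}}_t$ quoted in the Introduction, whose two coefficients are $\mathfrak a(x-\tfrac t2)$ and $\mathfrak a(x-\tfrac t2+\tfrac12)$ (the extra $q^{-t/2}$ here is a gauge that gets absorbed, cf.\ the discussion around \eqref{eq:inter_slice_prelimit_operator_intro}). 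The factor $(-1)^t$ in $\lambda(t)$ \eqref{eq:lambda_t_definition} and the sign convention fixed in \Cref{rmk:signs_vs_BGR} ensure the quantities under the square roots are positive for $\kappa\in\mathbf i\mathbb R_{>0}$, so the square roots are unambiguous and the limits are the stated positive (real) expressions. The main obstacle I anticipate is purely bookkeeping: correctly accounting for the handful of "reversed" $q$-Pochhammer symbols $(a;1/q)_{\text{linear in }L}$ in $\lambda(t)$ — these do \emph{not} simply tend to $1$ but require the $L\to-\infty$ branch of \Cref{lemma:inf_qPochhammer_to_theta}, producing $\theta_q$ factors that must be checked to cancel between numerator and denominator (which they should, since the final answer is $\theta_q$-free) — and making sure the total power of $q$ collected across the $\sim\!10$ Pochhammer symbols and the prefactors is exactly zero. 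I would organize this as a single careful display tabulating, for each factor, its limiting contribution, its power-of-$q$ contribution, and its power-of-$(-\kappa^2)$ contribution, and then sum the columns.
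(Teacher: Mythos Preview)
Your approach is correct and is essentially the same as the paper's: substitute the specific hexagon parameters into the explicit formulas for the weight ratios and for $a_0,a_1$, extract the dominant power of $q$, and invoke \Cref{lemma:inf_qPochhammer_to_theta} for the pieces with arguments blowing up.

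One simplification you are missing: the ``obstacle'' you anticipate with $\theta_q$ factors does not occur. In the ratio $\widetilde w_{T_0+t}(\cdot)/\widetilde w_{T_0+t+1}(\cdot)$ the $t$-independent part of $\lambda$ cancels outright, and after cancellation the only surviving infinite $q$-Pochhammer ratio is of the form $(q^{1-5L+x}\kappa^2;q)_\infty/(q^{-5L+x}\kappa^2;q)_\infty$, whose arguments differ by a single factor of $q$; this telescopes to the elementary factor $(1-q^{-5L+x}\kappa^2)^{-1}$, so no $\theta_q$'s appear and your proposed table is unnecessary. The paper simply displays the already-simplified ratios $\widetilde w_{T_0+t}(X_0+x-1)/\widetilde w_{T_0+t+1}(X_0+x)$, $\widetilde w_{T_0+t}(X_0+x)/\widetilde w_{T_0+t+1}(X_0+x)$, $a_1^{T_0+t}(X_0+x-1)$, $a_0^{T_0+t}(X_0+x)$ as short explicit rational expressions in $q^{L},q^x,q^t,\kappa^2$ (each with an explicit prefactor like $q^{-11L}$), from which the limits are read off directly.
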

\begin{proof}
	We have
	\begin{align*}
	\frac{\widetilde{w}_{T_0+t}(X_0+x-1)}{\widetilde{w}_{T_0+t+1}(X_0+x)}&=
	\frac{q^{-11 L} (1-q^{3 L+x}) (1-\kappa^2 q^{3 L+x}) (q^{t+1}-\kappa^2q^{2x})
	(q^{1-5 L+x} \kappa^2;q)_{\infty}}{(q^x-q^{5 L}) (q^t-\kappa^2q^{2x})
	(q^{-5 L+x} \kappa^2;q)_{\infty}}
	;
	\\
	\frac{\widetilde{w}_{T_0+t}(X_0+x)}{\widetilde{w}_{T_0+t+1}(X_0+x)}&=
	\frac{
	(q^{-3L-t+x}-1)(q^t-q^{2x+1}\kappa^2)(q^{-3L-t+x}\kappa^2;q)_\infty
	}
	{
	(1-q^{5L-t+x})(q^t-q^{2x}\kappa^2)(1-q^{5L-t+x}\kappa^2)(q^{1-3L-t+x}\kappa^2;q)_\infty
	};
	\\
	a_1^{T_0+t}(X_0+x-1)
	&=
	\frac{(q^x-q^{5 L}) (q^{5 L}-\kappa^2q^x)}{q^{t+1}-\kappa^2q^{2x}}
	;
	\\
	a_0^{T_0+t}(X_0+x)&=
	\frac{(1-q^{5 L-t+x}) (1-\kappa^2 q^{5 L-t+x})}{1-\kappa^2 q^{1-t+2x}}.
	\end{align*}
	Using \Cref{lemma:inf_qPochhammer_to_theta}, we immediately
	get the desired limits.
\end{proof}

\begin{lemma}
	\label{lemma:G_ratio_asymptotics}
	The ratio of the prefactors $G$
	\eqref{eq:G_factor_conjugation}
	in the pre-limit barcode kernel
	has the following asymptotic behavior:
	\begin{equation}
		\label{eq:G_ratio_asymptotics}
		\begin{split}
			&\lim_{L\to\infty}
			q^{-3L(t-1-s)}\ssp\frac{G(T_0+s,X_0)}{G(T_0+t-1,X_0)
			\mathsf{w}_{q,\kappa}(X_0-\tfrac {T_0+s}2+1)}
			\\&
			\hspace{120pt}=
			-
			\mathbf{i}^{t-s}
			q^{\frac12\binom {t-1}2-\frac{1}{2}\binom {s-1}2+1-s}
			\sqrt{
			\frac{-\kappa^2}{(1-\kappa ^2 q^{2-t})(1-\kappa ^2 q^{1-s} )}
			}
			.
		\end{split}
	\end{equation}
\end{lemma}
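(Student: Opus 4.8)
The plan is to substitute the explicit formula \eqref{eq:G_factor_conjugation} for $G$, the definition \eqref{eq:q_kappa_weight_of_ensemble} of $\mathsf{w}_{q,\kappa}$, and the specialization $T=8L$, $N=S=4L$, $x=X_0=3L$ into the left-hand side, with the two slices $\tau_1=T_0+s=2L+s$ and $\tau_2=T_0+t-1=2L+t-1$, and then simplify the resulting ratio factor by factor, letting $L\to\infty$ at the end. Expanding $G(T_0+s,X_0)/\bigl(G(T_0+t-1,X_0)\,\mathsf{w}_{q,\kappa}(X_0-\tfrac{T_0+s}{2}+1)\bigr)$ produces six kinds of contributions: (i) a monomial $q^{E(\tau_1)-E(\tau_2)}$ from the explicit $q$-exponent $E(\tau)=X_0(T+N-\tau-1)+\tau(\tfrac S2-\tfrac12)+\tfrac{\tau(\tau+1)}{4}$ of $G$; (ii) the power $\kappa^{-\tau_1+\tau_2}=\kappa^{t-1-s}$; (iii) the ratio of the factors $1-\kappa^2q^{2X_0-\tau-S+1}$, which at the two slices equals $(1-\kappa^2q^{1-s})/(1-\kappa^2q^{2-t})$; (iv) the ratio of the three pairs of $q$-Pochhammer symbols in the denominators of $G$; (v) the ratio $\sqrt{\widetilde w_{T_0+t-1}(X_0)/\widetilde w_{T_0+s}(X_0)}$; and (vi) the reciprocal of $\mathsf{w}_{q,\kappa}(X_0-\tfrac{T_0+s}{2}+1)$. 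A direct computation of the last argument gives $X_0-\tfrac{T_0+s}{2}+1-\tfrac{S+1}{2}=\tfrac{1-s}{2}$, whence $\mathsf{w}_{q,\kappa}(X_0-\tfrac{T_0+s}{2}+1)=-\kappa^{-1}q^{(s-1)/2}\bigl(1-\kappa^2q^{1-s}\bigr)$.

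First I would dispose of the $q$-Pochhammer factors in (iv). The symbol $(\tfrac1q;\tfrac1q)_{S+N-1-X_0}=(\tfrac1q;\tfrac1q)_{5L-1}$ is slice-independent and cancels exactly. For the remaining two pairs, $(q;q)_{T-S+X_0-\tau}$ and $(\kappa^2q^{X_0-T+1};q)_{T+N-\tau}$ (note that $X_0-T+1=1-5L$ does not depend on the slice), I would use the identity $(a;q)_m=(a;q)_\infty/(aq^m;q)_\infty$: the slice-independent $\infty$-symbols cancel in the ratio, while the remaining $\infty$-symbols and $(q;q)$-symbols tend to slice-independent limits ($1$ and $(q;q)_\infty$, respectively) by \Cref{lemma:inf_qPochhammer_to_theta}; hence the whole contribution of (iv) tends to $1$.

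Next I would evaluate (v) by telescoping. From the consecutive-slice computation of $\widetilde w$ already carried out in the proof of \Cref{lemma:limit_of_operator_U_coefficients_a_0_a_1} (specialized to vertical shift $0$), together with the recursion $(a;q)_\infty=(1-a)(aq;q)_\infty$, one gets
\[
\frac{\widetilde w_{T_0+\tau}(X_0)}{\widetilde w_{T_0+\tau+1}(X_0)}
=\frac{(q^{-3L-\tau}-1)(q^{\tau}-q\kappa^2)(1-\kappa^2q^{-3L-\tau})}{(1-q^{5L-\tau})(q^{\tau}-\kappa^2)(1-\kappa^2q^{5L-\tau})}
\sim-\kappa^2q^{-6L-2\tau}\,\frac{q^{\tau}-q\kappa^2}{q^{\tau}-\kappa^2}\qquad(L\to\infty).
\]
Multiplying these over the slices between $s$ and $t-1$, using $\sum_{\tau=s}^{t-2}\tau=\binom{t-1}{2}-\binom{s}{2}$ and the telescoping identity $\prod_{\tau=s}^{t-2}\frac{q^{\tau}-q\kappa^2}{q^{\tau}-\kappa^2}=q^{t-1-s}\frac{q^{s-1}-\kappa^2}{q^{t-2}-\kappa^2}$, and taking the square root, one finds
\[
\sqrt{\frac{\widetilde w_{T_0+t-1}(X_0)}{\widetilde w_{T_0+s}(X_0)}}
\sim(-\kappa^2)^{(s-t+1)/2}\,q^{3L(t-1-s)}\,q^{\binom{t-1}{2}-\binom{s}{2}}\,
\sqrt{\frac{1-\kappa^2q^{2-t}}{1-\kappa^2q^{1-s}}}.
\]
The prefactor $q^{-3L(t-1-s)}$ in the statement is exactly what cancels the growing factor $q^{3L(t-1-s)}$ here; moreover the $L^2$- and $L$-order terms in $E(\tau_1)-E(\tau_2)$ cancel among themselves, leaving the finite exponent $\tfrac12\binom{s}{2}-\tfrac12\binom{t-1}{2}$.

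Finally I would assemble (i)--(vi). The $q$-powers from $E(\tau_1)-E(\tau_2)$, from $1/\mathsf{w}_{q,\kappa}$, and from the finite part of (v) add up (via $\binom{s}{2}=\binom{s-1}{2}+(s-1)$) to the exponent $\tfrac12\binom{t-1}{2}-\tfrac12\binom{s-1}{2}+1-s$; the factors $1-\kappa^2q^{\bullet}$ from (iii), (vi), and the square root in (v) combine into $\bigl((1-\kappa^2q^{2-t})(1-\kappa^2q^{1-s})\bigr)^{-1/2}$; and the remaining powers of $\kappa$, $\mathbf{i}$, and $-1$ collapse to $-\mathbf{i}^{t-s}\sqrt{-\kappa^2}$, using crucially that $\kappa\in\mathbf{i}\mathbb{R}_{>0}$ so that $\sqrt{-\kappa^2}=-\mathbf{i}\kappa$ (this fixes the branches of all the square roots). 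This yields the claimed right-hand side. The main obstacle is precisely this last bookkeeping step: one has to keep the $q$-exponents (hence the binomial coefficients) consistent, verify that the powers of $\kappa$ reduce to a single factor $\kappa$, and track the powers of $\mathbf{i}$ and the overall sign correctly through the square roots, where the imaginary-$\kappa$ convention is essential. Everything else is a routine, if lengthy, manipulation of $q$-Pochhammer symbols and the theta function $\theta_q$.
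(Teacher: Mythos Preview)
Your argument is correct, and it takes a genuinely different route from the paper's own proof. The paper squares the full ratio $G(T_{0}+s,X_{0})/(G(T_{0}+t-1,X_{0})\,\mathbf{i}^{t-1-s})$ in one shot, obtaining a single closed-form expression in infinite $q$-Pochhammer symbols; it then applies \Cref{lemma:inf_qPochhammer_to_theta} to produce a ratio $\theta_{q}(q^{1-s}\kappa^{2})/\theta_{q}(q^{2-t}\kappa^{2})$ and finishes via the quasi-periodicity of $\theta_{q}$. Your decomposition (i)--(vi) is more modular: you isolate the contribution of $\sqrt{\widetilde w_{T_{0}+t-1}(X_{0})/\widetilde w_{T_{0}+s}(X_{0})}$ and evaluate it by telescoping consecutive-slice ratios, recycling the explicit formula already worked out in the proof of \Cref{lemma:limit_of_operator_U_coefficients_a_0_a_1}; the remaining Pochhammer pieces are dispatched with $(a;q)_{m}=(a;q)_{\infty}/(aq^{m};q)_{\infty}$ and \Cref{lemma:inf_qPochhammer_to_theta}, so you never touch the theta-function quasi-periodicity at all. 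The paper's approach is shorter and avoids the sign/branch bookkeeping that you flag at the end, while yours exposes more clearly where each power of $q$ and $\kappa$ comes from and reuses the inter-slice computation rather than redoing it. Both reach the same limit.
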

\begin{proof}
	One can check that the ratio
	$\frac{G(T_0+s,X_0)}{G(T_0+t-1,X_0)\ssp\mathbf{i}^{t-1-s}}$
	is positive for all $s,t$. Its square is given by
	\begin{align*}
		&
		\left( \frac{G(T_0+s,X_0)}{G(T_0+t-1,X_0)\ssp\mathbf{i}^{t-1-s}} \right)^2
		=
		\frac{(-1)^{t+s}(q^s-\kappa ^2 q) \kappa ^{-2s+2t-2} q^{-3L(s-t+1)+t-s} }{\kappa ^2 q^2-q^t}
		\\&\hspace{150pt}\times
		\frac{(q^{5L-s+1};q)_{\infty } (q^{3L+t-1};q)_{\infty } (q^{-3L-s+1} \kappa ^2;q)_{\infty } (q^{5L-s+1} \kappa ^2;q)_{\infty }}{ (q^{3L+s};q)_{\infty } (q^{5L-t+2};q)_{\infty } (q^{-3L-t+2} \kappa ^2;q)_{\infty } (q^{5L-t+2} \kappa ^2;q)_{\infty }}.
	\end{align*}
	By \Cref{lemma:inf_qPochhammer_to_theta}, the expression in the second line behaves as
	$\sim \left( \frac{q^{1-s}\kappa^2}{q^{2-t}\kappa^2} \right)^{3L}\frac{\theta_q(q^{1-s}\kappa^2)}{\theta_q(q^{2-t}\kappa^2)}$.
	Using the quasi-periodicity of the theta function and simplifying, we arrive at
	the desired result.
\end{proof}

\begin{remark}
	When the expression \eqref{eq:G_ratio_asymptotics} appears inside the
	barcode kernel, certain factors on the right-hand side of \eqref{eq:G_ratio_asymptotics}
	(specifically $\mathbf{i}^{t-s} q^{\frac12\binom{t-1}{2}-\frac12\binom{s-1}{2}}$)
	can be removed through a conjugation
	(gauge transformation) of the correlation kernel.
	In addition, the factor $q^{-3L(t-s)}$ on the left-hand side of \eqref{eq:G_ratio_asymptotics}
	is likewise a conjugation factor applied before taking the limit.
	The latter factor is necessary for the convergence of the
	ratio of the $G$ prefactors, but in the full barcode kernel,
	it can be adjusted (by a conjugation) to
	ensure the convergence of the kernel (see
	\Cref{conj:conjectural_limit_of_full_barcode_kernel}
	below).
\end{remark}

\section{Barcode kernel. Conjectures and numerical evidence}
\label{sec:2d_correlations_asymptotics_numerics_conjectures}

\subsection{Obstacles posed by the limiting inter-slice operator}
\label{sub:s7_setup}

In \Cref{sec:2d_correlations} we expressed the \emph{pre-limit} barcode kernel
\(K^{\mathrm{barcode}}_{(L)}(s,t)\)
(encoding the one-dimen\-sional
random stepped interface that emerges in the
waterfall region)
in terms of the two-dimen\-sional correlation
kernel \(\widetilde K(T_{0}+s,X_{0};T_{0}+t-1,X_{0})\) of the \(q\)-Racah
nonintersecting path ensemble.
Recall that we focus on the hexagon with the specific side lengths
$T=8L$ and $N=S=4L$, and we set $T_0=2L$ and $X_0=3L$.
Thanks to
\Cref{prop:2d_kernel_operator_interpretation},
\(\widetilde K\) admits an operator representation via the family of linear
operators \(\mathfrak U_{T_{0}+t}\) given by \eqref{eq:operator_U}.
Recall that
\Cref{lemma:G_ratio_asymptotics} provides additional explicit scaling and normalization
for $\widetilde{K}$ to get to $K^{\mathrm{barcode}}_{(L)}$.

Along the
center line of the waterfall region, the coefficients of
\(\mathfrak U_{T_{0}+t}\) converge to expressions
involving the function \(\mathfrak{a}(x)\) \eqref{eq:mathfrak_a_definition}, see
\Cref{lemma:limit_of_operator_U_coefficients_a_0_a_1}.
This enables us to define the limiting linear operators:
\begin{equation}
	\label{eq:limiting_operator_U}
	\bigl(\mathfrak{U}^{\mathrm{barcode}}_t f\bigr)(x)
	\coloneqq
	\mathfrak{a}(x-\tfrac{t}{2})\ssp f(x-1)+\mathfrak{a}(x-\tfrac{t}{2}+\tfrac12)\ssp f(x),
	\qquad
	x\in\mathbb Z\ \text{or}\ \mathbb Z+\tfrac12,
\end{equation}
so that
\begin{equation}
	\label{eq:limiting_operator_U_convergence}
	q^{3L+\frac{t}{2}}\ssp \bigr(\mathfrak{U}_{T_0+t}f\bigr)(X_0+x)\sim
	\bigl(\mathfrak{U}^{\mathrm{barcode}}_t f\bigr)(x), \qquad L\to\infty.
\end{equation}
The functions $\mathcal{F}_n(x)$ defined by \eqref{eq:F_n_functions_app} in \Cref{appendix}
play the same role for the limiting operators
as the orthonormal $q$-Racah polynomials $f_n^t(x)$ do for the
pre-limit ones. Namely, define
\begin{equation}
	\label{eq:F_n_t_limiting_functions}
	\mathcal{F}_n^t(x)\coloneqq \mathcal{F}_n(x-\tfrac{t-1}{2}),
\end{equation}
then
by \Cref{prop:three_term_relation},
we have (compare with \eqref{eq:operator_U_action_on_f_n}):
\begin{equation}
	\label{eq:U_barcode_interslice_functions}
	\bigl(\mathfrak{U}_t^{\mathrm{barcode}}\mathcal{F}_n^t\bigr)(x)=-q^n \mathcal{F}_n^{t+1}(x),
	\qquad
	t,x\in \mathbb{Z},
	\quad
	n\in \tfrac{1}{2}\mathbb{Z}_{\ge0}.
\end{equation}

A natural idea would be to replace \(\mathfrak U_{T_{0}+t}\)
with \(\mathfrak U^{\mathrm{barcode}}_t\) in the operator
representation of \(\widetilde K\) and expect that the
resulting expression (rewritten in terms of the $\mathcal{F}^t_n(x)$'s)
yields the limit of
\(K^{\mathrm{barcode}}_{(L)}\).
Let us
explain why this
seemingly straightforward step in fact requires a
considerably more delicate analysis,
which will be carried out in another work.

\medskip
When \(q\to1\) while \(q^{L}\) remains bounded (the regime analyzed in
\cite[Section~8]{borodin-gr2009q}), the operators \(\mathfrak U_{T_{0}+t}\) converge to
a bounded two-diagonal operator with constant coefficients. In Fourier space, both the limiting operator
and its inverse act by multiplication by a function and its reciprocal, respectively.
This makes it possible to obtain the limiting local lattice kernel simply by
inverting the Fourier transform.
The kernel in the limit as $q\to 1$ is given by the same expressions as before the limit \Cref{prop:2d_kernel_operator_interpretation},
In this way one recovers the celebrated
\emph{incomplete Beta kernel}
\cite{okounkov2003correlation},
which describes correlations in the Gibbs pure phase (see \Cref{sub:traditional_scaling_regime}
for further discussion and references).
We see that for bounded lattice operators, the operator method (in particular, the
results from \cite{reed1972} quoted in \Cref{thm:funkan}) provides a rigorous derivation of the local limiting kernel.

In our case of fixed $q$, the situation is dramatically different.
While the two-diagonal operators
$\mathfrak{U}^{\mathrm{barcode}}_t$ are bounded, their coefficients decay exponentially at $\pm\infty$.
Therefore, their inverses are unbounded, and, for example, applying
$\bigl(\mathfrak{U}^{\mathrm{barcode}}_t\bigr)^{-1}$ in the variable $y$ to the limit of $K_s(x,y)$
(which is the delta function at $y=x$) does not yield a
function in $\ell^2$.

\medskip

Throughout the remainder of this section we outline the main
computations (which, in fact, reveal further obstacles to
the operator approach based on the functions
$\mathcal{F}^t_n$), derive the conjectural limiting density of the barcode process,
and compare it with both
the numerical pre-limit barcode kernel and with
probabilistic simulations of the $q$-Racah ensemble. We also
discuss a possible approach towards the full limiting barcode kernel.
In computations in \Cref{sub:s7_density_limit,sub:s7_general_limit}, we will
explicitly indicate nonrigorous steps.

\subsection{Nonrigorous limit of the barcode kernel. Density}
\label{sub:s7_density_limit}

We first consider the diagonal elements of the
pre-limit barcode kernel, which define the density function
\begin{equation}
	\label{eq:density_function_barcode_prelimit}
	\rho^{\mathrm{barcode}}_{(L)}(t)\coloneqq K^{\mathrm{barcode}}_{(L)}(t,t),\qquad t\in \mathbb{Z}.
\end{equation}
From \eqref{eq:prelimit_barcode_kernel} and
\Cref{lemma:G_ratio_asymptotics}, we have the asymptotic equivalence as \(L\to\infty\):
\begin{equation}
	\label{eq:barcode_density_prelimit_asymptotics}
	\rho^{\mathrm{barcode}}_{(L)}(t)
	\sim
	q^{-3L-\frac{t-1}{2}}
	\widetilde K(T_0+t,X_0;T_0+t-1,X_0)
	\ssp
	\underbrace{\sqrt{
	\frac{-\kappa^2 q^{1-t}}{(1-\kappa ^2 q^{2-t})(1-\kappa ^2 q^{1-t} )}
	}}_{\mathfrak{a}(1-\frac{t}{2})}.
\end{equation}
The ``time'' arguments in $\widetilde{K}$
are ordered as in the first case in
\eqref{eq:K_kernel_two_dimensional_operator_interpretation}.
Thus, we need to compute the limit of
\begin{equation}
	\label{eq:limit_of_barcode_density_tilde_K_asymptotics}
	q^{-3L-\frac{t-1}{2}}\ssp \bigl( \mathfrak{U}_{T_0+t-1}^{-1} \bigr)_y\ssp
	K_{T_0+t}(X_0,X_0),
\end{equation}
where we apply the inverse operator to the function
$y\mapsto K_{T_0+t}(X_0,X_0+y)$, and then set $y=0$.
Let us now
replace this function in $y$ by its limit $\mathbf{1}_{y=0}$ (that follows from \Cref{prop:limit_of_kernel}),
and also use the asymptotic equivalence \eqref{eq:limiting_operator_U_convergence} for the
inverse operator $\mathfrak{U}_{T_0+t-1}^{-1}$, which reads
\begin{equation}
\label{eq:asymptotic_equivalence_inverse_operator}
	q^{-3L-\frac{t}{2}}\ssp \bigl( \mathfrak{U}_{T_0+t}^{-1} f\bigr)(X_0+x)\sim
	\bigl( (\mathfrak{U}^{\mathrm{barcode}}_t )^{-1}f\bigr)(x),
	\qquad
	L\to\infty.
\end{equation}
Therefore, we have
\begin{equation}
	\label{eq:limit_of_barcode_density_tilde_K_asymptotics_2}
	\eqref{eq:limit_of_barcode_density_tilde_K_asymptotics}
	\sim
	\bigl(\mathfrak{U}^{\mathrm{barcode}}_t\bigr)^{-1}\ssp \mathbf{1}_{y=0}\Big\vert_{y=0},
\end{equation}
where the operator acts on the variable $y$.
Using the representation of the identity in \Cref{prop:completeness_of_F_n_functions_app},
we have
\begin{equation}
	\label{eq:density_nonrigorous_final}
	\rho^{\mathrm{barcode}}_{(L)}(t)\sim
	\mathfrak{a}(1-\tfrac{t}{2})\ssp
	\bigl(\mathfrak{U}^{\mathrm{barcode}}_t\bigr)^{-1}
	\sum_{n\in \frac12\mathbb{Z}_{\ge0}}
	\frac{\mathcal{F}_n^t(0)\mathcal{F}_n^t(y)}{\|\mathcal{F}_n\|^2_{\ell^2(\mathbb{Z})}}
	\bigg\vert_{y=0}
	=
	\mathfrak{a}(1-\tfrac{t}{2})
	\sum_{n\in \frac12\mathbb{Z}_{\ge0}}
	\frac{-q^{-n}
	\mathcal{F}_n^t(0)\mathcal{F}_n^{t-1}(0)
	}{\|\mathcal{F}_n\|^2_{\ell^2(\mathbb{Z})}}.
\end{equation}
\begin{remark}
	\label{rmk:nonrigorous_limit_of_barcode_density_step1}
	Passing to the limit separately in $K_{T_0+t}$ and $\mathfrak{U}_{T_0+t-1}^{-1}$
	in \eqref{eq:limit_of_barcode_density_tilde_K_asymptotics}
	is nonrigorous.
	Moreover,
	we did not rigorously define $(\mathfrak{U}^{\mathrm{barcode}}_t)^{-1}$, either,
	so the right-hand side of \eqref{eq:limit_of_barcode_density_tilde_K_asymptotics_2}
	does not formally make sense.
\end{remark}

Let us examine the
final formula for the density more closely.
First, numerically one can check that the series in $n$
in the right-hand side
of
\eqref{eq:density_nonrigorous_final}
\textbf{diverges} for any $t\in \mathbb{Z}$.
However, this series diverges only ``mildly'',
in the sense that the terms are approaching the same nonzero
constant as $n\to\infty$, but they have alternating signs.
Therefore, the series in
\eqref{eq:density_nonrigorous_final}
may be thought of as having ``\textbf{two different sums}'',
coming from two possible ways of pairing the neighboring terms.
We make a precise conjecture:
\begin{conjecture}
	\label{conj:barcode_density_limit}
	The following limits
	\begin{equation*}
	\rho_0(t)\coloneqq
		\lim_{M\to\infty}
		\mathfrak{a}(1-\tfrac{t}{2})
		\sum_{n=0}^{M+1/2}
		\frac{-q^{-n}
		\mathcal{F}_n^t(0)\mathcal{F}_n^{t-1}(0)
		}{\|\mathcal{F}_n\|^2_{\ell^2(\mathbb{Z})}}
		,\quad
		\rho_1(t)\coloneqq
		\lim_{M\to\infty}
		\mathfrak{a}(1-\tfrac{t}{2})
		\sum_{n=0}^{M}
		\frac{-q^{-n}
		\mathcal{F}_n^t(0)\mathcal{F}_n^{t-1}(0)
		}{\|\mathcal{F}_n\|^2_{\ell^2(\mathbb{Z})}}
	\end{equation*}
	exist for all $t\in \mathbb{Z}$,
	where $M\in \mathbb{Z}$, and the sums are over $n\in \frac12\mathbb{Z}_{\ge0}$.
	Moreover, they take only two different values, depending on the parity of $t$:
	\begin{equation*}
		\rho^{\mathrm{barcode}}_{\mathrm{even}}\coloneqq \rho_0(2k)=\rho_1(2k+1),\qquad
		\rho^{\mathrm{barcode}}_{\mathrm{odd}}\coloneqq \rho_0(2k+1)=\rho_1(2k+2),
	\end{equation*}
	and $\rho^{\mathrm{barcode}}_{\mathrm{even}}+\rho^{\mathrm{barcode}}_{\mathrm{odd}}=1$.

	The pre-limit barcode density has the following asymptotic behavior
	for all $t\in \mathbb{Z}$:
	\begin{equation*}
		\lim_{L\to\infty}
		\rho^{\mathrm{barcode}}_{(L)}(t)
		=
		\begin{cases}
			\rho^{\mathrm{barcode}}_{\mathrm{even}}, & \text{if } t \text{ is even};\\
			\rho^{\mathrm{barcode}}_{\mathrm{odd}}, & \text{if } t \text{ is odd}.
		\end{cases}
	\end{equation*}
\end{conjecture}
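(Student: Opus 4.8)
\emph{Proposed approach.} The plan is to start from an exact pre-limit identity and reduce \Cref{conj:barcode_density_limit} to a single statement about the $L\to\infty$ behaviour of one finite $q$-series. Combining \Cref{def:prelimit_barcode_kernel}, \Cref{prop:inverse_kasteleyn_matrix}, \Cref{prop:2d_kernel_BGR}--\Cref{prop:2d_kernel_operator_interpretation}, and \Cref{lemma:G_ratio_asymptotics}, for the hexagon with $T=8L$, $N=S=4L$ one has, for every $L$ and every fixed $t$ with $|t|<L$,
\[
	\rho^{\mathrm{barcode}}_{(L)}(t)
	=-\,\frac{G(T_0+t,X_0)}{G(T_0+t-1,X_0)\,\mathsf{w}_{q,\kappa}(X_0-\tfrac{T_0+t}{2}+1)}
	\sum_{n=0}^{N-1}\frac{f_n^{T_0+t}(X_0)\,f_n^{T_0+t-1}(X_0)}{\widetilde{\mathscr{C}}_n^{T_0+t-1}},
\]
where the prefactor is $\sim q^{-3L-(t-1)/2}\,\mathfrak{a}(1-\tfrac t2)$ by \Cref{lemma:G_ratio_asymptotics}. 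Thus everything reduces to the asymptotics, \emph{uniform in the summation index} $0\le n\le 4L-1$, of the products $a_{n,L}(t)\coloneqq f_n^{T_0+t}(X_0)f_n^{T_0+t-1}(X_0)/\widetilde{\mathscr{C}}_n^{T_0+t-1}$, together with control of the corresponding partial sums.

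First I would treat the \textbf{head} of the sum. For each fixed value of the reindexed parameter $m\in\tfrac12\mathbb{Z}_{\ge0}$ matched to $n$ through the dictionary implicit in \Cref{lemma:limit_of_operator_U_coefficients_a_0_a_1} and in the properties of the functions $\mathcal{F}_n^t$ from \Cref{appendix}, the $q$-Racah polynomials $f_n^{T_0+t}(X_0)$ and the normalization $\widetilde{\mathscr{C}}_n^{T_0+t-1}$ converge, after the appropriate $q^{\pm3L}$-rescaling and up to an $L$-dependent overall sign that cancels in $a_{n,L}(t)$, to $\mathcal{F}_m^t(0)$ and to $-q^{m}$, using \eqref{eq:U_barcode_interslice_functions}, \eqref{eq:F_n_t_limiting_functions}, and the center-line collapse $K_{T_0+t}\to\mathrm{Id}$ (\Cref{prop:limit_of_kernel} and its discrete refinements). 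Hence each fixed summand of $\rho^{\mathrm{barcode}}_{(L)}(t)$ converges to the corresponding term $-q^{-m}\mathcal{F}_m^t(0)\mathcal{F}_m^{t-1}(0)/\|\mathcal{F}_m\|^2_{\ell^2(\mathbb{Z})}$ of the divergent series in \eqref{eq:density_nonrigorous_final}. The delicate point --- and the source of the $2\mathbb{Z}$-periodicity --- is the \textbf{tail}: since the limiting summands approach a nonzero constant $c=c(q,\kappa)$ with alternating sign, $\sum_{n\ge M}a_{n,L}(t)$ is a long almost-alternating partial sum whose net value depends on the parity of its final index $N-1=4L-1$. The key estimates I would aim for are (i) a decomposition $a_{n,L}(t)=(-1)^{n}b_{n,L}(t)+r_{n,L}(t)$ with $b_{n,L}\ge0$ slowly varying in $n$ and $b_{n,L}\to c$ as $n,L\to\infty$ suitably, and (ii) $\sum_n|b_{n,L}-b_{n+1,L}|$ and $\sum_n|r_{n,L}|$ uniformly bounded with summable tails. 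Abel summation on the alternating part then produces, for the fixed odd top index $4L-1$, precisely the $\rho_0$-type regularization (giving $\lim_L\rho^{\mathrm{barcode}}_{(L)}(t)=\rho_0(t)$), while the even truncation yields $\rho_1(t)$ --- this is the precise meaning of the ``two sums,'' and the $L\to\infty$ version of (i)--(ii) is exactly the asserted convergence of the series $\rho_0,\rho_1$. The two-valuedness $\rho_0(2k)=\rho_1(2k+1)$, etc., should then follow from the shift $\mathcal{F}_m^{t}(0)=\mathcal{F}_m(-\tfrac{t-1}{2})$ together with an explicit periodicity of $\mathcal{F}_m^{t}(0)\mathcal{F}_m^{t-1}(0)$ under $t\mapsto t+2$ extracted from the three-term relation \eqref{eq:U_barcode_interslice_functions}.

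For the uniform-in-$n$ asymptotics required in (i)--(ii) --- which I expect to be the \textbf{main obstacle} --- I would seek a double contour integral representation of the pre-limit two-dimensional kernel $\widetilde K(T_0+s,X_0;T_0+t-1,X_0)$ specialized to this hexagon (the $q$-Racah polynomials at the terminal node of the $q$-Askey scheme do admit such representations) and carry out a steepest-descent analysis. On the center line the two relevant critical points should merge into a \emph{double} critical point, which is the analytic avatar of the emergent period-two structure; expanding its contribution to the required order would produce the alternating constant $c$ and its (summable) corrections, and would simultaneously control the regime $n\approx 4L$ near the edge of the summation range that the head-term analysis does not see. Finally, I would derive the normalization $\rho^{\mathrm{barcode}}_{\mathrm{even}}+\rho^{\mathrm{barcode}}_{\mathrm{odd}}=1$ not analytically but from the combinatorial constraint that each barcode step is either a square or a vertical lozenge (cf.\ \Cref{rmk:barcode_periodicity_intro}), combined with the reflection symmetry of the $q$-Racah tiling that exchanges these two lozenge types and shifts the parity of the slice; this identifies the limiting vertical-lozenge density at even sites with $\rho^{\mathrm{barcode}}_{\mathrm{odd}}$. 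Alternatively, the identity should drop out of the double-orthogonality of the $\mathcal{F}_m$ in $\ell^2(\mathbb{Z})$ and $\ell^2(\mathbb{Z}+\tfrac12)$ established in \Cref{appendix}, via Parseval applied on the two lattices.
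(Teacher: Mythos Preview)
The statement you are trying to prove is a \emph{conjecture} in the paper, not a theorem: the paper offers no proof, only a nonrigorous heuristic derivation (\Cref{sub:s7_density_limit}, with \Cref{rmk:nonrigorous_limit_of_barcode_density_step1} flagging the illegitimate step) followed by numerical and perfect-sampling evidence (\Cref{sub:s7_numerics,sub:s7_probabilistic_simulations}). The authors explicitly write that a rigorous derivation ``will likely require new tools'' for handling the inverse of $\mathfrak{U}^{\mathrm{barcode}}_t$. So there is no ``paper's own proof'' to compare against; your proposal is an attempt to resolve an open problem.

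Evaluated on its own merits, your outline has the right shape but leaves the two decisive steps as assertions rather than arguments. First, the ``dictionary'' matching the integer index $0\le n\le 4L-1$ of the pre-limit $q$-Racah functions $f_n^{T_0+t}$ to a half-integer index $m\in\tfrac12\mathbb{Z}_{\ge0}$ for $\mathcal{F}_m^t$ is never specified, and nothing in \Cref{lemma:limit_of_operator_U_coefficients_a_0_a_1} or \Cref{appendix} provides it: those results concern the \emph{operators} and the \emph{limiting} functions, not pointwise asymptotics of individual $f_n^{t}(X_0)$ as $L\to\infty$. The convergence $K_{T_0+t}\to\mathrm{Id}$ on the center line is a statement about $\sum_n f_n^t(x)f_n^t(y)$, not about single terms, so it cannot by itself justify $f_n^{T_0+t}(X_0)\to\mathcal{F}_m^t(0)$. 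Second, the uniform-in-$n$ tail control you package as (i)--(ii) is exactly the analytic obstruction the paper identifies (unboundedness of $(\mathfrak{U}^{\mathrm{barcode}}_t)^{-1}$), and your proposed cure --- a double contour integral for $\widetilde K$ with a merging critical point --- is speculative: no such representation is given or cited, and $q$-Racah sits at the top of the $q$-Askey scheme where useful single/double contour formulas are not readily available. Your Abel-summation reduction and the parity bookkeeping on the top index $4L-1$ are reasonable once (i)--(ii) are in hand, and your combinatorial route to $\rho^{\mathrm{barcode}}_{\mathrm{even}}+\rho^{\mathrm{barcode}}_{\mathrm{odd}}=1$ via the square/vertical lozenge dichotomy plus a reflection symmetry is plausible and independent of the hard analysis. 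But as written, the proposal does not close the gap that makes this a conjecture.
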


Let us make a few comments on this conjecture.
Recall that $\rho_{(L)}^{\mathrm{barcode}}(t)$ is the density of the
square lozenges
$\begin{tikzpicture}[baseline = (current bounding box.south),scale=.25]
\draw[thick] (0,0)--++(1,0)--++(0,1)--++(-1,0)--cycle;
\end{tikzpicture}$\ssp.

\begin{remark}
The conjectural limits $\rho_0(t)$ and $\rho_1(t)$
may be equivalently written as infinite
sums, for example,
\begin{equation*}
	\rho_0(t)=
	-\mathfrak{a}(1-\tfrac{t}{2})
	\sum_{k=0}^{\infty}
	\biggl(
		\frac{q^{-k}
		\mathcal{F}_k^t(0)\mathcal{F}_k^{t-1}(0)
		}{\|\mathcal{F}_k\|^2_{\ell^2(\mathbb{Z})}}
		+
		\frac{q^{-(k+1/2)}
		\mathcal{F}_{k+1/2}^t(0)\mathcal{F}_{k+1/2}^{t-1}(0)
		}{\|\mathcal{F}_{k+1/2}\|^2_{\ell^2(\mathbb{Z})}}
	\biggr)
	.
\end{equation*}
Here the sum is over integer $k$.
A similar series representation can be written down for
$\rho_1(t)$; in that case one first isolates the term
corresponding to $k=0$ and then groups together the terms
with indices $k-\tfrac12$ and $k$ for $k=1,2,\ldots$.
The convergence of these infinite series
is equivalent to the first part of \Cref{conj:barcode_density_limit}.
\end{remark}

\begin{remark}
	\label{rmk:periodicity}
	The conjectural limit of the barcode density
	becomes \emph{periodic} in $t$ with period two.
	This periodicity was not expected a priori, since the
	$q$-Racah measure on lozenge tilings of the hexagon
	did not have any such periodicity.
	A posteriori,
	this periodicity may be explained as follows.

The number of horizontal lozenges
	$\begin{tikzpicture}[baseline = (current
	bounding box.south),scale=.25]
	\draw[thick] (0,0)--++(1,1)--++(1,0)--++(-1,-1)--cycle;
	\end{tikzpicture}$
	in the $t$-th vertical slice of the hexagon
	is determined by the shape of the hexagon
	and grows linearly with $t$.
	Thus, increasing $t$ by one adds exactly one horizontal lozenge to that slice.
	In the scaling limit, with probability exponentially close to one, these horizontal lozenges
	concentrate in the two regions $\mathcal{W}^{\pm}$ lying outside the waterfall region
	(see \Cref{def:above_below_waterfall}).
	When $t$ is incremented by one, the newly added horizontal lozenge
	can be placed only in one of the two regions, either $\mathcal{W}^+$ or $\mathcal{W}^-$.
	This asymmetry
	introduces an imbalance in the limiting density.
	By contrast, when $t$ increases by an even amount, the additional horizontal lozenges
	can be evenly distributed between $\mathcal{W}^+$ and $\mathcal{W}^-$,
	which leads to the observed period-two translation invariance of the limiting density.
\end{remark}

\begin{remark}
	\label{rmk:density_one_half}
	The fact that $\rho^{\mathrm{barcode}}_{\mathrm{even}}+\rho^{\mathrm{barcode}}_{\mathrm{odd}}=1$
	means that on larger scales, the proportion of the
	square lozenges $\begin{tikzpicture}[baseline = (current bounding box.south),scale=.25]
	\draw[thick] (0,0)--++(1,0)--++(0,1)--++(-1,0)--cycle;
	\end{tikzpicture}$
	(and similarly the vertical lozenges
	\begin{tikzpicture}[baseline = (current
	bounding box.south),scale=.25]
	\draw[thick] (0,0)--++(0,1)--++(1,1)--++(0,-1)--cycle;
	\end{tikzpicture}\ssp)
	tends to $\frac12$.
	This is consistent with the fact that the
	waterfall region has slope $\frac12$,
	see \Cref{fig:waterfall_region} for illustrations.
\end{remark}

\subsection{Nonrigorous limit of the barcode kernel. General case}
\label{sub:s7_general_limit}

First, based on numerics (detailed in
\Cref{sub:s7_numerics} below), we are able to conjecture the right gauge of the
pre-limit barcode kernel which leads to a convergent expression.

\begin{conjecture}
	\label{conj:conjectural_limit_of_full_barcode_kernel}
	The following limit exists for all $t,s\in \mathbb{Z}$:
	\begin{equation}
		\label{eq:conjectural_limit_of_full_barcode_kernel}
		\lim_{L\to\infty}
		\mathbf{i}^{t-s} q^{2L(s-t)}\ssp
		K^{\mathrm{barcode}}_{(L)}(s,t)
		\eqcolon
		\mathcal{K}^{\mathrm{barcode}}(s,t).
	\end{equation}
	The limit kernel $\mathcal{K}^{\mathrm{barcode}}(s,t)$
	is symmetric in $s$ and $t$,
	and is $2\times 2$ block Toeplitz:
	\begin{equation*}
	\begin{pmatrix}
		\mathcal{K}^{\mathrm{barcode}}(s,t)& \mathcal{K}^{\mathrm{barcode}}(s,t+1)\\
		\mathcal{K}^{\mathrm{barcode}}(s+1,t) & \mathcal{K}^{\mathrm{barcode}}(s+1,t+1)
	\end{pmatrix}
	=
		\begin{pmatrix}
			\mathcal{K}^{\mathrm{barcode}}_{00}(s-t) & \mathcal{K}^{\mathrm{barcode}}_{01}(s-t)\\
			\mathcal{K}^{\mathrm{barcode}}_{10}(s-t) & \mathcal{K}^{\mathrm{barcode}}_{11}(s-t)
		\end{pmatrix},
		\qquad s,t\in 2\mathbb{Z}.
	\end{equation*}
\end{conjecture}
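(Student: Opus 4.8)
\emph{Proof strategy.} The plan is to start from the exact pre-limit expression for $K^{\mathrm{barcode}}_{(L)}(s,t)$ and follow every ingredient into the limit, isolating the single genuinely new analytic difficulty — the regularization of a divergent series — which is precisely the point the paper leaves open. Fix $s\ge t$; the case $s<t$ is recovered from the symmetry proved at the end. By \Cref{def:prelimit_barcode_kernel} and \Cref{prop:inverse_kasteleyn_matrix}, for $s>t$ the Kronecker-delta term in \eqref{eq:inverse_kasteleyn_matrix} drops, so $K^{\mathrm{barcode}}_{(L)}(s,t)$ equals $-\widetilde{K}(T_0+s,X_0;T_0+t-1,X_0)$ times a ratio of $G$-prefactors and the weight $\mathsf{w}_{q,\kappa}$; the diagonal $s=t$ is exactly the density case of \Cref{conj:barcode_density_limit}. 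By \Cref{prop:2d_kernel_BGR}, $\widetilde{K}(T_0+s,X_0;T_0+t-1,X_0)=\sum_{n=0}^{N-1}\bigl(\widetilde{\mathscr{C}}_n^{T_0+t-1}\cdots\widetilde{\mathscr{C}}_n^{T_0+s-1}\bigr)^{-1}f_n^{T_0+s}(X_0)\,f_n^{T_0+t-1}(X_0)$, and \Cref{lemma:G_ratio_asymptotics} shows the $G$-prefactor ratio, multiplied by $q^{-3L(t-1-s)}$, converges to an explicit elementary factor. So, after the conjecture's gauge $\mathbf{i}^{t-s}q^{2L(s-t)}$, the problem reduces to the asymptotics of a precisely $q$-powered copy of this spectral sum.

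\emph{Convergence of the ingredients.} For $T=8L$, $N=S=4L$ one computes $\widetilde{\mathscr{C}}_n^{T_0+\tau}\sim q^{(n-6L-\tau)/2}$, so $\bigl(\widetilde{\mathscr{C}}_n^{T_0+t-1}\cdots\widetilde{\mathscr{C}}_n^{T_0+s-1}\bigr)^{-1}$ contributes an explicit power of $q$, linear in $L,n,s,t$; after combining this with the conjecture's gauge, the $G$-powers, and the $L$-dependence of the normalizations $h_n$ and $w^{qR}(X_0+\,\cdot\,)$ near the center line (which a direct estimate shows is asymptotically a $q$-Gaussian, cf.\ the Stieltjes--Wigert remark), the $L$-linear part should collapse to the regularizing prefactor $q^{(M+1)(s-t)}$ of \eqref{eq:K_barcode_intro}, with $M+1$ playing the role of the pre-limit cutoff $N=4L$. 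For the eigenfunctions one needs that, for fixed $n,\tau,x$, the orthonormal $q$-Racah function $f_n^{T_0+\tau}(X_0+x)$ converges to $c_n(-q^{n/2})^{\tau}\,\mathcal{F}_n^{\tau}(x)$ for an explicit constant $c_n$; I would deduce this from the center-line convergence $\mathfrak{D}^{\mathrm{scaled}}\to\mathfrak{T}^{\ssp\mathrm{ctr}}$ (\Cref{lemma:D_scaled_limit_of_diagonal_coefficient,lemma:D_scaled_limit_of_off_diagonal_coefficient,prop:center_line_orthogonality}) together with the inter-slice relations $\mathfrak{U}_{T_0+\tau}\to\mathfrak{U}^{\mathrm{barcode}}_\tau$ (\Cref{lemma:limit_of_operator_U_coefficients_a_0_a_1} and \eqref{eq:U_barcode_interslice_functions}) and the boundedness of $\sqrt{h_n}$, $\sqrt{w^{qR}}$ just noted. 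Assembling term by term then produces, formally, the series $\propto\sum_n(-q^n)^{t-1-s}\mathcal{F}_n^{s}(0)\mathcal{F}_n^{t-1}(0)/\|\mathcal{F}_n\|^2_{\ell^2(\mathbb{Z})}$ of \eqref{eq:K_barcode_intro}.

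\emph{The crux: regularization.} This series diverges, but only mildly: its tail has the form $\cdots+c-c+c-c+\cdots$, the analytic shadow of the unboundedness of $(\mathfrak{U}^{\mathrm{barcode}}_t)^{-1}$ and, ultimately, of the indeterminacy of the continuous $q^{-1}$-Hermite moment problem behind the $\mathcal{F}_n$ (this is also why the $\mathcal{F}_n$ are orthogonal in both $\ell^2(\mathbb{Z})$ and $\ell^2(\mathbb{Z}+\tfrac12)$, and why the series has ``two sums''). The real work is threefold: (i) sharp asymptotics of the summand $\bigl(\prod\widetilde{\mathscr{C}}\bigr)^{-1}f_n^{T_0+s}(X_0)f_n^{T_0+t-1}(X_0)$ that are \emph{uniform} in $n\in\{0,\dots,N-1\}$, covering in particular the boundary regime $n\sim N-1=4L-1$; (ii) showing that the pre-limit cutoff at $n=N-1$, weighted by the $q^{3L(s-t)}$-type factor coming from $\prod\widetilde{\mathscr{C}}$, realizes precisely the $M$-regularized sum $\lim_{M\to\infty}q^{(M+1)(s-t)}\sum_{n=0}^{M+1/2}(\cdots)$ of \eqref{eq:K_barcode_intro}; (iii) proving that this regularized sum converges, presumably via an Abel-type summation argument built on a Nevanlinna-style parametrization of the $q^{-1}$-Hermite moment problem (equivalently, of the resolutions of the identity associated with $\mathfrak{U}^{\mathrm{barcode}}_t$ in $\ell^2(\mathbb{Z})$ and $\ell^2(\mathbb{Z}+\tfrac12)$). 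I expect (iii), together with the uniformity in (i), to be the main obstacle: this is the functional-analytic heart the paper leaves to future work, and exactly where ``new tools for the inverse of $\mathfrak{U}^{\mathrm{barcode}}_t$'' are needed. An alternative to the operator route would be a direct discrete Riemann--Hilbert / steepest-descent analysis of the $q$-Racah polynomials at fixed $q$ near the center line, but the usual discrete-RHP machinery is tailored to $q\to1$-type scalings and would require substantial adaptation.

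\emph{Structural properties.} Granting that the limit equals \eqref{eq:K_barcode_intro}, the symmetry $\mathcal{K}^{\mathrm{barcode}}(s,t)=\mathcal{K}^{\mathrm{barcode}}(t,s)$ should follow from a symmetry identity for the $\mathcal{F}_n$ combined with the explicit transformation of the $G$-prefactor under $(s,x)\leftrightarrow(t,y)$ (or, more structurally, by recognizing the limiting determinantal kernel as gauge-equivalent to a self-adjoint one). The $2\times2$ block Toeplitz property then reduces to invariance of \eqref{eq:K_barcode_intro} under $(s,t)\mapsto(s+2,t+2)$: the factors $(-q^n)^{t-1-s}$ and $q^{(M+1)(s-t)}$ are manifestly invariant, the identity $\mathcal{F}_n^{t+2}(0)=\mathcal{F}_n^{t}(-1)$ together with the recursion for $\mathcal{F}_n$ developed in \Cref{appendix} relates the shifted eigenfunction values, and the only non-invariant pieces are the elementary $q$-powers in the prefactor, which act solely on the four within-block entries — precisely the slice-to-slice imbalance explained heuristically in \Cref{rmk:barcode_periodicity_intro}. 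Reading off $\mathcal{K}^{\mathrm{barcode}}_{00},\mathcal{K}^{\mathrm{barcode}}_{01},\mathcal{K}^{\mathrm{barcode}}_{10},\mathcal{K}^{\mathrm{barcode}}_{11}$ is then bookkeeping.
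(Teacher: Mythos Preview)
The statement is a \emph{conjecture}, and the paper offers no proof of it. What the paper provides is (a) a nonrigorous derivation of the form of the limiting series by passing to the limit separately in $K_s$ and in the inter-slice operators $\mathfrak{U}_{T_0+\tau}$ (explicitly flagged as nonrigorous in \Cref{rmk:nonrigorous_limit_of_barcode_density_step1} and the surrounding text), and (b) purely numerical evidence in \Cref{sub:s7_numerics,sub:s7_probabilistic_simulations} for the existence of the limit, its symmetry, and its $2\times2$ block Toeplitz structure. There is no argument in the paper that establishes any of the three claims rigorously.

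Your proposal is not a proof either, and you are candid about this: you correctly isolate the genuine analytic obstruction --- the need for uniform-in-$n$ asymptotics of the summands up to the cutoff $n=N-1$, and a rigorous justification that the pre-limit truncation realizes the $M$-regularization of \eqref{eq:sum_regularization_of_badly_diverging_series} --- and you explicitly note that steps (i)--(iii) in your ``crux'' paragraph are exactly the missing piece the paper defers to future work. So your outline is consonant with the paper's own heuristics (indeed it follows the same operator-replacement route through \Cref{lemma:limit_of_operator_U_coefficients_a_0_a_1,lemma:G_ratio_asymptotics} and \eqref{eq:U_barcode_interslice_functions}), but it does not close the gap, because no one yet has. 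One specific point to flag: your proposed mechanism for the block Toeplitz property via $\mathcal{F}_n^{t+2}(0)=\mathcal{F}_n^{t}(-1)$ and the recursion is plausible at the level of the formal series \eqref{eq:badly_diverging_series}, but even there you would need to check that the regularizing prefactor $q^{(M+1)(s-t)}$ interacts correctly with the shift, and the paper does not attempt this --- its evidence for the Toeplitz structure is entirely numerical.
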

In particular, $\mathcal{K}^{\mathrm{barcode}}_{00}(0)=\rho^{\mathrm{barcode}}_{\mathrm{even}}$
and
$\mathcal{K}^{\mathrm{barcode}}_{11}(0)=\rho^{\mathrm{barcode}}_{\mathrm{odd}}$.

\medskip

We can express the limiting kernel
$\mathcal{K}^{\mathrm{barcode}}(s,t)$ for $s\ge t$
in terms of the orthogonal functions $\mathcal{F}_n^{t}(x)$
\eqref{eq:F_n_t_limiting_functions}.
Since the kernel is conjecturally symmetric, specifying it
for $s\ge t$ uniquely determines
$\mathcal{K}^{\mathrm{barcode}}(s,t)$ for all
$s,t\in\mathbb{Z}$.

To access $\mathcal{K}^{\mathrm{barcode}}(s,t)$ for $s\ge t$,
we use the operator representation of the
two-dimensional correlation kernel
$\widetilde{K}(T_0+s,X_0;T_0+t-1,X_0)$,
which involves the inverse operators
$\mathfrak{U}_{T_0+j}^{-1}$; see \Cref{prop:2d_kernel_operator_interpretation}.
Arguing as in \Cref{sub:s7_density_limit}, we obtain an expression
involving an infinite series of the form:
\begin{equation}
	\label{eq:badly_diverging_series}
	(-1)^{t-s}
	\sqrt{\frac{-\kappa^2q^{1-s}}{(1-\kappa^2 q^{2-t})(1-\kappa^2 q^{1-s})}}
	\sum_{n\in \frac12\mathbb{Z}_{\ge0}}
	\frac{\bigl(-q^{n}\bigr)^{t-1-s}\mathcal{F}_n^{s}(0)\mathcal{F}_n^{t-1}(0)}
	{\|\mathcal{F}_n\|^2_{\ell^2(\mathbb{Z})}}.
\end{equation}
As we discussed in \Cref{sub:s7_density_limit}, for $s=t$,
the series in \eqref{eq:badly_diverging_series}
diverges ``mildly'', as its tail behaves like $+c-c+c-c+\ldots $.
For $s\ge t+1$, however, the presence of the negative powers of $q^n$ (starting from $q^{-2n}$)
in the summands
make them rapidly growing, and so the series diverges ``worse'' than in the case $s=t$.
However, it turns out that if we regularize the partial sums of the series,
we get a well-defined limit which (numerically) coincides
with the limit in \Cref{conj:conjectural_limit_of_full_barcode_kernel}:

\begin{conjecture}
	\label{conj:barcode_limit_via_F}
	The following limit exists for all
	$s,t\in \mathbb{Z}$,
	$s\ge t$:
	\begin{equation}
		\label{eq:sum_regularization_of_badly_diverging_series}
			\lim_{M\to\infty}
			q^{(M+1)(s-t)}\times
			(-1)^{t-s}
			\sqrt{\frac{-\kappa^2q^{1-s}}{(1-\kappa^2 q^{2-t})(1-\kappa^2 q^{1-s})}}
			\sum_{n=0}^{M+1/2}
			\frac{\bigl(-q^{n}\bigr)^{t-1-s}\mathcal{F}_n^{s}(0)\mathcal{F}_n^{t-1}(0)}
			{\|\mathcal{F}_n\|^2_{\ell^2(\mathbb{Z})}},
	\end{equation}
	where $M\in \mathbb{Z}$, and the sum is over $n\in \frac12\mathbb{Z}_{\ge0}$.
	The limit \eqref{eq:sum_regularization_of_badly_diverging_series}
	coincides with $\mathcal{K}^{\mathrm{barcode}}(s,t)$.

	Moreover, if we take the sum until $M$ instead of $M+1/2$,
	and correspondingly replace the regularization factor $q^{(M+1)(s-t)}$
	by $q^{(M+\frac{1}{2})(s-t)}$, then the limit
	also exists, and coincides with
	the shifted kernel $\mathcal{K}^{\mathrm{barcode}}(s+1,t+1)$.
\end{conjecture}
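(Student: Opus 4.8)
The plan is to pass to the limit in the \emph{exact} pre-limit formula rather than in the divergent operator expression. By \Cref{prop:2d_kernel_BGR}, \Cref{prop:inverse_kasteleyn_matrix}, and the gauge \eqref{eq:lambda_t_definition}, for $s\ge t$ the pre-limit barcode kernel of \Cref{def:prelimit_barcode_kernel} is the finite sum
\begin{equation*}
K^{\mathrm{barcode}}_{(L)}(s,t)=
-\,\frac{G(T_0+s,X_0)}{G(T_0+t-1,X_0)\,\mathsf{w}_{q,\kappa}\!\bigl(X_0-\tfrac{T_0+s}{2}+1\bigr)}
\sum_{n=0}^{N-1}\Bigl(\prod_{\tau=t-1}^{s-1}\widetilde{\mathscr{C}}_n^{T_0+\tau}\Bigr)^{-1}
f_n^{T_0+s}(X_0)\,f_n^{T_0+t-1}(X_0),\qquad N=4L .
\end{equation*}
First I would pin down the asymptotics of the ingredients: the $G$-ratio prefactor via \Cref{lemma:G_ratio_asymptotics}; the products $\prod_\tau\widetilde{\mathscr{C}}_n^{T_0+\tau}$ directly from \eqref{eq:def:constants_C_n_t}; and the $q$-Racah orthonormal functions $f_n^{T_0+\tau}(X_0+\cdot)$. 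Using the inter-slice identity \eqref{eq:three_term_relation_operator_U} together with \Cref{lemma:limit_of_operator_U_coefficients_a_0_a_1}, one shows that for each \emph{fixed} $n$ the function $f_n^{T_0+\tau}(X_0+\cdot)$ converges in $\ell^2(\mathbb Z)$ to $\pm\,\mathcal{F}^{\tau}_{n/2}(\cdot)/\|\mathcal{F}_{n/2}\|_{\ell^2(\mathbb Z)}$, while $q^{3L+\tau/2}\widetilde{\mathscr{C}}^{T_0+\tau}_n\to -q^{n/2}$; the correspondence $n\mapsto n/2$ here accounts for the half-integer labels of the family $\mathcal{F}_n$ of \Cref{appendix}. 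Combined with the completeness relation \Cref{prop:completeness_of_F_n_functions_app} — which underlies $K_{T_0+\tau}\to Id$ on the center line, \Cref{prop:limit_of_kernel} — this identifies the $L\to\infty$ limit of the $n$-th summand above with the $(n/2)$-th term of the formal series \eqref{eq:badly_diverging_series}.

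Two steps then remain. First, resum the formal series \eqref{eq:badly_diverging_series}, which diverges, in the regularized sense of \eqref{eq:sum_regularization_of_badly_diverging_series}. Here I would introduce the bilinear generating function $\Phi^{s,t}(z)\coloneqq\sum_{n\in\frac12\mathbb Z_{\ge0}}\|\mathcal{F}_n\|_{\ell^2(\mathbb Z)}^{-2}\,\mathcal{F}^s_n(0)\,\mathcal{F}^{t-1}_n(0)\,z^{n}$; since, by \Cref{appendix}, the $\mathcal{F}_n$ are built from continuous $q^{-1}$-Hermite polynomials (\cite[Chapter~3.26]{Koekoek1996}, \cite{IsmailMasson1994}), the $q$-Mehler (Rogers/Ismail--Masson) bilinear generating formula should give $\Phi^{s,t}$ in closed form as a ratio of Jacobi theta functions (equivalently a $q$-shifted-factorial expression, cf.\ \Cref{lemma:inf_qPochhammer_to_theta}), meromorphic in $z$ with poles on a circle $|z|=r<1$. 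The series \eqref{eq:badly_diverging_series} is then ``$\Phi^{s,t}(q^{t-1-s})$'' evaluated at $q^{t-1-s}\ge1$, beyond the disc of convergence, and the regularization \eqref{eq:sum_regularization_of_badly_diverging_series} extracts the Abel value of $\Phi^{s,t}$ along the ray $z\to q^{t-1-s}$: a contour deformation identifies it with the meromorphic continuation of $\Phi^{s,t}$ corrected by (half of) the residue at the pole(s) the ray crosses, the half-residue entering with a sign governed by the parity of the cutoff ($M$ versus $M+\tfrac12$) — which is exactly why the two regularizations in \Cref{conj:barcode_limit_via_F} produce $\mathcal{K}^{\mathrm{barcode}}(s,t)$ and $\mathcal{K}^{\mathrm{barcode}}(s+1,t+1)$, in agreement with the period-two heuristics of \Cref{rmk:periodicity}.

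Second — and this is the heart of the proof — one must show that the gauged pre-limit kernel $q^{2L(s-t)}\mathbf{i}^{t-s}K^{\mathrm{barcode}}_{(L)}(s,t)$ genuinely converges, to that same resummed value (this also establishes \Cref{conj:conjectural_limit_of_full_barcode_kernel}). The difficulty is that the summands of \eqref{eq:badly_diverging_series} do not tend to zero — their tail behaves like $+c-c+c-\cdots$, and for $s>t$ it grows — so the finite sum above is \emph{not} dominated by its formal term-by-term limit and $L\to\infty$ cannot be taken inside it naively. The route I would take is to apply a Christoffel--Darboux reduction to the finite sum and then a Riemann--Hilbert / steepest-descent analysis of the $q$-Racah polynomials, obtaining a contour-integral representation whose $L\to\infty$ limit is a contour representation of $\mathcal{K}^{\mathrm{barcode}}$ that can be matched with the closed form of $\Phi^{s,t}$ from the first step. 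I expect the genuinely hard part to be the uniform control underlying this deformation: it needs Plancherel--Rotach-type asymptotics of the $q$-Racah polynomials and their norms \eqref{eq:qRacah_norms} not merely at the spectral edge but across the entire band $n\asymp L$, so as to estimate the summands with $n$ comparable to $N$. This is precisely the ``new tools'' the paper anticipates, and the point where the lack of an a priori bound — the non-self-adjointness of $\mathfrak{U}^{\mathrm{barcode}}_t$, equivalently the indeterminacy of the $q^{-1}$-Hermite moment problem, which is also what makes the $\mathcal{F}_n$ orthogonal on both $\mathbb Z$ and $\mathbb Z+\tfrac12$ — bites hardest.
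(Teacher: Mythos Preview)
There is nothing to compare against: the paper does \emph{not} prove this statement. \Cref{conj:barcode_limit_via_F} is explicitly a conjecture, arrived at in \Cref{sub:s7_general_limit} by a nonrigorous termwise limit and then supported only by the numerics of \Cref{sub:s7_numerics,sub:s7_probabilistic_simulations}. The authors say outright (\Cref{sub:barcode_kernel_intro} and \Cref{sub:s7_setup}) that a rigorous derivation ``will likely require new tools'' and is ``left for future work.'' So what you have written is not an alternative proof but a research plan for an open problem --- and, to your credit, you recognize this in your last paragraph.

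On the substance of your plan: the first step (closed form for $\Phi^{s,t}$ via the $q$-Mehler formula) is plausible --- the paper itself uses Ismail--Masson's bilinear identity in the proof of \Cref{prop:completeness_of_F_n_functions_app}, at the special point $z=-q$ --- but your identification of the regularization \eqref{eq:sum_regularization_of_badly_diverging_series} with an Abel limit is not obviously correct. The conjecture's procedure is \emph{truncate at $M+\tfrac12$ and multiply by $q^{(M+1)(s-t)}$}; Abel summation would insert a decaying factor inside the sum. These are different divergent-series methods a priori, and you would need to show they agree (or bypass Abel summation entirely and match the truncated-rescaled sum directly to a contour integral). The dependence on the parity of the cutoff that you attribute to crossing ``half a residue'' is an appealing picture but needs an actual argument. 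The second step --- uniform Plancherel--Rotach asymptotics for the $q$-Racah functions across the whole band $n\asymp L$ in the fixed-$q$ regime --- is the genuine obstruction the paper flags, and you have not supplied it; naming it is not the same as overcoming it.
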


\subsection{Numerics}
\label{sub:s7_numerics}

Here we present numerical evidence in support of our
\Cref{conj:barcode_density_limit,conj:conjectural_limit_of_full_barcode_kernel,conj:barcode_limit_via_F}
on the asymptotic behavior of the barcode kernel.
We use \texttt{Mathematica} for symbolic and precise numerical computations.
The code defining all the functions and kernels
is reproduced in \Cref{sec:Mathematica_code_appendix}.
Throughout the rest of this section, we denote by
$\mathcal{K}^{\mathrm{barcode}}_{(M)}(s,t)$
the renormalized partial sum of the series in
\eqref{eq:sum_regularization_of_badly_diverging_series}
up to $M+\frac{1}{2}$. We also set
\begin{equation}
	\widehat{K}_{(L)}^{\mathrm{barcode}}(s,t)
	\coloneqq
	\mathbf{i}^{t-s}q^{2L(s-t)}\ssp K^{\mathrm{barcode}}_{(L)}(s,t)
\end{equation}
for the renormalized (conjugated) pre-limit kernel.

\subsubsection{Rate of convergence}

The pre-limit kernel
$\widehat{K}^{\mathrm{barcode}}_{(L)}(s,t)$
and the
partial sums
$\mathcal{K}^{\mathrm{barcode}}_{(M)}(s,t)$
both converge at a geometric rate $q$ in $L$ and $M$, respectively, see
\Cref{tab:L_convergence}.

\begin{table}[ht]
  \centering
	\scriptsize
  \begin{tabular}{|c|c|c|c|c|}
    \hline
    $L$ & $q=2/3$ & ratios & $q=1/3$ & ratios \\
    \hline
    10 & 0.46557005 & 0.69146 & 0.43591993 & 0.3333278 \\
    11 & 0.46659216 & 0.68248 & 0.43592101 & 0.3333315 \\
    12 & 0.46729892 & 0.67691 & 0.43592137 & 0.3333327 \\
    13 & 0.46778127 & 0.67336 & 0.43592149 & 0.3333331 \\
    14 & 0.46810778 & 0.67107 & 0.43592153 & 0.3333332 \\
    15 & 0.46832764 & 0.66958 & 0.43592154 & 0.3333333 \\
    16 & 0.46847518 & 0.66860 & 0.43592155 & 0.3333333 \\
    17 & 0.46857397 & 0.66795 & 0.43592155 & 0.3333333 \\
    18 & 0.46864002 & 0.66752 & 0.43592155 & 0.3333333 \\
    19 & 0.46868414 &         & 0.43592155 &           \\
    20 & 0.46871359 &         & 0.43592155 &           \\
    \hline
  \end{tabular}
  \qquad
	\begin{tabular}{|c|c|c|c|c|}
    \hline
    $L$ & $q=2/3$ & ratios & $q=1/3$ & ratios \\
    \hline
    10 & 0.05154306 & 0.64841 & 0.01901038 & 0.3331699 \\
    11 & 0.05233498 & 0.65338 & 0.01901074 & 0.3332788 \\
    12 & 0.05284846 & 0.65728 & 0.01901086 & 0.3333152 \\
    13 & 0.05318396 & 0.66016 & 0.01901090 & 0.3333273 \\
    14 & 0.05340448 & 0.66222 & 0.01901091 & 0.3333313 \\
    15 & 0.05355006 & 0.66365 & 0.01901092 & 0.3333327 \\
    16 & 0.05364646 & 0.66463 & 0.01901092 & 0.3333331 \\
    17 & 0.05371044 & 0.66530 & 0.01901092 & 0.3333333 \\
    18 & 0.05375296 & 0.66575 & 0.01901092 & 0.3333333 \\
    19 & 0.05378125 &         & 0.01901092 &           \\
    20 & 0.05380008 &         & 0.01901092 &           \\
    \hline
  \end{tabular}
  \caption{Convergence of the pre-limit barcode kernel for various $q$ and for $\kappa=2\mathbf{i}$.
	Left:
	density
	$\widehat{K}^{\mathrm{barcode}}_{(L)}(0,0)$.
	Right:
	elements
	$\widehat{K}^{\mathrm{barcode}}_{(L)}(6,1)$ of the kernel.
	Here and in \Cref{tab:M_convergence},
	we take ratios of a sequence $a_k$ defined as $(a_{k+1}-a_{k+2})/(a_{k}-a_{k+1})$.
	We see that the ratios become close to $q$.}
  \label{tab:L_convergence}
\end{table}

\begin{table}[ht]
  \centering
	\scriptsize
  \begin{tabular}{|c|c|c|c|c|}
    \hline
    $M$ & $q=2/3$ & ratios & $q=1/3$ & ratios \\
    \hline
    12 & 0.46719254 & 0.66801 & 0.43592134 & 0.3333333 \\
    13 & 0.46771795 & 0.66757 & 0.43592148 & 0.3333333 \\
    14 & 0.46806893 & 0.66727 & 0.43592153 & 0.3333333 \\
    15 & 0.46830323 & 0.66707 & 0.43592154 & 0.3333333 \\
    16 & 0.46845957 & 0.66693 & 0.43592155 & 0.3333333 \\
    17 & 0.46856386 & 0.66684 & 0.43592155 & 0.3333333 \\
    18 & 0.46863342 & 0.66678 & 0.43592155 & 0.3333333 \\
    19 & 0.46867980 & 0.66675 & 0.43592155 & 0.3333333 \\
    20 & 0.46871072 & 0.66672 & 0.43592155 & 0.3333333 \\
		21 & 0.46873134 &         & 0.43592155 &           \\
		22 & 0.46874509 &         & 0.43592155 &           \\
    \hline
  \end{tabular}
  \qquad
	\begin{tabular}{|c|c|c|c|c|}
    \hline
    $M$ & $q=2/3$ & ratios & $q=1/3$ & ratios \\
    \hline
    12 & 0.05141315 & 0.64925 & 0.01901026 & 0.3333306 \\
    13 & 0.05224775 & 0.65456 & 0.01901070 & 0.3333324 \\
    14 & 0.05278962 & 0.65836 & 0.01901085 & 0.3333330 \\
    15 & 0.05314430 & 0.66102 & 0.01901089 & 0.3333332 \\
    16 & 0.05337781 & 0.66286 & 0.01901091 & 0.3333333 \\
    17 & 0.05353216 & 0.66411 & 0.01901092 & 0.3333333 \\
    18 & 0.05363448 & 0.66495 & 0.01901092 & 0.3333333 \\
    19 & 0.05370242 & 0.66552 & 0.01901092 & 0.3333333 \\
    20 & 0.05374761 & 0.66590 & 0.01901092 & 0.3333333 \\
		21 & 0.05377768 &         & 0.01901092 &           \\
		22 & 0.05379770 &         & 0.01901092 &           \\
    \hline
  \end{tabular}
  \caption{Convergence of the renormalized series
	$\mathcal{K}^{\mathrm{barcode}}_{(M)}(0,0)$
	(left)
	and
	$\mathcal{K}^{\mathrm{barcode}}_{(M)}(6,1)$
	(right) for various $q$ and for $\kappa=2\mathbf{i}$.}
  \label{tab:M_convergence}
\end{table}

\subsubsection{Density function}
\label{subsub:density_function}

The diagonal values (i.e., the density function) of the pre-limit barcode kernel, together with those of its conjectural limit, are collected in \Cref{tab:diag_kernel_values}.
We chose smaller $L$ in the former to show some dependence on $t$, while
also took larger $M$ in the latter to demonstrate that the limiting density depends on $t$
only through its parity.
In particular,
the values
$\mathcal{K}^{\mathrm{barcode}}_{(M)}(t,t)$
and
$\mathcal{K}^{\mathrm{barcode}}_{(M)}(t+2,t+2)$
in \Cref{tab:diag_kernel_values} match up to ten decimal places.
One can see that
\begin{equation*}
	\bigl|
	\mathcal{K}^{\mathrm{barcode}}_{(M)}(7,7)-\mathcal{K}^{\mathrm{barcode}}_{(M)}(-7,-7)
	\bigr|
	\approx 2.7\cdot 10^{-16}.
\end{equation*}

We can also evaluate the inverse
Kasteleyn matrix
at other macroscopic coordinates
$(T_0,X_0)\neq(2L,3L)$,
to confirm that the limiting barcode
kernel is independent of the global location, and
depends only on the parameters $(q,\kappa)$.
For example, for $q=\frac{1}{7}$, $\kappa=3\mathbf{i}$,
$L=16$, and the global locations
$(T_0,X_0)=(\frac{L}{2},\frac{9L}{4})$ (on the center line)
and
$(T_0,X_0)=(\frac{L}{2},3L)$ (off the center line),
we have
the values given in \Cref{tab:diag_kernel_values_other_locations}.

In both \Cref{tab:diag_kernel_values,tab:diag_kernel_values_other_locations},
we
set $q=\tfrac{1}{7}$, a relatively small value chosen to accelerate convergence and get the limiting barcode kernel with higher precision.

\begin{table}[ht]
	\centering
	\scriptsize
	\begin{tabular}{|c|c||c|c|}
	\hline
		$t$ & \rule{0pt}{10pt}$\widehat{K}^{\mathrm{barcode}}_{(L)}(t,t)$ & $t$ & $\mathcal{K}^{\mathrm{barcode}}_{(M)}(t,t)$\\
		\hline
		$-7$ & 0.5523372977 & $-7$ & 0.5611174103\\
		$-6$ & 0.4387439934 & $-6$ & 0.4388825896\\
		$-5$ & 0.5598645425 & $-5$ & 0.5611174103\\
		$-4$ & 0.4388625943 & $-4$ & 0.4388825896\\
		$-3$ & 0.5609384574 & $-3$ & 0.5611174103\\
		$-2$ & 0.4388797292 & $-2$ & 0.4388825896\\
		$-1$ & 0.5610918461 & $-1$ & 0.5611174103\\
		$0$  & 0.4388821809 & $0$  & 0.4388825896\\
		$1$  & 0.5611137582 & $1$  & 0.5611174103\\
		$2$  & 0.4388825313 & $2$  & 0.4388825896\\
		$3$  & 0.5611168883 & $3$  & 0.5611174103\\
		$4$  & 0.4388825813 & $4$  & 0.4388825896\\
		$5$  & 0.5611173341 & $5$  & 0.5611174103\\
		$6$  & 0.4388825883 & $6$  & 0.4388825896\\
		$7$  & 0.5611173877 & $7$  & 0.5611174103\\
		\hline
	\end{tabular}
	\caption{Diagonal values $\widehat{K}^{\mathrm{barcode}}_{(L)}(t,t)$ and $\mathcal{K}^{\mathrm{barcode}}_{(M)}(t,t)$ for $-7\le t\le 7$. Parameters: $L=6$, $M=20$, $q=\tfrac17$, $\kappa=3\mathbf{i}$. All numbers are truncated to $10$ decimal digits.}
	\label{tab:diag_kernel_values}
\end{table}

%
%

\begin{table}[ht]
	\centering
	\scriptsize
	\begin{tabular}{|c|c|c|}
		\hline
		$t$
		&
		\rule{0pt}{10pt}
		$\mathrm{Kast}^{-1}(\frac{L}{2}+t, \frac{9L}{4}; \frac{L}{2}+t-1, \frac{9L}{4})$
		&
		$\mathrm{Kast}^{-1}(\frac{L}{2}+t, 3L; \frac{L}{2}+t-1, 3L)$
		\\[3pt]
		\hline
		$-2$ & 0.438744002701398029726330344950846261091117 & 0.438744002701397996670004043029846077414593\\
		$-1$ & 0.559864542689336428941964543890893079446439 & 0.559864542689336428425425841013168294080068\\
		$0$  & 0.438862595645192998493904028142472069596241 & 0.438862595645192993780780403343843713850882\\
		$1$  & 0.560938457463989285851404171643209418839516 & 0.560938457463989285776297216171385214228610\\
		$2$  & 0.438879729391405022754197543436512517729097 & 0.438879729391405022081083055826078202044213\\
		\hline
	\end{tabular}
	\caption{Entries of the inverse Kasteleyn matrix at two different macroscopic locations,
	where $L=16$, $q=\tfrac17$, and $\kappa=3\mathbf{i}$. We see that the values
	for the same $T_0=L/2$ and different $X_0$ agree extremely closely, which reflects the
	exponential concentration established in \Cref{thm:no_holes_in_waterfall_region}.
	These values also
	agree with the ones in \Cref{tab:diag_kernel_values}.}
	\label{tab:diag_kernel_values_other_locations}
\end{table}

We plot the density at $(0,0)$, i.e.\ the quantity
$\rho_{\mathrm{even}}^{\mathrm{barcode}}$ from
\Cref{conj:barcode_density_limit}, as a function of the
parameters $q$ and $\kappa$.  The resulting surface,
displayed in \Cref{fig:density_rho_even_3d}, exhibits a
striking system of waves whose frequency increases as
$q\to1$ or as $\kappa/\mathbf{i}\to0$.  Several
one-dimensional cross-sections of this surface are shown in
\Cref{fig:density_rho_even_cross_sections}.

\begin{remark}
	\label{rmk:q_to_zero}
	\Cref{fig:density_rho_even_3d,fig:density_rho_even_cross_sections}
	suggest the existence of a well-defined limit
	of $\rho_{\mathrm{even}}^{\mathrm{barcode}}$
	as $q\to0$. In this limit,
	many of our formulas should simplify, while the
	waterfall phenomenon in lozenge tilings persists
	for arbitrarily small $q$. We do not pursue this limit here.
\end{remark}

\begin{figure}[htpb]
	\centering
	\includegraphics[width=.7\textwidth]{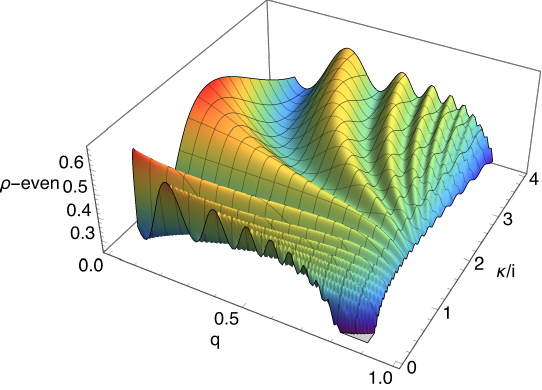}
	\caption{The even density function $\rho_{\mathrm{even}}^{\mathrm{barcode}}
	=\mathcal{K}^{\mathrm{barcode}}_{(M)}(0,0)$
	as a function of the parameters $q$ and $\kappa/\mathbf{i}$.
	Here we take $M=10$, $\frac{1}{10}\le q\le \frac{9}{10}$,
	$\frac{1}{13}\le \kappa/\mathbf{i}\le 4$,
	and the discretization in both parameters is $1/100$.
	Note that the theta functions in the denominator of \eqref{eq:F_n_functions_app}
	lead to singularities when $q$ and $\kappa/\mathbf{i}$ are mutual inverses.}
	\label{fig:density_rho_even_3d}
\end{figure}

\begin{figure}[htpb]
	\centering
	\begin{tabular}{cc}
		\makebox[.37\textwidth][c]{$\kappa=\tfrac{\mathbf{i}}{10}$}&
		\makebox[.37\textwidth][c]{$q=\tfrac12$}\\[2pt]
		\includegraphics[width=.37\textwidth]{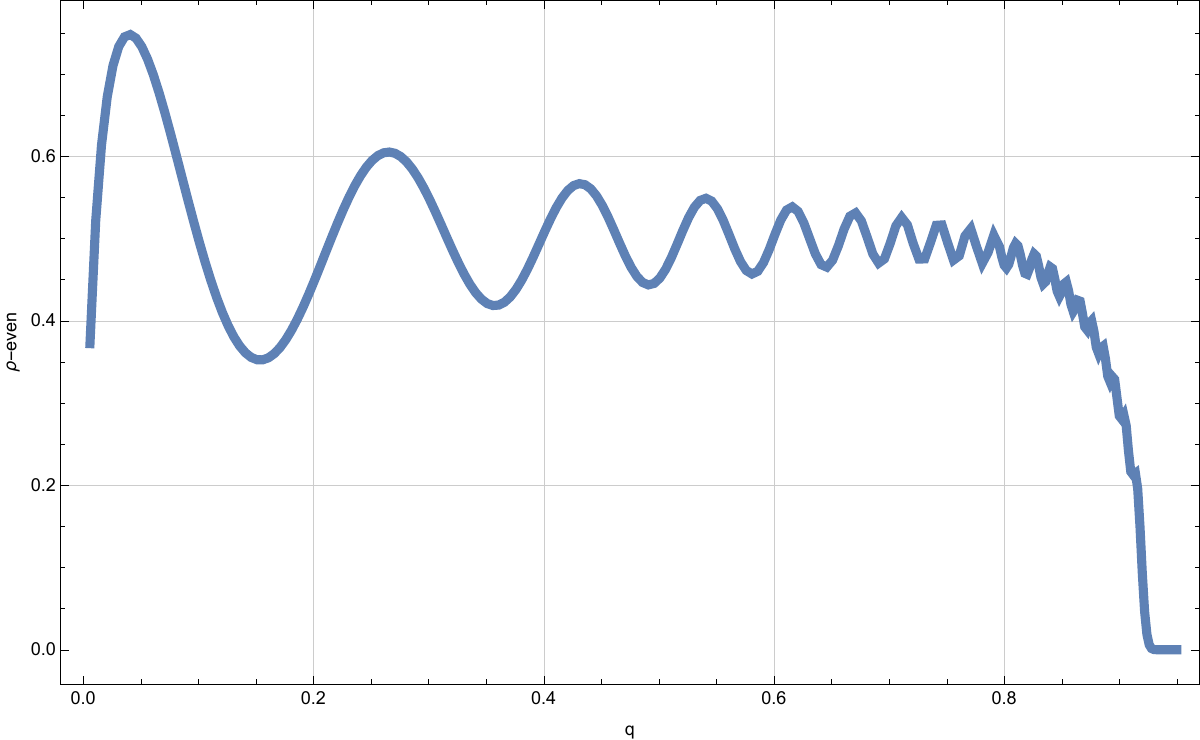}&
		\includegraphics[width=.37\textwidth]{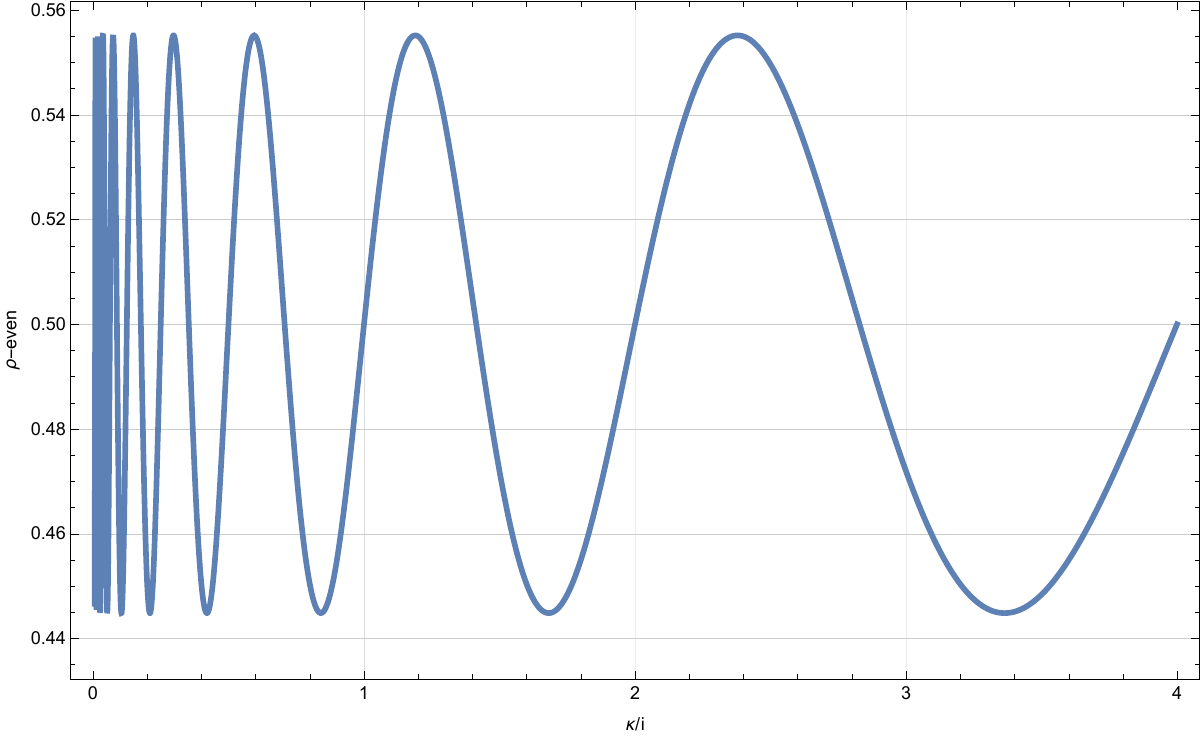}\\[20pt]
		\makebox[.37\textwidth][c]{$q=\tfrac1{1000}$}&
		\makebox[.37\textwidth][c]{$q=\tfrac{85}{100}$}\\[2pt]
		\includegraphics[width=.37\textwidth]{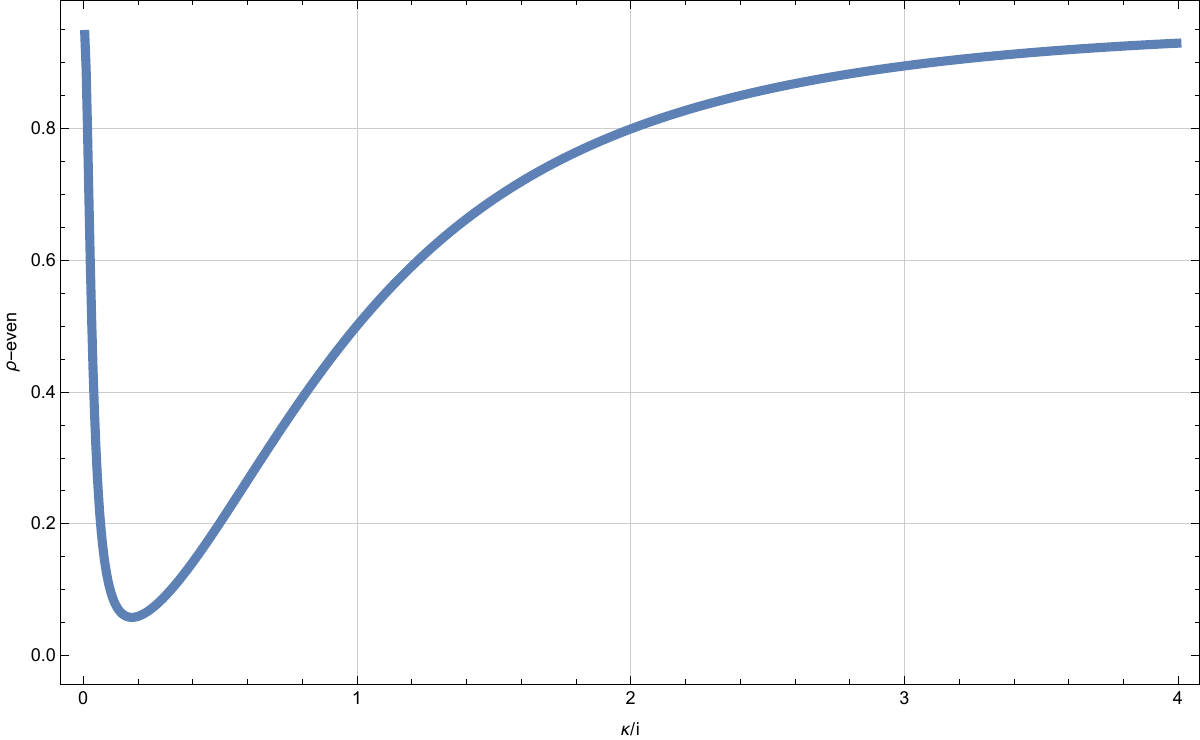}&
		\includegraphics[width=.37\textwidth]{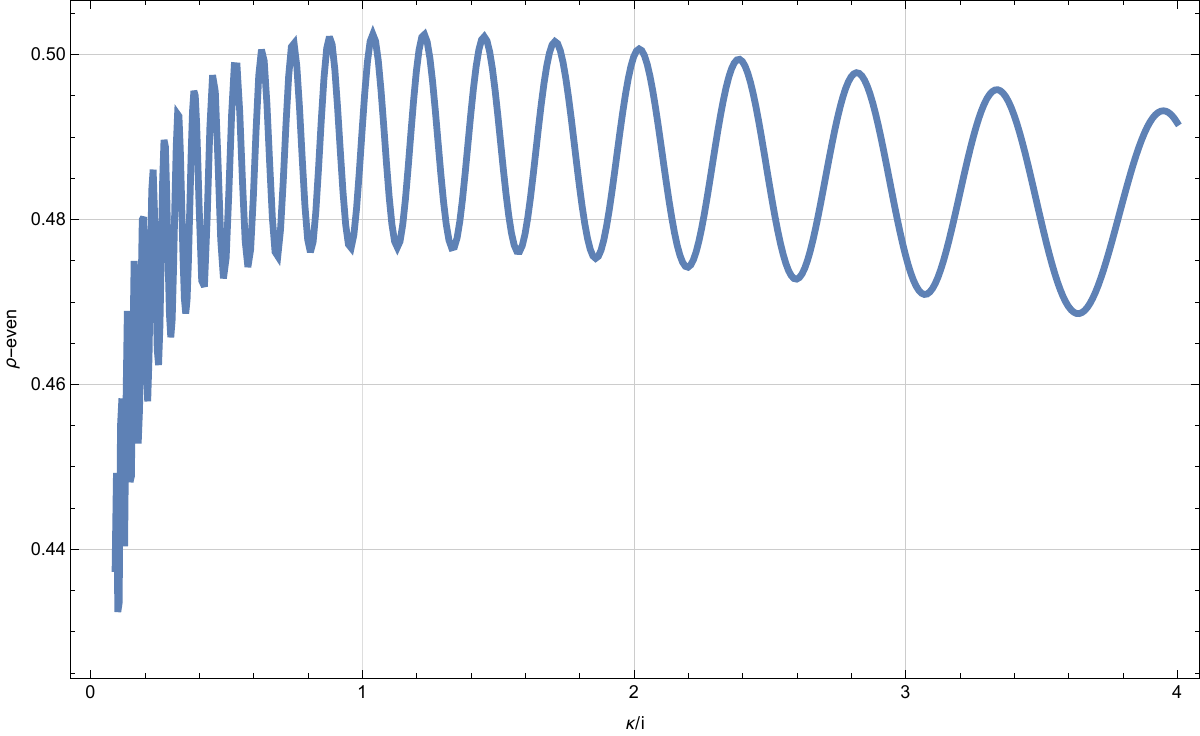}
	\end{tabular}
	\caption{Cross-sections of the surface $\rho_{\mathrm{even}}^{\mathrm{barcode}}$
	as one of the parameters $q$ or $\kappa/\mathbf{i}$ is fixed.}
	\label{fig:density_rho_even_cross_sections}
\end{figure}

\subsubsection{Correlation decay}

The covariance between the events
to find a particle in the barcode process
at $0$ and $t$ is given by
\begin{equation*}
	\operatorname{Cov}\left(
		\mathbf{1}_{\mathrm{barcode}}(0),
		\mathbf{1}_{\mathrm{barcode}}(t)
	\right)=
	-\mathcal{K}^{\mathrm{barcode}}_{(M)}(0,t)^2.
\end{equation*}
As for all determinantal point processes with a symmetric kernel,
the covariance is negative, which indicates repulsion.
The plot of the function
\begin{equation*}
	t\mapsto
	\frac{\log \bigl|\mathcal{K}^{\mathrm{barcode}}_{(M)}(0,t)\bigr|}
	{\log q}, \qquad t=1,2,\ldots,24,
\end{equation*}
for various values of $q$ is shown in \Cref{fig:correlations}.
This plot suggests a conjecture that correlations decay exponentially
in $t$.

\begin{figure}[htpb]
\centering
\includegraphics[width=.6\textwidth]{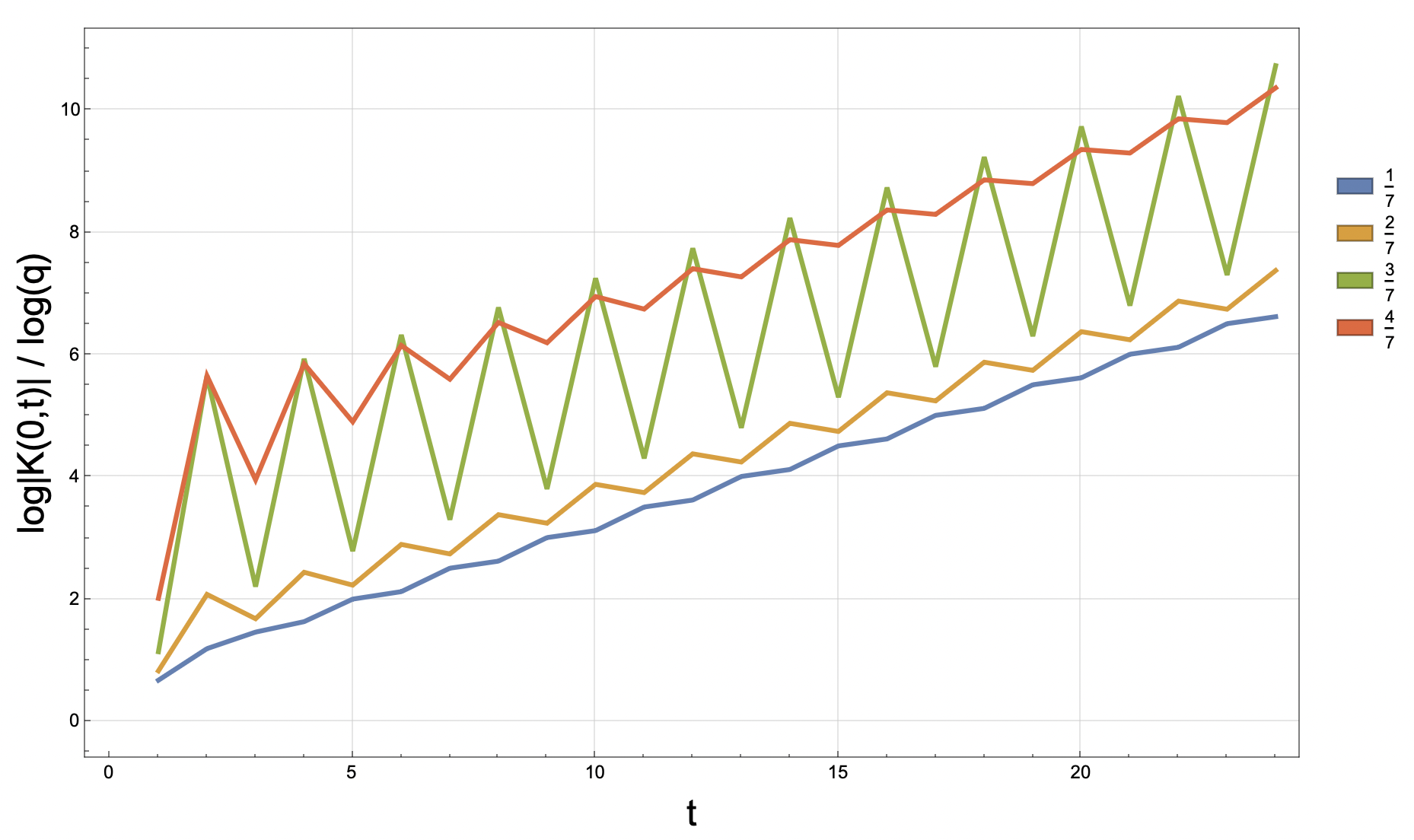}
\caption{Two-point correlations in the barcode process as a
function of $t$, for $q=\frac{1}{7},\frac{2}{7},\frac{3}{7}$, and~$\frac{4}{7}$.
The other parameters are $M=20$ and
$\kappa=\tfrac{3\mathbf{i}}{2}$.
The two-periodicity in $t$ is apparent, and
the approximately linear growth
corresponds to the hypothetical
exponential decay of correlations.}
\label{fig:correlations}
\end{figure}

The exponential decay of correlations stands in stark
contrast to the polynomial decay exhibited under ergodic
Gibbs measures on lozenge tilings with uniform resampling;
see \cite{KOS2006} and the further references in
\Cref{sub:traditional_scaling_regime} above. This numerical
observation distinguishes the barcode process from the sine
process (a cross-section of an ergodic Gibbs configuration),
whose correlations also decay polynomially. For the
continuous sine process this decay was already observed in
\cite{Dyson1962_part_3_of_statistical_theory_energy_levels}.

\subsection{Probabilistic simulations}
\label{sub:s7_probabilistic_simulations}

\subsubsection{Porting the perfect sampling algorithm}
\label{subsub:s7_porting_perfect_sampling}

Apart from numerics based on explicit formulas
(\Cref{sub:s7_numerics}), we can also test our
\Cref{conj:barcode_limit_via_F,conj:barcode_density_limit,conj:conjectural_limit_of_full_barcode_kernel}
with the help of the exact sampling algorithm for the $q$-Racah random lozenge tilings
\cite{borodin-gr2009q}.
The key observation underlying the algorithm is that one can
construct explicit Markov transition matrices on the space
of lozenge tilings; these transitions transport the
$q$-Racah measure for a hexagon with parameters $(T,S,N)$ to
the corresponding measures with parameters $(T,S+1,N)$ or
$(T,S-1,N)$.
For
nearly twenty years, the only available implementation of
this algorithm was Vadim Gorin's original \texttt{Compaq
Visual Fortran} code, whose source he kindly shared with us.
With
the assistance of modern AI-based coding tools we have
ported the relevant parts of this program to
\texttt{Python}; the resulting script is provided as
an ancillary file to the arXiv version of the paper.
We have
also created an interactive web-based simulator
\cite{petrov2025_3d_lozenge}; the latter samples the measure
$q^{\mathsf{vol}}$, i.e.\ the specialization $\kappa=0$,
which does not display the waterfall phenomenon.

The new \texttt{Python} implementation is particularly well
suited for batch generation of random lozenge tilings.
As in \cite{borodin-gr2009q}, the algorithm starts with
the unique tiling of the hexagon with
parameters $(T,0,N)$ (that is, a parallelogram), and then performs the sequence of
Markov moves that increment $S$ by one, thereby producing an
exact sample from the $q$-Racah distribution with parameters
$(T,S,N)$. For the final tiling, we also record
the barcode process, encoding it as a binary sequence.
See \Cref{fig:python_sim} for an example.\footnote{Examples in \Cref{fig:waterfall_region,fig:waterfall_region_frozen}
were generated by the original \texttt{Fortran} program.}
A key advantage of the script is that it can operate in a
``head-less'' mode: it need not render the tiling and thus can
produce large collections of barcode process samples in a short
time.

\begin{figure}[htpb]
\centering
\includegraphics[width=0.4\textwidth]{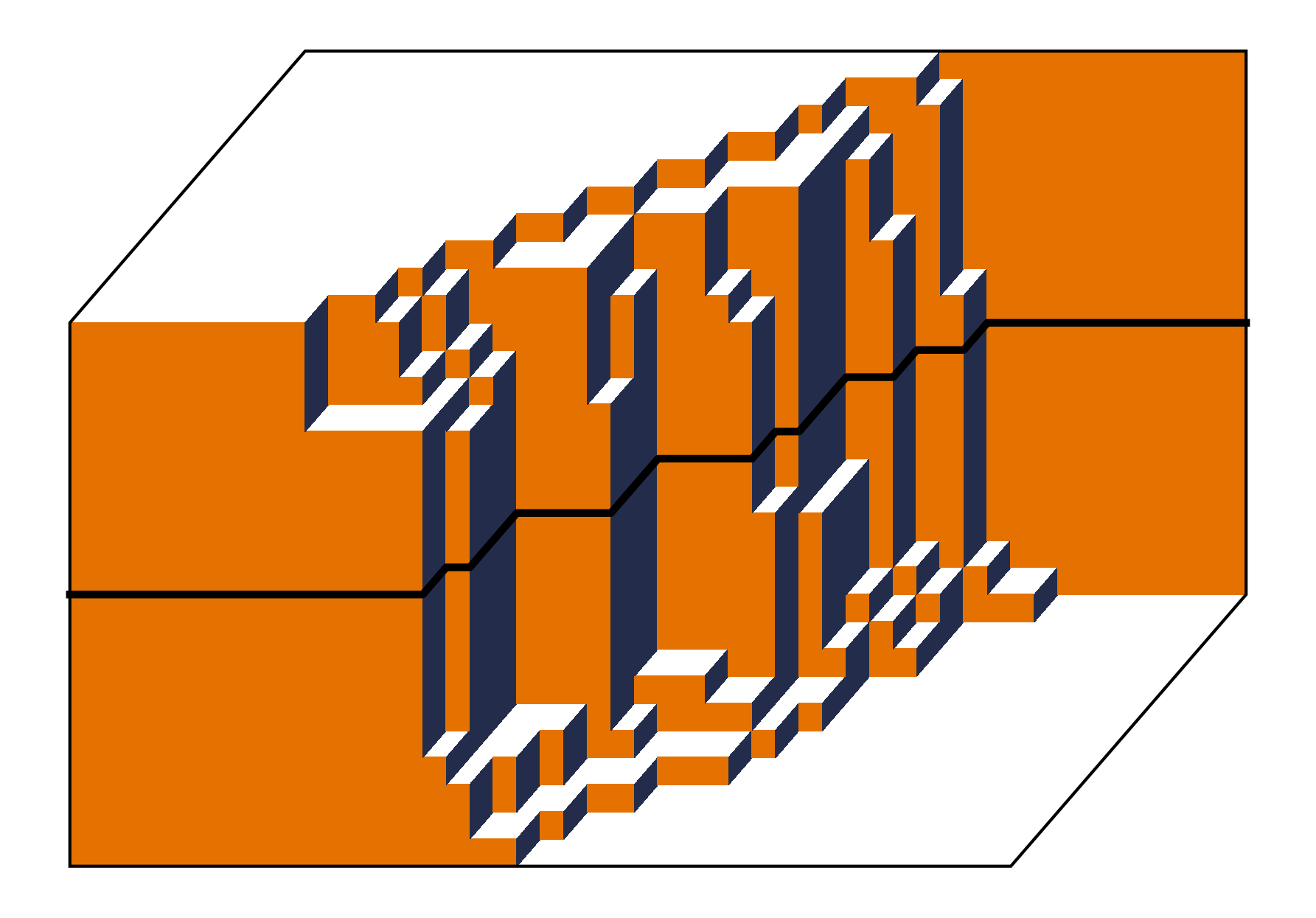}
\caption{Graphical output of the \texttt{Python} script. Here $T=50$, $S=10$, $N=20$, $q=\tfrac{7}{10}$, and $\kappa=3\mathbf{i}$.
The barcode process realization is
11111111111111101001111001111010011011011111111111.}
\label{fig:python_sim}
\end{figure}

In the rest of this subsection, we compare empirical pattern counts in the
barcode process with the predictions of
\Cref{conj:barcode_density_limit,conj:conjectural_limit_of_full_barcode_kernel},
and also demonstrate fluctuations of the height function of
the barcode process.

\subsubsection{Density and correlations}
\label{subsub:s7_density_and_correlations}

In \Cref{tab:simulation_density} we present empirical
density counts for various choices of $(T,S,N)$ (including the
base case $(8L,4L,4L)$, but also with other choices of the size),
with parameters
$q=\tfrac{1}{7}$ and $\kappa=4.3\mathbf{i}$. This
particular value of $\kappa$ is chosen since it induces a
comparatively strong bias between the densities on even and odd sites,
namely,
\begin{equation}
\label{eq:K_M_theoretical}
	\mathcal{K}_{(M)}^{\mathrm{barcode}}(0,0)=
	0.345174\ldots,
	\qquad
	\mathcal{K}_{(M)}^{\mathrm{barcode}}(1,1)=
	0.654826\ldots,
	\qquad M=20.
\end{equation}
For \Cref{tab:simulation_density}, we compute the frequency of ones on
even and odd sites, inside the waterfall region.
Indeed, note that when the hexagon is not symmetric (i.e.,
outside the base case $(8L,4L,4L)$; see \Cref{fig:python_sim} for an example),
the barcode process may start and end with a frozen string of zeroes or ones,
and we needed to cut these out.

\begin{table}[htbp]
	\centering
	\scriptsize
	\begin{tabular}{|ccc|c|c|c|c|c|}
		\hline
		$T$ & $S$ & $N$ &
		Even 1's count & Odd 1's count & Normalization & Even frequency & Odd frequency
		\\\hline
		400 & 200 & 200 &6,730&12,764& 19,500&0.345128&0.654564
		\\
		300 & 180& 140& 3,626 & 6,886 & 10,500 & 0.345333 & 0.65581
		\\
		400 & 20 & 20 & 6,68& 1,238& 1,900& 0.351579& 0.651579
		\\
		400 & 200 & 50 &  1,716 & 3,367 & 5,000 & 0.3432 & 0.6734
		\\\hline
	\end{tabular}
	\caption{Each row is based on 100 independently generated
	samples.  For example, in the first row we have 100
	realizations of a barcode sequence of length 400; removing
	five sites from each end to eliminate the boundary effects
	leaves 390 positions per sample.
	Thus the number of even (and, separately, odd) sites
	contributing to the empirical counts equals
	$390\times100/2=$19,500. The parameters are
	$q=\tfrac{1}{7}$ and $\kappa=4.3,\mathbf{i}$.
	In all cases, we observe a strong agreement of the sample
	densities with the theoretical ones \eqref{eq:K_M_theoretical}.}
	\label{tab:simulation_density}
\end{table}

Similarly, we can count the number of patterns of the form $11$ (length $2$) and $1\!\star\!1$ (length $3$) in the realizations of the barcode process (where $\star$ is either $0$ or $1$), and confirm the two-periodicity of these correlations, as well as the agreement with the theoretical predictions
\begin{equation*}
	\begin{split}
		&
		\det\begin{bmatrix}
			\mathcal{K}^{\mathrm{barcode}}_{(M)}(t,t) & \mathcal{K}^{\mathrm{barcode}}_{(M)}(t,t+1) \\
			\mathcal{K}^{\mathrm{barcode}}_{(M)}(t+1,t) & \mathcal{K}^{\mathrm{barcode}}_{(M)}(t+1,t+1)
		\end{bmatrix}=
		0.132142\ldots
		;\\
		&
		\hspace{40pt}
		\det\begin{bmatrix}
			\mathcal{K}^{\mathrm{barcode}}_{(M)}(t,t) & \mathcal{K}^{\mathrm{barcode}}_{(M)}(t,t+2) \\
			\mathcal{K}^{\mathrm{barcode}}_{(M)}(t+2,t) & \mathcal{K}^{\mathrm{barcode}}_{(M)}(t+2,t+2)
		\end{bmatrix}=
		\begin{cases}
			0.108658\ldots , & \text{if } t \text{ is even};\\
			0.418309\ldots , & \text{if } t \text{ is odd},
		\end{cases}
	\end{split}
\end{equation*}
for $q=\frac{1}{7}$, $\kappa=4.3\mathbf{i}$, and $M=20$.
Note that the first two-point correlation is fully translation-invariant.
For the data used for the first line in \Cref{tab:simulation_density}, we observe the frequency
$0.133933$ of the pattern~$11$ (over \emph{all} offsets, even or odd), and
$0.10653$
and
$0.413419$
for the patterns $1\!\star\!1$ with even and odd offsets, respectively.

\subsubsection{Height function of the barcode process}
\label{subsub:s7_height_function}

Finally, let us consider fluctuations of the height function
of the barcode process in the bulk of the waterfall region.
For a barcode process realization
$\vec b=(b_1,b_2,\ldots,b_T )$, $b_j\in \{0,1\}$,
we define the height function by
\begin{equation*}
	h(t)\coloneqq \sum\nolimits_{j=K}^t b_j,
\end{equation*}
where $K$ is a fixed offset chosen to reduce the effect of the left boundary
of the waterfall region.
Since the density of particles in the barcode process is
$\frac{1}{2}$, we consider the fluctuations
$h(t)-\frac{t}{2}$.
Plots
of fluctuations
are given in \Cref{fig:height_function}.
Based on them, we conjecture that the
fluctuations of the height function
grow slower than any power of $L$.

\begin{figure}[htb]
	\centering
	\begin{tabular}{m{.5\textwidth}m{.3\textwidth}}
		\includegraphics[width=\linewidth]{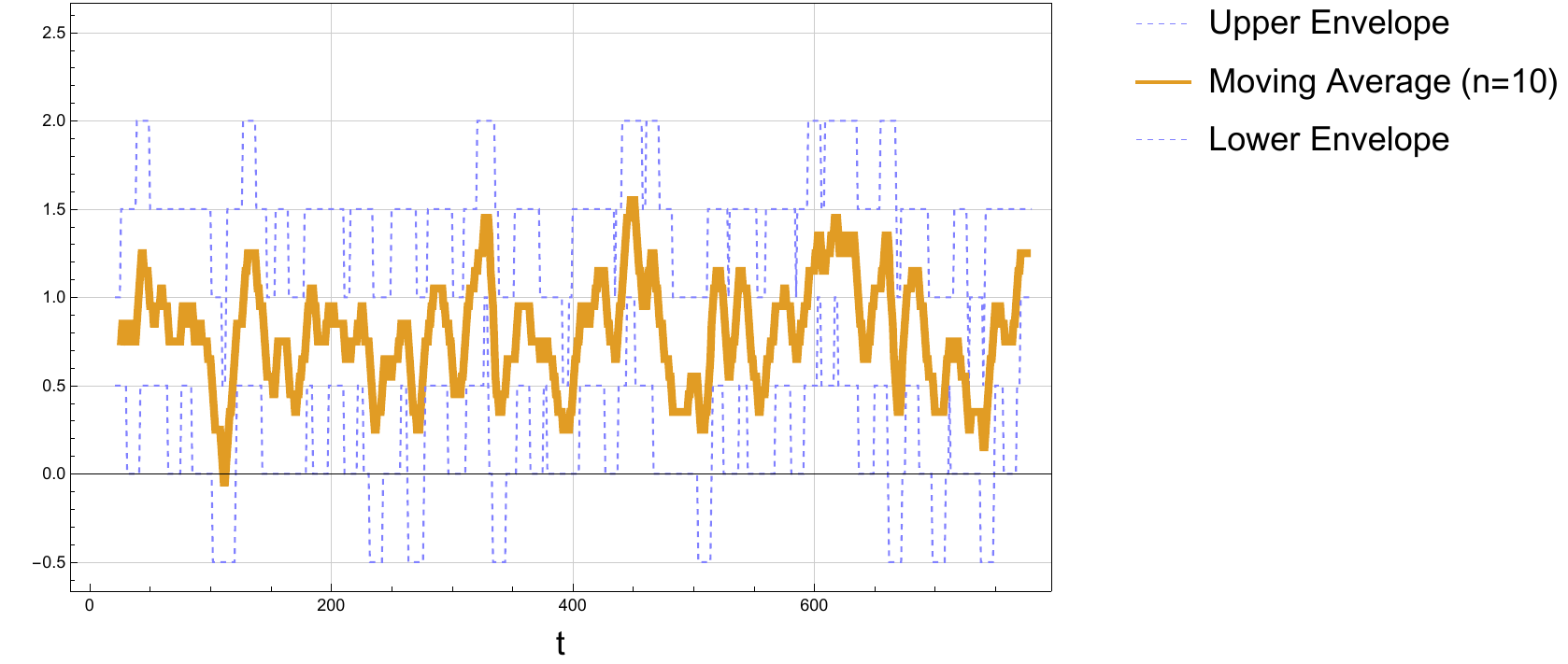} &
		\includegraphics[width=\linewidth]{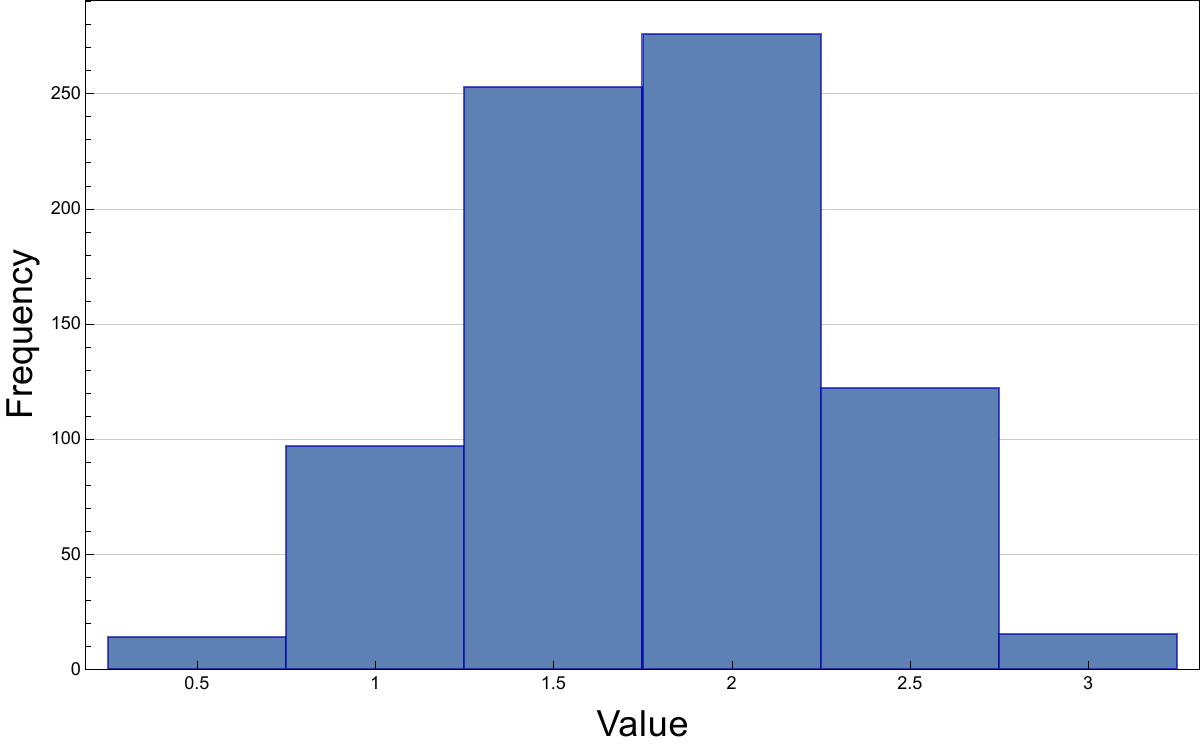}\\
		\includegraphics[width=\linewidth]{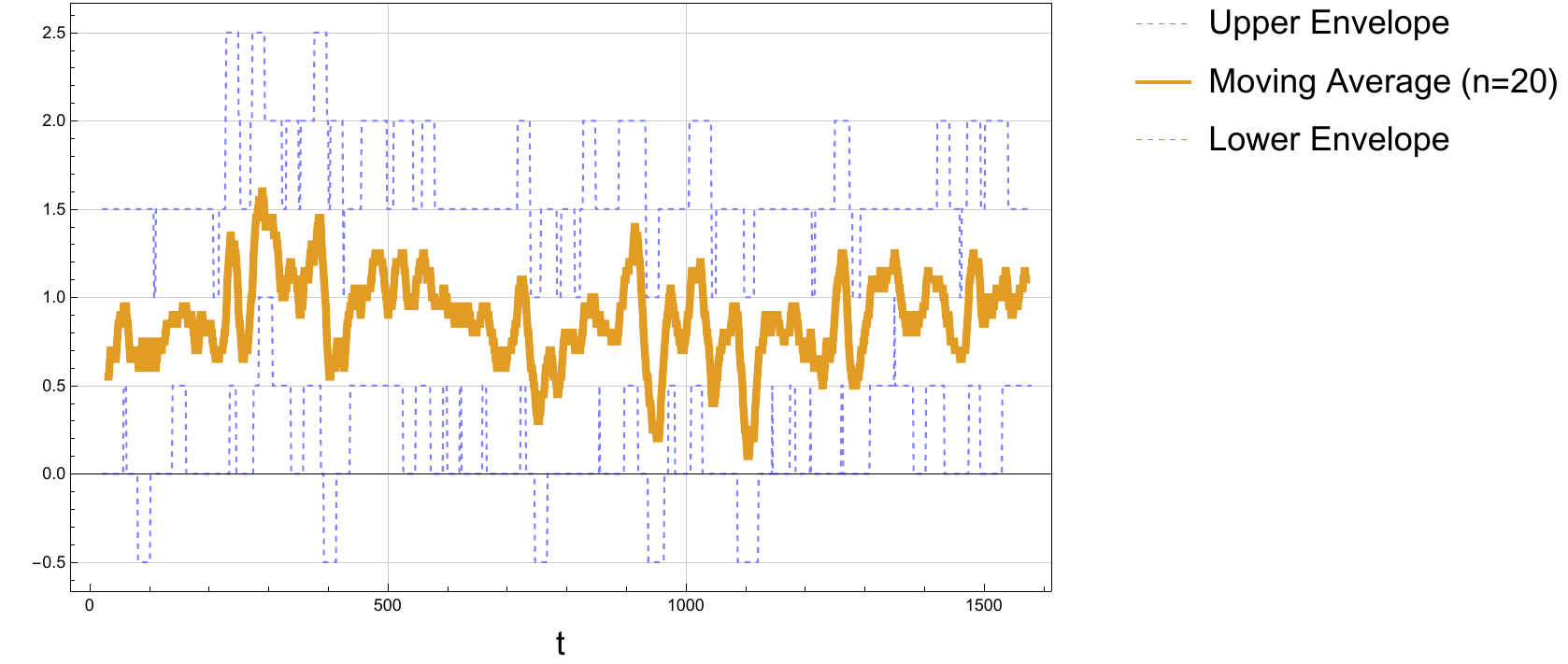} &
		\includegraphics[width=\linewidth]{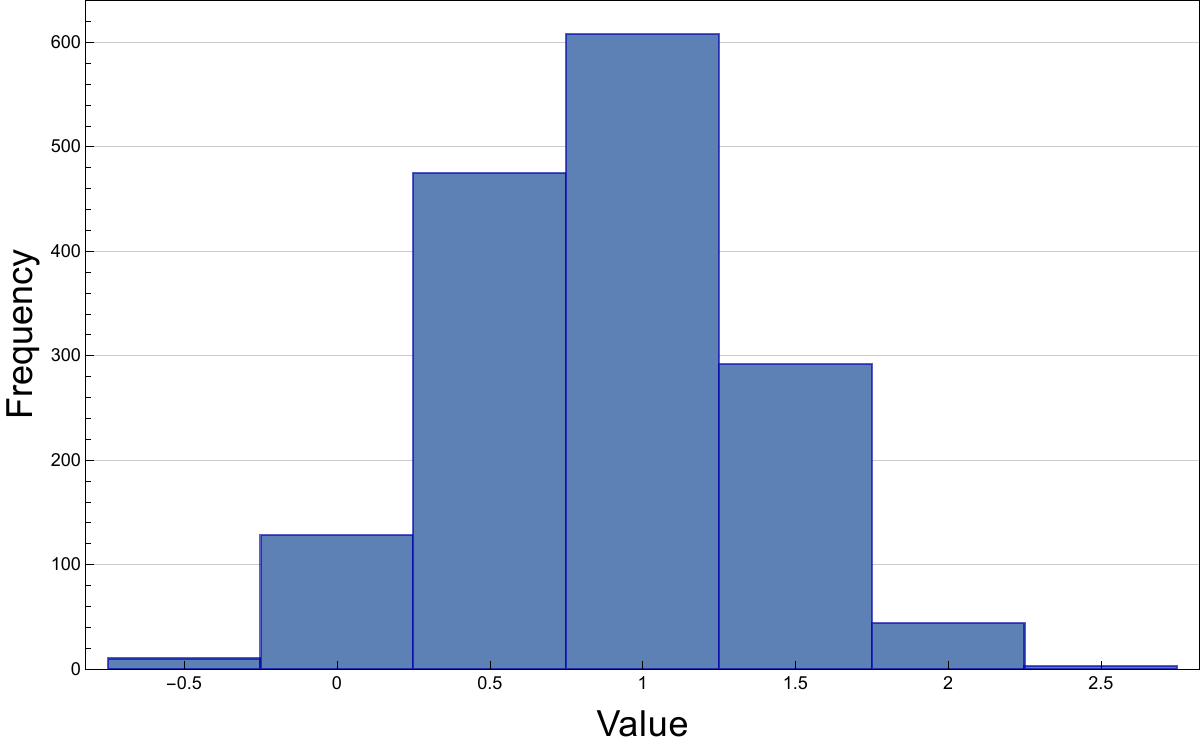}\\
	\end{tabular}
	\caption{Fluctuations $h(t)-\frac{t}{2}$ of the height function
	of the barcode process
	arising in the hexagon with parameters
	$(T,S,N)=(8L,4L,4L)$ for $L=100$ and $200$, where
	$q=\frac{4}{5}$, $\kappa=3\mathbf{i}$. The offset is $K=L/10$.
	On the left, the middle (bold) graph is the moving average, and there are also upper and lower
	envelopes (dashed lines). On the right, we show the histogram of the fluctuation values.}
	\label{fig:height_function}
\end{figure}

\subsection{Conclusion}
\label{subsec:7_conclusion}

The nonrigorous analysis carried out
in the present \Cref{sec:2d_correlations_asymptotics_numerics_conjectures}
paints the following conjectural picture.
There exists a family of determinantal point processes on $\mathbb{Z}$
with the correlation kernels
\[
  \mathcal{K}^{\mathrm{barcode}}(s,t\mid q,\kappa)
\]
depending on the parameters $q\in(0,1)$ and $\kappa\in\mathbf{i}\mathbb{R}_{>0}$.
These processes
governs the local statistics in the waterfall regime
of the $q$-Racah random lozenge tilings of the hexagon.
The limiting local process is universal --- it depends only on the parameters
$(q,\kappa)$, and not on the side lengths $(T,S,N)$ of the hexagon
(as long as the waterfall phase is present, i.e., $N\ll T$),
or the macroscopic point of observation around which we consider the local statistics.

This conjecture is supported by high-precision numerical computations
with the determinantal kernel of the $q$-Racah measure on lozenge tilings,
as well as by analyzing the results of the perfect sampling algorithm
for the measure.

\appendix

\section{Limiting orthogonal functions}
\label{appendix}

In this appendix, we establish several
properties of the orthogonal functions $\mathcal{F}_n(x)$
that enter \Cref{conj:barcode_density_limit,conj:barcode_limit_via_F} on the asymptotic
behavior of the barcode kernel.
This part of the paper is self-contained, its proofs are rigorous, and properties of
the functions $\mathcal{F}_n(x)$ may be of independent interest.

\subsection{Definition of the functions}
\label{app:a1_defn}

We need the Jacobi theta function, its quasi-periodicity, and the triple product identity:
\begin{align}
	\theta_q(z)&\coloneqq (z;q)_{\infty}(q/z;q)_{\infty},
	\nonumber
	\\
	\theta_q\left(q^m z\right)&=(-1)^m q^{-\frac{m(m-1)}{2}} z^{-m} \theta_q(z), \qquad  m \in \mathbb{Z},
	\label{eq:theta_function_and_defns}
	\\
	\nonumber
	\theta_q(z)&=\frac{1}{(q;q)_\infty}\sum_{m\in \mathbb{Z}} (-1)^m q^{\frac{m(m-1)}{2}} z^m.
\end{align}

Let $\frac12\mathbb{Z}$ be the set of all integers and half-integers.
We abbreviate $\mathbb{Z}'=\mathbb{Z}+\tfrac12$,
so $\tfrac12\mathbb{Z}=\mathbb{Z}\cup \mathbb{Z}'$.
If $i-j\in \mathbb{Z}$, we say that $i$ and $j$ have the same \emph{half-parity}.

Define
\begin{equation}
	\label{eq:F_n_functions_app}
	\mathcal{F}_n(x)\coloneqq
	\sqrt{1-\kappa^{-2}q^{-2x}}
	\sum_{i=-n}^{n}
	\frac{(-1)^{n+i} q^{-i/2}}{(q;q)_{n-i}(q;q)_{n+i}}
	\ssp
	\frac{1}{\theta_{\sqrt q}\left( |\kappa|q^{x+i+\frac12} \right)},
\end{equation}
where $0<q<1$ and $\kappa\in \mathbf{i}\mathbb{R}_{>0}$ are the
$q$-Racah parameters, $n,x\in \frac{1}{2}\mathbb{Z}$, and $n\ge0$.
By convention, in \eqref{eq:F_n_functions_app} and
throughout this appendix, the sum runs only over those $i$
that have the same half-parity as $n$. Any sum over all
half-integers will be indicated explicitly.

\begin{remark}
	\label{rmk:theta_function_hypergeometric_form_cont_hermite}
	The functions $\mathcal{F}_n$
	can be expressed in a $q$-hypergeometric form
	as
	\begin{equation}
		\label{eq:F_n_hypergeometric_form}
		\mathcal{F}_n(x)=
		\frac{(-1)^{2n+2x+1}q^{(n-x)^2+x/2}|\kappa|^{2x-2n+1}
		\sqrt{1-\kappa^{-2}q^{-2x}}}
		{(q;q)_{2n}\theta_{\sqrt q}(|\kappa|)}\ssp
		{}_2\phi_0\left(
			\begin{array}{c}
				q^{2n},0\\
				-
			\end{array}
			\middle|\ssp q^{-1};\kappa^{2}q^{2(x-n)}
		\right).
	\end{equation}
	The series ${}_2\phi_0$ is terminating, and can
	be identified with the
	continuous $q$-Hermite orthogonal polynomial
	$H_{2n}(\kappa q^x\mid q^{-1})$, where
	\cite[Chapter~3.26]{Koekoek1996}:
	\begin{equation}
		\label{eq:continuous_q_hermite}
		H_n(\lambda\mid q)= \lambda^{-n}
		{}_2\phi_0\left(
			\begin{array}{c}
				q^{-n},0\\
				-
			\end{array}
			\middle|\ssp q;\lambda^2 q^n
		\right).
	\end{equation}
	Since $0<q<1$ and
	the polynomials involve the inverse parameter $q^{-1}$, they
	are often referred to as the
	\emph{continuous $q^{-1}$-Hermite polynomials}.

	The occurrence of the orthogonal polynomial $H_{2n}$, with
	the index doubled from $n$ to $2n$, reflects the fact that
	the parameter $n$ in $\mathcal{F}_n$ may be a (proper) half-integer.
\end{remark}

In the remainder of this appendix, we establish the following properties of the functions $\mathcal{F}_n(x)$:
\begin{enumerate}[$\bullet$]
	\item
		(\Cref{prop:three_term_relation} in \Cref{sub:three_term_relation})
		Three-term relation in $x$;
	\item
		(\Cref{prop:orthogonality_F_n_functions_app} in \Cref{sub:orthogonality})
		Orthogonality of the functions $\mathcal{F}_n(x)$, $n\in \frac{1}{2}\mathbb{Z}_{\ge0}$,
		in each of the three spaces $\ell^2(\mathbb{Z})$,
		$\ell^2(\mathbb{Z}')$, and $\ell^2(\tfrac12\mathbb{Z})$;
	\item
		(\Cref{prop:norms_of_F_n_functions_app} in \Cref{sub:norms})
		Explicit expression for
		$\ell^2$-norms of $\mathcal{F}_n$;
	\item
		(\Cref{prop:completeness_of_F_n_functions_app} in \Cref{sub:representation_of_identity})
		Completeness of the
		functions in $\ell^2(\mathbb{Z})$ and $\ell^2(\mathbb{Z}')$,
		and the corresponding representation of the identity operator.
\end{enumerate}
All of these properties, except the last one, are proved
directly using the
definition~\eqref{eq:F_n_functions_app}
and the properties of the Jacobi theta function
\eqref{eq:theta_function_and_defns}.
To establish
completeness, however, we employ the $q$-hypergeometric
representation from
\Cref{rmk:theta_function_hypergeometric_form_cont_hermite},
together with the $q$-Mehler formula (an explicit Poisson kernel) for
the continuous $q^{-1}$-Hermite polynomials obtained in
\cite{IsmailMasson1994}.

\subsection{Three-term relation}
\label{sub:three_term_relation}

\begin{proposition}
	\label{prop:three_term_relation}
	We have
	\begin{equation}
		\label{eq:master_function_three_term_relation}
		\mathfrak{a}(x)\ssp
		\mathcal{F}_n(x-\tfrac12)
		+
		\mathfrak{a}(x+\tfrac12)\ssp
		\mathcal{F}_n(x+\tfrac12)
		=
		-q^n\ssp \mathcal{F}_n(x),
		\qquad n,x\in \tfrac12 \mathbb{Z},\quad n\ge0,
	\end{equation}
	where $\mathfrak{a}$
	is given by
	\begin{equation*}
		\mathfrak{a}(x)\coloneqq
		\biggl[
			\frac{-\kappa^2 q^{2x-1}}
			{ (1-\kappa ^2 q^{2x})(1-\kappa ^2 q^{2x-1})}
		\biggr]^{\frac12},\qquad x\in \tfrac12 \mathbb{Z}.
	\end{equation*}
\end{proposition}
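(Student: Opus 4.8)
The plan is to reduce \eqref{eq:master_function_three_term_relation} to a finite Laurent-polynomial identity in $q$ by repeatedly invoking the quasi-periodicity of the theta function \eqref{eq:theta_function_and_defns}. \textbf{Step 1 (the square-root prefactors).} Since $\kappa\in\mathbf i\mathbb R_{>0}$, we have $\kappa^2=-|\kappa|^2<0$, so every radicand occurring in $\mathfrak a$ and in the factor $\sqrt{1-\kappa^{-2}q^{-2x}}$ from \eqref{eq:F_n_functions_app} is a positive real and all square roots are the positive ones. Manipulating $\mathfrak a(x)^2$ and $\mathfrak a(x+\tfrac12)^2$ directly, I would show
\[
\mathfrak a(x)\sqrt{1-\kappa^{-2}q^{1-2x}}=\bigl(1-\kappa^2q^{2x}\bigr)^{-1/2},\qquad
\mathfrak a(x+\tfrac12)\sqrt{1-\kappa^{-2}q^{-1-2x}}=q^{-1/2}\bigl(1-\kappa^2q^{2x}\bigr)^{-1/2},
\]
together with $\sqrt{1-\kappa^{-2}q^{-2x}}=(1-\kappa^2q^{2x})^{1/2}/(|\kappa|q^x)$. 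Writing $\mathcal F_n(y)=\sqrt{1-\kappa^{-2}q^{-2y}}\,G_n(y)$ with $G_n(y)\coloneqq\sum_i\tfrac{(-1)^{n+i}q^{-i/2}}{(q;q)_{n-i}(q;q)_{n+i}}\,\theta_{\sqrt q}(|\kappa|q^{y+i+1/2})^{-1}$, and multiplying \eqref{eq:master_function_three_term_relation} through by $(1-\kappa^2q^{2x})^{1/2}$, the claim becomes the theta-free identity
\[
G_n(x-\tfrac12)+q^{-1/2}G_n(x+\tfrac12)=-\,q^n\,\frac{1-\kappa^2q^{2x}}{|\kappa|q^x}\,G_n(x).
\]

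\textbf{Step 2 (collapsing the half-integer shifts).} In $G_n(x-\tfrac12)$, $G_n(x)$, $G_n(x+\tfrac12)$ the theta appears at $|\kappa|q^{x+i}$, $|\kappa|q^{x+i+1/2}$, $|\kappa|q^{x+i+1}$, respectively. Using \eqref{eq:theta_function_and_defns} in the forms $\theta_{\sqrt q}(\sqrt q\,w)=-w^{-1}\theta_{\sqrt q}(w)$ and $\theta_{\sqrt q}(qw)=q^{-1/2}w^{-2}\theta_{\sqrt q}(w)$, I would rewrite all three sums over the common denominator $\theta_{\sqrt q}(|\kappa|q^{x+i})$. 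Setting $\psi_i\coloneqq\tfrac{(-1)^{n+i}}{(q;q)_{n-i}(q;q)_{n+i}}\,\theta_{\sqrt q}(|\kappa|q^{x+i})^{-1}$ and using $|\kappa|^2=-\kappa^2$, the identity of Step~1 then reduces to
\[
\sum_i\bigl(q^{-i/2}-q^{n+i/2}\bigr)\psi_i=\kappa^2q^{2x}\sum_i\bigl(q^{3i/2}-q^{n+i/2}\bigr)\psi_i ,
\]
with the sums over $i$ of the same half-parity as $n$, $-n\le i\le n$.

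\textbf{Step 3 (a finite identity in $q$).} Iterating $\theta_{\sqrt q}(qw)=q^{-1/2}w^{-2}\theta_{\sqrt q}(w)$ gives $\theta_{\sqrt q}(|\kappa|q^{x+i})^{-1}=(-\kappa^2)^i q^{i^2-i/2+2ix}\,\theta_{\sqrt q}(|\kappa|q^{x})^{-1}$ for $i\in\mathbb Z$ (and the analogous formula over $\theta_{\sqrt q}(|\kappa|q^{x+1/2})^{-1}$ when $n\in\mathbb Z'$). After clearing this common factor, both sides of the Step~2 identity become finite linear combinations of the functions $x\mapsto q^{2ix}$, which are linearly independent on $\tfrac12\mathbb Z$; so it suffices to equate, for each $i$, the coefficient of $q^{2ix}$ — reindexing $i\mapsto i-1$ on the right to absorb the extra $q^{2x}$. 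With $b_i\coloneqq\tfrac{(-1)^{n+i}}{(q;q)_{n-i}(q;q)_{n+i}}$, so that $b_i=-\tfrac{1-q^{n-i+1}}{1-q^{n+i}}\,b_{i-1}$, and using $q^{-i/2}-q^{n+i/2}=q^{-i/2}(1-q^{n+i})$ and $q^{3(i-1)/2}-q^{n+(i-1)/2}=q^{3(i-1)/2}(1-q^{n-i+1})$, one checks that after cancelling $(-\kappa^2)^i$ the two coefficients coincide, both equalling $-(1-q^{n-i+1})\,b_{i-1}\,q^{i^2-i}$. The mismatched extremal indices are harmless: the left coefficient vanishes at $i=-n$ and the reindexed right coefficient vanishes at $i=n+1$. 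This establishes \eqref{eq:master_function_three_term_relation}.

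\textbf{What I expect to be the main obstacle.} None of the steps is conceptually hard; the work is almost entirely bookkeeping — pinning down the positive branches of the many square roots in Step~1, carrying the integer and proper-half-integer cases of $n$ in parallel, and correctly tracking the shift of the $q$-Pochhammer factors under the reindexing in Step~3. (One could instead bypass Step~3 by invoking \Cref{rmk:theta_function_hypergeometric_form_cont_hermite} and identifying \eqref{eq:master_function_three_term_relation} with the standard three-term recurrence of the continuous $q^{-1}$-Hermite polynomials $H_{2n}$; but verifying that the explicit prefactors match up is a computation of comparable length, so I would present the self-contained theta-function argument above.)
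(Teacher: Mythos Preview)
Your proof is correct and follows the same overall strategy as the paper: use the quasi-periodicity of $\theta_{\sqrt q}$ to strip away the theta functions and square-root prefactors, reducing \eqref{eq:master_function_three_term_relation} to a finite Laurent-polynomial identity in $z=\kappa^2q^{2x}$ (your Step~2 expression, after clearing the common $\theta_{\sqrt q}(|\kappa|q^x)^{-1}$, is precisely the paper's \eqref{eq:theta_function_eigenfunction_proof_1} up to the global sign $(-1)^n$).

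The only genuine difference is in how that polynomial identity is verified. The paper passes to $\hat q=q^{-1}$ to kill the $q^{i^2}$ factor, then exploits the symmetry $i\mapsto -i$ to collapse the four-term numerator into a form that vanishes by the further symmetry $i\mapsto -i-1$. You instead match coefficients of $z^i$ directly, using the telescoping recursion $b_i=-\tfrac{1-q^{n-i+1}}{1-q^{n+i}}\,b_{i-1}$ for the $q$-binomial coefficients; the two boundary checks (vanishing at $i=-n$ on the left, at $i=n+1$ on the reindexed right) are exactly what the paper's symmetry argument encodes. Your finishing move is more hands-on and perhaps more transparent term-by-term; the paper's is slicker but requires spotting the $q\to q^{-1}$ trick. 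Either way the computation is short, and your parenthetical remark about the $q^{-1}$-Hermite recurrence as an alternative route matches the paper's own Remark following the proof.
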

\begin{proof}
	Using the quasi-periodicity \eqref{eq:theta_function_and_defns}
	and after the necessary simplifications, the three-term relation
	becomes
	\begin{multline*}
		\sum_{i=-n}^n
		\frac{(-1)^{n+i} q^{-i/2}}{(q;q)_{n-i}(q;q)_{n+i}}
		\Biggl(
			\frac{q^{i^2-\frac12i}|\kappa|^{2i}q^{2ix}}{\sqrt{1-\kappa^2 q^{2x}}}
			+
			\frac{q^{i^2+\frac32 i}|\kappa|^{2i+2}q^{(2i+2)x}}{\sqrt{1-\kappa^2 q^{2x}}}
		\Biggr)\frac{1}{\theta_{\sqrt q}(|\kappa|q^x)}
		\\=-q^n
		\sum_{i=-n}^{n}
		\sqrt{1-\kappa^{-2}q^{-2x}}
		\frac{(-1)^{n+i} q^{-i/2}}{(q;q)_{n-i}(q;q)_{n+i}}
		\ssp
		\frac{-q^{i^2+\frac12i}|\kappa|^{2i+1}q^{(2i+1)x}}{\theta_{\sqrt q}\left( |\kappa|q^{x} \right)}
	\end{multline*}
	(up to sign $(-1)^{2i}=\pm1$ which is the same on both sides).
	This identity reduces to
	\begin{equation}
		\label{eq:theta_function_eigenfunction_proof_1}
		\sum_{i=-n}^{n}
		\frac{q^{i^{2}}
			\left( z^iq^{-i}-z^{i+1}q^{i}-z^iq^{n}+z^{i+1}q^{n} \right)
		}
		{(q;q)_{n-i}(q;q)_{n+i}}=0,
	\end{equation}
	where
	$z=\kappa^2 q^{2x}$.
	Using the fact that $(q;q)_{n-i}(q;q)_{n+i}=q^{i^2+n^2+n}(q^{-1};q^{-1})_{n-i}(q^{-1};q^{-1})_{n+i}$,
	we can remove the factor
	$q^{i^2}$ from the numerator in \eqref{eq:theta_function_eigenfunction_proof_1}
	by changing $q$ to $\hat q\coloneqq 1/q$. Moreover, by symmetry, we can replace $i$ by $(-i)$
	in the first and the third summands in the numerator in~\eqref{eq:theta_function_eigenfunction_proof_1}.
	Thus, we need to show that
	\begin{equation}
		\label{eq:theta_function_eigenfunction_proof_2}
		\sum_{i=-n}^{n}
		\frac{
			z^{-i}\hat q^{-i}-z^{i+1}\hat q^{-i}-z^{-i}\hat q^{-n}+z^{i+1}\hat q^{-n}
		}
		{(\hat q;\hat q)_{n-i}(\hat q;\hat q)_{n+i}}=0.
	\end{equation}
	Factorizing
	$z^{-i}\hat q^{-i}-z^{i+1}\hat q^{-i}-z^{-i}\hat q^{-n}+z^{i+1}\hat q^{-n}=
	-(1-\hat q^{n-i})\ssp\hat q^{-n}(z^{-i}-z^{i+1})$, we see that
	\eqref{eq:theta_function_eigenfunction_proof_2} is equivalent to
	\begin{equation*}
		\sum_{i=-n}^{n-1}\frac{z^{-i}-z^{i+1}}{(\hat q;\hat q)_{n-i-1}(\hat q;\hat q)_{n+i}}=0.
	\end{equation*}
	The latter identity follows directly from the symmetry under $i\mapsto -\,i-1$.
	Consequently, we obtain the desired three-term relation \eqref{eq:master_function_three_term_relation}.
	Moreover, this relation is valid for all $x$, because the
	argument reduces to an identity of rational functions in
	the variable $z=\kappa^{2}q^{2x}$.
\end{proof}

\begin{remark}
	Alternatively, \Cref{prop:three_term_relation}
	can also be derived directly from the difference
	equation \cite[(3.26.5)]{Koekoek1996} satisfied by the
	continuous $q^{-1}$-Hermite orthogonal polynomials, which are
	related to $\mathcal{F}_n(x)$ (see
	Remark~\ref{rmk:theta_function_hypergeometric_form_cont_hermite}).
\end{remark}

\subsection{Orthogonality}
\label{sub:orthogonality}

\begin{proposition}
	\label{prop:orthogonality_F_n_functions_app}
	The functions $\mathcal{F}_n(x)$, $n\in \tfrac12 \mathbb{Z}_{\ge0}$,
	are orthogonal in the variable $x$
	in each of the three spaces
	$\ell^2(\mathbb{Z})$,
	$\ell^2(\mathbb{Z}')$,
	and
	$\ell^2(\tfrac12\mathbb{Z})$.
\end{proposition}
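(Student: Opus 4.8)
The plan is to exhibit the $\mathcal{F}_n$'s as eigenfunctions of a single bounded self-adjoint operator and then invoke the elementary fact that eigenfunctions of a self-adjoint operator attached to distinct eigenvalues are orthogonal. On the Hilbert space $\ell^2(\tfrac12\mathbb{Z})$ I would introduce
\[
	(\mathfrak{L}g)(x)\coloneqq \mathfrak{a}(x)\ssp g(x-\tfrac12)+\mathfrak{a}(x+\tfrac12)\ssp g(x+\tfrac12),\qquad x\in\tfrac12\mathbb{Z},
\]
with $\mathfrak{a}$ the coefficient from \eqref{eq:mathfrak_a_definition}. One checks that $\mathfrak{a}(x)$ is real and strictly positive for $\kappa\in\mathbf{i}\mathbb{R}_{>0}$, and that $\mathfrak{a}(x)=O(q^{|x|})$ as $x\to\pm\infty$; hence $\mathfrak{L}$ is bounded. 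It is symmetric because the coefficient linking the sites $x$ and $x+\tfrac12$ equals $\mathfrak{a}(x+\tfrac12)$ regardless of the order in which we take them, so $\mathfrak{L}$ is self-adjoint. First I would also record that every $\mathcal{F}_n$ lies in $\ell^2(\tfrac12\mathbb{Z})$: this is immediate from the Gaussian factor $q^{(n-x)^2}$ in the $q$-hypergeometric form \eqref{eq:F_n_hypergeometric_form}, the remaining factors (the terminating ${}_2\phi_0$ of bounded degree $2n$, together with $\sqrt{1-\kappa^{-2}q^{-2x}}$ and the powers of $q$ and $|\kappa|$) growing at most exponentially in $|x|$; consequently $\mathcal{F}_n\in\ell^2(\mathbb{Z})$ and $\mathcal{F}_n\in\ell^2(\mathbb{Z}')$ as well.

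\textbf{Orthogonality in $\ell^2(\tfrac12\mathbb{Z})$.} The three-term relation of \Cref{prop:three_term_relation} is exactly the eigenrelation $\mathfrak{L}\mathcal{F}_n=-q^n\mathcal{F}_n$ for every $n\in\tfrac12\mathbb{Z}_{\ge0}$. Since $0<q<1$, the map $n\mapsto -q^n$ is injective on $\tfrac12\mathbb{Z}_{\ge0}$, so for $n\neq m$ self-adjointness of $\mathfrak{L}$ forces $\langle\mathcal{F}_n,\mathcal{F}_m\rangle_{\ell^2(\tfrac12\mathbb{Z})}=0$. This disposes of the third space.

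\textbf{Passing to the two sublattices.} The operator $\mathfrak{L}$ reverses half-parity: it sends $\ell^2(\mathbb{Z})$ into $\ell^2(\mathbb{Z}')$ and vice versa, so $\mathfrak{L}^2$ preserves each of $\ell^2(\mathbb{Z})$ and $\ell^2(\mathbb{Z}')$. A short computation shows that on $\ell^2(\mathbb{Z})$ the operator $\mathfrak{L}^2$ is the symmetric, bounded (hence self-adjoint) tridiagonal operator
\[
	(\mathfrak{L}^2 g)(x)=\mathfrak{a}(x)\mathfrak{a}(x-\tfrac12)\ssp g(x-1)+\bigl(\mathfrak{a}(x)^2+\mathfrak{a}(x+\tfrac12)^2\bigr)\ssp g(x)+\mathfrak{a}(x+\tfrac12)\mathfrak{a}(x+1)\ssp g(x+1),
\]
and similarly on $\ell^2(\mathbb{Z}')$. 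Applying $\mathfrak{L}$ twice to the eigenrelation gives $\mathfrak{L}^2\mathcal{F}_n=q^{2n}\mathcal{F}_n$ pointwise on $\tfrac12\mathbb{Z}$, and since the above tridiagonal operator at an integer $x$ only involves the values $\mathcal{F}_n(x-1),\mathcal{F}_n(x),\mathcal{F}_n(x+1)$, the restriction $\mathcal{F}_n|_{\mathbb{Z}}$ is an eigenfunction of $\mathfrak{L}^2$ on $\ell^2(\mathbb{Z})$ with eigenvalue $q^{2n}$ (and likewise $\mathcal{F}_n|_{\mathbb{Z}'}$ on $\ell^2(\mathbb{Z}')$). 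The values $q^{2n}$ for $n\in\{0,\tfrac12,1,\tfrac32,\dots\}$ are pairwise distinct, so the same self-adjoint-operator argument yields orthogonality of $\{\mathcal{F}_n\}_{n\in\frac12\mathbb{Z}_{\ge0}}$ in both $\ell^2(\mathbb{Z})$ and $\ell^2(\mathbb{Z}')$, completing the proof.

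\textbf{Main obstacle.} I do not anticipate a genuine difficulty here: the whole argument is soft once $\mathfrak{L}$ is introduced. The only points requiring care are confirming that $\mathfrak{L}$ is well defined and bounded on $\ell^2$ (which rests on the decay estimates for $\mathfrak{a}$ and for $\mathcal{F}_n$ quoted above) and the bookkeeping of half-parities when squaring $\mathfrak{L}$ and when restricting $\mathcal{F}_n$ to $\mathbb{Z}$ or $\mathbb{Z}'$.
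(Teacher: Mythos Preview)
Your proposal is correct and follows essentially the same route as the paper: introduce the bounded self-adjoint shift operator $\mathfrak{L}$ on $\ell^2(\tfrac12\mathbb{Z})$, use the three-term relation to identify the $\mathcal{F}_n$ as eigenfunctions with distinct eigenvalues $-q^n$, and then pass to $\ell^2(\mathbb{Z})$ and $\ell^2(\mathbb{Z}')$ via $\mathfrak{L}^2$. You add a few details the paper leaves implicit (the explicit tridiagonal form of $\mathfrak{L}^2$ and the $\ell^2$-membership of $\mathcal{F}_n$ from the Gaussian factor), but the argument is the same.
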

\begin{proof}
	Once we have the three-term relation
	(\Cref{prop:three_term_relation}),
	the orthogonality
	of $\mathcal{F}_n(x)$
	in $\ell^2(\tfrac12\mathbb{Z})$
	(as functions of $x$) readily follows.
	Indeed,
	the left-hand side
	of \eqref{eq:master_function_three_term_relation}
	defines a symmetric operator
	\begin{equation}
		\label{eq:T_operator_no_name}
		f(x)\mapsto \mathfrak{a}(x)f(x-\tfrac12)+\mathfrak{a}(x+\tfrac12)f(x+\tfrac12)
	\end{equation}
	in $\ell^2(\tfrac12\mathbb{Z})$.
	This operator is bounded because its coefficients $\mathfrak{a}(\cdot)$ rapidly decay at $\pm\infty$.
	Therefore, it is self-adjoint in $\ell^2(\tfrac12\mathbb{Z})$.

	The functions $\mathcal{F}_n$ are eigenfunctions of \eqref{eq:T_operator_no_name}
	with distinct eigenvalues $-q^n$, $n\in \tfrac12\mathbb{Z}_{\ge0}$,
	and hence they are orthogonal in $\ell^2(\tfrac12\mathbb{Z})$.

	Next, the square of the operator \eqref{eq:T_operator_no_name}
	is also bounded and self-adjoint, but preserves each of the spaces
	$\ell^2(\mathbb{Z})$ and $\ell^2(\mathbb{Z}')$. The functions
	$\mathcal{F}_n$ are eigenfunctions of the square of \eqref{eq:T_operator_no_name}
	with distinct eigenvalues $q^{2n}$, and hence they are orthogonal in $\ell^2(\mathbb{Z})$ and $\ell^2(\mathbb{Z}')$.
	This completes the proof.
\end{proof}

\subsection{Norms}
\label{sub:norms}

\begin{proposition}
\label{prop:norms_of_F_n_functions_app}
	For all $n\in \tfrac12 \mathbb{Z}_{\ge0}$, we have
	\begin{equation}
		\label{eq:norms_of_F_n}
		\|\mathcal{F}_n\|_{\ell^2(\mathbb{Z})}^2=
		\|\mathcal{F}_n\|_{\ell^2(\mathbb{Z'})}^2=
		\frac12\|\mathcal{F}_n\|_{\ell^2(\!{\scriptscriptstyle\frac12}\mathbb{Z})}^2=
		\frac{
		(q;q)_\infty(-\kappa^2)\ssp\theta_q(\kappa^{-2})}
		{q^n(q;q)_{2n}\ssp\theta_{\sqrt{q}}(|\kappa|)^2}.
	\end{equation}
\end{proposition}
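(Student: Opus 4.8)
The plan is to prove \eqref{eq:norms_of_F_n} in two stages: first the three equalities between the $\ell^2$-norms on $\mathbb{Z}$, $\mathbb{Z}'$, and $\tfrac12\mathbb{Z}$, and then the explicit evaluation of one of them.

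For the first stage I would use only the three-term relation. Let $\mathfrak{T}$ denote the bounded self-adjoint operator $f(x)\mapsto \mathfrak{a}(x)\ssp f(x-\tfrac12)+\mathfrak{a}(x+\tfrac12)\ssp f(x+\tfrac12)$ on $\ell^2(\tfrac12\mathbb{Z})$ from the proof of \Cref{prop:orthogonality_F_n_functions_app}; since it shifts the argument by half-integers, it maps $\ell^2(\mathbb{Z})$ into $\ell^2(\mathbb{Z}')$ and vice versa. Write $\mathcal{F}_n=\mathcal{F}_n^{\mathbb{Z}}+\mathcal{F}_n^{\mathbb{Z}'}$ for the decomposition into restrictions extended by zero. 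The eigenrelation \eqref{eq:master_function_three_term_relation}, namely $\mathfrak{T}\mathcal{F}_n=-q^n\mathcal{F}_n$, splits by support into $\mathfrak{T}\mathcal{F}_n^{\mathbb{Z}}=-q^n\mathcal{F}_n^{\mathbb{Z}'}$ and $\mathfrak{T}\mathcal{F}_n^{\mathbb{Z}'}=-q^n\mathcal{F}_n^{\mathbb{Z}}$. Pairing the first identity with $\mathcal{F}_n^{\mathbb{Z}'}$ and using self-adjointness gives $-q^n\|\mathcal{F}_n^{\mathbb{Z}'}\|^2=\langle \mathfrak{T}\mathcal{F}_n^{\mathbb{Z}},\mathcal{F}_n^{\mathbb{Z}'}\rangle=\langle \mathcal{F}_n^{\mathbb{Z}},\mathfrak{T}\mathcal{F}_n^{\mathbb{Z}'}\rangle=-q^n\|\mathcal{F}_n^{\mathbb{Z}}\|^2$, hence $\|\mathcal{F}_n^{\mathbb{Z}}\|^2=\|\mathcal{F}_n^{\mathbb{Z}'}\|^2$; the identity with $\ell^2(\tfrac12\mathbb{Z})$ is then just $\|\mathcal{F}_n\|^2_{\ell^2({\scriptscriptstyle\frac12}\mathbb{Z})}=\|\mathcal{F}_n^{\mathbb{Z}}\|^2+\|\mathcal{F}_n^{\mathbb{Z}'}\|^2=2\|\mathcal{F}_n^{\mathbb{Z}}\|^2$ by orthogonality of disjointly supported functions.

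For the second stage I would compute $\|\mathcal{F}_n\|^2_{\ell^2({\scriptscriptstyle\frac12}\mathbb{Z})}=\sum_{x\in\frac12\mathbb{Z}}\mathcal{F}_n(x)^2$ directly, which avoids any half-parity bookkeeping. Expanding the square with \eqref{eq:F_n_functions_app} and interchanging the $x$-sum with the double sum over $i,j$ of the same half-parity as $n$, and then shifting $x\mapsto x-i-\tfrac12$ (a move that preserves $\tfrac12\mathbb{Z}$), the inner sum becomes $\sum_{x\in\frac12\mathbb{Z}}\bigl(1-\kappa^{-2}q^{2i+1}q^{-2x}\bigr)\bigl[\theta_{\sqrt q}(|\kappa|q^{x})\,\theta_{\sqrt q}(|\kappa|q^{x+j-i})\bigr]^{-1}$. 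Using the quasi-periodicity \eqref{eq:theta_function_and_defns} to pull the shift $q^{j-i}$ out of the second theta factor reduces everything to bilateral sums of the form $S_k\coloneqq\sum_{l\in\mathbb{Z}}p^{2kl}\,\theta_p(|\kappa|p^{l})^{-2}$ with $p=\sqrt q$ and $k\in\mathbb{Z}$; these converge since $\theta_p(|\kappa|p^l)^{-2}$ has Gaussian decay in $l$. The evaluation of $S_k$ is a classical theta-function partial-fraction/residue computation (a degenerate, squared case of Ramanujan's ${}_1\psi_1$ summation, cf.\ \cite{GasperRahman}), and it produces a closed form in $k$ built from $\theta_q(\kappa^{-2})$ times an explicit power of $q$ and $|\kappa|$. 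Substituting back, the combination $\sqrt{1-\kappa^{-2}q^{-2x}}$ built into $\mathcal{F}_n$ is designed precisely so that the $S_m$-terms collapse to a single power of $q^{\,j-i}$, whereupon the remaining double sum over $i,j$ factorizes and is evaluated by the $q$-Chu--Vandermonde identity, yielding the factor $q^{-n}(q;q)_{2n}^{-1}$ after the constants are reassembled into \eqref{eq:norms_of_F_n}. Alternatively, and perhaps more transparently, one can first rewrite $\mathcal{F}_n(x)=c_n\,\varpi(x)^{1/2}\,H_{2n}(\kappa q^{x}\mid q^{-1})$ using the $q$-hypergeometric form \eqref{eq:F_n_hypergeometric_form} and \eqref{eq:continuous_q_hermite}, with an $n$-independent weight $\varpi(x)$ and a constant $c_n$ carrying the $(q;q)_{2n}^{-1}\theta_{\sqrt q}(|\kappa|)^{-1}$ factors; then $\sum_{x\in\frac12\mathbb{Z}}\varpi(x)\,H_{2n}(\kappa q^{x}\mid q^{-1})^2$ is a discrete orthogonality relation for the continuous $q^{-1}$-Hermite polynomials on the two-sided geometric lattice $\{\kappa q^{x}:x\in\tfrac12\mathbb{Z}\}$, and both its total mass (giving $(-\kappa^2)\,\theta_q(\kappa^{-2})$ up to the factor $(q;q)_\infty$) and the polynomial norm (giving the $q$-power and $(q;q)_{2n}$) can be taken from \cite{IsmailMasson1994}.

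The routine part is the second stage's bookkeeping of prefactors; the genuine obstacle is pinning down the inner bilateral sum $S_k$ with every power of $q$ and $|\kappa|$ and the exact $\theta_q(\kappa^{-2})/\theta_{\sqrt q}(|\kappa|)^2$ dependence correct --- equivalently, identifying the precise extremal discrete measure among the one-parameter family solving the (indeterminate) continuous $q^{-1}$-Hermite moment problem and extracting its mass and polynomial norm in the normalization used in \eqref{eq:norms_of_F_n}. A secondary point needing care is the proper half-integer $n\in\mathbb{Z}'$, where $H_{2n}$ is an odd polynomial; one must check that the orthogonality evaluation and the $\mathbb{Z}$/$\mathbb{Z}'$ splitting above carry over without change, which they do because the whole computation reduces to identities of rational and theta functions in $z=\kappa^2 q^{2x}$ that are insensitive to the half-parity of $n$.
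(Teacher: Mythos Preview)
Your Stage~1 argument for $\|\mathcal{F}_n\|_{\ell^2(\mathbb{Z})}^2=\|\mathcal{F}_n\|_{\ell^2(\mathbb{Z}')}^2$ is correct and is in fact cleaner than the paper's approach, which computes both norms separately and checks they agree. Your operator-theoretic trick using $\mathfrak{T}\mathcal{F}_n^{\mathbb{Z}}=-q^n\mathcal{F}_n^{\mathbb{Z}'}$ and self-adjointness is a genuine improvement.

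For Stage~2, your idea of summing over $\tfrac12\mathbb{Z}$ instead of $\mathbb{Z}$ is good and does simplify things (the paper, summing over $\mathbb{Z}$, has to split on the parity of $i+j$ and then invoke a separate theta identity, \Cref{lemma:theta_identity}, which you avoid). However, two of your predictions about how the computation unfolds are wrong. First, the bilateral sums $S_k$ are much easier than you suggest: quasi-periodicity gives $\theta_{\sqrt q}(|\kappa|\,p^l)^{-2}=p^{l(l-1)}|\kappa|^{2l}\theta_{\sqrt q}(|\kappa|)^{-2}$, and then $S_k$ is directly a Jacobi triple product, yielding $S_k=(q;q)_\infty\theta_q(\kappa^2 q^k)/\theta_{\sqrt q}(|\kappa|)^2$; no ${}_1\psi_1$ or residue machinery is needed. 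Second, and more importantly, after you plug this in and use quasi-periodicity of $\theta_q$ once more, the $(i,j)$-dependence of the inner $x$-sum is \emph{not} a single power of $q^{j-i}$: it is $q^{(i-j)^2/2}(1+q^{i+j})$. Consequently the double sum over $i,j$ does \emph{not} factorize, and $q$-Chu--Vandermonde is not the right tool. What you actually need is the identity
\[
\sum_{i,j}\frac{(-1)^{2n+i+j}\,q^{(i-j)^2/2+(i+j)/2}}{(q;q)_{n-i}(q;q)_{n+i}(q;q)_{n-j}(q;q)_{n+j}}=\frac{1}{q^n(q;q)_{2n}},
\]
which is \Cref{lemma:a_binomial_identity} in the paper; it is proved by observing that the inner $j$-sum (a $q$-binomial expansion) vanishes unless $i=-n$. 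Your $(1+q^{i+j})$ then gives twice this by the $i\mapsto -i$, $j\mapsto -j$ symmetry, and the factor of $2$ matches $\|\mathcal{F}_n\|_{\ell^2(\frac12\mathbb{Z})}^2=2\|\mathcal{F}_n\|_{\ell^2(\mathbb{Z})}^2$. So your route works, but not via factorization.

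Your alternative (b) via Ismail--Masson is less promising as stated: the continuous $q^{-1}$-Hermite moment problem is indeterminate, and \cite{IsmailMasson1994} does not hand you the norms for this particular discrete lattice directly; you would essentially be re-deriving the same theta computation inside their framework.
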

\begin{proof}
	The functions $\mathcal{F}_n(x)$ are real-valued (as $n$ and $i$ are of the same half-parity),
	so we can sum their squares to compute the norms. We have
	\begin{equation}
		\label{eq:theta_function_eigenfunction_proof_3}
		\begin{split}
			(\mathcal{F}_n(x))^2&=
			\frac{1}{\theta_{\sqrt{q}}(|\kappa|)^2}\ssp
			(1-\kappa^{-2}q^{-2x})\sum_{i,j}
			\frac{(-1)^{2n+i+j}q^{-(i+j)/2}}
			{
			(q;q)_{n-i} (q;q)_{n+i} (q;q)_{n-j} (q;q)_{n+j}}
			\\&\hspace{160pt}\times
			|\kappa|^{4x+2+2i+2j}
			q^{(i+x)(i+x+1/2)+
			(j+x)(j+x+1/2)}.
		\end{split}
	\end{equation}
	Here the sum is over $-n\le i,j\le n$ of the same half-parity as $n$.
	Next, let us sum the $x$-dependent part
	of \eqref{eq:theta_function_eigenfunction_proof_3}
	over $x\in\mathbb{Z}$. We have, using the triple product identity \eqref{eq:theta_function_and_defns}:
	\begin{equation}
		\label{eq:theta_4_for_summing_over_Z}
		\sum_{x\in \mathbb{Z}}(1-\kappa^{-2}q^{-2x})(-\kappa^2)^{2x}q^{2x^2+(2i+2j+1)x}=
		(q^4;q^4)_{\infty}
		\Bigl[
			\theta_{q^4}( -\kappa^4 q^{2i+2j+3})-
			\kappa^{-2}
			\theta_{q^4}( -\kappa^4 q^{2i+2j+1})
		\Bigr].
	\end{equation}
	We can simplify this expression depending on the parity of $i+j$.
	If $i+j$ is even, we have
	\begin{equation*}
		\eqref{eq:theta_4_for_summing_over_Z}=
		(q^4;q^4)_{\infty}
		(-\kappa^{-2})^{i+j}
		\bigl[
			\theta_{q^4}(-\kappa^4 q^3)
			q^{-(i+j)(i+j+1)/2}
			-\kappa^{-2}
			\theta_{q^4}(-\kappa^4 q)
			q^{-(i+j)(i+j-1)/2}
		\bigr].
	\end{equation*}
	If $i+j$ is odd, we have
	\begin{equation*}
		\eqref{eq:theta_4_for_summing_over_Z}=
		(q^4;q^4)_{\infty}
		(-\kappa^{-2})^{i+j-1}
		\bigl[
			\kappa^{-4}\theta_{q^4}(-\kappa^4 q)
			q^{-(i+j)(i+j+1)/2}
			-\kappa^{-2}
			\theta_{q^4}(-\kappa^4q^{3} )
			q^{-(i+j)(i+j-1)/2}
		\bigr].
	\end{equation*}
	Multiplying this by the remaining terms in \eqref{eq:theta_function_eigenfunction_proof_3},
	we have for $i+j$ even:
	\begin{equation*}
		\begin{split}
			&
			-(-1)^{2n+i+j}\kappa^2 q^{(i-j)^2/2} (q^4;q^4)_{\infty}
			\bigl[
				\theta_{q^4}(-\kappa^4 q^3)q^{-(i+j)/2}
				-\kappa^{-2}
				\theta_{q^4}(-\kappa^4 q)q^{(i+j)/2}
			\bigr]
			\\&\hspace{50pt}
			=(-1)^{2n+i+j}
			q^{(i-j)^2/2} (q^4;q^4)_{\infty}
			\bigl[
				\theta_{q^4}(-\kappa^4 q)q^{(i+j)/2}
				-\kappa^{2}\theta_{q^4}(-\kappa^4 q^3)q^{-(i+j)/2}
			\bigr],
		\end{split}
	\end{equation*}
	and for $i+j$ odd:
	\begin{equation*}
		(-1)^{2n+i+j}
		q^{(i-j)^2/2}
		(q^4;q^4)_{\infty}
		\bigl[
			\theta_{q^4}(-\kappa^4 q)q^{-(i+j)/2}
			-
			\kappa^{2}
			\theta_{q^4}(-\kappa^4 q^3)q^{(i+j)/2}
		\bigr].
	\end{equation*}
	Utilizing the symmetry of the summation intervals for $i,j$, we see that
	the power $q^{-(i+j)/2}$ can always be replaced by $q^{(i+j)/2}$.
	This eliminates the dependence on the parity of $i+j$,
	and thus we have:
	\begin{equation*}
		\sum_{x\in \mathbb{Z}}(\mathcal{F}_n(x))^2=
		\frac{(q^4;q^4)_{\infty}\left(
				\theta_{q^4}(-\kappa^4 q)
				-\kappa^{2}\theta_{q^4}(-\kappa^4 q^3)
		\right)}
		{\theta_{\sqrt{q}}(|\kappa|)^2}
		\sum_{i,j}
		\frac{(-1)^{2n+i+j}q^{(i-j)^2/2+(i+j)/2}}
		{(q;q)_{n-i} (q;q)_{n+i} (q;q)_{n-j} (q;q)_{n+j}}.
	\end{equation*}
	By \Cref{lemma:theta_identity} below,
	the prefactor is equal to
	$\frac{
	(-\kappa^2)(q;q)_\infty\ssp \theta_q(\kappa^{-2})}{\theta_{\sqrt q}(|\kappa|)^2}$.
	The remaining sum over $i,j$ is simplified
	to $q^{-n}(q;q)_{2n}^{-1}$
	in \Cref{lemma:a_binomial_identity} below.
	Thus, we arrive at the desired norm in
	$\ell^2(\mathbb{Z})$.

	Let us obtain the norm in $\ell^2(\mathbb{Z}')$.
	The sum in \eqref{eq:theta_4_for_summing_over_Z}
	over $\mathbb{Z}'$
	is equal to
	\begin{equation*}
		\begin{split}
			&
			\sum_{x\in \mathbb{Z}'}
			(1-\kappa^{-2}q^{-2x})(-\kappa^2)^{2x}q^{2x^2+(2i+2j+1)x}=
			\sum_{y\in \mathbb{Z}}
			(1-\kappa^{-2}q^{-2y-1})(-\kappa^2)^{2y+1}q^{2y^2+(2i+2j+3)y+i+j+1}
			\\&\hspace{120pt}=
			-\kappa^2 q^{i+j+1}
			(q^4;q^4)_{\infty}
			\bigl[
				\theta_{q^4}(-\kappa^4 q^{2i+2j+3})
				-
				\kappa^{-2}q^{-1}\theta_{q^4}(-\kappa^4 q^{2i+2j+1})
			\bigr].
		\end{split}
	\end{equation*}
	Summing this expression (together with the remaining factors
	from \eqref{eq:theta_function_eigenfunction_proof_3})
	over $i,j$ is analogous to the case of $\ell^2(\mathbb{Z})$,
	and we omit the details.
	In the end, one can check that the norms in
	$\ell^2(\mathbb{Z})$ and $\ell^2(\mathbb{Z}')$ are equal to each other.
	This completes the proof of \Cref{prop:norms_of_F_n_functions_app}
	modulo the two lemmas below.
\end{proof}

\begin{lemma}
	\label{lemma:theta_identity}
	We have
	\begin{equation*}
		(q^4;q^4)_{\infty}\left(
			\theta_{q^4}(-\kappa^4 q)
			-\kappa^{2}\theta_{q^4}(-\kappa^4 q^3)
			\right)=
			(-\kappa^2)(q;q)_\infty\ssp \theta_q(\kappa^{-2}).
	\end{equation*}
\end{lemma}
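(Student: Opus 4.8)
The plan is to expand both sides as Laurent series in $\kappa^{2}$ via the Jacobi triple product identity and match them. First I would apply the triple product identity with base $q^{4}$, which gives $(q^{4};q^{4})_{\infty}\theta_{q^{4}}(w)=\sum_{m\in\mathbb{Z}}(-1)^{m}q^{2m(m-1)}w^{m}$, and substitute $w=-\kappa^{4}q$ and $w=-\kappa^{4}q^{3}$. Using $(-1)^{m}(-\kappa^{4})^{m}=\kappa^{4m}$ to absorb the signs, the left-hand side of the lemma becomes
\begin{equation*}
	\sum_{m\in\mathbb{Z}}\kappa^{4m}q^{2m^{2}-m}-\sum_{m\in\mathbb{Z}}\kappa^{4m+2}q^{2m^{2}+m}.
\end{equation*}

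Next I would recombine these two sums into one. The exponents $4m$ and $4m+2$ run, as $m$ ranges over $\mathbb{Z}$, over all even integers, so I reparametrize by a single index $k\in\mathbb{Z}$, setting $k=2m$ in the first sum and $k=2m+1$ in the second. A direct check of the $q$-exponents shows $2m^{2}-m=\tfrac{k(k-1)}{2}$ when $k=2m$ and $2m^{2}+m=\tfrac{k(k-1)}{2}$ when $k=2m+1$; since the first sum contributes the terms with even $k$ with sign $+1=(-1)^{k}$ and the second contributes the odd-$k$ terms with sign $-1=(-1)^{k}$, the two sums collapse to $\sum_{k\in\mathbb{Z}}(-1)^{k}q^{k(k-1)/2}\kappa^{2k}$. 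By the triple product identity (now with base $q$), this equals $(q;q)_{\infty}\theta_{q}(\kappa^{2})$.

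It then remains to show $(q;q)_{\infty}\theta_{q}(\kappa^{2})=-\kappa^{2}(q;q)_{\infty}\theta_{q}(\kappa^{-2})$, i.e.\ $\theta_{q}(\kappa^{2})=-\kappa^{2}\theta_{q}(\kappa^{-2})$. For this I would first note the inversion symmetry $\theta_{q}(z)=\theta_{q}(q/z)$, immediate from the product form $\theta_{q}(z)=(z;q)_{\infty}(q/z;q)_{\infty}$, so that $\theta_{q}(\kappa^{2})=\theta_{q}(q\kappa^{-2})$; then apply the quasi-periodicity relation from \eqref{eq:theta_function_and_defns} with $m=1$, namely $\theta_{q}(qz)=-z^{-1}\theta_{q}(z)$, at $z=\kappa^{-2}$, which yields $\theta_{q}(q\kappa^{-2})=-\kappa^{2}\theta_{q}(\kappa^{-2})$. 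Chaining these identities gives the claim.

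All manipulations are with absolutely convergent series (since $0<q<1$), so there is no analytic obstacle here; the only point needing care is the sign bookkeeping and the even/odd reparametrization of the summation index when merging the two $\theta_{q^{4}}$-sums into a single $\theta_{q}$-sum, which one should verify by explicitly matching $q$-exponents as indicated above.
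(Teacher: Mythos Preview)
Your proof is correct and follows essentially the same approach as the paper: both expand via the Jacobi triple product and match terms by splitting a single integer index into its even and odd parts. The only cosmetic difference is that the paper reindexes directly to land on the series for $(-\kappa^{2})(q;q)_{\infty}\theta_{q}(\kappa^{-2})$, whereas you first obtain $(q;q)_{\infty}\theta_{q}(\kappa^{2})$ and then apply the inversion symmetry $\theta_{q}(z)=\theta_{q}(q/z)$ together with quasi-periodicity; this extra step is equivalent to the paper's implicit substitution $m\mapsto -m$ in the matching.
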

\begin{proof}
	Using the Jacobi triple product identity \eqref{eq:theta_function_and_defns}, we have
	\begin{align*}
		(q^4;q^4)_{\infty}\left(
		\theta_{q^4}(-\kappa^4 q)
		-\kappa^{2}\theta_{q^4}(-\kappa^4 q^3)
		\right)&=
		\sum_{m\in \mathbb{Z}}
		(q^4)^{m(m-1)/2}\left( \kappa^{4m} q^m-\kappa^2 \kappa^{4m}q^{3m} \right).
	\end{align*}
	We aim to show that this sum is equal to
	$\sum_{j\in \mathbb{Z}}(-1)^{j+1} \kappa ^{2-2 j} q^{j(j-1)/2}$.
	Indeed, for $j=2m+1$, the $j$-th summand is equal to
	$(q^4)^{(-m)(-m-1)/2}\kappa^{-4m} q^{-m}$,
	and for $j=2m$, the $j$-th summand is
	$-(q^4)^{(-m)(-m-1)/2}\kappa^2 \kappa^{-4m}q^{-3m}$.
	This completes the proof.
\end{proof}

\begin{lemma}
	\label{lemma:a_binomial_identity}
	We have for all $n\in \tfrac12\mathbb{Z}_{\ge0}$:
	\begin{equation}
		\label{eq:a_binomial_identity}
		\sum_{i=-n}^n\sum_{j=-n}^n
		\frac{(-1)^{2n+i+j}q^{(i-j)^2/2+(i+j)/2}}
		{(q;q)_{n-i} (q;q)_{n+i} (q;q)_{n-j} (q;q)_{n+j}}=\frac{1}{q^n(q;q)_{2n}},
	\end{equation}
	where the sums are over $i,j$ of the same half-parity as $n$.
\end{lemma}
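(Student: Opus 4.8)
The plan is to reduce \eqref{eq:a_binomial_identity} to the terminating $q$-binomial theorem by passing from the indices $i,j$ to $a\coloneqq n+i$ and $b\coloneqq n+j$. Put $M\coloneqq 2n\in\mathbb{Z}_{\ge0}$. Since $i$ and $j$ run over $-n,-n+1,\dots,n$ with the same half-parity as $n$, the shifted indices $a,b$ run over $0,1,\dots,M$ and are ordinary integers, even when $n$ is a half-integer. A direct computation gives
\[
\tfrac{(i-j)^2}{2}+\tfrac{i+j}{2}=\tfrac{a(a+1)}{2}+\tfrac{b(b+1)}{2}-ab-n,
\]
while $(q;q)_{n-i}(q;q)_{n+i}=(q;q)_{M-a}(q;q)_{a}$ (and likewise in $j$), and $(-1)^{2n+i+j}=(-1)^{a+b}$. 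Writing $\binom{M}{a}_q\coloneqq\frac{(q;q)_M}{(q;q)_a(q;q)_{M-a}}$ for the Gaussian binomial coefficient, the overall factor $q^{-n}$ cancels against the right-hand side of \eqref{eq:a_binomial_identity}, and the claim is equivalent to
\[
\sum_{a=0}^{M}\sum_{b=0}^{M}(-1)^{a+b}\,q^{\frac{a(a+1)}{2}+\frac{b(b+1)}{2}-ab}\,\binom{M}{a}_q\binom{M}{b}_q=(q;q)_M.
\]

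Next I would do the inner sum over $b$ with $a$ held fixed. Using $\tfrac{b(b+1)}{2}=\binom{b}{2}+b$, the $b$-dependent part of the summand is $(-1)^b q^{\binom{b}{2}}\binom{M}{b}_q\,(q^{1-a})^{b}$, so the terminating $q$-binomial theorem $\sum_{b=0}^{M}(-1)^b q^{\binom{b}{2}}\binom{M}{b}_q x^b=(x;q)_M$, applied with $x=q^{1-a}$, yields $\sum_{b}(\cdots)=(q^{1-a};q)_M=\prod_{k=0}^{M-1}(1-q^{1-a+k})$. For $a=0$ this equals $(q;q)_M$, whereas for $1\le a\le M$ the factor with $k=a-1$ equals $1-q^{0}=0$, so the product vanishes. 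Hence only the $a=0$ term of the outer sum survives, and it equals $(-1)^0q^{0}\binom{M}{0}_q(q;q)_M=(q;q)_M$, which is exactly the required value.

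There is no genuine obstacle here; the computation is essentially two applications of the $q$-binomial theorem (one of which trivialises because its argument specialises to $q^0$). The only two points that require a little attention are the algebraic simplification of the quadratic exponent after the substitution $i\mapsto a-n$, $j\mapsto b-n$ — where one must check that every appearance of $n$ outside the exponent cancels, leaving precisely the factor $q^{-n}$ needed to match the right-hand side of \eqref{eq:a_binomial_identity} — and the elementary but crucial observation that $(q^{1-a};q)_M$ vanishes for each positive $a\le M$, which is what collapses the double sum to its single nonzero term. The half-integer case $n\in\mathbb{Z}'$ needs no separate argument, since after the substitution $M=2n$ and $a,b$ are integers just as in the integer case.
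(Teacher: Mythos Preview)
Your proof is correct and is essentially the same argument as the paper's: both fix one index, apply the terminating $q$-binomial theorem to the inner sum (yielding a product $(x;q)_M$ that vanishes except at a single boundary value of the outer index), and read off the surviving term. The only difference is cosmetic --- you shift to nonnegative integer indices $a=n+i$, $b=n+j$ so the surviving term sits at $a=0$, whereas the paper works directly with $i,j$ and finds the survivor at $i=-n$.
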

\begin{proof}
	Let us show that, for a fixed $i$, the sum over $j$
	in \eqref{eq:a_binomial_identity}
	is zero unless $i=-n$.
	Indeed,
	\begin{equation}
		\label{eq:a_binomial_identity_proof_1}
		\sum_{j=-n}^n
		\frac{(-1)^{n+j}q^{j(j+1)/2}z^j}
		{ (q;q)_{n-j} (q;q)_{n+j}}=
		\frac{z^{-n}q^{n(n-1)/2}}{(q;q)_{2n}}\prod_{r=-n+1}^{n}(1-zq^{r}),
	\end{equation}
	which is a direct consequence of the $q$-binomial theorem
	(it allows one to
	extract the coefficient of $z^{j}$ on the right-hand side).
	The polynomial on the right-hand side of
	\eqref{eq:a_binomial_identity_proof_1}
	vanishes at $z=q^{-i}$ for every $i$ with $-n+1\le i\le n$.
	Therefore, the sum over $j$ in
	\eqref{eq:a_binomial_identity} is zero unless $i=-n$.
	When $i=-n$, substituting $z=q^{-n}$ yields the desired result.
\end{proof}

\subsection{Representation of the identity and completeness}
\label{sub:representation_of_identity}

\begin{proposition}
	\label{prop:completeness_of_F_n_functions_app}
	For all $x,y\in \tfrac12\mathbb{Z}$
	with $x-y\in \mathbb{Z}$, we have
	\begin{equation}
		\label{eq:representation_of_identity}
		\sum_{n\in \frac12\mathbb{Z}_{\ge0}}
		\frac{\mathcal{F}_n(x)\mathcal{F}_n(y)}
		{\|\mathcal{F}_n\|^2
		_{\ell^2(\mathbb{Z})}}
		=\mathbf{1}_{x=y}.
	\end{equation}
	This implies that the orthonormal functions $\mathcal{F}_n/\|\mathcal{F}_n\|_{\ell^2(\mathbb{Z})}$,
	$n\in \tfrac12\mathbb{Z}_{\ge0}$,
	are complete in each of the spaces $\ell^2(\mathbb{Z})$ and $\ell^2(\mathbb{Z}')$.
\end{proposition}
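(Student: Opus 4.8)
The plan is to reduce \eqref{eq:representation_of_identity} to the Poisson kernel (the $q$-Mehler formula) for the continuous $q^{-1}$-Hermite polynomials established in \cite{IsmailMasson1994}, and then to collapse the resulting closed form with theta-function identities. First I would rewrite each factor $\mathcal{F}_n$ by means of the $q$-hypergeometric representation of \Cref{rmk:theta_function_hypergeometric_form_cont_hermite}, namely $\mathcal{F}_n(x)=\varpi(x)\,q^{(n-x)^2}|\kappa|^{-2n}(q;q)_{2n}^{-1}\,H_{2n}(\kappa q^{x}\mid q^{-1})$, where $\varpi(x)$ is a nonvanishing function of $x$ alone (collecting $\sqrt{1-\kappa^{-2}q^{-2x}}$, $q^{x/2}$, $|\kappa|^{2x+1}$ and $\theta_{\sqrt q}(|\kappa|)^{-1}$). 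Inserting this together with the explicit norm from \Cref{prop:norms_of_F_n_functions_app}, the $\theta_{\sqrt q}(|\kappa|)$ factors and the $x,y$-independent constant cancel, and — writing $m=2n$, so that the range $n\in\tfrac12\mathbb Z_{\ge0}$ corresponds to all integer degrees $m\ge0$ — the left-hand side of \eqref{eq:representation_of_identity} becomes $\varpi(x)\varpi(y)\,\bigl((q;q)_\infty(-\kappa^2)\theta_q(\kappa^{-2})\bigr)^{-1}$ times an explicit bilinear sum $\sum_{m\ge0}c_m\,H_m(\kappa q^{x}\mid q^{-1})H_m(\kappa q^{y}\mid q^{-1})$, with $c_m$ read off from the accumulated powers $q^{(n-x)^2+(n-y)^2+n}|\kappa|^{-4n}(q;q)_{2n}^{-1}$ after completing the square in $n=m/2$. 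This is a mere re-indexing, not a rearrangement, and absolute convergence is already guaranteed by Bessel's inequality for the orthonormal system $\mathcal{F}_n/\|\mathcal{F}_n\|_{\ell^2(\mathbb Z)}$ (\Cref{prop:orthogonality_F_n_functions_app}, \Cref{prop:norms_of_F_n_functions_app}).

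Second, I would recognize this bilinear sum as a specialization of the $q^{-1}$-Hermite Poisson kernel of \cite{IsmailMasson1994} (see also \cite[Chapter~3.26]{Koekoek1996}) and substitute its closed form, a ratio of Jacobi theta functions in $\kappa q^{x}$, $\kappa q^{y}$ and the auxiliary parameter produced by the $c_m$. The conceptual reason this succeeds is that the discrete measure $\sum_{x\in\mathbb Z}\varpi(x)^2\,\delta_{\kappa q^{x}}$, with respect to which the $H_{2n}$ are orthogonal by \Cref{prop:orthogonality_F_n_functions_app}, is an $N$-extremal solution of the (indeterminate) $q^{-1}$-Hermite moment problem; its orthogonal polynomials are then complete in the corresponding $L^2$, and their Poisson kernel equals the reproducing (identity) kernel on the support $\{\kappa q^{x}:x\in\mathbb Z\}$. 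Matching the Ismail--Masson parameters to this measure, and checking convergence of the bilinear series to it, is where most of the bookkeeping lies; the case $x,y\in\mathbb Z'$ is handled identically with the lattice $\{\kappa q^{x}:x\in\mathbb Z'\}$.

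Third, with the closed form in hand, I would simplify it — absorbing the prefactor $\varpi(x)\varpi(y)$ and using the quasi-periodicity of $\theta_q$ \eqref{eq:theta_function_and_defns} together with the triple product identity — to an expression depending only on the integer $x-y$. That expression should carry a Pochhammer/theta factor in $q^{x-y}$ which vanishes at every nonzero integer separation, while at $x=y$ a compensating cancellation between numerator and denominator leaves the value $1$; combined with the $x\leftrightarrow y$ symmetry of \eqref{eq:representation_of_identity}, this yields exactly $\mathbf 1_{x=y}$. The completeness assertion then follows from the standard Hilbert-space fact that an orthonormal system $\{\psi_n\}$ with $\sum_n\psi_n(x)\psi_n(y)=\mathbf 1_{x=y}$ on the standard basis $\{e_x\}$ obeys Parseval's identity $\sum_n|\langle f,\psi_n\rangle|^2=\|f\|^2$ for every finitely supported, hence every, $f$, and is therefore complete; one applies this with $\psi_n=\mathcal{F}_n/\|\mathcal{F}_n\|_{\ell^2(\mathbb Z)}$ in $\ell^2(\mathbb Z)$, and likewise in $\ell^2(\mathbb Z')$ since the two norms coincide (\Cref{prop:norms_of_F_n_functions_app}).

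The main obstacle I anticipate is the second step: pinning down the exact form of the $q^{-1}$-Hermite Poisson kernel relevant here — verifying that $\sum_x\varpi(x)^2\delta_{\kappa q^{x}}$ is precisely the $N$-extremal solution to which the Ismail--Masson formula applies, and that the (growing) bilinear series converges to it — and then pushing the theta-function simplification of the third step through so that the closed form genuinely collapses to the Kronecker delta. Everything else — the hypergeometric rewriting, the cancellation of the common constants, and the functional-analytic conclusion — is routine given the formulas already established in this appendix.
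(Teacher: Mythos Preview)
Your approach is essentially the paper's: both express $\mathcal{F}_n$ through the continuous $q^{-1}$-Hermite polynomials, apply the Ismail--Masson $q$-Mehler formula to sum the bilinear series in closed form, observe that the factor $(q^{x-y+1};q)_\infty(q^{y-x+1};q)_\infty$ kills every nonzero integer separation, and evaluate at $x=y$ using theta quasi-periodicity. The only difference is that you anticipate a genuine obstacle in the second step --- matching to an $N$-extremal solution of the indeterminate moment problem --- which the paper simply bypasses: it invokes the $q$-Mehler formula purely as the analytic series identity
\[
\sum_{m\ge0}\frac{H_m(\lambda\mid q^{-1})H_m(\mu\mid q^{-1})\,q^{m(m-1)/2}z^m}{(q;q)_m}
=\frac{(-z\lambda\mu;q)_\infty(-z/(\lambda\mu);q)_\infty(-z\lambda/\mu;q)_\infty(-z\mu/\lambda;q)_\infty}{(z^2/q;q)_\infty}
\]
at $z=-q$, $\lambda=\kappa q^x$, $\mu=\kappa q^y$, with no appeal to the moment problem or completeness of the $H_m$ in any weighted $L^2$. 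So your anticipated difficulty is a red herring, and the bookkeeping you flag as ``where most of the work lies'' is in fact absent.
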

\begin{proof}
	We need to show that
	\begin{equation}
		\label{eq:representation_identity_proof_1}
			\sum_{n\in \frac12\mathbb{Z}_{\ge0}}
			q^n(q;q)_{2n}\ssp \mathcal{F}_n(x)\mathcal{F}_n(y)
			=\mathbf{1}_{x=y}\ssp
			\frac
			{
			(-\kappa^2)(q;q)_\infty\ssp \theta_q(\kappa^{-2})}
			{\theta_{\sqrt{q}}(|\kappa|)^2}.
	\end{equation}
	We employ the so-called $q$-Mehler formula for the
	continuous $q^{-1}$-Hermite polynomials $H_n$
	\eqref{eq:continuous_q_hermite}
	with $q>1$, which is available from \cite[Theorem~2.1]{IsmailMasson1994}.
	In our notation, it reads
	\begin{equation}
		\label{eq:q_Mehler}
		\begin{split}
			&
			\sum_{m=0}^{\infty}
			\frac{H_m(\lambda\mid q^{-1})H_m(\mu\mid q^{-1})\ssp q^{m(m-1)/2}z^m}
			{(q;q)_m}
			\\&
			\hspace{120pt}
			=
			\frac{(-z\lambda\mu;q)_{\infty}(-z/(\lambda\mu);q)_{\infty}(-z\lambda/\mu;q)_{\infty}
			(-z\mu/\lambda;q)_{\infty}}
			{(z^2/q;q)_{\infty}}.
		\end{split}
	\end{equation}
	The summands in the left-hand side
	of \eqref{eq:representation_identity_proof_1}
	are expressed through the continuous $q^{-1}$-Hermite polynomials as
	\begin{equation*}
		\begin{split}
			q^n(q;q)_{2n}\ssp \mathcal{F}_n(x)\mathcal{F}_n(y)&=
			\frac{(-1)^{2x+2y}q^{x^2+y^2+(x+y)/2}
				|\kappa|^{2(x+y+1)}
			\sqrt{(1-\kappa^{-2}q^{-2x})(1-\kappa^{-2}q^{-2y})}}{\theta_{\sqrt q}(|\kappa|)^2}
			\\
			&\hspace{40pt}\times
			\frac{H_{2n}(\kappa q^x\mid q^{-1})
			H_{2n}(\kappa q^y\mid q^{-1})
			q^{2n(2n-1)/2}(-q)^{2n}}{(q;q)_{2n}}.
		\end{split}
	\end{equation*}
	Applying \eqref{eq:q_Mehler}
	with $z=-q$, $\lambda=\kappa q^x$, and $\mu=\kappa q^y$, we see that
	the sum in the left-hand side of \eqref{eq:representation_identity_proof_1}
	becomes
	\begin{equation}
		\label{eq:representation_identity_proof_2}
		\begin{split}
			&\frac{(-1)^{2x+2y}q^{x^2+y^2+(x+y)/2}
			|\kappa|^{2(x+y+1)}
			\sqrt{(1-\kappa^{-2}q^{-2x})(1-\kappa^{-2}q^{-2y})}}{\theta_{\sqrt q}(|\kappa|)^2}
			\\&\hspace{80pt}
			\times
			\frac{(\kappa^2 q^{x+y+1};q)_{\infty}(\kappa^{-2}q^{1-x-y};q)_{\infty}(q^{x-y+1};q)_{\infty}
			(q^{y-x+1};q)_{\infty}}
			{(q;q)_{\infty}}.
		\end{split}
	\end{equation}
	Thanks to the factors
	$(q^{x-y+1};q)_{\infty}
	(q^{y-x+1};q)_{\infty}$,
	if $x-y\in \mathbb{Z}$, then
	expression \eqref{eq:representation_identity_proof_2}
	vanishes unless $x=y$.

	To complete
	the proof of \Cref{prop:completeness_of_F_n_functions_app},
	it remains to simplify \eqref{eq:representation_identity_proof_2} for $y=x$
	and match it to the right-hand side of
	\eqref{eq:representation_identity_proof_1}.
	We have
	\begin{equation*}
		\begin{split}
			\eqref{eq:representation_identity_proof_2}
			&=\frac{q^{2x^2+x}
			|\kappa|^{4x+2}
			(1-\kappa^{-2}q^{-2x})}{\theta_{\sqrt q}(|\kappa|)^2}
			(\kappa^2 q^{2x+1};q)_{\infty}(\kappa^{-2}q^{1-2x};q)_{\infty}(q;q)_{\infty}
			\\&=
			\frac{q^{2x^2+x}
			|\kappa|^{4x+2}(q;q)_\infty\ssp \theta_q(\kappa^{-2}q^{-2x})}{\theta_{\sqrt q}(|\kappa|)^2}
			\\&=
			\frac{
			(-\kappa^2)(q;q)_\infty\ssp \theta_q(\kappa^{-2})}{\theta_{\sqrt q}(|\kappa|)^2},
		\end{split}
	\end{equation*}
	where in the last line we used the quasi-periodicity \eqref{eq:theta_function_and_defns}.
\end{proof}

\begin{remark}
	\label{rmk:no_symmetry_shift_in_half}
	When $x$ and $y$ have different half-parities, the product
	$(q^{x-y+1};q)_{\infty}(q^{y-x+1};q)_{\infty}$, and hence
	the whole expression
	\eqref{eq:representation_identity_proof_2}, does not
	vanish. Consequently, the functions $\mathcal{F}_n(x)$ are
	\emph{not complete} in the larger space
	$\ell^{2}(\tfrac12\mathbb{Z})$.
\end{remark}

\section{\texttt{Mathematica} code}
\label{sec:Mathematica_code_appendix}

Here we present the \texttt{Mathematica} definitions of the two-dimensional
$q$-Racah kernel~$\widetilde{K}$
\eqref{eq:K_kernel_two_dimensional},
the pre-limit barcode kernel $K_{(L)}^{\mathrm{barcode}}$
\eqref{eq:prelimit_barcode_kernel},
and the conjectural limit $\mathcal{K}^{\mathrm{barcode}}(s,t)$
\eqref{eq:conjectural_limit_of_full_barcode_kernel}.
These code snippets are involved in the numerical verification of
\Cref{conj:barcode_density_limit,conj:conjectural_limit_of_full_barcode_kernel,conj:barcode_limit_via_F}
which is described in
\Cref{sub:s7_numerics}.
All \texttt{Mathematica} formulas are written only for the first parameter zone in the hexagon \eqref{eq:hexagon_zone_1}.
Note that in code, we refer to $N$ as \texttt{NN}.

\lstset{
  basicstyle=\scriptsize\ttfamily,
  columns=fullflexible,
	tabsize=2,
  keepspaces=true,
}
\begin{lstlisting}[language=Mathematica, caption={The orthonormal $q$-Racah polynomials $f_n^t(x)$} \eqref{eq:qRacah_orthonormal_basis_f_n}, label=lst:fnt, backgroundcolor=\color{gray!10}]
w[t_, x_] := (
		(q^(1 - 2 NN - T))^-x (1 - q^(1 - S - t + 2 x) \[Kappa]^2)
		QPochhammer[q^(1 - NN - S), q, x] QPochhammer[q^(1 - NN - t), q, x]
		QPochhammer[q^(1 - S - t) \[Kappa]^2, q, x] QPochhammer[q^(1 - T) \[Kappa]^2, q, x]
) / (
		(1 - q^(1 - S - t) \[Kappa]^2) QPochhammer[q, q, x] QPochhammer[q^(1 - S - t + T), q, x]
		QPochhammer[q^(1 + NN - S) \[Kappa]^2, q, x] QPochhammer[q^(1 + NN - t) \[Kappa]^2, q, x]
)

R[n_, t_, x_] := QHypergeometricPFQ[
		{q^-n, q^(1 + n - 2 NN - T), q^-x, q^(1 - S - t + x) \[Kappa]^2},
		{q^(1 - NN - S), q^(1 - NN - t), q^(1 - T) \[Kappa]^2},
		q, q
]

h[n_, t_] := (
		(1 - q^(1 - 2 NN - T)) (q^(1 - S - t) \[Kappa]^2)^n QPochhammer[q, q, n]
		QPochhammer[q^(2 - 2 NN - T), q, -1 + NN + t] QPochhammer[q^(1 - NN + S - T), q, n]
		QPochhammer[q^(1 - NN + t - T), q, n] QPochhammer[q^(1 - 2 NN) / \[Kappa]^2, q, n]
		QPochhammer[q^(-NN + S) / \[Kappa]^2, q, -1 + NN + t]
) / (
		(1 - q^(1 + 2 n - 2 NN - T)) QPochhammer[q^(1 - NN - S), q, n] QPochhammer[q^(1 - NN - t), q, n]
		QPochhammer[q^(1 - 2 NN - T), q, n] QPochhammer[q^(1 - NN + S - T), q, -1 + NN + t]
		QPochhammer[q^(1 - 2 NN) / \[Kappa]^2, q, -1 + NN + t] QPochhammer[q^(1 - T) \[Kappa]^2, q, n]
)

f[n_, t_, x_] := R[n, t, x] Sqrt[w[t, x] / h[n, t]]
\end{lstlisting}

\begin{lstlisting}[language=Mathematica, caption={The two-dimensional kernel $\widetilde K(s,x;t,y)$ \eqref{eq:K_kernel_two_dimensional}}, label=lst:Ktilde, backgroundcolor=\color{gray!10}]
CTilde[n_, t_] := Sqrt[(q^(n - t - NN) - 1) * (1 - q^(T + NN - t - n - 1))]

Ker[s_, x_, t_, y_] :=
	Which[
		s >= t,
			Sum[
				1/Product[CTilde[n, j], {j, t, s - 1}] * f[n, s, x] * f[n, t, y],
				{n, 0, NN - 1}
			],
		s < t,
			-Sum[
				Product[CTilde[n, j], {j, s, t - 1}] * f[n, s, x] * f[n, t, y],
				{n, NN, NN + s - 1}
			]
	]
\end{lstlisting}

\begin{lstlisting}[language=Mathematica, caption={The inverse Kasteleyn matrix \eqref{eq:inverse_kasteleyn_matrix}}, label=lst:KastInv, backgroundcolor=\color{gray!10}]
wLozenge[j_] := \[Kappa] q^(j - (S + 1)/2) - 1/(\[Kappa] q^(j - (S + 1)/2))

wTilde[t_, x_] := (
		(-1)^(t + S) * q^(x * (2 * NN + T - 1)) * (1 - \[Kappa]^2 * q^(2 * x - t - S + 1))
) / (
		QPochhammer[q, q, x] * QPochhammer[q, q, T - S - t + x] *
		QPochhammer[q^(-1), q^(-1), t + NN - x - 1] *
		QPochhammer[q^(-1), q^(-1), S + NN - x - 1] *
		QPochhammer[\[Kappa]^2 * q^(x - T + 1), q, T + NN - t] *
		QPochhammer[\[Kappa]^2 * q^(x - t - S + 1), q, NN + t]
)

G[t_, x_] := (
		(-1)^(x) * \[Kappa]^(-t) * q^(x * (T + NN - t - 1) + t * (S/2 - 1/2) + t * (t + 1)/4) *
		(1 - \[Kappa]^2 * q^(2 * x - t - S + 1))
) / (
		QPochhammer[q^(-1), q^(-1), S + NN - 1 - x] *
		QPochhammer[q, q, T - S + x - t] *
		QPochhammer[\[Kappa]^2 * q^(x - T + 1), q, T + NN - t]
) / Sqrt[wTilde[t, x]]

KastInv[s_, x_, t_, y_] := G[s, x] / G[t, y] / wLozenge[x - s/2 + 1] *
  (If[x == y && s == t, 1, 0] - Ker[s, x, t, y])
\end{lstlisting}

\begin{lstlisting}[language=Mathematica, caption={The pre-limit barcode kernel $K_{(L)}^{\mathrm{barcode}}(s,t)$ \eqref{eq:prelimit_barcode_kernel}}, label=lst:KLBarcode, backgroundcolor=\color{gray!10}]
T := 8 L; NN := 4 L; S := 4 L;

Kbar[s_, t_] := KastInv[2 L + s, 3 L, 2 L + t - 1, 3 L]
\end{lstlisting}

\begin{lstlisting}[language=Mathematica, caption={The orthogonal functions $\mathcal{F}_n(x)$ \eqref{eq:F_n_functions_app}}, label=lst:F_n, backgroundcolor=\color{gray!10}]
th[q_, z_] := QPochhammer[z, q] * QPochhammer[q/z, q]

F[n_, x_] := Sqrt[1 - \[Kappa]^(-2) * q^(-2 * x)] * Sum[
		(-1)^(n + i) * q^(-i/2) / QPochhammer[q, q, n - i] /
		QPochhammer[q, q, n + i] / th[Sqrt[q], q^(x + i + 1/2) * \[Kappa]/I],
		{i, -n, n}
]

fsqnorm[n_] := (
		QPochhammer[q, q] * (-\[Kappa]^2) * th[q, \[Kappa]^(-2)] / q^n /
		QPochhammer[q, q, 2 * n] / th[Sqrt[q], \[Kappa]/I]^2
)
\end{lstlisting}

\begin{lstlisting}[language=Mathematica, caption={The partial sum in the conjectural limiting barcode kernel $\mathcal{K}^{\mathrm{barcode}}(s,t)$ \eqref{eq:conjectural_limit_of_full_barcode_kernel}}, label=lst:KLimitingBarcode, backgroundcolor=\color{gray!10}]
prefactorFromG[s_, t_] := (
		-q^(1 - s) * Sqrt[-\[Kappa]^2 / (1 - \[Kappa]^2 * q^(2 - t)) / (1 - \[Kappa]^2 * q^(1 - s))]
)

KLimitBarcodeHalf[s_, t_][M_] := (
		(-1)^(s - t) * q^((M + 1) * (s - t)) * q^((s - 1)/2) * prefactorFromG[s, t] *
		(-Sum[
				(-q^n)^(t - 1 - s) * F[n, -(t - 2)/2] * F[n, -(s - 1)/2] / fsqnorm[n],
				{n, 0, M + 1/2, 1/2}
		])
)

KLimitingBarcode[s_, t_][M_] := If[s >= t,
		KLimitBarcodeHalf[s, t][M],
		KLimitBarcodeHalf[t, s][M]
]
\end{lstlisting}

\begin{bibdiv}
\begin{biblist}
\bib{aggarwal2019universality}{article}{
      author={Aggarwal, A.},
       title={{Universality for Lozenge Tiling Local Statistics}},
        date={2023},
     journal={Ann. of Math. (2)},
      volume={198},
      number={3},
       pages={881\ndash 1012},
        note={arXiv:1907.09991 [math.PR]},
}

\bib{aggarwal2022gaussian}{article}{
      author={Aggarwal, A.},
      author={Gorin, V.},
       title={{Gaussian unitary ensemble in random lozenge tilings}},
        date={2022},
     journal={Prob. Theory Relat. Fields},
      volume={184},
      number={4},
       pages={1139\ndash 1166},
        note={arXiv:2106.07589 [math.PR]},
}

\bib{aggarwalhuang2025airy}{article}{
      author={Aggarwal, A.},
      author={Huang, J.},
       title={{Edge statistics for lozenge tilings of polygons, II: Airy line
  ensemble}},
        date={2025},
     journal={Forum Math. Pi},
      volume={13},
       pages={e2},
        note={arXiv:2108.12874 [math.PR]},
}

\bib{Ahn2022lozenge}{article}{
      author={Ahn, A.},
      author={Russkikh, M.},
      author={Van~Peski, R.},
       title={{Lozenge Tilings and the Gaussian Free Field on a Cylinder}},
        date={2022},
     journal={Comm. Math. Phys.},
      volume={396},
       pages={1221\ndash 1275},
        note={arXiv:2105.00551 [math.PR]},
}

\bib{BKMM2003}{book}{
      author={Baik, J.},
      author={Kriecherbauer, T.},
      author={McLaughlin, K. T.-R.},
      author={Miller, P.~D.},
       title={{Discrete Orthogonal Polynomials: Asymptotics and Applications}},
      series={Annals of Mathematics Studies},
   publisher={Princeton University Press},
        date={2007},
        note={arXiv:math/0310278 [math.CA]},
}

\bib{BerggrenBorodin2023}{article}{
      author={Berggren, T.},
      author={Borodin, A.},
       title={{Geometry of the doubly periodic Aztec dimer model}},
        date={2023},
     journal={arXiv preprint},
        note={arXiv:2306.07482 [math.PR]},
}

\bib{berggren2024perfect_hexagon}{article}{
      author={Berggren, T.},
      author={Nicoletti, M.},
      author={Russkikh, M.},
       title={{Perfect t-embeddings and Lozenge Tilings}},
        date={2024},
     journal={arXiv preprint},
        note={arXiv:2408.05441 [math.PR]},
}

\bib{Borodin2009}{incollection}{
      author={Borodin, A.},
       title={Determinantal point processes},
        date={2011},
   booktitle={Oxford handbook of random matrix theory},
      editor={Akemann, G.},
      editor={Baik, J.},
      editor={Di~Francesco, P.},
   publisher={Oxford University Press},
        note={arXiv:0911.1153 [math.PR]},
}

\bib{BG2011non}{article}{
      author={Borodin, A.},
      author={Gorin, V.},
       title={{Markov processes of infinitely many nonintersecting random
  walks}},
        date={2013},
     journal={Probab. Theory Relat. Fields},
      volume={155},
      number={3-4},
       pages={935\ndash 997},
        note={arXiv:1106.1299 [math.PR]},
}

\bib{borodin-gr2009q}{article}{
      author={Borodin, A.},
      author={Gorin, V.},
      author={Rains, E.},
       title={{q-Distributions on boxed plane partitions}},
        date={2010},
     journal={Selecta Math.},
      volume={16},
      number={4},
       pages={731\ndash 789},
        note={arXiv:0905.0679 [math-ph]},
}

\bib{borodin2007asymptotics}{article}{
      author={Borodin, A.},
      author={Olshanski, G.},
       title={{Asymptotics of Plancherel-type random partitions}},
        date={2007},
     journal={Journal of Algebra},
      volume={313},
      number={1},
       pages={40\ndash 60},
        note={arXiv:math/0610240},
}

\bib{bufetov2016fluctuations}{article}{
      author={Bufetov, A.},
      author={Gorin, V.},
       title={{Fluctuations of particle systems determined by Schur generating
  functions}},
        date={2018},
     journal={Adv. Math.},
      volume={338},
       pages={702\ndash 781},
        note={arXiv:1604.01110 [math.PR]},
}

\bib{BufetovPetrovZografos2025}{article}{
      author={Bufetov, A.},
      author={Petrov, L.},
      author={Zografos, P.},
       title={{Domino Tilings of the Aztec Diamond in Random Environment and
  Schur Generating Functions}},
        date={2025},
     journal={arXiv preprint},
        note={arXiv:2507.08560 [math.PR]},
}

\bib{cerf2001low}{article}{
      author={Cerf, R.},
      author={Kenyon, R.},
       title={{The low-temperature expansion of the Wulff crystal in the 3D
  Ising model}},
        date={2001},
     journal={Comm. Math. Phys.},
      volume={222},
       pages={147\ndash 179},
}

\bib{charlier2020doubly}{article}{
      author={Charlier, C.},
       title={{Doubly periodic lozenge tilings of a hexagon and matrix valued
  orthogonal polynomials}},
        date={2020},
     journal={Stud. Appl. Math.},
      volume={146},
      number={1},
       pages={3\ndash 80},
        note={arXiv:2001.11095 [math-ph]},
}

\bib{CohnKenyonPropp2000}{article}{
      author={Cohn, H.},
      author={Kenyon, R.},
      author={Propp, J.},
       title={A variational principle for domino tilings},
        date={2001},
     journal={Jour. AMS},
      volume={14},
      number={2},
       pages={297\ndash 346},
        note={arXiv:math/0008220 [math.CO]},
}

\bib{CohnLarsenPropp}{article}{
      author={Cohn, H.},
      author={Larsen, M.},
      author={Propp, J.},
       title={The shape of a typical boxed plane partition},
        date={1998},
     journal={New York J. Math},
      volume={4},
       pages={137\ndash 165},
        note={arXiv:math/9801059 [math.CO]},
}

\bib{DiFran2019qvol}{article}{
      author={Di~Francesco, P.},
      author={Guitter, E.},
       title={A tangent method derivation of the arctic curve for q-weighted
  paths with arbitrary starting points},
        date={2019},
     journal={J. Phys. A},
      volume={52},
      number={11},
       pages={115205},
        note={arXiv:1810.07936 [math-ph]},
}

\bib{dimitrov2019log}{article}{
      author={Dimitrov, E.},
      author={Knizel, A.},
       title={{Log-gases on quadratic lattices via discrete loop equations and
  q-boxed plane partitions}},
        date={2019},
     journal={J. Funct. Anal.},
      volume={276},
      number={10},
       pages={3067\ndash 3169},
        note={arXiv:1710.01709 [math.PR]},
}

\bib{Duits2024lozenge}{article}{
      author={Duits, M.},
      author={Duse, E.},
      author={Liu, W.},
       title={{Lozenge Tilings of a Hexagon and q-Racah Ensembles}},
        date={2024},
     journal={J. Phys. A Math. Theor.},
      volume={57},
      number={40},
       pages={405202},
        note={arXiv:2311.13407 [math.PR]},
}

\bib{Dyson1962_part_3_of_statistical_theory_energy_levels}{article}{
      author={Dyson, F.~J.},
       title={{Statistical Theory of the Energy Levels of Complex Systems.
  III}},
        date={1962},
     journal={J. Math. Phys.},
      volume={3},
      number={1},
       pages={166\ndash 175},
}

\bib{ferrari2003step}{article}{
      author={Ferrari, P.},
      author={Spohn, H.},
       title={{Step fluctuations for a faceted crystal}},
        date={2003},
        ISSN={0022-4715},
     journal={J. Stat. Phys},
      volume={113},
      number={1},
       pages={1\ndash 46},
        note={arXiv:cond-mat/0212456 [cond-mat.stat-mech]},
}

\bib{GasperRahman}{book}{
      author={Gasper, G.},
      author={Rahman, M.},
       title={{Basic hypergeometric series}},
   publisher={Cambridge University Press},
        date={2004},
}

\bib{Gorin2007Hexagon}{article}{
      author={Gorin, V.},
       title={Nonintersecting paths and the {H}ahn orthogonal polynomial
  ensemble},
        date={2008},
     journal={Funct. Anal. Appl.},
      volume={42},
      number={3},
       pages={180\ndash 197},
        note={arXiv:0708.2349 [math.PR]},
}

\bib{gorin2021lectures}{article}{
      author={Gorin, V.},
       title={Lectures on random lozenge tilings},
        date={2021},
     journal={Cambridge Studies in Advanced Mathematics. Cambridge University
  Press},
}

\bib{gorin2022dynamical}{article}{
      author={Gorin, V.},
      author={Huang, J.},
       title={{Dynamical loop equation}},
        date={2024},
     journal={Ann. Probab.},
      volume={52},
      number={5},
       pages={1758\ndash 1863},
        note={arXiv:2205.15785 [math.PR]},
}

\bib{GorinPanova2012_full}{article}{
      author={Gorin, V.},
      author={Panova, G.},
       title={{Asymptotics of symmetric polynomials with applications to
  statistical mechanics and representation theory}},
        date={2015},
     journal={{Ann. Probab.}},
      volume={43},
      number={6},
       pages={3052\ndash 3132},
        note={arXiv:1301.0634 [math.RT]},
}

\bib{huang2024concentration}{article}{
      author={Huang, J.},
       title={{Edge statistics for lozenge tilings of polygons, I:
  concentration of height function on strip domains}},
        date={2024},
     journal={Prob. Theory Relat. Fields},
      volume={188},
       pages={337\ndash 485},
        note={arXiv:2108.12872 [math.PR]},
}

\bib{huang2024pearcey}{article}{
      author={Huang, J.},
      author={Yang, F.},
      author={Zhang, L.},
       title={{Pearcey universality at cusps of polygonal lozenge tilings}},
        date={2024},
     journal={Comm. Pure Appl. Math.},
      volume={77},
      number={9},
       pages={3708\ndash 3784},
        note={arXiv:2306.01178 [math.PR]},
}

\bib{IsmailMasson1994}{article}{
      author={Ismail, M. E.~H.},
      author={Masson, D.~R.},
       title={{q-Hermite polynomials, biorthogonal rational functions, and
  q-beta integrals}},
        date={1994},
     journal={Trans. Amer. Math. Soc.},
      volume={346},
      number={1},
       pages={63\ndash 116},
}

\bib{Kenyon2004Height}{article}{
      author={Kenyon, R.},
       title={Height fluctuations in the honeycomb dimer model},
        date={2008},
     journal={Commun. Math. Phys.},
      volume={281},
      number={3},
       pages={675\ndash 709},
        note={arXiv:math-ph/0405052},
}

\bib{Kenyon2007Lecture}{article}{
      author={Kenyon, R.},
       title={{Lectures on dimers}},
        date={2009},
     journal={arXiv preprint},
        note={arXiv:0910.3129 [math.PR]},
}

\bib{OkounkovKenyon2007Limit}{article}{
      author={Kenyon, R.},
      author={Okounkov, A.},
       title={Limit shapes and the complex {B}urgers equation},
        date={2007},
     journal={Acta Math.},
      volume={199},
      number={2},
       pages={263\ndash 302},
        note={arXiv:math-ph/0507007},
}

\bib{KOS2006}{article}{
      author={Kenyon, R.},
      author={Okounkov, A.},
      author={Sheffield, S.},
       title={Dimers and amoebae},
        date={2006},
     journal={Ann. Math.},
      volume={163},
       pages={1019\ndash 1056},
        note={arXiv:math-ph/0311005},
}

\bib{Koekoek1996}{article}{
      author={Koekoek, R.},
      author={Swarttouw, R.F.},
       title={{The Askey-scheme of hypergeometric orthogonal polynomials and
  its q-analogue}},
        date={1996},
     journal={Technical report, Delft University of Technology and Free
  University of Amsterdam},
        note={arXiv:math/9602214 [math.CA], report no. OP-SF 20 Feb 1996.
  Updated version available at
  \url{https://fa.ewi.tudelft.nl/~koekoek/documents/as98.pdf}},
}

\bib{Konig2005}{article}{
      author={K{\"o}nig, W.},
       title={{Orthogonal polynomial ensembles in probability theory}},
        date={2005},
     journal={Probab. Surv.},
      volume={2},
       pages={385\ndash 447},
        note={arXiv:math/0403090 [math.PR]},
}

\bib{mehta2004random}{book}{
      author={Mehta, M.L.},
       title={{Random matrices}},
   publisher={Academic press},
        date={2004},
}

\bib{Mkrtchyan2014Periodic}{article}{
      author={Mkrtchyan, S.},
       title={Plane partitions with 2-periodic weights},
        date={2014},
     journal={Letters in Mathematical Physics},
      volume={104},
      number={9},
       pages={1053\ndash 1078},
        note={arXiv:1309.4825 [math.PR]},
}

\bib{Mkrtchyan2019}{article}{
      author={Mkrtchyan, S.},
       title={{Turning Point Processes in Plane Partitions with Periodic
  Weights of Arbitrary Period}},
        date={2021},
     journal={Prog. Math.},
      volume={340},
       pages={497\ndash 527},
        note={arXiv:1908.01246 [math.PR]},
}

\bib{mkrtchyan2017gue}{article}{
      author={Mkrtchyan, S.},
      author={Petrov, L.},
       title={{GUE corners limit of q-distributed lozenge tilings}},
        date={2017},
     journal={Electron. J. Probab.},
      volume={22},
      number={101},
       pages={24 pp.},
         url={https://doi.org/10.1214/17-EJP112},
        note={arXiv:1703.07503 [math.PR]},
}

\bib{nienhuis1984triangular}{article}{
      author={Nienhuis, B.},
      author={Hilhorst, H.~J.},
      author={Bl{\"o}te, H.~W.~J.},
       title={{Triangular SOS models and cubic-crystal shapes}},
        date={1984},
     journal={J. Phys. A: Math. Gen.},
      volume={17},
      number={18},
       pages={3559\ndash 3581},
}

\bib{okounkov2003correlation}{article}{
      author={Okounkov, A.},
      author={Reshetikhin, N.},
       title={{Correlation function of Schur process with application to local
  geometry of a random 3-dimensional Young diagram}},
        date={2003},
     journal={Jour. AMS},
      volume={16},
      number={3},
       pages={581\ndash 603},
        note={arXiv:math/0107056 [math.CO]},
}

\bib{OkounkovReshetikhin2006RandomMatr}{article}{
      author={Okounkov, A.},
      author={Reshetikhin, N.Y.},
       title={The birth of a~random matrix},
        date={2006},
     journal={Mosc. Math. J.},
      volume={6},
      number={3},
       pages={553\ndash 566},
}

\bib{Okounkov2005}{article}{
      author={Okounkov, A.},
      author={Reshetikhin, N.},
       title={{Random skew plane partitions and the Pearcey process}},
        date={2007},
     journal={Commun. Math. Phys.},
      volume={269},
      number={3},
       pages={571\ndash 609},
        note={arXiv:math/0503508 [math.CO]},
}

\bib{Olshansk2008-difference}{article}{
      author={Olshanski, G.},
       title={Difference operators and determinantal point processes},
        date={2008},
     journal={Functional Analysis and Its Applications},
      volume={42},
      number={4},
       pages={317\ndash 329},
        note={arXiv:0810.3751 [math.PR]},
}

\bib{Petrov2012}{article}{
      author={Petrov, L.},
       title={{Asymptotics of Random Lozenge Tilings via Gelfand-Tsetlin
  Schemes}},
        date={2014},
     journal={Probab. Theory Relat. Fields},
      volume={160},
      number={3},
       pages={429\ndash 487},
      eprint={1202.3901},
        note={arXiv:1202.3901 [math.PR]},
}

\bib{Petrov2012GFF}{article}{
      author={Petrov, L.},
       title={{Asymptotics of Uniformly Random Lozenge Tilings of Polygons.
  Gaussian Free Field}},
        date={2015},
     journal={{Ann. Probab.}},
      volume={43},
      number={1},
       pages={1\ndash 43},
      eprint={1206.5123},
        note={arXiv:1206.5123 [math.PR].},
}

\bib{petrov2025_3d_lozenge}{misc}{
      author={Petrov, L.},
       title={{3D q-volume lozenge tilings visualization}},
         how={\url{https://lpetrov.cc/simulations/2025-06-08-q-vol-3d/}},
        date={2025},
}

\bib{petrov2023asymptotics}{article}{
      author={Petrov, L.},
      author={Tikhonov, M.},
       title={{Asymptotics of noncolliding q-exchangeable random walks}},
        date={2023},
     journal={J. Phys. A: Math. Theor.},
      volume={56},
      number={36},
       pages={365203},
        note={arXiv:2303.02380 [math.PR]},
}

\bib{reed1972}{book}{
      author={Reed, M.},
      author={Simon, B.},
       title={{Methods of Modern Mathematical Physics, Vol. I. Functional
  Analysis}},
   publisher={Academic Press},
     address={New York},
        date={1972},
}

\bib{Sheffield2008}{article}{
      author={Sheffield, S.},
       title={Random surfaces},
        date={2005},
     journal={Ast\'erisque},
      volume={304},
        note={arXiv:math/0304049 [math.PR]},
}

\bib{TaoRMbook}{book}{
      author={Tao, T.},
       title={Topics in random matrix theory},
      series={Graduate Studies in Mathematics},
   publisher={AMS},
        date={2012},
      volume={132},
}

\bib{WuMcCoyTracyBarouch1976}{article}{
      author={Wu, T.~T.},
      author={McCoy, B.~M.},
      author={Tracy, C.~A.},
      author={Barouch, E.},
       title={{Spin-spin correlation functions for the two-dimensional Ising
  model: Exact theory in the scaling region}},
        date={1976},
     journal={Phys. Rev. B},
      volume={13},
      number={1},
       pages={316\ndash 374},
}
\end{biblist}
\end{bibdiv}

\medskip

\textsc{a. knizel, barnard college, columbia university, new york, ny, usa}

e-mail: \texttt{aknizel@barnard.edu}

\medskip

\textsc{l. petrov, university of virginia, charlottesville, va, usa}

e-mail: \texttt{lenia.petrov@gmail.com}

\end{document}